\theoremstyle{definition}
\newtheorem{Def}{Definition}[section]
\newtheorem{Thm}[Def]{Theorem}
\newtheorem{Prp}[Def]{Proposition}
\newtheorem{Lem}[Def]{Lemma}
\newtheorem{Cor}[Def]{Corollary}
\newtheorem{Remark}[Def]{Remark}
\def\i<#1>{\langle #1 \rangle}
\newcommand{\Par}		{ {\mathcal{P}ar} }
\newcommand{\mb}[1]	{ \boldsymbol{#1} }
\newcommand{\comp}	{	\circ }
\newcommand{\eps}	{ \varepsilon }
\newcommand{\dirac}	{ \slashed{\partial} }
\newcommand{\Dirac}	{ \slashed{D} }
\newcommand{\deebar} { \overline{\partial} }
\title{The Nahm transform of spatially periodic instantons}
\author{Masaki Yoshino}
\date{\today}
\begin{document}
	\maketitle
	\begin{abstract}
  	We construct the Nahm transform from finite energy instantons on the product of a real line $\mathbb{R}$ and a three dimensional torus $T^3$
    to Dirac-type singular monopoles on the dual torus $\hat{T}^3$.
    Moreover, we show the correspondence between the data which handle the asymptotic behavior of 
    instantons at infinity and one of monopoles at singular points.
  \end{abstract}
  \section{Introduction}

	Set $T^3 := \mathbb{R}^3/\Lambda_3$, where $\Lambda_3 \subset \mathbb{R}^3$ is a lattice of $\mathbb{R}^3$.
	Let $(V,h)$ be a Hermitian vector bundle on $\mathbb{R}\times T^3$
  	and $A$ be a connection on $(V,h)$.
  	The triple $(V,h,A)$ is called an instanton on $\mathbb{R}\times T^3$ if its curvature $F(A)$ satisfies the ASD equation $F(A) = -\ast F(A)$.
  Additionally, an instanton $(V,h,A)$ on $\mathbb{R}\times T^3$ is $L^2$-finite
  if it satisfies the finite energy condition $|F(A)| \in L^2(\mathbb{R} \times T^3)$.
  Let $\hat{T}^3$ be the dual torus of $T^3$
  \textit{i.e.} $\hat{T}^3 = \mathrm{Hom}(\mathbb{R}^3,\mathbb{R})/ \Lambda_3^{\ast}$,
  where $\Lambda_3^{\ast} = \{\xi \in \mathrm{Hom}(\mathbb{R}^3,\mathbb{R}) \mid \xi(\Lambda_3)\subset\mathbb{Z}\}$
  is the dual subgroup of $\Lambda_3$.
  Let $Z \subset \hat{T}^3$ be a finite subset.
  Let $(\hat{V},\hat{h},\hat{A})$ be a Hermitian vector bundle with a connection on $\hat{T}^3 \setminus Z$.
  Let $\hat{\Phi}$ be a skew-Hermitian section of $\mathrm{End}(V)$ on $\hat{T}^3 \setminus Z$.
  The tuple $(\hat{V},\hat{h},\hat{A},\hat{\Phi})$ is said to be a monopole on $\hat{T}^3 \setminus Z$
  if it satisfies the Bogomolny equation $F(\hat{A}) = \ast\nabla_{\hat{A}}(\hat{\Phi})$.
  Moreover, $Z$ is the Dirac-type singularities of $(\hat{V},\hat{h},\hat{A},\hat{\Phi})$ if
  it has a certain type of the asymptotic behavior around each point of $Z$ (Definition \ref{Def:def of monopoles}).
  Then we call $(\hat{V},\hat{h},\hat{A},\hat{\Phi})$ a Dirac-type singular monopole on $\hat{T}^3$.
  In this paper,
  we will construct the Nahm transform of an $L^2$-finite instanton $(V,h,A)$ on the product of a real line $\mathbb{R}\times T^3$
  to a Dirac-type singular monopole $(\hat{V},\hat{h},\hat{A},\hat{\Phi})$ on $\hat{T}^3$.
  
  In general,
  for any closed subgroup $\Lambda \subset \mathbb{R}^4$ and the dual subgroup $\Lambda^{\ast} \subset \mathrm{Hom}(\mathbb{R}^4,\mathbb{R})$,
  it is believed that there exists a way to construct 
  $\Lambda^{\ast}$-invariant instantons on $\mathrm{Hom}(\mathbb{R}^4,\mathbb{R})$
  from $\Lambda$-invariant instantons on $\mathbb{R}^4$.
  For example, if $\Lambda = \mathbb{R}^4$ and $\Lambda^{\ast} = \{0\}$,
  it was constructed by Atiyah, Drinfeld, Hitchin and Manin \cite{Ref:ADHM}, and called the ADHM construction.
  The case $(\Lambda,\Lambda^{\ast})\simeq (\mathbb{R},\mathbb{R}^3)$
  was studied by Nahm \cite{Ref:Nah}, Hitchin \cite{Ref:Hit} and Nakajima \cite{Ref:Nak}.
  See Jardim \cite{Ref:Jar} for a list of many Nahm transformations.
  
  Since $\mathbb{R}$-invariant instantons on $\mathbb{R}\times\hat{T}^3$ can be regarded as monopoles on $\hat{T}^3$,
  the construction in this paper corresponds to the case $(\Lambda,\Lambda^{\ast}) \simeq (\mathbb{Z}^3, \mathbb{R}\times \mathbb{Z}^3)$.
  This case was previously studied by Charbonneau \cite{Ref:Cha2}
  who constructed singular monopoles on $\hat{T}^3$,
  from $L^2$-finite instantons of rank $2$ on $\mathbb{R}\times T^3$ whose curvature have exponentially decay.
  We will generalize this result to general $L^2$-finite instantons
  and prove that any singularities of the transformed monopole are Dirac-type.
  
  Next, let us consider relations between the Nahm transforms and the Kobayashi-Hitchin correspondences.
  On a connected compact K\"{a}hler surface $(M,g)$ with the K\"{a}hler form $\omega$,
  there exists a one-to-one correspondence between irreducible instantons and stable holomorphic vector bundles $V$ with
  the condition $\left(c_1(V)\cup\omega\right)/[X] = 0$ up to their gauge transformations,
  which is called the Kobayashi-Hitchin correspondence and proved by Uhlenbeck and Yau \cite{Ref:Uhl-Yau}.
  In our case, there exist similar relations under the assumption that
  $T^3$ is isomorphic to $S^1 \times T^2$ as a Riemannian manifold.
  On one hand, in \cite{Ref:Cha-Hur} Charbonneau and Hurtubise obtained the Kobayashi-Hitchin correspondence between
  Dirac-type singular monopoles on $\hat{T}^3$ 
  and polystable singular mini-holomorphic bundles (Definition \ref{Def:min-hol. bdle}) on $\hat{T}^3$.
  On the other hand, we will give a construction of polystable parabolic bundles with parabolic degree $0$  on $(\mathbb{P}^1\times T^2,\{0,\infty\}\times T^2)$ 
  from $L^2$-finite instantons on $\mathbb{R}\times T^3$\;(Theorem \ref{Thm:inst is stable}).
  However, it is only a half part of the Kobayashi-Hitchin correspondence
  and we have not yet proved the other part.
  
  Next we will construct mini-holomorphic bundles on $\hat{T}^3$ from stable parabolic bundles on $(\mathbb{P}^1\times T^2,\{0,\infty\}\times T^2)$ of rank $r>1$
  in reference to \cite{Ref:Moc1}.
  We call this construction the \textit{algebraic Nahm transform} as in \cite{Ref:Moc1} and it satisfies the following commutative diagram.
  
  \begin{figure}[h]
	  \begin{xy}
  		(0,30) 		*+[F]\txt{$L^2$-finite instantons \\on $\mathbb{R}\times T^3$}																= "A",
    	(90,30)	*+[F]\txt{Dirac-type singular \\monopoles on $\hat{T}^3$}																			= "B",
    	(0,0)			*+[F]\txt{stable parabolic bundles \\on $(\mathbb{P}^1\times T^2,\{0,\infty\}\times T^2)$}		= "C",
    	(100,0)		*+[F]\txt{stable singular \\ mini-holomorphic bundles on $\hat{T}^3$}												= "D",
    	\ar @{->}		"A";"B"^{\mbox{Nahm transform}}
    	\ar @{->}		"A";"C"^{\mbox{Kobayashi-Hitchin corres.}}
    	\ar @{<->}	"B";(90,5)^(0.5){\mbox{Kobayashi-Hitchin corres.}}^(0.72){\mbox{constructed in \cite{Ref:Cha-Hur}}}
    	\ar @{->}		"C";"D"^{\mbox{Alg. Nahm transform}}
  	\end{xy}
  \end{figure}
  
  \subsection{Main result}
  	The main results of this paper are summarized as follow.
  	\begin{enumerate}[label=(\Roman*)]
  		\item\label{Enum:Asymp behavior}
      		For any $L^2$-finite instanton $(V,h,A)$ on $\mathbb{R}\times T^3$,
        	there exist model solutions of the Nahm equation $(\Gamma_{\pm}, N_{\pm}) = (\Gamma_{i,\pm}, N_{i,\pm})_{i=1,2,3}$
        	such that $(V,h,A)$ is approximated by the $T^3$-invariant instantons associated to $(\Gamma_{\pm}, N_{\pm})$
        	at $t \to \pm\infty$ (Corollary \ref{Cor:Asymp of Inst}).
    	\item\label{Enum:Construct monopole}
      		We construct a monopole $(\hat{V},\hat{h},\hat{A},\hat{\Phi})$
        	on $\hat{T}^3 \setminus \mathrm{Sing}(V,h,A)$ from an $L^2$-finite instanton $(V,h,A)$ on $\mathbb{R}\times T^3$,
        	where $\mathrm{Sing}(V,h,A) \subset \hat{T}^3$ is a finite subset determined by $(\Gamma_{\pm})$ (Proposition \ref{Prp:transformed bundle is monopole}).
        	Moreover, each point of $\mathrm{Sing}(V,h,A)$
        	is a Dirac-type singularity of $(\hat{V},\hat{h},\hat{A},\hat{\Phi})$ (Theorem \ref{Thm:Transformed monopole is Dirac type}).
    	\item\label{Enum:Correspodence}
      		Assume that $T^3$ is isomorphic to $S^1 \times T^2$ as a Riemannian manifold.
      		If $(V,h,A)$ is irreducible and $\mathrm{rank}(V)>1$,
        	then the weight $\vec{k} \in \mathbb{Z}^{\mathrm{rank}(\hat{V})}$ of
        	$(\hat{V},\hat{h},\hat{A},\hat{\Phi})$ at each singular point $\xi \in \mathrm{Sing}(V,h,A)$
        	is determined by the weights of the $\mathfrak{su}(2)$ representation $\rho_{\pm,\xi}$ constructed from $(N_{\pm})$ (Theorem \ref{Thm:Correspodence between weights}).
  	\end{enumerate}
    The first result is an analytical preparation of the Nahm transform in \ref{Enum:Construct monopole}.
    The second is the construction of the Nahm transform.
    The third is an application of the commutativity in the above figure (Theorem \ref{Thm:Algebraic Nahm trans}).
    Let us explain more details in the following.
    \subsubsection{Main result \ref{Enum:Asymp behavior}}
    	Let $h_{\mathbb{C}^r}$ be the canonical Hermitian metric on $\mathbb{C}^r$.
    	For a smooth manifold $S$,
    	we denote by $(\underline{\mathbb{C}^r}_S,\underline{h}_S)$ the product bundle of $(\mathbb{C}^r,h_{\mathbb{C}^r})$ on $S$.
    	If there is no risk of confusion,
    	then we abbreviate $(\underline{\mathbb{C}^r}_S,\underline{h}_S)$ to $(\underline{\mathbb{C}^r},\underline{h})$.
    	
    	Let $A$ be a connection on $(\underline{\mathbb{C}^r},\underline{h})$ on $(0,\infty)\times T^3$,
    	and assume that the connection form $\alpha dt + \sigma_i A_i dx^i$ of $A$ is invariant under $T^3$-action \textit{i.e.} $\alpha$ and $A_i$ are $T^3$-invariant functions on $(0,\infty)\times T^3$.
    	Then the ASD equation for $(\underline{\mathbb{C}^r},\underline{h},A)$ is equivalent to the following Nahm equation:
      \begin{eqnarray}\label{Eqn:Intro Nahm eq}
    		\left\{\begin{array}{l}
      		\displaystyle\frac{\partial A_1}{\partial t} + [\alpha, A_1] = -[A_2, A_3] \\
        	\\
        	\displaystyle\frac{\partial A_2}{\partial t} + [\alpha, A_2] = -[A_3, A_1] \\
        	\\
        	\displaystyle\frac{\partial A_3}{\partial t} + [\alpha, A_3] = -[A_1, A_2]. \\
				\end{array}\right.
    	\end{eqnarray}
      
      Let $\Gamma_i \in \mathfrak{u}(r)\;(i=1,2,3)$ be skew-Hermitian matrices which are commutative each other.
      For the tuple $\Gamma=(\Gamma_i)$,
      we set $\mathrm{Center}(\Gamma) := \{a \in \mathfrak{u}(r)\;|\;[\Gamma_i,a]=0\;(i=1,2,3)\}$.
      Take $N_i \in \mathrm{Center}(\Gamma)\;(i=1,2,3)$ satisfying 
      $N_i = [N_j,N_k]$
      for any even permutation $(ijk)$ of $(123)$.
      Then the tuple
      $\alpha=0, A_i = \Gamma_i + N_{i}/t$ forms a solution of (\ref{Eqn:Intro Nahm eq}) on $(0,\infty)$.
    	\begin{Def}[Definition \ref{Def:model sol of Nahm eq}]
      	A tuple $(\Gamma=(\Gamma_i),N=(N_i))$ is called a model solution of the Nahm equation if it satisfies the above conditions.
    	\end{Def}
      We obtain the following theorem as a consequence of results in \cite{Ref:Biq} and \cite{Ref:Mor-Mro-Rub}.
      \begin{Thm}[Corollary \ref{Cor:Asymp of Inst}]\label{Thm: thm1 in Intro}
      	Let $(V,h,A)$ be an $L^2$-finite instanton on $(0,\infty)\times T^3$,
        \textit{i.e.} its curvature $F(A)$ is $L^2$.
      	
        If we fix a positive number $0<\lambda<1$ and
        take a sufficiently large $R>0$, then there exist a trivialization of $C^{4,\lambda}$-class
      	$\sigma:(V,h)|_{(R,\infty)\times T^3} \simeq (\underline{\mathbb{C}^r} , \underline{h})$
      	and a model solution of the Nahm equation $(\Gamma,N)$ such that the following holds for
        the connection form $\alpha dt + \sum_i A_i dx^{i}$ of $A$ with respect to $\sigma$.
    		\begin{enumerate}
      		\item
        		The trivialization $\sigma$ is a temporal gauge \textit{i.e.} we have $\alpha = 0$.
        	\item
        		There exists a decomposition 
          	$A_i - (\Gamma_i + N_i /t) = \eps_{1,i}(t) + \eps_{2,i}(t) + \eps_{3,i}(t,x)$
          	such that we have $\eps_{1,i}(t) \in \mathrm{Center}(\Gamma)$, $\eps_{2,i}(t) \in (\mathrm{Center}(\Gamma))^{\perp}$
          	and the following estimates for any $1\leq i,j\leq 3$,
            where $(\mathrm{Center}(\Gamma))^{\perp}$ means the orthogonal complement of
            $\mathrm{Center}(\Gamma)$ in $\mathfrak{u}(r)$ with respect to the inner product $\i<A,B> := -\mathrm{tr}(AB)$.
          	\begin{eqnarray*}
      				\begin{array}{ll}
            		|\partial^{j}_{t} \eps_{1,i}| &= O(t^{-(1+j+\delta)})\\
            		|\partial^{j}_{t} \eps_{2,i}| &= O(\exp(- \delta t))\\
              	||\eps_{3,i}||_{C^{3,\lambda}([t,t+1]\times T^3)} &= O(\exp(- \delta t)),
            	\end{array}
          	\end{eqnarray*}
          	where $\delta$ is a positive number.
        \end{enumerate}
      \end{Thm}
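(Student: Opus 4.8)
The plan is to combine the general asymptotic theory of finite-energy anti-self-dual connections on a cylindrical end with a sharper analysis of the reduced Nahm flow in the directions tangent to $\mathrm{Center}(\Gamma)$. First I would view the instanton on the end $(R,\infty)\times T^3$ as a $t$-family of connections on the compact fibre $T^3$, so that the ASD equation becomes a Nahm-type gradient flow whose rest points are the flat connections on $T^3$. Since $|F(A)|\in L^2$, the cylindrical-end theory of \cite{Ref:Mor-Mro-Rub} applies: after a gauge transformation $A$ converges as $t\to\infty$ to a flat limit on $T^3$, and the modes transverse to the stabilizer of this limit decay exponentially. A flat $U(r)$-connection on $T^3$ is given by three commuting holonomies, so I can choose the trivialization $\sigma$ so that the limit is the constant connection with commuting skew-Hermitian generators $\Gamma_i\in\mathfrak{u}(r)$; these are the $\Gamma_i$ of the sought model solution. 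Integrating the $dt$-component $\alpha$ along $t$ produces a further gauge transformation placing $A$ into the temporal gauge $\alpha=0$, and interior elliptic estimates for the ASD equation together with the exponential convergence upgrade $\sigma$ to a trivialization of $C^{4,\lambda}$-class.

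In temporal gauge the $T^3$-invariant part of the ASD equation is precisely the Nahm equation (\ref{Eqn:Intro Nahm eq}), while the non-invariant part $\eps_{3,i}$ is controlled by the $\Gamma$-twisted Laplacian on $T^3$. The nonzero Fourier modes on $T^3$ have eigenvalues bounded away from $0$, so the spectral-gap energy estimate of \cite{Ref:Mor-Mro-Rub} yields $\|\eps_{3,i}\|_{C^{3,\lambda}([t,t+1]\times T^3)}=O(\exp(-\delta t))$. It then remains to analyse the invariant reduction, i.e.\ a solution $(A_i(t))$ of (\ref{Eqn:Intro Nahm eq}) with $A_i\to\Gamma_i$, which I would split, orthogonally for the inner product $\i<A,B>=-\mathrm{tr}(AB)$, into the central part $\eps_{1,i}\in\mathrm{Center}(\Gamma)$ and the transverse part $\eps_{2,i}\in(\mathrm{Center}(\Gamma))^{\perp}$. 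Linearizing (\ref{Eqn:Intro Nahm eq}) about $(\Gamma_i)$ gives the operator $b_i\mapsto-[\Gamma_j,b_k]-[b_j,\Gamma_k]$, which annihilates $\mathrm{Center}(\Gamma)$ and is invertible with a positive spectral gap on $(\mathrm{Center}(\Gamma))^{\perp}$; a standard ODE bootstrap then forces $|\partial_t^j\eps_{2,i}|=O(\exp(-\delta t))$, and shows that $\eps_{2,i}$ couples back into the central equation only through quadratic, hence exponentially small, terms.

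The central directions carry the genuinely new behavior. Projecting (\ref{Eqn:Intro Nahm eq}) onto $\mathrm{Center}(\Gamma)$, all brackets involving $\Gamma_i$ drop out because $N_i$ and $\eps_{1,i}$ lie in $\mathrm{Center}(\Gamma)$, so $y_i:=A_i-\Gamma_i$ (restricted to the centre) satisfies, up to the exponentially small coupling to $\eps_{2},\eps_{3}$, the autonomous Nahm system $\dot y_i=-[y_j,y_k]$ on the Lie algebra $\mathrm{Center}(\Gamma)$. Its homogeneous scaling solutions are exactly $y_i=N_i/t$ with $N_i=[N_j,N_k]$, which recovers the model datum $N=(N_i)$ and its $\mathfrak{su}(2)$-type relations. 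I would then invoke the fine asymptotic analysis of \cite{Ref:Biq}: writing $y_i=N_i/t+\eps_{1,i}$ and linearizing about the Nahm pole $N_i/t$, a centre/stable-manifold argument identifies the decaying profiles and gives the polynomial estimates $|\partial_t^j\eps_{1,i}|=O(t^{-(1+j+\delta)})$, after discarding the non-normalizable modes using $A_i\to\Gamma_i$.

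The main obstacle is precisely this central analysis. Unlike the transverse and non-invariant directions, there is no mass gap along $\mathrm{Center}(\Gamma)$: the linearization of $\dot y_i=-[y_j,y_k]$ about $N_i/t$ has a power-law, rather than exponential, fundamental solution dictated by the weights of the $\mathfrak{su}(2)$-representation generated by $(N_i)$, so exponential decay is impossible and one must run a weighted fixed-point scheme on $(R,\infty)$ adapted to the homogeneity of the system. The delicate verification is that the quadratic cross-terms between the centre and its complement, and the feedback of $\eps_{3}$ into the invariant part, are both subordinate to the rate $t^{-(1+\delta)}$ and therefore do not spoil it; once the decomposition $A_i-(\Gamma_i+N_i/t)=\eps_{1,i}+\eps_{2,i}+\eps_{3,i}$ is in place with the three independent decay mechanisms above, the stated estimates follow.
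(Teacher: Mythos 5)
Your overall strategy is the same as the paper's: the paper also proves this by combining the cylindrical-end/center-manifold theory of \cite{Ref:Mor-Mro-Rub} (to handle the non-$T^3$-invariant modes, giving $\eps_3$) with Biquard's asymptotic analysis of the Nahm equation \cite{Ref:Biq} (giving $\eps_1$ and $\eps_2$), with exactly your three-term decomposition. However, two of your intermediate claims have genuine gaps, and they are precisely the points where the paper's execution is different.

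First, the paper does not analyze the $T^3$-invariant part of $A$ directly. It uses the center-manifold theorem of \cite{Ref:Mor-Mro-Rub} to produce an \emph{exact} $T^3$-invariant $L^2$-finite instanton $\tilde{A}$ with $\|A_\sigma-\tilde{A}\|_{C^{3,\lambda}}=O(e^{-\delta t})$ (Theorem \ref{Thm:Appro of Inst}); this step requires proving that the $T^3$-action on the center manifold is trivial, which in turn needs $\Gamma$ to be normalized so that differences of simultaneous eigenvalues avoid $2\pi\sqrt{-1}\mathbb{Z}$ --- the same normalization your Fourier spectral-gap claim silently assumes and without which it is false. Because $\tilde A$ is an exact solution of the Nahm equation, Theorem \ref{Thm:Asymp of Nahm} (Biquard) applies to it verbatim. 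In your version, the $T^3$-average of $A$ satisfies the Nahm equation only up to exponentially small coupling terms, so Biquard's theorem does not apply as stated; the ``weighted fixed-point scheme'' that would absorb this perturbation into his centre/stable-manifold analysis is asserted but never constructed, and this is the main unresolved step of your proposal. Second, your mechanism for the decay of $\eps_2$ is broken: the linearization $b_i\mapsto -[\Gamma_j,b_k]-[b_j,\Gamma_k]$ is \emph{not} invertible on $(\mathrm{Center}(\Gamma))^{\perp}$. On each simultaneous $\mathrm{ad}$-eigenspace with nonzero eigenvalue vector $\mu=(\mu_i)$ it acts as the cross product $-\mu\times(\cdot)$, whose kernel is spanned by $b_i=[\Gamma_i,\phi]$, i.e.\ by infinitesimal constant gauge transformations, and these kernel vectors lie in $(\mathrm{Center}(\Gamma))^{\perp}$. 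Consequently ``bounded implies exponentially decaying'' fails: a bounded solution may converge to a nearby conjugate $g^{-1}\Gamma g$ rather than to $\Gamma$, and one must first rotate $\Gamma$ along its gauge orbit to kill these neutral modes --- this is exactly the delicacy that Biquard's result (cited by the paper for \emph{both} the $\eps_1$ and $\eps_2$ estimates, not only the central part) takes care of. Both gaps are repairable by doing what the paper does: normalize $\Gamma$, use the center manifold to reduce to an exact Nahm solution, and quote Biquard for the full dichotomy on that solution.
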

      \begin{Remark}
      	Since the tuple $\alpha=0, A_i = \Gamma_i + N_{i}/t$ also form a solution of the Nahm equation on $(-\infty,0)$,
      	a similar result holds for $L^2$-finite instantons on $(-\infty,0)\times T^3$.
      \end{Remark}
    \subsubsection{Main result \ref{Enum:Construct monopole}}
    	Let $(V,h,A)$ be an $L^2$-finite instanton on $\mathbb{R}\times T^3$ of rank $r$.
      Applying Theorem \ref{Thm: thm1 in Intro} to $(V,h,A)|_{(-\infty,0)\times T^3}$ and $(V,h,A)|_{(0,\infty)\times T^3}$,
      we obtain model solutions $(\Gamma_{\pm},N_{\pm})$ which approximate $(V,h,A)$ at $t \to \pm\infty$.
      Since the simultaneous eigenvalues of $\sum_i\Gamma_{\pm,i}dx^i \in \Omega^1(T^3)$ are
      $T^3$-invariant pure imaginary $1$-forms on $T^3$,
      they can be regarded as elements of $\mathrm{Hom}(\mathbb{R}^3,\sqrt{-1}\mathbb{R})$.
      Thus we take $\widetilde{\mathrm{Spec}(\Gamma_{\pm})} \subset \mathrm{Hom}(\mathbb{R}^3,\mathbb{R})$
      as the set of $(2\pi\sqrt{-1})^{-1}$ times simultaneous eigenvalues of $\sum_i\Gamma_{\pm,i}dx^i$.
      We define the spectrum set $\mathrm{Spec}(\Gamma_{\pm}) \subset \hat{T}^3$ to be the image of $\widetilde{\mathrm{Spec}(\Gamma_{\pm})}$
      by the quotient map $\mathrm{Hom}(\mathbb{R}^3,\mathbb{R}) \to \hat{T}^3$.
			We define the singularity set of $(V,h,A)$ as
      $\mathrm{Sing}(V,h,A) := \mathrm{Spec}(\Gamma_{+}) \cup \mathrm{Spec}(\Gamma_{-})$.
      For $\xi \in \hat{T}^3$ and the associated flat Hermitian line bundle
      $L_{\xi} := (\underline{\mathbb{C}},\underline{h},d+2\pi\sqrt{-1}\i<\xi,x>)$ on $\mathbb{R}\times T^3$,
      we set the twisted instanton $(V,h,A_{\xi}):=(V,h,A)\otimes L_{-\xi}$.
      Then we have $\mathrm{Sing}(V,h,A_{\xi}) = \mathrm{Sing}(V,h,A) - \xi = \{\mu-\xi\in\hat{T}^3\mid \mu\in\mathrm{Sing}(V,h,A)\}$.
      
      We construct the Nahm transform of $(V,h,A)$ as follows.
      Let $S^{\pm}$ be the spinor bundle on $\mathbb{R}\times T^3$ with respect to the trivial spin structure and
      $\dirac_{A}^{\pm}:S^{\pm}\otimes V \to S^{\mp}\otimes V$ be the Dirac operator of the connection $A$.
      Let $\mathcal{V}$ be a Hermitian flat vector bundle on $\hat{T}^3$ which is the quotient of the product bundle 
    	$(\underline{L^2(\mathbb{R}\times T^3, V\otimes S^{-})}, ||\cdot||_{L^2})$ on $\mathrm{Hom}(\mathbb{R}^3,\mathbb{R})$
    	by a $\Lambda^{\ast}_3$-action $v\cdot(\xi, f) := (\xi +v, \exp(2\pi\sqrt{-1}\i<x,v>)f)$.
      For a family of $L^2$-finite instantons $\{(V, h, A_{\xi})\}_{\xi \in \hat{T}^3}$,
      we set $(\hat{V},\hat{h})$ as a subbundle of $\mathcal{V}|_{\hat{T}^3 \setminus \mathrm{Sing}(V,h,A)}$
    	defined by $\hat{V}_{\xi} := \mathrm{Ker}\left( \dirac^{-}_{A_{\xi}} \right) \cap L^2$.
      Then $(\hat{V},\hat{h})$ is well-defined and of finite rank
      because $\dirac^{-}_{A_{\xi}}$ is a continuous family of surjective Fredholm operators
      for $\xi \in \hat{T}^3 \setminus \mathrm{Sing}(V,h,A)$ (see Theorem \ref{Thm:Dirac op is Fredholm} and Remark \ref{Rem:surj of neg Dirac op}).
      By Theorem \ref{Thm:Index of dirac op}, we have $\mathrm{rank}(\hat{V}) = (8\pi^2)^{-1}||F(A)||_{L^2}^2$.
    	
      We define a connection $\hat{A}$ on $\hat{V}$ to be the induced connection from the flat connection $d_{\mathcal{V}}$ of $\mathcal{V}$,
      \textit{i.e.} we can write $\hat{A} = Pd_{\mathcal{V}}$ for the orthogonal projection $P:\mathcal{V}|_{\hat{T}^3 \setminus \mathrm{Sing}(V,h,A)} \to \hat{V}$.
      We take a skew-Hermitian endomorphism $\hat{\Phi}$ as $\hat{\Phi}(f) := P(2\pi\sqrt{-1} tf)$.
      Then we have the next theorem.
      \begin{Thm}[Proposition \ref{Prp:transformed bundle is monopole} and Theorem \ref{Thm:Transformed monopole is Dirac type}]\label{Thm:thm2 in intro}
      	$(\hat{V}, \hat{h}, \hat{A}, \hat{\Phi})$ is a monopole on $\hat{T}^3 \setminus \mathrm{Sing}(V,h,A)$,
        and each point of $\mathrm{Sing}(V,h,A)$ is a Dirac-type singularity of $(\hat{V}, \hat{h}, \hat{A}, \hat{\Phi})$.
      \end{Thm}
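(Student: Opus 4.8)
The plan is to prove the two assertions separately: first that $(\hat V,\hat h,\hat A,\hat\Phi)$ solves the Bogomolny equation on $\hat T^3\setminus\mathrm{Sing}(V,h,A)$ (Proposition \ref{Prp:transformed bundle is monopole}), and then that the behaviour near each $\xi_0\in\mathrm{Sing}(V,h,A)$ is of Dirac type (Theorem \ref{Thm:Transformed monopole is Dirac type}). For the Bogomolny part I would run the standard Green's-operator computation. Fix $\xi\notin\mathrm{Sing}(V,h,A)$. Since $\dirac^-_{A_\xi}$ is surjective Fredholm with $(\dirac^-_{A_\xi})^\ast=\dirac^+_{A_\xi}$, the Weitzenb\"ock formula together with the ASD condition $F^+(A)=0$ identifies $\dirac^-_{A_\xi}\dirac^+_{A_\xi}$ on $S^+\otimes V$ with the coupled, twisted connection Laplacian $\nabla^\ast\nabla$, which is invertible on $L^2$; write $G_\xi$ for its inverse. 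The orthogonal projection onto $\hat V_\xi=\mathrm{Ker}(\dirac^-_{A_\xi})$ is then $P_\xi=1-\dirac^+_{A_\xi}G_\xi\dirac^-_{A_\xi}$. Because the $\xi$-dependence of $A_\xi$ enters only through the flat twist $-2\pi\sqrt{-1}\sum_i\xi_i\,dx^i$, differentiation in the dual directions yields Clifford multiplication, $\partial_{\xi_i}\dirac^-_{A_\xi}=-2\pi\sqrt{-1}\,c(dx^i)$, while the Higgs field is governed by the commutator $[\dirac^-_{A_\xi},t]=c(dt)$.

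Substituting $P_\xi$ into the Gauss--Codazzi identity $F(\hat A)=P_\xi\,(dP_\xi)\wedge(dP_\xi)\,P_\xi\big|_{\hat V}$ and into $\nabla_{\hat A}\hat\Phi=\hat\nabla\hat\Phi-\hat\Phi\hat\nabla$ expresses both quantities as $G_\xi$ sandwiched between Clifford elements $c(dx^i),c(dt)$ and the projection $P_\xi$. The equation $F(\hat A)=\ast\nabla_{\hat A}\hat\Phi$ then reduces to an identity in the Clifford algebra: on the relevant chirality $S^-$, Clifford multiplication by the self-dual $2$-forms $dt\wedge dx^i+\tfrac12\varepsilon_{ijk}\,dx^j\wedge dx^k$ vanishes, so that $c(dt)c(dx^i)=-\tfrac12\varepsilon_{ijk}\,c(dx^j)c(dx^k)$ as endomorphisms of $S^-\otimes V$. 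Since $P_\xi$ lands in $\mathrm{Ker}(\dirac^-_{A_\xi})\subset S^-\otimes V$, this is exactly the relation that converts the $c(dt)c(dx^i)$ terms of $\nabla_{\hat A}\hat\Phi$ into the $c(dx^j)c(dx^k)$ terms of $F(\hat A)$, with the Hodge star on $\hat T^3$ accounting for $\varepsilon_{ijk}$. Keeping track of $G_\xi$ commuting through $\dirac^\pm_{A_\xi}$ yields the Bogomolny equation; this part is routine bookkeeping once the projection formula and the self-duality relation are in place.

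The substantive work is the Dirac-type statement. Here I would localise near $\xi_0\in\mathrm{Sing}(V,h,A)$. By Theorem \ref{Thm: thm1 in Intro} the instanton is, on $\{|t|>R\}$, gauge-equivalent to the model solution $\Gamma_\pm+N_\pm/t$ up to errors with $|\partial_t^j\eps_{1,i}|=O(t^{-(1+j+\delta)})$ and $\eps_{2,i},\eps_{3,i}=O(e^{-\delta t})$. Since $\mathrm{Sing}(V,h,A)=\mathrm{Spec}(\Gamma_+)\cup\mathrm{Spec}(\Gamma_-)$, as $\xi\to\xi_0$ precisely one simultaneous eigenspace of $\Gamma_\pm-\xi$ degenerates and its associated Fourier mode on $T^3$ ceases to be exponentially suppressed. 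After separating this mode, the kernel elements of $\dirac^-_{A_\xi}$ responsible for the singular behaviour are governed by the ordinary differential model operator $\tfrac{d}{dt}+2\pi\sqrt{-1}(\xi_0-\xi)\cdot{}+N_\pm/t$ on the half-line. Because $N_\pm$ is, by the model-solution conditions, the nilpotent datum of an $\mathfrak{su}(2)$-representation, this is an explicitly solvable Bessel/confluent-hypergeometric system whose $L^2$-solutions pin down both the local rank of $\hat V$ and the weight data near $\xi_0$.

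The final step is to extract the singular asymptotics and match them to the Dirac model of Definition \ref{Def:def of monopoles}. As $\xi\to\xi_0$ the relevant zero modes delocalise toward $t\to\pm\infty$ at a rate set by $|\xi-\xi_0|$, so that the quadratic form $\langle\hat\Phi f,f\rangle=2\pi\sqrt{-1}\langle tf,f\rangle$ diverges like $|\xi-\xi_0|^{-1}$ with coefficient fixed by the representation weight, while $\hat A=P\,d_{\mathcal V}$ acquires the curvature of a sum of Dirac monopoles of the corresponding charges. I would make this precise by building from the model solution an approximate monopole near $\xi_0$, then using the uniform surjectivity and Fredholm estimates of Theorem \ref{Thm:Dirac op is Fredholm} to show that the true $(\hat A,\hat\Phi)$ differs from it by terms subordinate to the Dirac singularity, discarding the contributions of $\eps_{1,i},\eps_{2,i},\eps_{3,i}$ via Theorem \ref{Thm: thm1 in Intro}. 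The main obstacle is exactly this uniformity: controlling the Green's operator $G_\xi$ and the delocalising kernel modes uniformly as $\xi\to\xi_0$, where $\dirac^-_{A_\xi}$ loses surjectivity and the $L^2$ estimates degenerate. Showing that the $N_\pm/t$ tail produces precisely the polynomial-in-$t$ weight structure of a Dirac-type singularity—rather than some milder or wilder asymptotics—is where the genuine analytic difficulty lies.
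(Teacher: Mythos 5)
Your first half (the Bogomolny equation) follows the standard route and is, in substance, what the paper does: the paper outsources precisely this Green's-operator and Clifford-algebra bookkeeping to Charbonneau \cite{Ref:Cha2}, and concentrates on the one point your sketch passes over, namely that on the noncompact space $\mathbb{R}\times T^3$ every step of that formal computation --- indeed even the definition $\hat{\Phi}(f)=P(2\pi\sqrt{-1}\,tf)$, which needs $tf\in L^2$ --- must be justified by decay of harmonic spinors. That decay (Corollary \ref{Cor:decay of harmonic spinor}, and its extension to $(d_{\mathcal{V}}f)_{\xi}$ via the equation $\partial_t(d_{\mathcal{V}}f)=\Dirac_{A_{\xi}}(d_{\mathcal{V}}f)+\mathrm{clif}(\langle d\xi,dx\rangle)f$) is the actual content of the paper's Proposition \ref{Prp:transformed bundle is monopole}. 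So this half of your proposal is workable once you supply the decay estimates.

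The genuine gap is in the second half. What you propose --- isolating the degenerating Fourier mode, solving the model operator $\frac{d}{dt}+2\pi\sqrt{-1}(\xi_0-\xi)+N_{\pm}/t$, building an approximate monopole near $\xi_0$ and perturbing, with uniform control of $G_{\xi}$ as surjectivity degenerates --- is a program rather than a proof, and you yourself flag its crucial step (uniformity of the Green's operator; showing the $N_{\pm}/t$ tail produces exactly the Dirac-type weight structure) as unresolved. The missing idea that collapses all of this is Proposition \ref{Prp:Moc's criterion for Dirac sing}, taken from \cite{Ref:Moc-Yos}: a point singularity of a monopole is of Dirac type \emph{if and only if} $|\hat{\Phi}|=O(d^{-1})$ near the point. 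With this criterion, no approximate solution, no weight identification, and no uniform invertibility are needed; one only needs an a priori bound on the Higgs field, and that bound is soft. For $f\in\mathrm{Ker}(\dirac^{-}_{A_{\xi}})\cap L^2$ set $F(t)=\int_{\{t\}\times T^3}|f|^2dx$; Proposition \ref{Prp:ODIneq for Harmonic Spinor} gives $F'(t)\leq-\kappa F(t)$ for $t>K$ and $F'(t)\geq\kappa F(t)$ for $t<-K$, with $K,\kappa^{-1}=O(d^{-1})$ where $d=\mathrm{dist}(\xi,\mathrm{Sing}(V,h,A))$; integration by parts then yields $|\langle\hat{\Phi}f,f\rangle|\leq 2\pi(2K+\kappa^{-1})||f||^2_{L^2}=O(d^{-1})||f||^2_{L^2}$, and since $\hat{\Phi}$ is skew-Hermitian this quadratic-form bound controls $|\hat{\Phi}|$ itself. (Incidentally, your assumption that ``precisely one simultaneous eigenspace degenerates'' at $\xi_0$ need not hold --- the eigenvalue can be multiple, and $\xi_0$ may lie in both $\mathrm{Spec}(\Gamma_{+})$ and $\mathrm{Spec}(\Gamma_{-})$ --- but with the criterion above this never matters.) The weight identification you were attempting is deliberately postponed in the paper to Theorem \ref{Thm:Correspodence between weights}, where it is carried out under extra hypotheses by algebraic, not analytic, means.
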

      Here we recall the definition of Dirac-type singularity of monopole by following \cite{Ref:Cha-Hur}.
      Let $(X,g)$ be an oriented Riemannian $3$-fold and $Z \subset X$ be a discrete subset.
      Let $(V,h,A,\Phi)$ be a monopole on $X \setminus Z$ of rank $r$.
      Each point $p \in Z$ is a Dirac-type singularity of $(V,h,A,\Phi)$ with weight $\vec{k}= (k_1,\ldots,k_r) \in \mathbb{Z}^r$
      if the following conditions are satisfied.
      \begin{itemize}
      	\item
        	There exists a neighborhood $B \subset X$ of $p$ such that
          $(V, h)|_{B \setminus \{p\}}$ is decomposed into a direct sum of Hermitian line bundles
          $\bigoplus_{i=1}^{r} L_i$\;\;($\mathrm{deg}(L_i) =\int_{\partial B}c_1(L_i) = k_i$).
        \item
        	Under the above decomposition,
          we have the next estimates.
          \begin{eqnarray*}
          	\left\{
            	\begin{array}{l}
		          	\Phi  = \displaystyle\frac{\sqrt{-1}}{2R}\sum_{i=1}^{r} k_i\cdot \mathrm{Id}_{L_i} + O(1)\\
		            \nabla_A(R\Phi) = O(1),
              \end{array}
            \right.
          \end{eqnarray*}
          where $R$ is the distance from $p$.
      \end{itemize}
    \subsubsection{Main result \ref{Enum:Correspodence}}
    	Since $N_{\pm}=(N_{\pm,i})$ satisfies $N_{\pm,i} = [N_{\pm,j},N_{\pm,k}]$ for any even permutation $(ijk)$ of $(123)$,
      we can construct $\mathfrak{su}(2)$ representation $\rho_{\pm}$ from $N_{\pm}$.
      Then, $\rho_{\pm}$ can be decomposed into $\rho_{\pm} =\bigoplus_{\xi \in \mathrm{Sing}(V,h,A)} \rho_{\pm,\xi}$
      because $N_{\pm,i} \in \mathrm{Center}(\Gamma_{\pm})$.
      Now we define the weight of $\rho_{\pm,\xi}$ to be $w_{\pm,\xi} :=(\mathrm{rank}(\rho_{\pm,\xi,i})) \in \mathbb{Z}^{m_{\pm,\xi}}$,
      where $\rho_{\pm,\xi} = \bigoplus_{i=1}^{m_{\pm,\xi}} \rho_{\pm,\xi,i}$ is the irreducible decomposition.
      Let $\vec{k}_{\xi}$ be the weight of the monopole $(\hat{V},\hat{h},\hat{A},\hat{\Phi})$ at $\xi \in \mathrm{Sing}(V,h,A)$
      and $\vec{k}_{+,\xi},\vec{k}_{-,\xi}$ the positive and negative part of $\vec{k}_{\xi}$.
      Here main result \ref{Enum:Correspodence} can be described as follows.
      \begin{Thm}[Theorem \ref{Thm:Correspodence between weights}]\label{Thm:thm3 in Intro}
      	Assume $T^3$ is isomorphic to $S^1\times T^2$ as a Riemannian manifold.
        If the $L^2$-finite instanton $(V,h,A)$ is irreducible and  of $\mathrm{rank}(V) >1$,
    	then $\vec{k}_{\pm}$ agrees with $\pm w_{\pm,\xi}$ with a suitable permutation.
  	  \end{Thm}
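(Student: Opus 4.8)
The plan is to compute the monopole weight $\vec k_\xi$ not directly from the Higgs-field asymptotics of $\hat\Phi$—where the term $N_\pm$ is buried inside the projection $P(2\pi\sqrt{-1}tf)$ and hard to extract—but through the commutative diagram, which converts the question into an algebraic one. By Theorem~\ref{Thm:Algebraic Nahm trans} the monopole $(\hat V,\hat h,\hat A,\hat\Phi)$ produced by the analytic Nahm transform of $(V,h,A)$ agrees, under the Kobayashi--Hitchin correspondence of Charbonneau--Hurtubise \cite{Ref:Cha-Hur}, with the mini-holomorphic bundle obtained by applying the algebraic Nahm transform to the stable parabolic bundle $\mathcal{E}$ on $(\mathbb{P}^1\times T^2,\{0,\infty\}\times T^2)$ associated to $(V,h,A)$ via Theorem~\ref{Thm:inst is stable}. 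Since that correspondence reads off $\vec k_\xi$ at a singular point from the local holomorphic structure of the mini-holomorphic bundle near $\xi$—the degrees of the line-bundle summands appearing in the definition of a Dirac-type singularity being holomorphic invariants preserved by the correspondence—it suffices to compute this local structure algebraically.

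Next I would identify, via Theorem~\ref{Thm: thm1 in Intro}, how the data $(\Gamma_\pm,N_\pm)$ determine $\mathcal{E}$ along the divisors $\{0\}\times T^2$ and $\{\infty\}\times T^2$. The eigenvalues of $\sum_i\Gamma_{\pm,i}dx^i$ fix the parabolic weights and, through $\mathrm{Spec}(\Gamma_\pm)$, the decomposition over the points $\xi\in\mathrm{Sing}(V,h,A)$; the residual term $N_{\pm,i}/t$, whose residue assembles into the $\mathfrak{su}(2)$ representation $\rho_\pm=\bigoplus_\xi\rho_{\pm,\xi}$, controls the nilpotent part of the local model and hence the jumping structure of $\mathcal{E}$ transverse to the divisor. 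Decomposing $\rho_{\pm,\xi}=\bigoplus_{i=1}^{m_{\pm,\xi}}\rho_{\pm,\xi,i}$ into irreducibles with dimensions recorded by $w_{\pm,\xi}=(\mathrm{rank}(\rho_{\pm,\xi,i}))$, each irreducible summand should contribute a single elementary Nahm-pole block to the local model at the corresponding puncture.

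The core step is then the local computation of the algebraic Nahm transform near $\xi$. Here I would show that an irreducible $\mathfrak{su}(2)$ block of dimension $n$ sitting at $\{\infty\}\times T^2$ (coming from $N_+$) transforms into a line-bundle summand of the mini-holomorphic bundle near $\xi$ of degree $+n$, and symmetrically a block of dimension $n$ at $\{0\}\times T^2$ (coming from $N_-$) yields a summand of degree $-n$. This is a Fourier--Mukai-type cohomology calculation: along $T^2$ the transform is governed by the Poincar\'e kernel, while in the $\mathbb{P}^1$-direction the jump in degree across $\xi$ is dictated by the order of the pole of the relevant cohomology, which for an $n$-dimensional irreducible Nahm-pole block equals $n$ in absolute value—reproducing the classical correspondence between the dimension of an $\mathfrak{su}(2)$ irreducible and the charge of the associated Dirac-type singularity. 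Summing over the irreducible components of $\rho_{\pm,\xi}$ then gives $\vec k_{+,\xi}=w_{+,\xi}$ and $\vec k_{-,\xi}=-w_{-,\xi}$, i.e. $\vec k_\pm=\pm w_{\pm,\xi}$ up to the permutation coming from the unordered irreducible decomposition.

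The main obstacle is precisely this last local computation: one must make rigorous the passage from the Nahm-pole residue data of $N_\pm$ to the degrees of the holomorphic line summands of the transformed bundle, control the spectral sequence of the algebraic Nahm transform over the singular fibre, and verify that no extra contributions arise from the off-diagonal error terms $\eps_{2,i}$ and $\eps_{3,i}$ of Theorem~\ref{Thm: thm1 in Intro}. The hypotheses that $(V,h,A)$ be irreducible with $\mathrm{rank}(V)>1$ and that $T^3\cong S^1\times T^2$ enter here, guaranteeing that $\mathcal{E}$ is stable and that the algebraic Nahm transform of \cite{Ref:Moc1} applies, so that the weight is genuinely determined by $\rho_{\pm,\xi}$ alone; the remaining bookkeeping is to fix orientation and sign conventions so that the positive and negative parts of $\vec k_\xi$ align with $N_+$ and $N_-$ respectively.
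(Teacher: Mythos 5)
Your proposal follows essentially the same route as the paper's proof: pass to the algebraic side via Theorem \ref{Thm:Algebraic Nahm trans}, read $\vec{k}_{\xi}$ off the meromorphic scattering map of the underlying mini-holomorphic bundle (Proposition \ref{Prp:dirac weight is merom degree}), identify the graded pieces ${}^{i}\mathrm{Gr}_{0}(P_{\ast\ast}V)$ with the model bundles built from $(\Gamma_{\pm},N_{\pm})$ (Lemma \ref{Lem:explicit formula for gr. of model bundle}), and conclude by the Fourier--Mukai computation in which a size-$n$ Jordan block of the nilpotent part yields a length-$n$ cyclic torsion sheaf and hence a weight entry $\pm n$ (Proposition \ref{Prp:merom comparison} combined with Corollary \ref{Cor:FM-trans of ss deg0}). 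The ``core step'' you flag as the main obstacle is precisely what Proposition \ref{Prp:merom comparison} supplies, so your outline is correct and coincides with the paper's argument.
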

  \subsection*{Remark}
	When the first version of this paper was almost accomplished,
	the author found the paper of Charbonneau and Hurtubise \cite{Ref:Cha-Hur2},
	where they seem to study that the Nahm transform induces
	the bijection of the equivalence classes of 
	spatially periodic instantons and singular monopoles on the dual torus,
	under the genericity assumption as in \cite{Ref:Cha1,Ref:Cha2,Ref:Cha-Hur}.
	In our Theorem \ref{Thm:thm2 in intro}, we study the Nahm transform 
	$(\hat{V},\hat{h},\hat{A},\hat{\Phi})$ of any $L^2$-finite
	spatially periodic instantons $(V,h,A)$ without the genericity
	assumption
	as a refinement of the construction in \cite{Ref:Cha1,Ref:Cha2},
	and we prove that the singularities of the monopoles
	$(\hat{V},\hat{h},\hat{A},\hat{\Phi})$ 
	are of Dirac type,
	in a more direct way using a result in \cite{Ref:Moc-Yos}.
	We also study the comparison of the weights of 
	the singularities of the instantons and the monopoles (Theorem \ref{Thm:thm3 in Intro})
	in this generalized context.
	\subsection*{Acknowledgment}
		I express my sincere thanks to my supervisor Takuro Mochizuki for his support
		and patience. His deep insight and attentive guidance have greatly
		helped me to accomplish this article.
	\section{Preliminary}\label{Sec:Preliminary}
	For the product bundle $(\underline{\mathbb{C}^r}_S,\underline{h}_S)$ on a smooth manifold $S$,
	we denote by $d_S$ the trivial connection on $(\underline{\mathbb{C}^r}_S,\underline{h}_S)$.
	If there is no risk of confusion,
	then we abbreviate $d_S$ to $d$.
	
	\subsection{Tori and dual tori}\label{SubSec:Tori}
	For a finite dimensional $\mathbb{R}$-vector space $X$
	and a lattice $\Lambda \subset X$,
  	we set $T = X/\Lambda$.
    Let $X^{\ast}$ be the dual space of $X$.
    Let $\Lambda^{\ast}$ denote the dual lattice of  $\Lambda$,
		\textit{i.e.}, $\Lambda^{\ast}:=\{v\in X^{\ast}\,|\,v(\Lambda)\subset Z\}$.
    We define the dual torus $\hat{T}$ of $T$ by $\hat{T} := X^{\ast} / \Lambda^{\ast}$.
    
    For any $\xi \in \hat{T}$, we define a flat Hermitian line bundle $L_{\xi}$ on $T$ as
    $L_{\xi} := ( \underline{\mathbb{C}}_{T},\underline{h}_{T} , d_{T} + 2\pi\sqrt{-1}\i<\xi, dx>)$.
    By this correspondence, we can naturally regard $\hat{T}$ as the moduli space of flat Hermitian line bundles on $T$.
    The double dual of $T$ is naturally isomorphic to $T$,
    and hence $x \in T$ also gives a flat Hermitian line bundle 
    $L_{x} := ( \underline{\mathbb{C}}_{\hat{T}},\underline{h}_{\hat{T}} , d_{\hat{T}} + 2\pi\sqrt{-1}\i<x, d\xi>)$ on $\hat{T}$.
    
    We recall a differential-geometric construction of the Poincar\'{e} bundle on $T\times\hat{T}$ in \cite{Ref:Dna-Kro}.
    On $T \times X^{\ast}$, we have the following Hermitian line bundle with a connection on $T\times \hat{T}$
    \[
    	\tilde{\mathcal{L}} = \left( \underline{\mathbb{C}},\underline{h}, d - 2\pi\sqrt{-1}\i<\xi, dx> \right).
    \]
    The $\Lambda^{\ast}$-action on $T \times X^{\ast}$ is naturally lifted to the action on $\tilde{\mathcal{L}}$ given by
    \[
    	v\cdot(x, \xi, s) = \left(x, \xi+v, \exp(2\pi\sqrt{-1}\i< x,v >)s\right).
    \]
    The induced Hermitian line bundle with a connection is called the Poincar\'{e} bundle,
    and denoted by $\mathcal{L}$.
    \begin{Lem}[Lemma 3.2.14 in \cite{Ref:Dna-Kro}]
    	The Poincar\'{e} bundle $\mathcal{L}$ has the following properties.
    	\begin{itemize}
      	\item
		    	For any $\xi \in \hat{T}$, $\mathcal{L}|_{T \times \{\xi\}}$ is isomorphic to $L_{-\xi}$.
        \item
		      For any $x \in T$, $\mathcal{L}|_{\{x\} \times \hat{T}}$ is isomorphic to $L_{x}$.
      \end{itemize}
    \end{Lem}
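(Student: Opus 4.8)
The plan is to verify both isomorphisms by pulling everything back to the covering space $T \times X^{\ast}$, where $\tilde{\mathcal{L}}$ is given explicitly, and then tracking what the $\Lambda^{\ast}$-quotient does to each slice. Throughout I fix a representative $\tilde\xi \in X^{\ast}$ of $\xi \in \hat{T}$ and a representative $\tilde x \in X$ of $x \in T$, and I record at the end that the resulting isomorphism classes do not depend on these choices.

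For the first property I would restrict along the fibre direction. Since the action $v\cdot(x,\xi,s) = (x,\xi+v,\exp(2\pi\sqrt{-1}\langle x,v\rangle)s)$ translates the $X^{\ast}$-coordinate by the lattice, it is free and properly discontinuous on the $X^{\ast}$-factor; hence the preimage of $T\times\{\xi\}$ in $T\times X^{\ast}$ is the disjoint union $\bigsqcup_{v\in\Lambda^{\ast}} T\times\{\tilde\xi+v\}$, and the quotient map restricts to a diffeomorphism $T\times\{\tilde\xi\} \xrightarrow{\sim} T\times\{\xi\}$ covered by an isomorphism of Hermitian bundles with connection. Therefore $\mathcal{L}|_{T\times\{\xi\}} \cong \tilde{\mathcal{L}}|_{T\times\{\tilde\xi\}}$, and restricting the connection form $-2\pi\sqrt{-1}\langle\xi,dx\rangle$ to the slice $\xi=\tilde\xi$ yields exactly $(\underline{\mathbb{C}}_T,\underline{h}_T,\, d_T - 2\pi\sqrt{-1}\langle\tilde\xi,dx\rangle) = L_{-\xi}$, which finishes this half.

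The more substantive half is the second property, where the slice $\{x\}\times\hat{T}$ meets the nontrivial part of the $\Lambda^{\ast}$-action on fibres. Restricting $\tilde{\mathcal{L}}$ to $\{\tilde x\}\times X^{\ast}$ kills the connection form, because $dx=0$ there, leaving the trivial flat bundle $(\underline{\mathbb{C}},d)$ on $X^{\ast}$; but the $\Lambda^{\ast}$-action carries the nontrivial automorphy factor $v\cdot(\xi,s) = (\xi+v,\exp(2\pi\sqrt{-1}\langle\tilde x,v\rangle)s)$, so the quotient is \emph{not} literally $L_x$. The key step is to trade this automorphy factor for a connection form by a unitary gauge transformation. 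I would set $g(\xi) := \exp(-2\pi\sqrt{-1}\langle\tilde x,\xi\rangle)$ and check that multiplication by $g$ intertwines the above action with the trivial fibre action $v\cdot(\xi,s)=(\xi+v,s)$, while transforming the trivial connection $d$ into $d + 2\pi\sqrt{-1}\langle\tilde x,d\xi\rangle$. Since $|g|=1$, this gauge transformation is an isometry and descends to an isomorphism over $\hat{T}$ of the quotient bundle with $(\underline{\mathbb{C}}_{\hat{T}},\underline{h}_{\hat{T}},\, d_{\hat{T}} + 2\pi\sqrt{-1}\langle\tilde x,d\xi\rangle) = L_x$.

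The main obstacle, and the only point requiring care, is the bookkeeping in this last step: I must confirm simultaneously that $g$ is $\Lambda^{\ast}$-equivariant (so that it descends to $\hat{T}$), that it produces exactly the connection form in the definition of $L_x$ with the correct sign (the relation $a' = a - g^{-1}dg$), and that the whole construction is independent of the chosen representatives $\tilde x,\tilde\xi$. The independence follows because changing a representative by a lattice vector is absorbed by a globally defined unitary gauge transformation, which is well defined on the torus precisely because $\langle v,\lambda\rangle \in \mathbb{Z}$ for $v\in\Lambda^{\ast}$ and $\lambda\in\Lambda$. Once these compatibilities are checked, both isomorphisms follow.
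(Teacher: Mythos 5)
Your proof is correct and takes essentially the same route as the paper: your gauge transformation $g(\xi) = \exp(-2\pi\sqrt{-1}\langle \tilde{x},\xi\rangle)$ is precisely the inverse of the global unit-norm section $s(\xi) = \exp(2\pi\sqrt{-1}\langle x,\xi\rangle)$ that the paper uses as a frame, so trading the automorphy factor for the connection form $2\pi\sqrt{-1}\langle x,d\xi\rangle$ is the identical computation, and your treatment of the first slice is just an expansion of what the paper calls clear by construction. The additional bookkeeping (equivariance of $g$, independence of the representatives $\tilde{x},\tilde{\xi}$) is detail the paper leaves implicit rather than a different argument.
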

    \begin{proof}
    	The first claim is clear by the construction of $\mathcal{L}$.
      For $x \in T$,
      the connection form of $\tilde{\mathcal{L}}$ on the slice $\{x\}\times X^{\ast}$ vanishes,
      and $\mathcal{L}|_{\{x\} \times \hat{T}}$ has the global section induced by the function $s(\xi) := \exp(2\pi\sqrt{-1}\i< x,\xi >)$ on $\{x\}\times X^{\ast}$,
      which satisfies $ds = s(2\pi\sqrt{-1}\i< x,d\xi >)$.
      Hence, $\mathcal{L}|_{\{x\} \times \hat{T}}$ is isomorphic to $L_{x}$.
    \end{proof}
    \begin{Remark}
    	If $X$ is a complex vector space,
    	then $T$ and $\hat{T}$ are equipped with the induced complex structures and $\mathcal{L}$ becomes a holomorphic line bundle on $T\times \hat{T}$ by the holomorphic structure induced by the $(0,1)$-part of the connection on $\mathcal{L}$.
    \end{Remark}
    In this paper,
    we fix a lattice $\Lambda_3 \subset \mathbb{R}^3$.
    Thus $T^3 = \mathbb{R}^3/\Lambda_3$, $\Lambda^{\ast}_3 \subset (\mathbb{R}^3)^{\ast}$ and $\hat{T}^3 = \mathrm{Hom}(\mathbb{R}^3,\mathbb{R})/\Lambda^{\ast}_3$
    are also fixed.
  \subsection{$L^2$-finite instantons}\label{SubSec:Def L^2-fin Inst}
  	Let $(X,g)$ be a connected oriented Riemannian 4-fold and $\ast$ be the Hodge operator on $X$.
    Let $L^2(X,g)$ denote the space of $L^2$-functions on $X$ with respect to the measure induced by $g$.
  	\begin{Def}
	    Let $(V, h)$ be a Hermitian vector bundle and $A$ be a unitary connection on $(V,h)$.
      \begin{itemize}
      	\item
        	The tuple $(V, h, A)$ is an instanton on $X$
          if the ASD equation $F(A) = -\ast F(A)$ holds.
        \item
        	An instanton $(V, h, A)$ is an $L^2$-finite instanton on $X$
          if we have $|F(A)| \in L^2(X,g)$,
          where the norm is induced by $g$ and $h$.
      \end{itemize}
    \end{Def}
    \subsubsection{Some easy properties of $L^2$-finite instantons on $\mathbb{R}\times T^3$}
      \begin{Lem}\label{Lem:rank1 inst is flat}
    		Let $(L,h,A)$ be an $L^2$-finite instanton on $\mathbb{R} \times T^3$ of rank $1$.
        Then, $(L, h, A)$ is a flat  Hermitian line bundle.
        In particular, $(L,h)$ is a topologically trivial  Hermitian line bundle.
      \end{Lem}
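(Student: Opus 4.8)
The plan is to show that a rank-$1$ $L^2$-finite instanton is forced to be flat, and that flatness of a Hermitian line bundle on $\mathbb{R}\times T^3$ implies topological triviality. For a rank-$1$ bundle the connection is abelian, so the curvature $F(A)$ is an ordinary (imaginary-valued) $2$-form and the ASD equation $F(A)=-\ast F(A)$ is linear. First I would combine the ASD equation with the closedness of the curvature: since $F(A)$ is the curvature of a connection on a line bundle, the Bianchi identity gives $dF(A)=0$, and because $F(A)$ is anti-self-dual we also have $d\ast F(A)=-dF(A)=0$. Hence $F(A)$ is a harmonic $2$-form on $\mathbb{R}\times T^3$. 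The goal is then to argue that the only harmonic, anti-self-dual, $L^2$ two-form on $\mathbb{R}\times T^3$ is $0$.

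The key step is the $L^2$ vanishing argument. Decompose the harmonic form in the coordinates $(t,x^1,x^2,x^3)$ and Fourier-expand in the $T^3$ directions using the characters indexed by $\Lambda_3^\ast$. On each Fourier mode the harmonicity condition becomes a constant-coefficient ODE in $t$ whose solutions are combinations of $\exp(\pm c|\eta| t)$ for the frequency $\eta$, together with a polynomial (at most linear) behavior for the zero mode $\eta=0$. For a nonzero frequency, any solution that does not blow up as $t\to+\infty$ must decay like $\exp(-c|\eta|t)$ and hence cannot remain bounded as $t\to-\infty$ unless it vanishes identically; so all nonzero Fourier modes must be $0$ in order for $F(A)$ to lie in $L^2(\mathbb{R}\times T^3)$. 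For the zero mode the form is $t$-translation invariant and harmonic, so it is an affine function of $t$; but a nonzero constant-in-$x$ two-form has $|F(A)|$ constant in $x$ and at best polynomially controlled in $t$, which fails to be $L^2$ over the noncompact factor $\mathbb{R}$ (of infinite volume) unless it too vanishes. Combining these, $F(A)\equiv 0$, so $(L,h,A)$ is flat.

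Finally, topological triviality follows from flatness: a flat unitary connection on a line bundle has $F(A)=0$, so the real first Chern class $c_1(L)=\frac{\sqrt{-1}}{2\pi}[F(A)]$ vanishes in $H^2(\mathbb{R}\times T^3;\mathbb{R})$. Since $\mathbb{R}\times T^3$ is homotopy equivalent to $T^3$, whose second integral cohomology is torsion-free, the vanishing of the real Chern class forces $c_1(L)=0$ in $H^2(\mathbb{R}\times T^3;\mathbb{Z})$, and a Hermitian line bundle with vanishing first Chern class over this space is topologically trivial.

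I expect the main obstacle to be the $L^2$ vanishing on the zero Fourier mode and, more generally, making the mode-by-mode ODE analysis fully rigorous: one must be careful that the harmonic representative genuinely decomposes into decaying exponentials on the nonzero modes and that no subtle slowly-decaying or oscillatory contribution survives. The nonzero-frequency modes are the clean part of the argument; the delicate point is ruling out the $t$-translation-invariant piece, where $L^2$-finiteness over the infinite-volume $\mathbb{R}$-direction, rather than any decay estimate, is what does the work.
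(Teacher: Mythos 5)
Your proof is correct and takes essentially the same route as the paper: both use the Bianchi identity together with the ASD equation to conclude that $F(A)$ is a harmonic $L^2$ form on $\mathbb{R}\times T^3$, and then conclude that such a form must vanish. The paper simply asserts that harmonic $L^2$ component functions on $\mathbb{R}\times T^3$ are zero and that flatness gives topological triviality, whereas your Fourier-mode analysis and your $c_1$ argument fill in exactly those steps that the paper leaves implicit.
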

      \begin{proof}
      	Let $t$ and $(x^1,x^2,x^3)$ be the standard coordinates of $\mathbb{R}$ and $\mathbb{R}^3$ respectively.
        By abuse of notation,
        we use $(t,x^1,x^2,x^3)$ to denote a local chart of $\mathbb{R}\times T^3$.
        Using this coordinate, we write $F(A) = \sum_{i} F_{ti} dt\wedge dx^{i} + \sum_{i< j} F_{ij} dx^{i}\wedge dx^{j}$.
        By Bianchi's identity $\nabla_{A}(F(A))=0$ and the ASD equation $F(A) = -\ast F(A)$,
        we have $\Delta(F(A))=0$,
        where $\Delta = \nabla^{\ast}_A\nabla_A + \nabla_A\nabla^{\ast}_A$ is the Laplacian.
        It implies that the functions $F_{ti}$ and $F_{ij}$ are harmonic and $L^2$ on $\mathbb{R}\times T^3$,
        and hence $0$.
        Thus we obtain that $(L,h,A)$ is a flat Hermitian line bundle.
      \end{proof}
    	\begin{Cor}\label{Cor:Topological triviality of inst}
      	Let $(V, h, A)$ be an $L^2$-finite instanton on $\mathbb{R} \times T^3$ of rank $r$.
        Then, $(V,h)$ is a trivial Hermitian vector bundle.
      \end{Cor}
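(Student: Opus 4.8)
The plan is to reduce to the rank-one case already settled in Lemma \ref{Lem:rank1 inst is flat} by passing to the determinant line bundle, and then to exploit the low dimension of $\mathbb{R}\times T^3$ to promote the resulting vanishing of the first Chern class to a genuine trivialization. First I would form the determinant bundle $\det V = \Lambda^r V$ with its canonically induced Hermitian metric $\det h$ and induced unitary connection $\det A$. The curvature of $\det A$ is $\mathrm{tr}\, F(A)$, and taking the trace of the ASD equation $F(A) = -\ast F(A)$ gives $\mathrm{tr}\, F(A) = -\ast \mathrm{tr}\, F(A)$, so $\det A$ is again an instanton. The pointwise Cauchy--Schwarz bound $|\mathrm{tr}\, F(A)| \leq \sqrt{r}\,|F(A)|$ shows it remains $L^2$. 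Hence $(\det V, \det h, \det A)$ is a rank-one $L^2$-finite instanton, and Lemma \ref{Lem:rank1 inst is flat} applies to it.

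By that lemma, $\det V$ is a topologically trivial Hermitian line bundle, so $c_1(V) = c_1(\det V) = 0$ in $H^2(\mathbb{R}\times T^3;\mathbb{Z})$. Next I would invoke the classification of complex vector bundles in low dimension. Since $\mathbb{R}\times T^3$ deformation retracts onto $T^3$, which has the homotopy type of a $3$-dimensional CW complex, the obstructions to constructing a global unitary frame of $V$ (equivalently, to a section of the $U(r)$-frame bundle) lie in the groups $H^{k+1}(\mathbb{R}\times T^3;\pi_k(U(r)))$. The primary one, for $k=1$, lives in $H^2(\mathbb{R}\times T^3;\pi_1(U(r))) = H^2(\mathbb{R}\times T^3;\mathbb{Z})$ and is precisely $c_1(V)$, which vanishes. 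The next, for $k=2$, lies in $H^3(\mathbb{R}\times T^3;\pi_2(U(r))) = H^3(\mathbb{R}\times T^3;0) = 0$, and all higher obstructions vanish because the base is $3$-dimensional. Therefore a global frame exists, and after Gram--Schmidt it is unitary, so $(V,h)$ is trivial as a Hermitian bundle.

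The step I expect to be the main subtlety is the obstruction-theoretic passage from $c_1(V)=0$ to triviality; everything else is formal. The key points to verify carefully are that $\pi_2(U(r)) = 0$ (so the secondary obstruction group is zero) and that the cohomological dimension of $\mathbb{R}\times T^3$ is $3$. It is worth emphasizing the role of this dimension bound: over a $4$-dimensional base the class $c_2$, detected by $\pi_3(U(r)) = \mathbb{Z}$ and living in $H^4$, could obstruct triviality, but here $H^4(\mathbb{R}\times T^3;\mathbb{Z}) = 0$ eliminates that possibility as well, so that $c_1$ alone governs the topological type.
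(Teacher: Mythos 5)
Your proposal is correct, and its first half (passing to $\det V$, checking via the trace of the ASD equation and Cauchy--Schwarz that it is a rank-one $L^2$-finite instanton, then applying Lemma \ref{Lem:rank1 inst is flat}) is exactly the paper's first step --- in fact you are slightly more careful than the paper, which applies the lemma to the determinant bundle without spelling out why it is again an $L^2$-finite instanton. Where you diverge is the topological step. The paper does not invoke obstruction theory: it uses triviality of $\det V$ to reduce the structure group to $SU(r)$, covers $T^3$ by two open sets $q(I_i)\times T^2$ each homotopy equivalent to $T^2$, trivializes the $SU(r)$-bundle on each chart using $\pi_1(SU(r))=0$, and then patches the two trivializations by explicitly homotoping the transition function $f:T^2\to SU(r)$ to a constant, first pushing it into the identity on the two coordinate circles (using $\pi_1(SU(r))=0$) and then collapsing it (using $\pi_2(SU(r))=0$). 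Your route packages the same homotopy-theoretic input ($\pi_1$, $\pi_2$ of the unitary groups, and the $3$-dimensionality of $T^3$) into the standard obstruction-theoretic framework for sections of the $U(r)$-frame bundle, with $c_1(V)$ as the primary obstruction; this is shorter and more conceptual, at the cost of citing general machinery, whereas the paper's argument is elementary and self-contained, constructing the global trivialization by hand. One point worth making explicit in your version is that the coefficient systems in $H^{k+1}(X;\pi_k(U(r)))$ are untwisted, which holds because the structure group acts on the fiber by left translations and $U(r)$ is connected; with that remark added, your proof is complete.
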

      \begin{proof}
      	By Lemma \ref{Lem:rank1 inst is flat},
        $\left(\mathrm{det}(V), \mathrm{det}(h)\right)$ is a trivial Hermitian line bundle.
        Thus, it suffices to prove that any principal $SU(r)$-bundle $P$ on $T^3$ is topologically trivial.
        
        We may assume $T^3 = (\mathbb{R}/\mathbb{Z})^3$.
        Let $q : \mathbb{R}\to S^1 = \mathbb{R}/\mathbb{Z}$ be the quotient map, and put $T^2=(\mathbb{R}/\mathbb{Z})^2$.
        Take the open intervals $I_1=(0, 1)$, $I_2=(1/2, 3/2)$.
        We set $U_i := q(I_i) \times T^2$,
        and we obtain an open covering $\{U_1,U_2\}$ of $T^3$.
        Then, $P|_{U_i}$ is trivial because $SU(r)$ is simply-connected
        and $U_i$ and $T^2$ are homotopy equivalent.
        Moreover, we can patch each trivialization of $P|_{U_i}$ and get a global one.
        Indeed, any smooth map $f:T^2 \to SU(r)$ is homotopic to a smooth map
        $f_1:T^2\to SU(r)$ such that $f_1(\{0\}\times S^1) \subset \{e\}$ and $f_1(S^1\times\{0\}) \subset \{e\}$
        because $SU(r)$ is simply connected,
        and $f_1$ is homotopic to a constant map because $\pi_2(SU(r))=0$.
        Therefore, $P$ is a trivial $SU(r)$-bundle.
      \end{proof}
	\subsection{Monopoles with Dirac-type singularities}
  	In this subsection, we recall the definition of monopoles with Dirac-type singularities
    by following \cite{Ref:Cha-Hur}.
    \begin{Def}\label{Def:def of monopoles}
    	Let $(X,g)$ be an oriented Riemannian 3-fold and $\ast$ be the Hodge operator on $X$.
      \begin{enumerate}
      	\item
        	Let $(V, h, A)$ be a Hermitian vector bundle with a unitary connection,
          and $\Phi$ be a skew-Hermitian section of $\mathrm{End}(V)$.
          The tuple $(V, h, A, \Phi)$ is called a monopole on $X$ if it satisfies the Bogomolny equation $F(A) = \ast \nabla_A(\Phi)$.
        \item
        	Let $Z \subset X$ be a discrete subset.
          Let $(V, h, A, \Phi)$ be a monopole of rank $r \in \mathbb{N}$ on $X \setminus Z$.
          Each point $p\in Z$ is called a Dirac-type singularity of the monopole $(V, h, A, \Phi)$ with weight $\vec{k}=(k_i) \in \mathbb{Z}^r$
          if the following holds.
          \begin{itemize}
          	\item
            	There exists a small neighborhood $B$ of $p$ such that
              $(V,h)|_{B\setminus\{p\}}$ is decomposed into a sum of Hermitian line bundles $\bigoplus_{i=1}^{r} L_i$
              with $\int_{\partial B} c_1(L_i) = k_i$.
            \item
            	In the above decomposition, we have the following estimates,
              \begin{align*}
              	\left\{
                	\begin{array}{l}
	              		\Phi  = \displaystyle\frac{\sqrt{-1}}{2R}\sum_{i=1}^{r} k_i\cdot Id_{L_i} + O(1)\\
	                	\nabla_A(R\Phi) = O(1),
              		\end{array}
                \right.
              \end{align*}
              where $R$ is the distance from $p$.
          \end{itemize}
      \end{enumerate}
    \end{Def}
    In \cite{Ref:Moc-Yos}, the following proposition is proved.
    \begin{Prp}\label{Prp:Moc's criterion for Dirac sing}
    	Let $U \subset \mathbb{R}^3$ be a neighborhood of $0 \in \mathbb{R}^3$.
      Let $(V,h,A,\Phi)$ be a monopole on $(U \setminus \{0\},g_{\mathbb{R}^3})$.
      Then, the point $0$ is a Dirac-type singularity of $(V,h,A,\Phi)$ if and only if
      $|\Phi(x)| = O(|x|^{-1})\;(x \to 0)$.
    \end{Prp}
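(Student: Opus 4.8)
If $0$ is a Dirac-type singularity then, by Definition \ref{Def:def of monopoles}, on some $B\setminus\{0\}$ we have $\Phi = \frac{\sqrt{-1}}{2R}\sum_{i}k_i\,\mathrm{Id}_{L_i} + O(1)$, so $|\Phi(x)| \le \frac{C}{R}+O(1) = O(|x|^{-1})$. All the content is in the converse, and the plan is to convert the behaviour near $0$ into a translation-invariant problem on a half-cylinder. Writing $R = |x|$, $s = \log R$ and $\omega = x/R\in S^2$, the Euclidean metric becomes $g_{\mathbb{R}^3} = R^2(ds^2 + g_{S^2})$, conformal to the cylinder metric $g_C = ds^2+g_{S^2}$ on $C := (-\infty,s_0)\times S^2$. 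Since in dimension $3$ the Hodge star on $1$-forms scales by the conformal factor, the substitution $\Psi := R\Phi$ turns the Bogomolny equation $F(A)=\ast\nabla_A(\Phi)$ into an autonomous (i.e.\ $s$-translation invariant) first-order elliptic system on $C$ for the pair $(A,\Psi)$; explicitly, after splitting $F(A) = B + ds\wedge E$, the system reads $B = (\nabla_s\Psi-\Psi)\,\mathrm{vol}_{S^2}$ and $E = \ast_{S^2}\nabla_{S^2}\Psi$. The hypothesis $|\Phi|=O(|x|^{-1})$ is exactly the statement that $|\Psi|$ is bounded as $s\to-\infty$. I would also record that, by Bogomolny together with the Bianchi identity, $\nabla_A(\Phi) = \ast F(A)$ and $\nabla_A^{\ast}\nabla_A(\Phi) = -\ast d_A F(A) = 0$, so $\Phi$ is a harmonic section and $|\Phi|^2$ is subharmonic off its zeros; this supplies maximum-principle control on $|\Psi|$.

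Next, \textbf{bootstrapping and convergence}. The monopole is smooth on $U\setminus\{0\}$, so after fixing a local Coulomb/temporal gauge on each unit slab $[s-1,s+1]\times S^2$ the boundedness of $\Psi$ feeds through the autonomous system and interior elliptic estimates to give uniform $C^\infty$ bounds on $(A,\Psi)$, and in particular a uniform energy bound on each slab, as $s\to-\infty$. Compactness then yields $s$-invariant solutions as subsequential limits along sequences $s_n\to-\infty$. To upgrade subconvergence to genuine convergence I would use that, up to the explicit conformal-weight term, the autonomous system is the gradient flow in $s$ of a Chern--Simons--Higgs type functional $\mathcal{W}$ on configurations over $S^2$, whose critical points are the $s$-invariant solutions; a monotonicity formula for $\mathcal{W}$ together with a {\L}ojasiewicz--Simon inequality near its critical set forces a \emph{unique} limit $(A_\infty,\Psi_\infty)$, with exponential rate.

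\textbf{Identification of the limit and conclusion.} In temporal gauge $A_s=0$ an $s$-invariant solution satisfies, on $S^2$,
\[
\nabla_{A_\infty}\Psi_\infty = 0, \qquad F(A_\infty) = -\,\Psi_\infty\,\mathrm{vol}_{S^2}.
\]
Thus $\Psi_\infty$ is a parallel skew-Hermitian endomorphism, so the bundle over $S^2$ splits into its eigenbundles with constant eigenvalues; since on each eigenbundle $F(A_\infty)$ is a constant multiple of $\mathrm{vol}_{S^2}$ and $\pi_1(S^2)=0$, each eigenbundle reduces further to a sum of Hermitian line bundles $L_i$ on which $A_\infty$ is an abelian Dirac monopole. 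Integrating $F(A_\infty)|_{L_i} = -\lambda_i\,\mathrm{vol}_{S^2}$ over $S^2$ and using $\int_{S^2}c_1(L_i)=k_i\in\mathbb{Z}$ forces $\lambda_i = \frac{\sqrt{-1}}{2}k_i$. Transporting the splitting back through the (exponential) convergence gives $(V,h)|_{B\setminus\{0\}}=\bigoplus_i L_i$ with $\int_{\partial B}c_1(L_i)=k_i$, while $\Phi = R^{-1}\Psi = \frac{\sqrt{-1}}{2R}\sum_i k_i\,\mathrm{Id}_{L_i}+O(1)$ and $\nabla_A(R\Phi)=\nabla_A(\Psi)=O(1)$ both follow from $\Psi\to\Psi_\infty$. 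Hence $0$ is a Dirac-type singularity with weight $\vec{k}=(k_i)$.

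The \textbf{main obstacle} is the convergence step: showing that the bounded but a priori non-invariant solution converges (not merely subconverges) as $s\to-\infty$, with a rate good enough to extract the $O(1)$ remainder. This is where the {\L}ojasiewicz--Simon machinery and a careful treatment of the gauge are essential, the latter being delicate because $V$ is defined only on the punctured neighbourhood and the topological splitting $\bigoplus_i L_i$ is produced only in the limit.
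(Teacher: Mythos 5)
A preliminary remark on the comparison: the paper contains no proof of Proposition \ref{Prp:Moc's criterion for Dirac sing} at all --- it is imported from \cite{Ref:Moc-Yos}, and the proof there is not a cylindrical-asymptotics argument: the nontrivial implication is obtained by lifting the monopole, via Kronheimer's correspondence, to an $S^1$-equivariant instanton on a punctured ball in $\mathbb{R}^4$, extending it across the puncture by Uhlenbeck's removable-singularity theorem, and reading off the Dirac-type structure (including the weights $k_i$, from the $S^1$-action on the fibre at the origin) from the smooth extension. Your route is therefore entirely different from the one the paper relies on. Much of what you set up is correct: the forward implication, the conformal change to the half-cylinder, the autonomous system $B = (\nabla_s\Psi - \Psi)\,\mathrm{vol}_{S^2}$, $E = \ast_{S^2}\nabla_{S^2}\Psi$, its interpretation as a gradient flow of a Chern--Simons--Higgs functional, the identification of $s$-invariant solutions with $\nabla_{A_\infty}\Psi_\infty = 0$ and $F(A_\infty) = -\Psi_\infty\,\mathrm{vol}_{S^2}$, and the quantization $\lambda_i = \frac{\sqrt{-1}}{2}k_i$ via $c_1$.

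The genuine gap is in your final paragraph, and it is not a deferrable technicality: the two $O(1)$ estimates in Definition \ref{Def:def of monopoles} do not follow from convergence $\Psi\to\Psi_\infty$, however smooth. Unwind the conformal factors: $\Phi - \frac{\sqrt{-1}}{2R}\sum_i k_i\,\mathrm{Id}_{L_i} = R^{-1}(\Psi - \Psi_\infty)$, so the first estimate demands $|\Psi - \Psi_\infty|_h = O(R) = O(e^{s})$; and since Euclidean norms of $1$-forms are $R^{-1}$ times norms taken in the cylinder metric $g_C$, the second estimate $\nabla_A(R\Phi) = \nabla_A\Psi = O(1)$ demands $|\nabla_A\Psi|_{g_C} = O(e^{s})$ as well. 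You therefore need exponential decay at rate at least $1$. Łojasiewicz--Simon, by itself, yields convergence at a polynomial rate in general, and an exponential rate $e^{\delta s}$ --- with $\delta$ dictated by the spectrum of the Hessian of $\mathcal{W}$ at the limit --- only after the critical set is shown to be Morse--Bott; nothing forces $\delta \geq 1$, and the linearization at a Dirac model can a priori admit decay rates in $(0,1)$, e.g.\ in the off-diagonal blocks $\mathrm{Hom}(L_j,L_i)$. Mere convergence gives only $\Phi = \frac{\sqrt{-1}}{2R}\sum_i k_i\,\mathrm{Id}_{L_i} + o(R^{-1})$ and $\nabla_A(R\Phi) = o(R^{-1})$, strictly weaker than Dirac type. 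To close this along your lines you would need, after the Łojasiewicz--Simon step, an indicial-root analysis of the linearized Bogomolny operator at the Dirac model on the cylinder showing the error actually decays at rate $e^{s}$ --- a genuine spectral computation --- in addition to establishing the gauge-equivariant Łojasiewicz--Simon inequality for $\mathcal{W}$ itself, which you flag as the main obstacle but which is likewise substantial. This is precisely the work the citation to \cite{Ref:Moc-Yos} sidesteps: once the lifted instanton extends smoothly across the origin, the sharp remainder estimates are inherited for free from smoothness.
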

    \subsubsection{Monopoles and mini-holomorphic structure}
    	We introduce a complex-geometric interpretation of monopole by following \cite{Ref:Cha-Hur} and \cite{Ref:Moc-Yos}.
      Let $\Sigma$ be a Riemann surface with a K\"ahler metric $g_\Sigma$.
      Set $S^1 := \mathbb{R}/\mathbb{Z}$
      and $X:= S^1 \times \Sigma$.
      Let $p_i$ be the projection from $X$ to the $i$-th component.
      Let $q:\mathbb{R} \to S^1$ be the quotient map.
      Let us recall the mini-holomorphic structure on $X$ in \cite{Ref:Moc-Yos}.
      \begin{Def}\label{Def:min-hol. bdle}
      \ 
      \begin{enumerate}
      	\item
        	We define $\Omega^{0,1}(X) := {p_1}^{\ast}\Omega_{\mathbb{C}}^{1}(S^1) \oplus {p_2}^{\ast}\Omega^{0,1}(\Sigma)$
          and $\Omega^{0,2}(X) := \bigwedge^{2}\Omega^{0,1}(X)$.
          We define $\deebar_X : \Omega^{0,i}(X) \to \Omega^{0,i+1}(X)$ to be
          $\deebar_X = d_{S^1} + \deebar_{\Sigma}$.
          We call the tuple $(\Omega^{0,i}(X),\deebar_X)$ \textit{mini-holomorphic structure} on $X$.
        \item
        	Let $V$ be a vector bundle on an open subset $U\subset X$.
          Let $\Omega^{0,i}(U,V)$ denote the space of $V$-valued differential forms on $U$ of degree $(0,i)$.
          A differential operator $\deebar_{V}:\Omega^{0,0}(U,V) \to \Omega^{0,1}(U,V)$ is called
          mini-holomorphic structure of $V$ if it satisfies the following conditions.
          \begin{itemize}
          	\item
            	For any $f \in C^{\infty}(U)$ and $s \in \Omega^{0,i}(U,V)$,
              we have $\deebar_V(fs) = \deebar_X(f) \wedge s + f\deebar_V(s)$.
              Note that the differential operators $\deebar_{V}:\Omega^{0,i}(U,V) \to \Omega^{0,i+1}(U,V)$
              are naturally induced.
            \item
            	The integrability condition $\deebar_V \comp \deebar_V = 0$ is satisfied.
          \end{itemize}
      \end{enumerate}
      \end{Def}
      Let $I=(a,b) \subset \mathbb{R}$ be an open interval with $|b-a| < 1$
      and $W \subset \Sigma$ be a domain.
      Let $(V,\deebar_V)$ be a mini-holomorphic bundle on the open subset of the form $q(I) \times W \subset X$.
      We decompose the differential operator $\deebar_V(s)$ as
      $\deebar_V(s) = d_{V,S^1}(s) + \deebar_{V,\Sigma}(s) \in {p_1}^{\ast}\Omega^{1}(S^1) \oplus {p_2}^{\ast}\Omega^{0,1}(\Sigma)$ for a local section $s$ of $V$.
      For $t \in I$,
      let $V^t$ denote the holomorphic bundle $(V|_{q(t)\times W},\deebar_{V,q(t)})$ on $W$,
      where $\deebar_{V,q(t)}$ is the restriction of the differential operator $\deebar_{V,\Sigma}$ on $\{q(t)\}\times W$.
      For any fixed $x \in W$, we obtain a connection on $V|_{q(I)\times\{x\}}$ as the restriction of $d_{V,S^1}$ on $q(I)\times\{x\}$.
      Hence we have the parallel transport $\Psi_{t,t'}:V^t \to V^{t'}$ for any $t,t' \in I$.
      The isomorphism $\Psi_{t,t'}$ is called the scattering map in \cite{Ref:Cha-Hur}.
      Recall that the scattering map $\Psi_{t,t'}:V^t \to V^{t'}$ is a holomorphic isomorphism,
      which follows from the integrability condition $\deebar_V \comp \deebar_V = [d_{V,S^1},\deebar_{V,\Sigma}] = 0$.
      The next proposition shows that monopoles on $X$ have the underlying mini-holomorphic structures.
      \begin{Prp}\label{Prp:monopole induce mini-hol}
      	Let $(L,h,A,\Phi)$ be a monopole on $X$.
        We decompose the covariant derivative $\nabla_A$
        into 
        $
        	\nabla_A(f) = 
          \nabla_{A,t}(f)dt + \nabla^{1,0}_{A}(f) + \nabla^{0,1}_{A}(f) 
          \in {p_1}^{\ast}\Omega^{1}(S^1) \oplus {p_2}^{\ast}\Omega^{1,0}(\Sigma) \oplus {p_2}^{\ast}\Omega^{0,1}(\Sigma)
        $ for a local section $f$ of $L$.
        Then, the differential operator
        $\deebar_{L} := (\nabla_{A,t} -\sqrt{-1}\Phi)dt + \nabla^{0,1}_{A}$ is
        a mini-holomorphic structure on $L$.
      \end{Prp}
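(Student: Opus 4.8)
The plan is to check the two defining properties of a mini-holomorphic structure from Definition \ref{Def:min-hol. bdle}, namely the Leibniz rule and the integrability $\deebar_L \comp \deebar_L = 0$. The Leibniz rule is immediate and carries no real content: writing $\deebar_X(f) = (\partial_t f)\,dt + \deebar_{\Sigma}(f)$, for $f \in C^{\infty}$ and a local section $s$ one has $\nabla_{A,t}(fs) = (\partial_t f)s + f\nabla_{A,t}(s)$ and $\nabla^{0,1}_A(fs) = \deebar_{\Sigma}(f)s + f\nabla^{0,1}_A(s)$, while $\Phi$ is tensorial so that $\Phi(fs) = f\Phi(s)$; collecting terms yields $\deebar_L(fs) = \deebar_X(f)\wedge s + f\deebar_L(s)$. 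Hence the work is entirely in the integrability condition.

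For integrability I would fix a local holomorphic coordinate $z = x^1 + \sqrt{-1}x^2$ on $\Sigma$ and abbreviate $D_t := \nabla_{A,t} - \sqrt{-1}\Phi$ and $D_{\bar z} := \nabla_{A,\bar z}$, where $\nabla^{0,1}_A = \nabla_{A,\bar z}\,d\bar z$, so that $\deebar_L(s) = (D_t s)\,dt + (D_{\bar z}s)\,d\bar z$. Since $\Omega^{0,2}(X)$ is spanned by $dt\wedge d\bar z$ (because $dt\wedge dt = d\bar z\wedge d\bar z = 0$ and $\deebar_X(dt) = \deebar_X(d\bar z) = 0$), extending $\deebar_L$ to $(0,1)$-forms by the Leibniz rule and composing, the repeated-differential terms drop out and one is left with
\[
	\deebar_L \comp \deebar_L(s) = [D_t, D_{\bar z}]\,s \; dt\wedge d\bar z .
\]
Thus integrability is equivalent to the single operator identity $[\nabla_{A,t} - \sqrt{-1}\Phi,\, \nabla_{A,\bar z}] = 0$, which is exactly the commutator criterion $[d_{L,S^1}, \deebar_{L,\Sigma}] = 0$ recalled just before the proposition.

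The crux is therefore to deduce $[D_t, D_{\bar z}] = 0$ from the Bogomolny equation, and here the bookkeeping of the Hodge star is the main (and only) obstacle. Expanding and using $[\Phi,\nabla_{A,\bar z}] = -\nabla_{A,\bar z}(\Phi)$ gives $[D_t, D_{\bar z}] = F_{t\bar z} + \sqrt{-1}\,\nabla_{A,\bar z}(\Phi)$. I would then read off the components of $F(A) = \ast\nabla_A(\Phi)$ in the real frame $dt, dx^1, dx^2$ with the orientation $dt\wedge dx^1\wedge dx^2$, obtaining $F_{t1} = \nabla_{A,2}(\Phi)$ and $F_{t2} = -\nabla_{A,1}(\Phi)$, and convert to the complex frame via $F_{t\bar z} = \tfrac12(F_{t1} + \sqrt{-1}F_{t2})$ and $\nabla_{A,\bar z}(\Phi) = \tfrac12(\nabla_{A,1}(\Phi) + \sqrt{-1}\nabla_{A,2}(\Phi))$. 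Substituting the two Bogomolny relations causes the four resulting terms to cancel in pairs, giving $[D_t, D_{\bar z}] = 0$; note that the sign of $\sqrt{-1}\Phi$ in the definition of $\deebar_L$ is precisely the one that makes this cancellation occur for the chosen orientation. The remaining Bogomolny component $F_{12} = \nabla_{A,t}(\Phi)$ plays no role in integrability; it governs how the scattering map interacts with $\Phi$ rather than $\deebar_L \comp \deebar_L$.
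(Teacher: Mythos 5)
Your proposal is correct and is essentially the paper's own argument written out in full: the paper's proof consists of the single remark that integrability $\deebar_{L}\comp\deebar_{L}=0$ follows from the Bogomolny equation in a standard way, and your computation (reduction to $[\nabla_{A,t}-\sqrt{-1}\Phi,\nabla_{A,\bar z}]=0$, then component-wise cancellation using $F_{t1}=\nabla_{A,2}(\Phi)$, $F_{t2}=-\nabla_{A,1}(\Phi)$) is precisely that standard verification, with the signs and orientation handled correctly. No gap.
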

      \begin{proof}
      	It is standard that the integrability condition $\deebar_{L}\comp\deebar_{L} = 0$ 
        follows from the Bogomolny equation $F(A) = \ast\nabla_A(\Phi)$.
      \end{proof}
      
      Let $w$ be the standard coordinate of $\mathbb{C}$.
      Let $U \subset \mathbb{C}$ be a neighborhood of $0$ and put $U^{\ast} := U \setminus \{0\}$.
      Let $(V,h,A,\Phi)$ be a monopole of rank $r$ on $([-\eps,\eps]\times U) \setminus \{(0,0)\} \subset \mathbb{R}\times\mathbb{C}$,
      and let $(V,\deebar_V)$ denote the underlying mini-holomorphic bundle.
      The following proposition in \cite{Ref:Cha-Hur}
      allows us to interpret the weights of Dirac-type singularities
      in terms of the scattering maps of the underlying mini-holomorphic bundles.
      \begin{Prp}\label{Prp:dirac weight is merom degree}
        If $(0,0)$ is a Dirac-type singularity of weight $\vec{k} = (k_i) \in \mathbb{Z}^r$,
        then the scattering map $\Psi_{-\eps,\eps} : V^{-\eps}|_{U^{\ast}} \to V^{\eps}|_{U^{\ast}}$ is extended to a meromorphic isomorphism 
        $\Psi_{-\eps,\eps}:V^{-\eps}(\ast 0) \to V^{\eps}(\ast 0)$ and there exists a holomorphic frame $\mb{v}^{-}$ (resp. $\mb{v}^{+}$) of $V^{-\eps}$ (resp. $V^{\eps}$)
        such that $\Phi^{-\eps,\eps}$ can be represented as $\Psi_{-\eps,\eps}(\mb{v}^{-}) = \mb{v}^{+}\cdot \mathrm{diag}(w^{k_i})$,
        where $\mathrm{diag}(c_i)$ is the diagonal matrix whose $(i,i)$-th entries are $c_i$.
        Moreover, this type of diagonal matrix representation is unique up to permutations.
      \end{Prp}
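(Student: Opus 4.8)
The plan is to reduce everything to the behaviour of the scattering map along the $t$-direction and to read off the weights from its growth near $w=0$. Recall that $\Psi_{-\eps,\eps}$ is the parallel transport of $\nabla_{A,t}-\sqrt{-1}\Phi$ along the segment $\{w\}\times[-\eps,\eps]$, and that it is a holomorphic isomorphism $V^{-\eps}|_{U^{\ast}}\to V^{\eps}|_{U^{\ast}}$ by the integrability of $\deebar_V$. Since the slices $\{\pm\eps\}\times U$ avoid the singular point and $U$ is a disk, the holomorphic bundles $V^{\pm\eps}$ are holomorphically trivial; fixing holomorphic frames identifies $\Psi_{-\eps,\eps}$ with a $GL_r$-valued holomorphic function on $U^{\ast}$. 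The first goal is to show that this function and its inverse have at most polynomial growth in $|w|^{-1}$; then the Riemann removable-singularity theorem applied to $w^{N}\Psi_{-\eps,\eps}$ promotes $\Psi_{-\eps,\eps}$ to a meromorphic isomorphism $V^{-\eps}(\ast 0)\to V^{\eps}(\ast 0)$.

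\textbf{Growth of the scattering map.} Using the Dirac-type decomposition $(V,h)|_{B\setminus\{p\}}=\bigoplus_i L_i$ and the estimate $\Phi=\frac{\sqrt{-1}}{2R}\sum_i k_i\,\mathrm{Id}_{L_i}+O(1)$, I would track the norm of a $(\nabla_{A,t}-\sqrt{-1}\Phi)$-parallel section $s$ lying in $L_i$. Since $\sqrt{-1}\Phi$ is Hermitian and equals $-\tfrac{k_i}{2R}+O(1)$ on $L_i$, the parallel equation $\nabla_{A,t}s=\sqrt{-1}\Phi s$ gives $\partial_t\log|s|=-\tfrac{k_i}{2R}+O(1)$, and integrating over $t\in[-\eps,\eps]$ yields, with $R=\sqrt{t^2+|w|^2}$,
\[
  \int_{-\eps}^{\eps}\frac{dt}{2\sqrt{t^2+|w|^2}}=\mathrm{arcsinh}(\eps/|w|)=-\log|w|+O(1)\qquad(|w|\to 0),
\]
so that $|s(\eps)|/|s(-\eps)|\sim|w|^{k_i}$. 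Hence $\Psi_{-\eps,\eps}$ scales like $\mathrm{diag}(|w|^{k_i})$ in norm, which furnishes the polynomial bounds needed for the meromorphic extension and shows that $\det\Psi_{-\eps,\eps}$ vanishes to order $\sum_i k_i$. The condition $\nabla_A(R\Phi)=O(1)$ enters here to control the off-diagonal (second fundamental form) terms of the connection relative to $\bigoplus_i L_i$, so that the per-summand growth is not spoiled by mixing between the $L_i$.

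\textbf{Diagonalization and uniqueness.} Once $\Psi_{-\eps,\eps}$ is a meromorphic isomorphism, its matrix lies in $GL_r$ over the fraction field of the local ring $\mathcal{O}_0=\mathbb{C}\{w\}$, a discrete valuation ring. The Smith normal form over $\mathcal{O}_0$ gives invertible holomorphic matrices $P,Q\in GL_r(\mathcal{O}_0)$ with $\Psi_{-\eps,\eps}=P\,\mathrm{diag}(w^{d_i})\,Q$; setting $\mb{v}^{-}:=Q^{-1}\mb{e}$ and $\mb{v}^{+}:=P\mb{e}$ for the standard frame $\mb{e}$ produces holomorphic frames of $V^{-\eps}$ and $V^{\eps}$ with $\Psi_{-\eps,\eps}(\mb{v}^{-})=\mb{v}^{+}\cdot\mathrm{diag}(w^{d_i})$, the elementary divisors $d_i$ being unique up to permutation. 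It then remains to identify $\{d_i\}$ with $\{k_i\}$.

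\textbf{Main obstacle.} The hard part is precisely this last identification of the \emph{individual} exponents, because the decomposition $\bigoplus_i L_i$ is a priori only $C^{\infty}$-Hermitian and need not be compatible with the holomorphic frames realizing the Smith form. The equality $\sum_i d_i=\sum_i k_i$ is immediate from the determinant computation above, but matching the multisets demands a finer, growth-adapted argument: I would characterize the partial sums $d_r+\cdots+d_{r-j+1}$ as the maximal order of vanishing of $\Lambda^{j}\Psi_{-\eps,\eps}(s_1\wedge\cdots\wedge s_j)$ over holomorphic $j$-frames $(s_1,\dots,s_j)$ of $V^{-\eps}$, and then show—using the per-summand estimate $\partial_t\log|s|_{L_i}=-\tfrac{k_i}{2R}+O(1)$ together with the boundedness of the off-diagonal connection forced by $\nabla_A(R\Phi)=O(1)$—that these extremal growth rates are attained along the $L_i$ and equal the ordered weights $k_i$.
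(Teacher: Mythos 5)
First, a point of order: the paper never proves this proposition — it is quoted from Charbonneau--Hurtubise \cite{Ref:Cha-Hur} and used as a black box — so your attempt cannot be compared against an in-paper argument; it has to stand on its own. Its skeleton (growth estimates, Smith normal form over $\mathbb{C}\{w\}$, exterior-power characterization of the partial sums of the elementary divisors) is the right shape, but the central analytic claim is not established, and as stated it is wrong. The estimate $\partial_t\log|s| = -\tfrac{k_i}{2R}+O(1)$ is valid only for sections that remain in the summand $L_i$, and $(\nabla_{A,t}-\sqrt{-1}\Phi)$-parallel sections do not remain there: the decomposition is merely unitary. It is true that $\nabla_A(R\Phi)=O(1)$ yields bounded off-diagonal connection terms for the eigenbundles of $R\Phi$ grouped by distinct $k_i$ (e.g.\ via a contour-integral formula for the spectral projections), but bounded coupling is not harmless here, because in the Duhamel expansion of the parallel transport the off-diagonal terms are weighted by the relative factors $\exp\bigl((k_i-k_j)\int dt/2R\bigr)\sim |w|^{-(k_i-k_j)}$, which blow up as $w\to 0$. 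A $2\times 2$ computation with $k_1>k_2$ shows that a solution starting in $L_1$ at $t=-\eps$ generically acquires an $L_2$-component of size $|w|^{k_2}\gg |w|^{k_1}$ by $t=\eps$; in fact every entry of the scattering matrix in the unitary frame is then only $O(|w|^{k_2})$. So the assertion that ``$\Psi_{-\eps,\eps}$ scales like $\mathrm{diag}(|w|^{k_i})$ in norm'' fails entrywise, and your determinant claim does not follow from it either (it is true, but the clean route is to note that $\det\Psi_{-\eps,\eps}$ is the scattering map of the induced rank-one monopole on $\det V$, where there is no mixing).

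What survives the mixing, and what you actually need, is the singular-value statement: $\Psi_{-\eps,\eps}=U\,\mathrm{diag}(c_i)\,W$ with $U,W$ bounded with bounded inverses and $|c_i|\sim |w|^{k_i}$. Combined with your Smith-form step this does finish the proof, since holomorphic invertible factors change each singular value only by bounded ratios, forcing $\{d_i\}=\{k_i\}$. But proving it requires the ``attainment'' half that your proposal defers. Ordering $k_1\geq\cdots\geq k_r$, the upper bounds $\sigma_1\cdots\sigma_j=\|\Lambda^j\Psi_{-\eps,\eps}\|\lesssim |w|^{k_{r-j+1}+\cdots+k_r}$ do follow from summing Duhamel paths (every path factor is at most $|w|^{\min_I k_I}$ on $\Lambda^j V$), yet upper bounds together with the determinant identity do not pin down the singular values: for $r=2$, $k=(2,0)$, the constraints $\sigma_1\lesssim 1$, $\sigma_1\sigma_2\sim|w|^2$ are equally compatible with $(\sigma_1,\sigma_2)=(1,|w|^2)$ and with $(|w|,|w|)$. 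The essential missing ingredient is the matching lower bound $\|\Lambda^j\Psi_{-\eps,\eps}\|\gtrsim |w|^{k_{r-j+1}+\cdots+k_r}$ for every $j$, i.e.\ that the slowest decay rate is actually achieved on each exterior power. This needs a quantitative ODE-asymptotics argument — a uniform Levinson-type theorem, or a proof that for $\eps$ small the no-transfer term in the Duhamel series on $\Lambda^j V$ dominates all transfer corrections — and it is precisely the step you leave as ``I would characterize \ldots and then show''. Until that step is supplied, the identification of the elementary divisors with the weights, which is the whole content of the proposition, is not proved.
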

  \subsection{Filtered sheaves and filtered bundles}\label{SubSec:Parabolic Str}
  	We recall the definitions of parabolic sheaves and bundles by following \cite{Ref:Moc1}.
    \subsubsection{Filtered sheaves}
    	Let $X$ be a complex manifold.
      Let $D$ be a smooth hypersurface of $X$ and $D = \coprod_{i=1}^{d} D_i$ be the decomposition into connected components.
    	Let $\mathcal{E}$ be a coherent $\mathcal{O}_X(\ast D)= \bigcup_{n\in\mathbb{Z}}\mathcal{O}_X(nD)$-module.
      
    	A tuple $P_{\ast}\mathcal{E}= \{P_{\mb{a}}\mathcal{E}\}_{\mb{a}=(a_i) \in \mathbb{R}^{d}}$
      of $\mathcal{O}_{X}$-submodules of $\mathcal{E}$ 
    	is called a filtered sheaf over $\mathcal{E}$ if it satisfies the following conditions:
    	\begin{itemize}
    		\item
      		$P_{\mb{a}}\mathcal{E} \subset \mathcal{E}$ is a coherent $\mathcal{O}_{X}$-module
        	and $P_{\mb{a}}\mathcal{E}|_{X \setminus D} = \mathcal{E}|_{X \setminus D}$ holds.
      	\item
      		For $\mb{a}=(a_i)$ and $\mb{a'}=(a'_i) \in \mathbb{R}^d$,
          we have $P_{\mb{a'}}\mathcal{E} \subset P_{\mb{a}}\mathcal{E}$
        	if $a'_i \leq a_i$ for any $i=1,\ldots,d$.
      	\item
      		On a small neighborhood $U$ of $D_i$, $P_{\mb{a}}\mathcal{E}|_{U}$ depends only on $a_i$,
        	which we denote by ${}^{i}P_{a_i}(\mathcal{E}|_{U})$.
      	\item
      		For any $i=1, \ldots, d$ and $a \in \mathbb{R}$, there exists $\epsilon > 0$
        	such that we have ${}^{i}P_{a}(\mathcal{E}|_{U}) ={}^{i}P_{a + \epsilon}(\mathcal{E}|_{U})$.
      	\item
      		For any $\mb{a} \in \mathbb{R}^d$ and $\mb{n}=(n_i) \in \mathbb{Z}^d$,
    			we have $P_{\mb{a}+\mb{n}}\mathcal{E} = P_{\mb{a}}\mathcal{E}(\sum n_i D_i)$.
    	\end{itemize}
    	
    	A filtered subsheaf $P_{\ast}\mathcal{E}' \subset P_{\ast}\mathcal{E}$ is a filtered sheaf
    	over a subsheaf $\mathcal{E}' \subset \mathcal{E}$ such that $P_{\mb{a}}\mathcal{E}' \subset P_{\mb{a}}\mathcal{E}$
    	for any $\mb{a} \in \mathbb{R}^d$.
    	If $P_{\mb{a}}\mathcal{E}' = \mathcal{E}' \cap P_{\mb{a}}\mathcal{E}$ holds for any $\mb{a} \in \mathbb{R}^d$,
    	it is called strict.
    	
    	For a small neighborhood $U$ of $D_i$,
    	we set ${}^iP_{<a}(\mathcal{E}|_{U}) := \sum_{a'<a}{}^iP_{a'}(\mathcal{E}|_{U})$.
    	We also define a coherent $\mathcal{O}_{D_i}$-module ${}^i\mathrm{Gr}_{a}(\mathcal{E})$ by 
    	${}^i\mathrm{Gr}_{a}(\mathcal{E}) = {}^iP_{<a}(\mathcal{E}|_{U}) / {}^iP_{<a}(\mathcal{E}|_{U})$.
    	We set
    	\[
    		\Par(P_{\ast}\mathcal{E}, i) := \{ a \in \mathbb{R} \mid {}^i\mathrm{Gr}_{a}(\mathcal{E}) \neq 0\}.
    	\]
    	
    	Suppose that $\mathcal{E}$ is torsion free.
    	The parabolic first Chern class $\mathrm{par\mathchar`-c_1}(P_{\ast}\mathcal{E})$ is defined as
    	\[
    		\mathrm{par\mathchar`-c_1}(P_{\ast}\mathcal{E}) 
    	  := c_1(P_{0,\ldots,0}V) - \sum_{i=1}^{d} \sum_{-1< a_i \leq 0} a_i \mathrm{rank}_{D_i}({}^{i}\mathrm{Gr}_{a_i}(\mathcal{E}))[D_i].
  		\]
    	Here $\mathrm{rank}_{D_i}$ denote the rank of coherent sheaves on $D_i$, and $[D_i]$ is the cohomology class of $D_i$ on $X$.
    \subsubsection{Filtered bundles}\label{SubSec:par bdle and their prop}
    	A filtered sheaf $P_{\ast}\mathcal{E}$ on $(X,D)$ is called a filtered bundle 
      if it satisfies the following conditions:
      \begin{itemize}
    		\item
        	For any $\mb{a} \in \mathbb{R}^d$,
          $P_{\mb{a}}\mathcal{E}$ is a locally free $\mathcal{O}_X$-module.
        \item
        	For any $i=1,\ldots,d$ and $a \in \mathbb{R}$,
          ${}^i\mathrm{Gr}_{a}(\mathcal{E})$ is a locally free $\mathcal{O}_{D_i}$-module.
      \end{itemize}
      
      For example,
      we define the trivial filtered bundle $\mathcal{O}_{X}(\ast D)$
      as $P_{\mb{a}}\mathcal{O}_{X}(\ast D) = \mathcal{O}_{X}(\sum [a_i]D_i)$,
      where $[a_i] \in \mathbb{Z}$ is the greatest integer satisfying $[a_i] \leq a_i$.
      
      The filtered bundle $P_{\ast}{\mathcal{H}om}_{\mathcal{O}_X(\ast D)}(\mathcal{E}_1,\mathcal{E}_2)$
      over ${\mathcal{H}om}_{\mathcal{O}_X(\ast D)}(\mathcal{E}_1,\mathcal{E}_2)$
      is defined as follows:
      \[ 
      	P_{\mb{a}}{\mathcal{H}om}_{\mathcal{O}_X(\ast D)}(\mathcal{E}_1, \mathcal{E}_2) = 
        	\{
          	f \in {\mathcal{H}om}(\mathcal{E}_1,\mathcal{E}_2) 
        		\mid f(P_{\mb{b}}\mathcal{E}_1) \subset P_{\mb{a}+\mb{b}}\mathcal{E}_2\,(\forall\mb{b}\in\mathbb{R}^d)
          \}.
      \]
      We denote $P_{\ast}{\mathcal{H}om}_{\mathcal{O}_X(\ast D)}(\mathcal{E}_1,\mathcal{E}_2)$
      by $P_{\ast}{\mathcal{H}om}(\mathcal{E}_1,\mathcal{E}_2)$ if there is no risk of confusion.
      
      For any filtered bundle $P_{\ast}\mathcal{E}$, the dual filtered bundle $P_{\ast}(\mathcal{E}^{\lor})$ is defined as
      $P_{\ast}{\mathcal{H}om}(\mathcal{E}, \mathcal{O}_{X}(\ast D))$,
      Then we have a natural isomorphism 
      \[
      	P_{\mb{a}}(\mathcal{E}^{\lor}) \simeq 
      	(P_{< -\mb{a} + \mb{\delta}}\mathcal{E})^{\lor} = (\bigcup_{\mb{b} < -\mb{a} + \mb{\delta}}P_{\mb{b}}\mathcal{E})^{\lor},
      \]
      where $\mb{\delta}=(1,\ldots,1) \in \mathbb{R}^d$.
    \subsubsection{Stable filtered bundles on $(\mathbb{P}^1 \times T^2, \{0,\infty\}\times T^2)$}\label{SubSubSec:par bdle on P^1xT^2}
    	We consider the case $X = \mathbb{P}^1 \times T^2$ and $D = (\{0\} \times T^2) \sqcup  (\{\infty\} \times T^2)$,
      where $T^2$ is an elliptic curve.
      For a filtered bundle $P_{\ast}\mathcal{E}$,
      we write $P_{ab}\mathcal{E}$ instead of $P_{\mb{a}}\mathcal{E}$ for $\mb{a}=(a,b) \in \mathbb{R}^2$.
      
      For a filtered bundle $P_{\ast\ast}\mathcal{E}$,
      we define the parabolic degree of $P_{\ast\ast}\mathcal{E}$ by
      \[
      	\mathrm{par\mathchar`-deg}(P_{\ast\ast}\mathcal{E})
        = \frac{\sqrt{-1}}{2}\int_{\mathbb{P}^1 \times \{w_0\}} \mathrm{par\mathchar`-c_1}(P_{\ast\ast}\mathcal{E}),\;(\forall w_0 \in T^2).
      \]
      
      \begin{Def}\label{Def:stable filtered bundle}
      	Let $P_{\ast\ast}\mathcal{E}$ be a filtered bundle on $(X,D)$.
        \begin{itemize}
        	\item
		        $P_{\ast\ast}\mathcal{E}$ is stable if it satisfies the following:
        		\begin{enumerate}
        			\item\label{Enum:gr semistable}
          			For any $a,b \in \mathbb{R}$,
            		$P_{ab}\mathcal{E}|_{\{0\}\times T^2}$ and $P_{ab}\mathcal{E}|_{\{\infty\}\times T^2}$ are semistable and of degree $0$.
          		\item
          			For any filtered subsheaf $P_{\ast\ast}\mathcal{E}' \subset P_{\ast\ast}\mathcal{E}$
            		satisfying (\ref{Enum:gr semistable}) and $0<\mathrm{rank}(\mathcal{E}') < \mathrm{rank}(\mathcal{E})$,
                we have
            		\[
      						\mathrm{par\mathchar`-deg}(P_{\ast\ast}\mathcal{E}') / \mathrm{rank}(\mathcal{E}') \;<\; 
      						\mathrm{par\mathchar`-deg}(P_{\ast\ast}\mathcal{E}) / \mathrm{rank}(\mathcal{E}).
      					\]
        		\end{enumerate}
          \item
          	$P_{\ast\ast}\mathcal{E}$ is polystable if there exists a decomposition $P_{\ast\ast}\mathcal{E} = \bigoplus_{i \in I} P_{\ast\ast}\mathcal{E}_i$
            such that $P_{\ast\ast}\mathcal{E}_i$ is stable and that
            $
            	\mathrm{par\mathchar`-deg}(P_{\ast\ast}\mathcal{E}_i) / \mathrm{rank}(\mathcal{E}_i) =
              \mathrm{par\mathchar`-deg}(P_{\ast\ast}\mathcal{E}_j) / \mathrm{rank}(\mathcal{E}_j)
            $ holds for any $i,j \in I$.
      	\end{itemize}
      \end{Def}
      \vspace{3mm}
      \noindent We have the following cohomology vanishing for stable filtered bundles of degree 0.
      \begin{Prp}\label{Prp:vanish cohom of stable bundle}
      	Let $p:\mathbb{P}^1\times T^2 \to T^2$ be the projection map.
        Let $F$ be a holomorphic line bundle of degree $0$ on $T^2$.
        For a stable filtered bundle $P_{\ast\ast}E$ on $\mathbb{P}^1\times T^2$ satisfying
        $\mathrm{par\mathchar`-deg}(P_{\ast\ast}E)=0$ and $\mathrm{rank}(E)>1$,
        we have
        $H^i(\mathbb{P}^1 \times T^2, P_{-\hat{t}\hat{t}}E \otimes p^{\ast}F) = H^i(\mathbb{P}^1 \times T^2, P_{<-\hat{t}<\hat{t}}E \otimes p^{\ast}F) =0$ for any $\hat{t}\in\mathbb{R}$ and any $i\neq 1$.
      \end{Prp}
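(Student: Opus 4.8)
The plan is to prove only the cases $i=0$ and $i=2$ (the single nontrivial degrees on the surface $\mathbb{P}^1\times T^2$), and to reduce both to one vanishing statement of the form ``$H^0=0$ for a stable filtered bundle of parabolic degree $0$ (or $<0$)'', which is then settled by the standard argument that a nonzero global section would destabilize.

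First I would absorb the extra data $\hat t$ and $F$ by a twist. Let $\mathcal N$ be the filtered line bundle whose underlying $\mathcal{O}_X(\ast D)$-module is $p^{\ast}F(\ast D)$ and whose parabolic weights along $\{0\}\times T^2$ and $\{\infty\}\times T^2$ are $\hat t$ and $-\hat t$, normalized so that $P_{00}(P_{\ast\ast}E\otimes\mathcal N)=P_{-\hat t\hat t}E\otimes p^{\ast}F$ and $P_{<0<0}(P_{\ast\ast}E\otimes\mathcal N)=P_{<-\hat t<\hat t}E\otimes p^{\ast}F$. Since $\deg F=0$ and the weights are balanced, linearity of the parabolic degree gives $\mathrm{par}\text{-}\deg(\mathcal N)=0$; tensoring by a filtered line bundle preserves stability and adds $\mathrm{rank}(E)\cdot\mathrm{par}\text{-}\deg(\mathcal N)$ to the parabolic degree, so $P_{\ast\ast}E\otimes\mathcal N$ is again stable of parabolic degree $0$ and rank $>1$. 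Replacing $P_{\ast\ast}E$ by $P_{\ast\ast}E\otimes\mathcal N$, it suffices to prove $H^0(X,P_{00}E)=H^0(X,P_{<0<0}E)=0$ together with the corresponding $H^2$-vanishing.

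For $i=0$, a nonzero $s\in H^0(X,P_{00}E)$ induces an injection $\mathcal{O}_X\hookrightarrow P_{00}E$; saturating its image inside $E|_{X\setminus D}$ and equipping it with the induced strict filtration produces a rank-$1$ filtered subsheaf $P_{\ast\ast}\mathcal L\subset P_{\ast\ast}E$ containing the weight-$0$ section, whence $\mathrm{par}\text{-}\deg(P_{\ast\ast}\mathcal L)\ge 0$. Because $\mathrm{rank}(E)>1$, $\mathcal L$ is a proper subobject, and once $P_{\ast\ast}\mathcal L$ is arranged to satisfy condition (1) of Definition \ref{Def:stable filtered bundle}, stability forces $\mathrm{par}\text{-}\deg(P_{\ast\ast}\mathcal L)<0$, a contradiction; the vanishing for $P_{<0<0}E$ then follows from the inclusion $P_{<0<0}E\hookrightarrow P_{00}E$. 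For $i=2$, Serre duality gives $H^2(X,P_{00}E)^{\ast}\cong H^0(X,(P_{00}E)^{\vee}\otimes K_X)$, and since $K_X$ is the pullback of $\mathcal{O}_{\mathbb{P}^1}(-2)$ it has strictly negative parabolic degree. The dual filtered bundle $P_{\ast\ast}(E^{\vee})$ is stable of parabolic degree $0$ (Section \ref{SubSec:par bdle and their prop}), so $P_{\ast\ast}(E^{\vee})\otimes K_X$ is stable of parabolic degree $\mathrm{rank}(E)\cdot\mathrm{par}\text{-}\deg(K_X)<0$. Identifying $(P_{00}E)^{\vee}\otimes K_X$ with a member $P_{\mathbf c}(E^{\vee}\otimes K_X)$ of this filtered bundle and rerunning the section argument (now any section yields a subsheaf of slope $\ge 0$, contradicting semistability of a bundle of negative slope) gives $H^2=0$, and the $P_{<0<0}$-case is handled by the same member-wise argument.

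The main obstacle is compatibility with condition (1): the subsheaf $P_{\ast\ast}\mathcal L$ cut out by a section need not have $\mathcal L|_{\{0\}\times T^2}$ and $\mathcal L|_{\{\infty\}\times T^2}$ of degree $0$, so it is not immediately an admissible test object for stability. The point to establish is that one may replace $P_{\ast\ast}\mathcal L$ by its ``condition-(1) saturation''—enlarging it by elementary modifications along the divisors until the graded restrictions become the degree-$0$ semistable pieces guaranteed for $P_{\ast\ast}E$—\emph{without decreasing the parabolic degree}. This is precisely where the degree-$0$ semistability of the restrictions $E|_{\{0\}\times T^2}$ and $E|_{\{\infty\}\times T^2}$ enters essentially, and it is the only step that is not formal. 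Once this saturation lemma is in place, both vanishings follow from the slope inequalities above.
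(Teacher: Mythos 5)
Your overall architecture coincides with the paper's (normalize away $F$ and $\hat t$; kill $H^0$ by a destabilizing-section argument; get $H^2$ from Serre duality via the dual filtered bundle), but the execution of the $H^0$ step has a genuine gap --- one you flag yourself and then leave as an unproved ``condition-(1) saturation'' lemma. The gap is created by your choice of test object: after saturating the image of $s$ and taking the induced strict filtration, the restrictions $P_{ab}\mathcal{L}|_{D_i}$ indeed need not have degree $0$, so Definition \ref{Def:stable filtered bundle} says nothing about $P_{\ast\ast}\mathcal{L}$, and your proposed repair (elementary modifications along $D$ that restore condition (\ref{Enum:gr semistable}) \emph{without decreasing the parabolic degree}) is precisely a nontrivial assertion that would itself need proof. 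The paper avoids saturation altogether: from $s\in H^0(P_{00}E)$ it takes the \emph{trivial} filtered structure on $\mathcal{O}_X(\ast D)\cdot s$, namely $P_{ab}\mathcal{O}:=\mathcal{O}_X([a]D_1+[b]D_2)\cdot s$. This is a filtered subsheaf of $P_{\ast\ast}E$ because $\mathcal{O}_X([a]D_1+[b]D_2)\cdot s\subset P_{[a][b]}E\subset P_{ab}E$ by the shift property $P_{\mb{a}+\mb{n}}E=P_{\mb{a}}E(\textstyle\sum n_iD_i)$; it satisfies condition (\ref{Enum:gr semistable}) automatically, since its restriction to each $D_i$ is the trivial line bundle on $T^2$ (the normal bundles of $\{0\}\times T^2$ and $\{\infty\}\times T^2$ are trivial and $D_1\cap D_2=\emptyset$); and its parabolic degree is $0$. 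Since the stability condition quantifies over \emph{all} filtered subsheaves satisfying (\ref{Enum:gr semistable}) --- neither strictness nor saturatedness is required --- this object is admissible and immediately contradicts stability. With this choice your ``main obstacle,'' which you correctly identify as the only non-formal step of your argument, simply disappears.

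There is also a quantitative error in your $i=2$ step: the claim that ``any section yields a subsheaf of slope $\geq 0$'' fails for sections of a general member $P_{\mb{c}}$ of a filtered bundle, because the parabolic degree of the generated rank-$1$ subsheaf depends on the fractional parts of $\mb{c}$ (a section of $P_{1-\epsilon,1-\epsilon}$ only produces a filtered subsheaf of parabolic degree $\geq -2+2\epsilon$). The clean bookkeeping is to absorb $K_X\simeq\mathcal{O}_X(-D_1-D_2)$ as an \emph{integer} shift of the filtration rather than treating it as a negative-degree twist: one has $(P_{00}E)^{\lor}\otimes K_X\simeq P_{<0<0}(E^{\lor})\subset P_{00}(E^{\lor})$ and $(P_{<0<0}E)^{\lor}\otimes K_X\simeq P_{00}(E^{\lor})$, so both $H^2$ vanishings reduce by Serre duality to the already-established $H^0$ vanishing for the dual filtered bundle $P_{\ast\ast}(E^{\lor})$, which is again stable of parabolic degree $0$. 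This is essentially the paper's route: it dualizes $P_{<0<0}E$ (via $P_{00}(E^{\lor})\simeq(P_{<1<1}E)^{\lor}$ and $P_{<1<1}E\simeq P_{<0<0}E\otimes K_X^{-1}$), and then obtains $H^2(P_{00}E)=0$ from the exact sequence $0\to P_{<0<0}E\to P_{00}E\to{}^1\mathrm{Gr}_0\oplus{}^2\mathrm{Gr}_0\to 0$, whose graded pieces are sheaves on the curves $D_i$ and hence have vanishing $H^2$; your direct dualization of $P_{00}E$ also works once the shift bookkeeping above replaces the incorrect slope claim.
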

      \begin{proof}
      	By replacing $E$ with $E\otimes p^{\ast}F$, we may assume that $F$ is trivial.
      	By considering $P_{\ast-\hat{t},\ast+\hat{t}}E$ instead of $P_{\ast\ast}E$,
      	we may also assume $\hat{t}=0$.
      	If $i<0$ or $i>2$ holds,
      	then the cohomologies vanish obviously.
      	Thus we prove only the cases $i=0$ and $i=2$. 
      	If there exists a non-zero global section of $P_{00}E$,
        we have a filtered subsheaf $P_{\ast\ast}\mathcal{O} \subset P_{\ast\ast}E$ of rank $1$
        that satisfies (\ref{Enum:gr semistable}) in Definition \ref{Def:stable filtered bundle} and
        $\mathrm{par\mathchar`-deg}(P_{\ast\ast}\mathcal{O}) \geq 0$.
        However, it contradicts the stability of $P_{\ast\ast}E$.
        Thus, we have $H^0(\mathbb{P}^1 \times T^2, P_{00}E) =0$.
        By the natural inclusion $P_{<0<0}V\subset P_{00}V$,
        we have $H^0(\mathbb{P}^1 \times T^2, P_{<0<0}E) =0$.
        By the natural isomorphism
        $P_{00}(E^{\lor}) \simeq (P_{<1<1}E)^{\lor}$,
        we have $H^0(\mathbb{P}^1 \times T^2, (P_{<1<1}E)^{\lor})= 0$.
        Using the Serre duality theorem and isomorphisms $P_{<1<1}E \simeq P_{<0<0}E\otimes O_X(D) \simeq P_{<0<0}E \otimes (\Omega^2_{\mathbb{P}^1 \times T^2})^{-1}$,
        we obtain $H^2(\mathbb{P}^1 \times T^2, P_{<1<1}E\otimes \Omega^2_{\mathbb{P}^1 \times T^2}) = H^2(\mathbb{P}^1 \times T^2, P_{<0<0}E)= 0$.
        We also have $H^2(P_{00}E)=0$ by the short exact sequence $0\to P_{<0<0}V \to P_{00}V \to {}^{1}\mathrm{Gr}_{0}(P_{\ast\ast}V)\oplus {}^{2}\mathrm{Gr}_{0}(P_{\ast\ast}V)\to 0$ and trivial vanishing of cohomologies $H^2(T^2,{}^{1}\mathrm{Gr}_{0}(P_{\ast\ast}V))=H^2(T^2,{}^{2}\mathrm{Gr}_{0}(P_{\ast\ast}V))=0$.
      \end{proof}
      
  \subsection{The Fourier-Mukai transform of semistable bundles}
  	We recall the Fourier-Mukai transform of semistable bundles of degree $0$ on an elliptic curve $T^2 := \mathbb{C}/\Lambda_2$
    by following \cite[Subsubsection 2.1.2]{Ref:Moc1}.
    
    Let $w$ be the standard coordinate of $\mathbb{C}$.
    We will denote by $\hat{T}^2$ the dual torus of $T^2$.
    Let $V$ be a semistable bundle of degree $0$ and of rank $r$ on $T^2$.
    As a part of result in \cite[Proposition 2.2]{Ref:Moc1}, we have the following proposition.
    \begin{Prp}\label{Prp:decomp of ss deg0}
      There exist $k \in \mathbb{N}$, $F_i \in \mathrm{Pic}^0(T^2)$ and nilpotent matrices $N_i \in \mathrm{Mat}(\mathbb{C},r_i)$
      ($1\leq i \leq k$ and $\sum_i r_i = r$) such that we have $F_i \not\simeq F_j$ and
      an isomorphism $V \simeq \bigoplus^k_{i=1} F_i \otimes (\underline{\mathbb{C}^{r_i}}, \deebar + N_i d\bar{w})$.
      Moreover, if we take another isomorphism $V \simeq \bigoplus^{k'}_{i=1} F'_i \otimes (\underline{\mathbb{C}^{r'_i}}, \deebar + N'_i d\bar{w})$,
      then we have $k=k'$ and there exist a permutation $\sigma$ of $\{1,\ldots,k\}$ and linear transformations $g_i \in GL(r_i,\mathbb{C})$ such that we have $F_i \simeq F_{\sigma(i)}$, $r_i=r'_i$ and $N_i = \mathrm{Ad}(g_i)N'_i$ for any $1\leq r\leq k$.
    \end{Prp}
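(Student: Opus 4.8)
The plan is to reduce the statement to Atiyah's classification of bundles on an elliptic curve together with an explicit description of the holomorphic structures $\deebar + N\,d\bar{w}$. First I would recall that a semistable bundle of degree $0$ admits a Jordan--H\"older filtration whose graded pieces are stable of slope $0$, i.e.\ line bundles in $\mathrm{Pic}^0(T^2)$; in particular every indecomposable summand of $V$ is itself semistable of degree $0$. By Atiyah's theorem each such indecomposable is isomorphic to $F\otimes A_s$ for some $F\in\mathrm{Pic}^0(T^2)$, where $A_s$ is the unique indecomposable bundle obtained as an $(s-1)$-fold self-extension of $\mathcal{O}$. The key identification is that $A_s\simeq(\underline{\mathbb{C}^s},\deebar+J_s\,d\bar{w})$ for a single nilpotent Jordan block $J_s$: the basis vector spanning $\ker J_s$ lies in the kernel of the operator, hence generates a trivial holomorphic sub-line-bundle $\mathcal{O}$, and dividing it out reproduces the self-extension structure inductively. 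Since $\mathrm{tr}(J_s)=0$ the determinant is trivial, and the endomorphisms of this model (computed below) are polynomials in $J_s$, so the model is an indecomposable self-extension of $\mathcal{O}$ of rank $s$ with trivial determinant, hence $\simeq A_s$.

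Next I would group the indecomposable summands according to their $\mathrm{Pic}^0$-factor. This grouping is well defined because the Jordan--H\"older factors of $F\otimes A_s$ are all isomorphic to $F$, so the $\mathrm{Pic}^0$-class of a summand is recorded among the Jordan--H\"older factors of $V$ and is an isomorphism invariant; equivalently, for $F\neq F'$ one has $\mathrm{Hom}(F,F')=H^0(F'\otimes F^{-1})=0$ since $F'\otimes F^{-1}$ is a nontrivial degree-$0$ line bundle. Collecting, for each distinct class $F_i$ the sum of the corresponding summands is $F_i\otimes\bigoplus_j A_{r_{i,j}}\simeq F_i\otimes(\underline{\mathbb{C}^{r_i}},\deebar+N_i\,d\bar{w})$, where $N_i$ is the nilpotent matrix in Jordan form with block sizes $r_{i,j}$ and $r_i=\sum_j r_{i,j}$. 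This yields the desired decomposition with $F_i\not\simeq F_j$.

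For uniqueness I would argue in two stages. The associated graded of any Jordan--H\"older filtration of $V$ is $\bigoplus_i F_i^{\oplus r_i}$, and since distinct elements of $\mathrm{Pic}^0(T^2)$ are non-isomorphic simple objects, the multiset $\{(F_i,r_i)\}$ is an invariant of $V$; this forces $k=k'$ and, after a permutation $\sigma$, $F_i\simeq F_{\sigma(i)}$ and $r_i=r'_i$. After tensoring by $F_i^{-1}$ it remains to show that an isomorphism $(\underline{\mathbb{C}^{r_i}},\deebar+N_i\,d\bar{w})\simeq(\underline{\mathbb{C}^{r_i}},\deebar+N'_i\,d\bar{w})$ forces $N_i=\mathrm{Ad}(g_i)N'_i$. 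Such an isomorphism is a $C^\infty$ matrix-valued gauge $g$ on $T^2$ solving $\partial g/\partial\bar{w}=g N'_i-N_i g$. Expanding $g$ in the Fourier characters of $T^2$, the operator $\partial/\partial\bar{w}$ acts on the mode indexed by $m$ as a scalar $\lambda_m$ vanishing only for $m=0$; on each nonzero mode the equation reads $\lambda_m g_m=g_m N'_i-N_i g_m$, and since $g\mapsto gN'_i-N_i g$ is a difference of commuting nilpotent operators it is nilpotent, so $\lambda_m\,\mathrm{Id}$ minus it is invertible and $g_m=0$. Hence $g$ equals the constant matrix $g_0$ with $N_i g_0=g_0 N'_i$, and its invertibility gives $N_i=\mathrm{Ad}(g_0)N'_i$, which provides the required $g_i$.

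The main obstacle I expect is the explicit identification $A_s\simeq(\underline{\mathbb{C}^s},\deebar+J_s\,d\bar{w})$ and the compatibility of the gauge-theoretic model of $F\otimes A_s$ with the complex-geometric self-extension picture; once this dictionary is established, the grouping by $\mathrm{Pic}^0$-class and the Fourier-mode computation of homomorphisms are routine.
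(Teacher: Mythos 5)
Your proposal cannot coincide with the paper's argument for the simple reason that the paper has none: Proposition \ref{Prp:decomp of ss deg0} is imported verbatim as part of \cite[Proposition 2.2]{Ref:Moc1}, and the surrounding material (Definition \ref{Def:spectrum of ss deg0}, Corollary \ref{Cor:FM-trans of ss deg0}) simply quotes Mochizuki's framework without proof. What you give instead is a correct, self-contained derivation: direct summands of a semistable slope-$0$ bundle are again semistable of slope $0$, Atiyah's classification identifies each indecomposable summand with $F\otimes A_s$, the Dolbeault model $(\underline{\mathbb{C}^s},\deebar+J_s\,d\bar{w})$ is identified with $A_s$ (it contains the constant section spanning $\ker J_s$, is an iterated self-extension of $\mathcal{O}$ with trivial determinant, and is indecomposable because its endomorphism algebra is the local ring $\mathbb{C}[J_s]$), and uniqueness is reduced, via the Jordan--H\"older invariance of the multiset $\{(F_i,r_i)\}$, to your Fourier-mode computation: since $g\mapsto gN'-Ng$ is a difference of commuting nilpotent operators, every nonzero Fourier mode of a gauge transformation intertwining two models must vanish, so the gauge is a constant invertible matrix conjugating $N'$ into $N$. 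All of these steps are sound. The one place you should be more explicit is the passage from the Jordan--H\"older matching to ``after tensoring by $F_i^{-1}$'': you need that any isomorphism between the two direct sums is automatically block-diagonal for the $\mathrm{Pic}^0$-isotypic decomposition, i.e.\ $\mathrm{Hom}\bigl(F_i\otimes(\underline{\mathbb{C}^{r_i}},\deebar+N_i d\bar{w}),\,F'_j\otimes(\underline{\mathbb{C}^{r'_j}},\deebar+N'_j d\bar{w})\bigr)=0$ whenever $F_i\not\simeq F'_j$. You assert the vanishing only for the line bundles themselves; the block version follows by the same iterated-extension argument (the Hom bundle is an iterated extension of copies of the nontrivial degree-$0$ line bundle $F_i^{-1}\otimes F'_j$, hence has no global sections), or directly from your Fourier computation with the flat connection of $F_i^{-1}\otimes F'_j$ inserted, since the resulting scalar shift never vanishes. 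As for what each approach buys: the paper's citation keeps its Section 2 short and guarantees literal consistency with \cite{Ref:Moc1}, whose Fourier--Mukai description in Corollary \ref{Cor:FM-trans of ss deg0} is what is actually invoked later; your argument is elementary and self-contained, and it produces the decomposition exactly in the gauge-theoretic normal form in which it is used in Sections 5 and 6 of the paper.
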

    Recall the spectrum of semistable bundle of degree $0$ is defined as follows.\;(See \cite{Ref:Moc1}.)
    \begin{Def}\label{Def:spectrum of ss deg0}
      We take the decomposition $V\simeq \bigoplus^k_{i=1} F_i \otimes (\underline{\mathbb{C}^{r_i}}, \deebar + N_i d\bar{w})$
      as in Proposition \ref{Prp:decomp of ss deg0}.
      We define the spectrum set $\mathrm{Spec}(V) \subset \hat{T}^2$
      to be $\mathrm{Spec}(V) := \{F_1,\ldots,F_k\}$ under the identification $\hat{T}^2 \simeq \mathrm{Pic}^0(T^2)$.
    \end{Def}
	The following corollary is also obtained by \cite[Proposition 2.2]{Ref:Moc1}.
    \begin{Cor}\label{Cor:FM-trans of ss deg0}
      We take the decomposition $V\simeq \bigoplus^k_{i=1} F_i \otimes (\underline{\mathbb{C}^{r_i}}, \deebar + N_i d\bar{w})$
      as in Proposition \ref{Prp:decomp of ss deg0}.
    	Let $\mathcal{L} \to T^2\times \hat{T}^2$ be the Poincar\'{e} bundle.
      Let $p_i$ be the projection of $T^2 \times \hat{T}^2$ to the $i$-th component.
      For any $\alpha \in \mathrm{Spec}(V)$,
      we set a multi-index $I_{\alpha} = (i_{\alpha,1},\ldots,i_{\alpha,k_{\alpha}}) \in \mathbb{N}^{k_{\alpha}}$ as a tuple of 
      the sizes of the Jordan blocks of $N_i$ corresponding to $\alpha$.
      The Fourier-Mukai transform $\mathrm{FM}(V) := Rp_{2 \ast}(p^{\ast}_{1}V \otimes L) \in D^b(\mathrm{Coh}(\mathcal{O}_{\hat{T}^2}))$ is given as follows:
    	\begin{align*}
    		H^i(\mathrm{FM}(V)) &= 0,\;(i\neq 1)\\
        H^1(\mathrm{FM}(V)) &\simeq 
        \bigoplus_{\alpha \in \mathrm{Spec}(V)} \bigoplus_{j=1}^{k_{\alpha}} \mathcal{O}_{\hat{T}^2,\alpha} / m_{\hat{T}^2,\alpha}^{i_{\alpha,j}},
    	\end{align*}
    	where $m_{\hat{T}^2,\alpha}$ is the maximal ideal of the stalk $\mathcal{O}_{\hat{T}^2,\alpha}$.
    \end{Cor}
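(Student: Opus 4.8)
The plan is to reduce the statement to a single Jordan block and then exploit the fact that the Fourier--Mukai transform is an equivalence of derived categories. Since $\mathrm{FM} = Rp_{2\ast}(p_1^{\ast}(-)\otimes\mathcal{L})$ is an exact functor of triangulated categories that commutes with finite direct sums, the decomposition $V\simeq\bigoplus_{i=1}^{k}F_i\otimes(\underline{\mathbb{C}^{r_i}},\deebar+N_id\bar{w})$ of Proposition \ref{Prp:decomp of ss deg0} reduces the computation to a single summand. Moreover, a constant gauge transformation by $g\in GL(r_i,\mathbb{C})$ gives a holomorphic isomorphism $(\underline{\mathbb{C}^{r_i}},\deebar+N_id\bar{w})\simeq(\underline{\mathbb{C}^{r_i}},\deebar+(gN_ig^{-1})d\bar{w})$, so this bundle depends only on the conjugacy class of $N_i$ and splits as a direct sum indexed by the Jordan blocks of $N_i$. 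Hence it suffices to treat one summand $W_{\alpha}:=F_{\alpha}\otimes(\underline{\mathbb{C}^{m}},\deebar+J_md\bar{w})$, where $\alpha=F_{\alpha}\in\mathrm{Spec}(V)$ and $J_m$ is a single nilpotent Jordan block of size $m$; summing the resulting contributions over all blocks of all $N_i$ will reassemble the stated formula with the multi-indices $I_{\alpha}$.

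First I would identify $W_{\alpha}$ with a twisted Atiyah bundle. The standard flag $\langle e_1\rangle\subset\langle e_1,e_2\rangle\subset\cdots\subset\underline{\mathbb{C}^m}$ is preserved by $\deebar+J_md\bar{w}$ and has successive quotients isomorphic to the trivial holomorphic line bundle $\mathcal{O}_{T^2}$, so $(\underline{\mathbb{C}^m},\deebar+J_md\bar{w})$ is an $m$-fold self-extension of $\mathcal{O}_{T^2}$; since $J_m$ is a single block its endomorphism algebra is local, so this bundle is indecomposable, i.e.\ it is the Atiyah bundle $I_m$. Thus $W_{\alpha}=F_{\alpha}\otimes I_m$ is indecomposable of rank $m$ and degree $0$.

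Next I would invoke Mukai's theorem that $\mathrm{FM}$ is an equivalence $D^b(\mathrm{Coh}\,\mathcal{O}_{T^2})\xrightarrow{\sim}D^b(\mathrm{Coh}\,\mathcal{O}_{\hat{T}^2})$, together with the normalization $\mathcal{L}|_{T^2\times\{\hat{w}\}}\simeq L_{-\hat{w}}$. For the flat line bundle $F_{\alpha}=L_{\alpha}$ one has $F_{\alpha}\otimes L_{-\hat{w}}\simeq L_{\alpha-\hat{w}}$, which has vanishing cohomology unless $\hat{w}=\alpha$, while $H^0(T^2,\mathcal{O}_{T^2})=H^1(T^2,\mathcal{O}_{T^2})=\mathbb{C}$; hence $F_{\alpha}$ is $\mathrm{WIT}_1$ with $\mathrm{FM}(F_{\alpha})\simeq k(\alpha)[-1]$. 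The $\mathrm{WIT}_1$ property is stable under extensions, as one sees from the long exact sequence of cohomology sheaves attached to the triangle $\mathrm{FM}$ produces from a short exact sequence, so $W_{\alpha}$ is $\mathrm{WIT}_1$ and $\mathrm{FM}(W_{\alpha})=\hat{W}_{\alpha}[-1]$ for a single coherent sheaf $\hat{W}_{\alpha}$ on $\hat{T}^2$. Applying the same fibrewise vanishing to $W_{\alpha}$, which is an iterated extension of $F_{\alpha}$, gives $H^{\bullet}(T^2,W_{\alpha}\otimes L_{-\hat{w}})=0$ for $\hat{w}\neq\alpha$, so $\hat{W}_{\alpha}$ is a torsion sheaf supported at $\alpha$.

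Finally I would pin down $\hat{W}_{\alpha}$. On an elliptic curve $\mathrm{FM}$ rotates the numerical invariants $(\mathrm{rank},\deg)$, carrying the invariants $(m,0)$ of $W_{\alpha}$ to those of a torsion sheaf of length $m$, so $\mathrm{length}(\hat{W}_{\alpha})=m$. Because $\mathrm{FM}$ is an equivalence and $W_{\alpha}$ is indecomposable, $\hat{W}_{\alpha}$ is an indecomposable torsion sheaf of length $m$ supported at the smooth point $\alpha$, and every such sheaf is isomorphic to $\mathcal{O}_{\hat{T}^2,\alpha}/m_{\hat{T}^2,\alpha}^{m}$. Summing over the Jordan blocks of the $N_i$ and recording their sizes in the multi-indices $I_{\alpha}$ then yields the asserted description of $H^1(\mathrm{FM}(V))$ together with the vanishing $H^i(\mathrm{FM}(V))=0$ for $i\neq1$. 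The main obstacle is precisely this last identification: one must control the $\mathcal{O}_{\hat{T}^2}$-module structure so that the transform is the indecomposable cyclic module $\mathcal{O}/m^m$ rather than a direct sum of shorter ones, and this is exactly where the indecomposability of a single-Jordan-block summand and the categorical equivalence property of $\mathrm{FM}$ are indispensable.
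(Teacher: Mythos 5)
Your proposal is correct, but it takes a genuinely different route from the paper: the paper gives no argument of its own for this corollary, stating only that it ``is also obtained by'' the cited result \cite[Proposition 2.2]{Ref:Moc1}, whereas you give a self-contained derivation from Mukai's derived equivalence. Your reduction to a single summand $F_{\alpha}\otimes(\underline{\mathbb{C}^{m}},\deebar+J_{m}d\bar{w})$ is sound, and the three mechanisms you rely on are correctly deployed: $\mathrm{WIT}_1$ is closed under extensions (long exact sequence of cohomology sheaves of the transformed triangle), full faithfulness of $\mathrm{FM}$ transports the local endomorphism algebra of the single-block summand to $\hat{W}_{\alpha}$ and forces its indecomposability, and the structure theory of finite-length modules over the discrete valuation ring $\mathcal{O}_{\hat{T}^2,\alpha}$ then leaves $\mathcal{O}_{\hat{T}^2,\alpha}/m_{\hat{T}^2,\alpha}^{m}$ as the only possibility, which is exactly the point where a decomposition into shorter cyclic modules must be excluded. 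Two steps deserve an explicit line each, though both are standard: the locality of $\mathrm{End}\bigl(\underline{\mathbb{C}^{m}},\deebar+J_{m}d\bar{w}\bigr)$ is not purely formal, since one must rule out nonconstant holomorphic endomorphisms (e.g.\ for $m=2$ the component equation $\deebar a=-c\,d\bar{w}$ with $c$ constant forces $c=0$ on $T^2$, after which the algebra is $\mathbb{C}[J_{m}]$, which is local); and the assertion that a rank-$m$, degree-$0$, $\mathrm{WIT}_1$ bundle transforms to a torsion sheaf of length $m$ should be anchored in Mukai's computation of $\mathrm{FM}$ on Chern characters. What your approach buys is independence from \cite{Ref:Moc1} and a transparent accounting of which formal properties of $\mathrm{FM}$ produce the cyclic module structure; what the citation buys the paper is brevity and consistency with the conventions of \cite{Ref:Moc1}, on which the paper also leans for Proposition \ref{Prp:decomp of ss deg0}, Definition \ref{Def:spectrum of ss deg0}, and the weight comparison in Proposition \ref{Prp:merom comparison}.
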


	\section{$L^2$-finite instantons on $\mathbb{R}\times T^3$}\label{Sec:L^2 fin Inst on RxT^3}
	In this  section, we fix a positive number $0<\lambda<1$.
  Recall that a function $f$ is called of $C^{i,\lambda}$-class 
  if $f$ is of $C^i$-class and all derivatives of $f$ of order $i$ are locally $\lambda$-H\"{o}lder continuous.
  \subsection{Asymptotic behavior of solutions of the Nahm equation on $(0,\infty)$}
  	For a $T^3$-invariant unitary connection $A$ on $(\underline{\mathbb{C}^r},\underline{h})$ on $(0,\infty)\times T^3$,
    the tuple $(\underline{\mathbb{C}^r},\underline{h},A)$ is an instanton if and only if
    the connection form of $A=\alpha\,dt+\sum_iA_idx^i$ satisfies the Nahm equation:
    \begin{eqnarray}\label{Eqn:Nahm eq}
    	\left\{\begin{array}{l}
      	\displaystyle\frac{\partial A_1}{\partial t} + [\alpha, A_1] = -[A_2, A_3] \\
        \\
        \displaystyle\frac{\partial A_2}{\partial t} + [\alpha, A_2] = -[A_3, A_1] \\
        \\
        \displaystyle\frac{\partial A_3}{\partial t} + [\alpha, A_3] = -[A_1, A_2]. \\
			\end{array}\right.
    \end{eqnarray}
    
    For skew-Hermitian commuting matrices $\Gamma_i \in \mathfrak{u}(r)\;(i=1,2,3)$,
    let $\mathrm{Center}(\Gamma)$ denote the centralizer of $\Gamma:=(\Gamma_i)$ in $\mathfrak{u}(r)$,
    i.e.\ $\mathrm{Center}(\Gamma)=\{a \in \mathfrak{u}(r)\;|\;[\Gamma_i,a]=0\;(i=1,2,3)\}$.
    If we take $N_i \in \mathrm{Center}(\Gamma)\;(i=1,2,3)$ satisfying the relations $N_i = [N_j,N_k]$ for any even permutation $(ijk)$ of $(123)$,
    then the tuple of $\alpha=0$ and $A_i=\Gamma_i+N_i/t$ $(i=1,2,3)$ forms a solution of (\ref{Eqn:Nahm eq}) on $(0,\infty)$.
    \begin{Def}\label{Def:model sol of Nahm eq}
    	A pair of tuples $(\Gamma=(\Gamma_i), N=(N_i))$ as described above is called a model solution of the Nahm equation.
    \end{Def}
    
    In \cite[Corollary 2.2 and Proposition 3.1]{Ref:Biq}, Biquard proved the following theorem.
    \begin{Thm}\label{Thm:Asymp of Nahm}
    	Let $\left(\alpha(t), A_i(t)\right)$ be a solution of the Nahm equation of $C^{3,\lambda}$-class on $(0,\infty)$ of rank $r$.
      Then, there exist a model solution of the Nahm equation $(\Gamma,N)$ and a $C^{4,\lambda}$-gauge transformation $g:(0,\infty) \to U(r)$
      such that the following conditions are satisfied.
      \begin{enumerate}
      	\item
        	The gauge transformation $g$ satisfies $g^{-1}\alpha g + g^{-1}\partial_{t}g = 0$.
        \item
        	We take the decomposition $g^{-1}A_i g - (\Gamma_i + N_i /t) = \eps_{1,i}(t) + \eps_{2,i}(t)$ satisfying 
          $\eps_{1,i}(t) \in \mathrm{Center}(\Gamma)$ and $\eps_{2,i}(t) \in (\mathrm{Center}(\Gamma))^{\perp}$,
          where $(\mathrm{Center}(\Gamma))^{\perp}$ means the orthogonal complement of
          $\mathrm{Center}(\Gamma)$ in $\mathfrak{u}(r)$ with respect to the inner product $\i<A,B> := -\mathrm{tr}(AB)$.
          Then, there exists $\delta>0$ such that the following estimates hold for any $1\leq i\leq 3$ and $0\leq j\leq 3$.
          \begin{eqnarray*}
          	\left\{
            	\begin{array}{ll}
		          	|\partial^{j}_{t} \eps_{1,i}(t)| &= O(t^{-(1+j+\delta)})\\
  		          |\partial^{j}_{t} \eps_{2,i}(t)| &= O(\exp(- \delta t))
              \end{array}
            \right.
          \end{eqnarray*}
      \end{enumerate}
    \end{Thm}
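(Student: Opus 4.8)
The plan is to pass first to the temporal gauge, turning the problem into an autonomous ODE, and then to analyse that flow by linearising at its limit, treating the directions along $\mathrm{Center}(\Gamma)$ separately from the transverse ones. Constructing the gauge $g$ is the easy part: the normalisation $g^{-1}\alpha g + g^{-1}\partial_t g = 0$ is the linear ODE $\partial_t g = -\alpha g$, and since $\alpha$ is skew-Hermitian and of $C^{3,\lambda}$-class on all of $(0,\infty)$, the solution with unitary initial value exists on the whole half-line, stays in $U(r)$, and is of $C^{4,\lambda}$-class. In this gauge the Nahm equation becomes the autonomous system $\partial_t A_i = -\tfrac12 \epsilon_{ijk}[A_j,A_k]$, and I work with it from here on.

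Next I would extract a commuting limit. Global existence on the open half-line already forbids the finite-time blow-up this flow otherwise admits (as the scalar reduction $\dot f = -f^2$ shows), and together with the isospectral structure of Nahm's equation it bounds the trajectory and yields $\Gamma_i := \lim_{t\to\infty} A_i(t)$. Since $\partial_t A_i\to 0$ forces $[A_j,A_k]\to 0$, the $\Gamma_i$ are commuting skew-Hermitian matrices, so $\mathrm{Center}(\Gamma)$ is defined. Writing $A_i = \Gamma_i + e_i$ with $e_i\to 0$ and linearising gives $\partial_t e_i = L(e)_i + Q(e)_i$, where $L(e)_i = -\epsilon_{ijk}[\Gamma_j,e_k]$ and $Q$ is quadratic. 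As the $\Gamma_j$ commute and are skew-Hermitian, the $\mathrm{ad}_{\Gamma_j}$ are simultaneously diagonalisable; on a joint eigenspace where $\mathrm{ad}_{\Gamma_j}$ has eigenvalue $\sqrt{-1}\mu_j$, the operator $L$ acts as $\vec e\mapsto -\sqrt{-1}\,\vec\mu\times\vec e$, with eigenvalues $0$ and $\pm|\vec\mu|$. On $(\mathrm{Center}(\Gamma))^{\perp}$ the gap $|\vec\mu|>0$ makes $L$ hyperbolic: the growing modes are ruled out by the boundedness just obtained, the stable modes decay like $\exp(-\delta t)$, and the residual $L$-zero modes transverse to $\mathrm{Center}(\Gamma)$ are tangent to the variety of commuting tuples and get absorbed once $\Gamma$ is taken to be the actual limit. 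A standard ODE bootstrap then upgrades this to the claimed estimate for $\eps_{2,i}$.

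Finally I would treat the slow directions. On $\mathrm{Center}(\Gamma)$ the operator $L$ vanishes and the flow restricts to a Nahm flow valued in $\mathrm{Center}(\Gamma)$, so the decay is only polynomial. Substituting the ansatz $N_i/t$ shows it solves this reduced flow exactly precisely when $N_i = [N_j,N_k]$, which is the $\mathfrak{su}(2)$ relation; the sub-leading behaviour is governed by the $\tfrac1t$-weighted operator $K(e)_i = -\epsilon_{ijk}[N_j,e_k]$, built from the $\mathfrak{su}(2)$-action of $(\mathrm{ad}_{N_j})$ on $\mathrm{Center}(\Gamma)\otimes\mathbb{C}^3$. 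Reading its indicial roots off this representation selects $N$ as the unique centre-valued solution matching the $1/t$ coefficient and forces the remainder $\eps_{1,i}$ to be $O(t^{-(1+\delta)})$; differentiating the ODE propagates this to the bounds $|\partial_t^j \eps_{1,i}| = O(t^{-(1+j+\delta)})$.

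The hard part is this last, centre-direction step: along $\mathrm{Center}(\Gamma)$ the linearisation degenerates, so the dichotomy must be resolved at the nonlinear level, where the $\mathfrak{su}(2)$-structure of $(N_j)$ both produces the model term $N_i/t$ and fixes the sharp exponent $1+\delta$ via the indicial roots of $\tfrac1t K$. By comparison the transverse exponential decay is the routine hyperbolic part, needing only the spectral gap of $\mathrm{ad}_\Gamma$ and the exclusion of unstable modes by global existence.
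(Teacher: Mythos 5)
Your proposal does not follow the paper's route: the paper's proof of this theorem consists of (a) quoting Biquard (\cite{Ref:Biq}, Corollaire 2.2 and Proposition 3.1) for the existence of the model solution $(\Gamma,N)$ and the $C^0$ estimates $|\tilde{\eps}_{1,i}|=O(t^{-(1+\delta)})$, $|\tilde{\eps}_{2,i}|=O(\exp(-\delta t))$ in some gauge, (b) composing with a second gauge transformation $g_1$ solving $g_1^{-1}\tilde{\alpha}g_1+g_1^{-1}\partial_t g_1=0$ with $g_1\to\mathrm{Id}$ exponentially fast, so that these estimates survive the passage to a temporal gauge, and (c) a bootstrap on the error ODE, using $[\mathrm{Center}(\Gamma),\mathrm{Center}(\Gamma)]\subset\mathrm{Center}(\Gamma)$, to get the derivative bounds. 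Your analogues of (b) and (c) --- solving $\partial_t g=-\alpha g$ and differentiating the error equation --- are fine. What you attempt in place of (a), namely a from-scratch dynamical-systems proof of the asymptotics, is exactly the hard content that the paper outsources, and it has concrete gaps.

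First, the existence of the limit $\Gamma_i=\lim_{t\to\infty}A_i(t)$ is asserted rather than proved: isospectrality of the Lax matrix constrains only eigenvalues, and the relevant matrices are non-normal --- for a model solution with $\Gamma=0$ the Lax matrix $(N_1+\sqrt{-1}N_2)/t$ is nilpotent, so its spectrum says nothing about its norm. Hence ``global existence plus isospectral structure bounds the trajectory'' is not an argument, and even a bounded trajectory of this flow need not converge without further input (e.g.\ a \L{}ojasiewicz inequality for the gradient-flow structure, or Biquard's $O(t^{-2})$ bound on $\partial_t A_i$). Second, the transverse linearization is not hyperbolic: on a joint $\mathrm{ad}_{\Gamma}$-eigenspace with eigenvalue vector $\sqrt{-1}\vec{\mu}\neq 0$, the operator $L$ has eigenvalues $0,\pm|\vec{\mu}|$, and its kernel there (the directions with $e_i\propto\mu_i$) lies inside $(\mathrm{Center}(\Gamma))^{\perp}$ --- precisely where the theorem demands exponential decay of $\eps_{2,i}$. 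Declaring these modes ``absorbed once $\Gamma$ is taken to be the actual limit'' is the crux, not a remark: the absorption shifts $\Gamma$, the splitting $\mathrm{Center}(\Gamma)\oplus(\mathrm{Center}(\Gamma))^{\perp}$ moves with it (centralizer dimension can jump, since the commuting variety is singular), and the whole step is entangled with the unproven convergence above. Third, on the center directions your indicial-root discussion presupposes that the center component has a genuine $c/t$ expansion whose coefficient stabilizes to some $N$ with $N_i=[N_j,N_k]$; ruling out slower decay or a non-convergent coefficient, and producing a definite gap $\delta>0$, is the content of Biquard's Proposition 3.1 and does not follow from reading off indicial roots after the fact. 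So as written, the proposal reproduces the paper's easy steps and only sketches, without establishing, the cited input.
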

    \begin{proof}
    	By Corollary 2.2 and Proposition 3.1 in \cite{Ref:Biq},
      there exist a positive number $\delta>0$ and a gauge transformation $g_0:(0,\infty) \to U(r)$ such that
      for the transformed solution $(\tilde{\alpha},\tilde{A}_i)=(g_0^{-1}\alpha g_0 + g_0^{-1}\partial_{t}g_0, g_0^{-1}A_ig_0)$
      we have the following estimates:
      \begin{eqnarray}\label{Eqn:C^0 est. of Nahm eq.}
          	\left\{
            	\begin{array}{ll}
              		\left|\tilde{\alpha}\right| &= O(\exp(- \delta t))\\
		          	\left|\tilde{\eps}_{1,i}\right| &= O(t^{-(1+\delta)})\\
  		         	\left|\tilde{\eps}_{2,i}\right| &= O(\exp(- \delta t)),
              \end{array}
            \right.
      \end{eqnarray}
      where we set $\tilde{A}_i - (\Gamma_i + N_i /t) =: \tilde{\eps}_{i,1} + \tilde{\eps}_{i,2} \in \mathrm{Center}(\Gamma)\oplus(\mathrm{Center}(\Gamma))^{\bot}$.
      We take another gauge transform $g_1:(0,\infty) \to U(r)$ satisfying the following conditions:
      \begin{eqnarray*}
          	\left\{
            	\begin{array}{ll}
              	g_1^{-1}\tilde{\alpha} g_1 + g_1^{-1}\partial_{t}g_1 = 0\\
                \lim_{t \to \infty}g_1(t) = \mathrm{Id}.
            	\end{array}
            \right.
    	\end{eqnarray*}
      Then we have an estimate $|g_1(t) - \mathrm{Id}| = O(\exp(- \delta t))$.
      Hence the same estimate as (\ref{Eqn:C^0 est. of Nahm eq.}) holds for the gauge transform $g:=g_0g_1$.
      Moreover, by definition of $g_1$ we have $g^{-1}\alpha g + g^{-1}\partial_{t}g = 0$,
      and this shows that $g$ is of $C^{4,\lambda}$-class.
      For a permutation $(ijk)$ of $(123)$,
      the equation (\ref{Eqn:Nahm eq}) can be written as follows:
      \begin{equation}\label{Eqn:deriv. of err. term}
      	\partial_t(\eps_{1,i}+\eps_{2,i})
      	 = [N_j/t,\eps_{1,k}]+[\eps_{1,j},N_k/t]+ [\Gamma_j+N_j/t,\eps_{2,k}]+[\eps_{2,j},\Gamma_k+N_k/t] + [\eps_{1,j}+\eps_{2,j}, \eps_{1,k}+\eps_{2,k}].
      \end{equation}
      Since we have $[\mathrm{Center}(\Gamma),\mathrm{Center}(\Gamma)]\subset \mathrm{Center}(\Gamma)$, by bootstrapping argument from (\ref{Eqn:deriv. of err. term}) we obtain the desired estimates for derivatives of $\eps_{1,i}(t)$ and $\eps_{2,i}(t)$.
    \end{proof}
  \subsection{Asymptotic behavior of $L^2$-finite instantons on $(0,\infty) \times T^3$}\label{SubSec:Asymp of Inst on (0,inf)xT^3}
  	The following theorem is proved in \cite[Lemma 3.3.2, Theorem 4.3.1, Corollary 4.3.3, Corollary 5.1.3 and Theorem 5.2.2]{Ref:Mor-Mro-Rub}.
    \begin{Thm}\label{Thm:Appro of Inst}
    	Let $(V,h,A)$ be an $L^2$-finite instanton on $(0,\infty) \times T^3$.
      If we take a sufficiently large $R>0$, then there exist a positive number $\delta>0$, a trivialization of $C^{4,\lambda}$-class
      $\sigma:(V,h)|_{(R,\infty)\times T^3} \simeq (\underline{\mathbb{C}^r} , \underline{h})$
      and a $T^3$-invariant $L^2$-finite instanton $(\underline{\mathbb{C}^r} , \underline{h}, \tilde{A})$ on $(R,\infty)\times T^3$,
      such that we have the following estimates.
      \begin{eqnarray*}
      	\left\{\begin{array}{ll}
      		||A_\sigma||_{C^{3,\lambda}([t,t+1] \times  T^3)} &= O(1)\\
        	||A_\sigma - \tilde{A}||_{C^{3,\lambda}([t,t+1] \times  T^3)} &= O(\exp(- \delta t)),
        \end{array}\right.
      \end{eqnarray*}
      where $A_\sigma$ is the connection form of $A$ with respect to $\sigma$, and we identify $\tilde{A}$ with its connection form.
    \end{Thm}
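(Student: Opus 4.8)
The plan is to follow the standard theory of finite-energy anti-self-dual connections on a cylindrical end, the cross-section here being the flat torus $T^3$, as carried out in \cite{Ref:Mor-Mro-Rub}. The first step is to convert the hypothesis $|F(A)|\in L^2$ into slicewise decay: since $F(A)$ is $L^2$ on $(0,\infty)\times T^3$, the energy $\int_{[t,t+1]\times T^3}|F(A)|^2$ tends to $0$ as $t\to\infty$. Once this is below Uhlenbeck's gauge-fixing constant, I would put $A$ into local Coulomb gauge on each slab $[t,t+1]\times T^3$, where the connection form is bounded by the local $L^2$-norm of $F(A)$, and then bootstrap the ASD equation to gain $C^{3,\lambda}$-control. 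Patching these local gauges into a single temporal-gauge trivialization $\sigma$ of $C^{4,\lambda}$-class over the far end $(R,\infty)\times T^3$ gives the first estimate $||A_\sigma||_{C^{3,\lambda}([t,t+1]\times T^3)}=O(1)$. Note that $A_\sigma$ need not vanish at infinity: it converges to a nonzero $T^3$-invariant instanton, so one can only expect the $O(1)$ bound here.

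The second and substantive step is to construct that limiting $T^3$-invariant instanton $\tilde A$ and to establish the exponential convergence $||A_\sigma-\tilde A||_{C^{3,\lambda}}=O(\exp(-\delta t))$. In temporal gauge the ASD equation is the downward gradient flow of the Chern--Simons functional for the path of connections $a(t)$ on $T^3$, whose linearization has the form $\partial_t+L_t$ with $L_t$ self-adjoint elliptic on $T^3$. I would split $a(t)$ into its $T^3$-invariant part, which records the slow motion along the space of flat connections and will supply $\tilde A$, and its fluctuating part in the non-invariant Fourier modes. Because the flat connections on $T^3$ are highly non-acyclic, the limit cannot be a single flat connection; instead the invariant part evolves as a solution of the Nahm equation (\ref{Eqn:Nahm eq}), which is exactly the $T^3$-invariant instanton $\tilde A$ and the part that is not expected to decay.

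The main obstacle is the exponential decay of the fluctuating part. This rests on a spectral gap: the non-invariant Fourier modes on $T^3$ are bounded away from zero by a constant determined by the lattice $\Lambda_3$, which forces the corresponding components of $A_\sigma-\tilde A$ to decay at a uniform exponential rate $O(\exp(-\delta t))$. The delicate point is ruling out a slowly-decaying tail in the invariant (zero-mode) directions after subtracting $\tilde A$, which requires a center-manifold or \L{}ojasiewicz-type argument for the gradient flow; this is the analytic heart of \cite{Ref:Mor-Mro-Rub}. Rather than reproduce it, the efficient route is to assemble the cited results: Lemma 3.3.2 provides the slicewise energy decay, Theorem 4.3.1 and Corollary 4.3.3 provide the good gauge $\sigma$ together with the $C^{3,\lambda}$ bound, and Corollary 5.1.3 with Theorem 5.2.2 upgrade the convergence to the stated exponential rate, with $\tilde A$ a $T^3$-invariant (hence Nahm) solution.
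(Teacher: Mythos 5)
Your outline follows the same route as the paper: both rest on the cylindrical-end theory of \cite{Ref:Mor-Mro-Rub}, with Lemma 3.3.2, Theorem 4.3.1 and Corollary 4.3.3 supplying the $C^{4,\lambda}$ gauge $\sigma$ and the $O(1)$ bound, and the center-manifold results of Section 5 supplying exponential convergence. The gap is in the last step: you attribute to Corollary 5.1.3 and Theorem 5.2.2 the conclusion that the limit $\tilde A$ is $T^3$-invariant, but those results do not say this. Theorem 5.2.2 gives exponential convergence of $A_\sigma$ to a solution lying on the center manifold $\mathcal{H}$ of the flat limit $\nabla_{\Gamma}$, and the points of $\mathcal{H}$ are a priori just connections on $T^3$, not $T^3$-invariant ones; a solution on $\mathcal{H}$ is a priori only a slowly varying ASD connection, not a Nahm solution. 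The $T^3$-invariance is precisely what the paper has to prove, and it does so by an argument absent from your proposal: (a) first normalize the trivialization by a (possibly winding) gauge transformation so that any two distinct simultaneous eigenvalues $\alpha,\beta$ of $\Gamma$ satisfy $\alpha_i-\beta_i\notin 2\pi\sqrt{-1}\mathbb{Z}$; (b) observe that under this normalization every class in $H^1\bigl(\Omega^{\ast}_{T^3}(\underline{\mathfrak{u}(r)}),\nabla_{ad(\Gamma)}\bigr)$ is $T^3$-invariant; (c) use the $T^3$-equivariance of the isometry $T_{\nabla_{\Gamma}}\mathcal{H}\simeq H^1$ (this is what Corollary 5.1.3 is actually cited for) together with the connectedness of $\mathcal{H}$ to conclude that the $T^3$-action on $\mathcal{H}$ is trivial, i.e.\ every point of $\mathcal{H}$ is a $T^3$-invariant connection. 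Only after (a)--(c) does ``convergence to the center manifold'' become the statement of the theorem.

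The same omission invalidates your heuristic spectral-gap argument. The gap of the linearized operator on the non-invariant Fourier modes is not ``a constant determined by the lattice $\Lambda_3$'': on the $(\alpha,\beta)$-isotypic component of $ad(\Gamma)$ the relevant operator involves $2\pi\sqrt{-1}\,n+(\alpha-\beta)$ with $n\in\Lambda_3^{\ast}$, so when two distinct simultaneous eigenvalues differ by a nonzero element of $2\pi\sqrt{-1}\Lambda_3^{\ast}$ there are zero modes sitting in \emph{non-invariant} Fourier modes and the gap vanishes. In that resonant situation your decomposition (invariant part $=$ slow Nahm dynamics, non-invariant part $=$ exponentially decaying) breaks down, and the limiting connection produced by the center-manifold theorem need not be $T^3$-invariant in the chosen trivialization --- it is only gauge-equivalent to an invariant one via a winding gauge transformation. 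This resonance is exactly what the paper's preliminary gauge normalization removes; without it your proof cannot reach the stated conclusion.
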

    \begin{proof}
    	Since we have $\pi_1(T^3)\simeq \mathbb{Z}^3$,
    	by considering parallel transport,
    	it is proved that
    	for any flat Hermitian vector bundle $F$ on $T^3$ of rank $r$
    	there exists a tuple of Hermitian commuting matrices $\Gamma = (\Gamma_i)\subset \mathfrak{u}(r)\;(i=1,2,3)$ such that we have $F \simeq (\underline{\mathbb{C}^r},\underline{h},d+\sum_i \Gamma_i dx^i)$.
    	By taking a suitable gauge transformation of $(\underline{\mathbb{C}^r},\underline{h})$, we may assume the condition $\alpha_i - \beta_i \not\in (2\pi\sqrt{-1})\mathbb{Z}$ for any two distinct simultaneous eigenvalues $\alpha=(\alpha_i),\beta=(\beta_i) \in (\sqrt{-1}\mathbb{R})^3$ and for any $i=1,2,3$.
    	By Lemma 3.3.2, Theorem 4.3.1 and Corollary 4.3.3 in \cite{Ref:Mor-Mro-Rub},
    	there exist $R>0$, a commuting tuple of skew-Hermitian matrices $\Gamma = (\Gamma_i)\subset\mathfrak{u}(r)\;(i=1,2,3)$ and a trivialization of $C^{4,\lambda}$-class
    	$\tilde{\sigma}:(V,h)|_{(R,\infty)\times T^3} \simeq (\underline{\mathbb{C}^r} , \underline{h})$ such that
    	$||A_{\tilde{\sigma}} - \sum_i \Gamma_i dx^i||_{C^{3,\lambda}([t,t+1] \times  T^3)} = o(1)$.
    	In particular, we obtain $||A_{\tilde{\sigma}}||_{C^{3,\lambda}([t,t+1] \times  T^3)} = O(1)$.
    	
    	Let $\mathcal{A}_{L^2_3}$ be the space of $L^2_3$-connections of $(\underline{\mathbb{C}^r} , \underline{h})$ and $\mathcal{H}\subset \mathcal{A}_{L^2_3}$ be the Center manifold of the flat connection $\nabla_{\Gamma} = (d+\sum_i \Gamma_i dx^i)$ on $(\underline{\mathbb{C}^r} , \underline{h})$ in \cite[Section 5.1]{Ref:Mor-Mro-Rub}.
    	By the definition of the Center manifold, we have $\nabla_{\Gamma} \in \mathcal{H}$.
    	The $T^3$-action on $T^3$ itself induces the $T^3$-action on $\mathcal{A}_{L^2_3}$.
    	Since $\nabla_{\Gamma}$ is $T^3$-invariant,
    	by Corollary 5.1.3 in \cite{Ref:Mor-Mro-Rub},
    	$\mathcal{H}$ is a connected Riemannian manifold equipped with a $T^3$-action.
    	By \cite{Ref:Mor-Mro-Rub},
    	we have the $T^3$-equivariant isometry $T_{\nabla_{\Gamma}}\mathcal{H} \simeq H^1\left(\Omega_{T^3}^{\ast}(\underline{\mathfrak{u}(r)}),\nabla_{ad(\Gamma)}\right)$,
   		where $\nabla_{ad(\Gamma)}$ is the flat connection on $\underline{\mathfrak{u}(r)}$ induced by $\nabla_{\Gamma}$ with the adjoint representation of $\mathfrak{u}(r)$.
    	However, any elements of $H^1(T^3,\Omega^{\ast}(\underline{\mathbb{C}^r},\nabla_{\Gamma}))$ are $T^3$-invariant because of the assumption on simultaneous eigenvalues of $\Gamma$.
    	Therefore, since $\mathcal{H}$ is connected, the $T^3$-action on $\mathcal{H}$ is trivial.
    	Hence, by Lemma 3.3.2 and Theorem 5.2.2 in \cite{Ref:Mor-Mro-Rub},
    	there exist a positive number $\delta>0$, a trivialization $\sigma:(V,h)|_{(R,\infty)\times T^3} \simeq (\underline{\mathbb{C}^r} , \underline{h})$ and a $T^3$-invariant $L^2$-finite instanton $(\underline{\mathbb{C}^r} , \underline{h}, \tilde{A})$ on $(R,\infty)\times T^3$,
    	such that we have $||A_\sigma - \tilde{A}||_{C^{3,\lambda}([t,t+1] \times  T^3)} = O(\exp(- \delta t))$,
    	which completes the proof.
    \end{proof}
    We obtain the following corollary as a consequence of Theorem \ref{Thm:Appro of Inst} and Theorem \ref{Thm:Asymp of Nahm}.
    \begin{Cor}\label{Cor:Asymp of Inst}
    	Let $(V,h,A)$ be an $L^2$-finite instanton on $(0,\infty) \times T^3$ of rank $r$.
      If we take a sufficiently large $R>0$, then there exist a positive number $\delta>0$, a trivialization of $C^{4,\lambda}$-class
      $\sigma:(V,h)|_{(R,\infty)\times T^3} \simeq (\underline{\mathbb{C}^r} , \underline{h})$
      and a model solution of the Nahm equation $(\Gamma,N)$ such that the following holds.
    	\begin{enumerate}
      	\item
        	Let $\alpha dt + \sum_i A_i dx^{i}$ denote the connection form of $A$ with respect to $\sigma$.
          Then we have $\alpha = 0$.
        \item
        	There exist  decompositions 
          $A_i - (\Gamma_i + N_i /t) = \eps_{1,i}(t) + \eps_{2,i}(t) + \eps_{3,i}(t,x)$
          such that we have $\eps_{1,i}(t) \in \mathrm{Center}(\Gamma)$, $\eps_{2,i}(t) \in (\mathrm{Center}(\Gamma))^{\perp}$
          and that the following estimates for $1\leq i\leq 3$ and $0\leq j\leq 3$,
          \begin{eqnarray*}
      			\left\{\begin{array}{ll}
            	|\partial^{j}_{t} \eps_{1,i}| &= O(t^{-(1+j+\delta)})\\
            	|\partial^{j}_{t} \eps_{2,i}| &= O(\exp(- \delta t))\\
              ||\eps_{3,i}||_{C^{3,\lambda}([t,t+1]\times T^3)} &= O(\exp(- \delta t)).
            \end{array}\right.
          \end{eqnarray*}
      \end{enumerate}
    \end{Cor}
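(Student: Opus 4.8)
The plan is to combine Theorem \ref{Thm:Appro of Inst} and Theorem \ref{Thm:Asymp of Nahm}: the former reduces an arbitrary $L^2$-finite instanton to a $T^3$-invariant model up to an exponentially small error, and the latter describes that $T^3$-invariant model by a model solution of the Nahm equation. First I would apply Theorem \ref{Thm:Appro of Inst} to obtain $R>0$, a $C^{4,\lambda}$-trivialization $\sigma_0$, and a $T^3$-invariant $L^2$-finite instanton $(\underline{\mathbb{C}^r},\underline{h},\tilde{A})$ on $(R,\infty)\times T^3$ with $\|A_{\sigma_0}-\tilde{A}\|_{C^{3,\lambda}([t,t+1]\times T^3)} = O(\exp(-\delta t))$ and $\|A_{\sigma_0}\|_{C^{3,\lambda}} = O(1)$. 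Writing $\tilde{A} = \tilde{\alpha}(t)\,dt + \sum_i \tilde{A}_i(t)\,dx^i$, the $T^3$-invariance together with the ASD equation shows that $(\tilde{\alpha},\tilde{A}_i)$ is a $C^{3,\lambda}$-solution of the Nahm equation (\ref{Eqn:Nahm eq}) on $(R,\infty)$. Applying Theorem \ref{Thm:Asymp of Nahm} to it then produces a model solution $(\Gamma,N)$ and a $C^{4,\lambda}$-gauge transformation $g\colon(R,\infty)\to U(r)$, depending only on $t$, such that $g^{-1}\tilde{\alpha}g + g^{-1}\partial_t g = 0$ and $g^{-1}\tilde{A}_i g - (\Gamma_i + N_i/t) = \tilde{\eps}_{1,i} + \tilde{\eps}_{2,i}$ with the stated decay of $\tilde{\eps}_{1,i}\in\mathrm{Center}(\Gamma)$ and $\tilde{\eps}_{2,i}\in(\mathrm{Center}(\Gamma))^{\perp}$.

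Next I would transport everything by $g$. Since $g$ depends only on $t$, it is $T^3$-invariant, so $\sigma_1 := \sigma_0\cdot g$ is again a $C^{4,\lambda}$-trivialization and the connection form of $A$ with respect to $\sigma_1$ is $g^{-1}A_{\sigma_0}g + g^{-1}dg$. Its $dt$-component equals $g^{-1}\alpha_{\sigma_0}g + g^{-1}\partial_t g = g^{-1}(\alpha_{\sigma_0}-\tilde{\alpha})g$, because $g^{-1}\tilde{\alpha}g + g^{-1}\partial_t g = 0$; thus it is an $(t,x)$-dependent term $\beta$ with $\|\beta\|_{C^{3,\lambda}([t,t+1]\times T^3)} = O(\exp(-\delta t))$. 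Its $dx^i$-component equals $(\Gamma_i + N_i/t) + \tilde{\eps}_{1,i} + \tilde{\eps}_{2,i} + g^{-1}(A_{\sigma_0,i}-\tilde{A}_i)g$, where the last summand is again $O(\exp(-\delta t))$ in $C^{3,\lambda}$ by the approximation estimate.

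Finally I would fix the temporal gauge by solving the linear ODE $\partial_t h = -\beta h$ with the normalization $\lim_{t\to\infty}h(t,x) = \mathrm{Id}$; since $\beta$ is integrable and exponentially small, this has a unique solution with $\|h - \mathrm{Id}\|_{C^{3,\lambda}([t,t+1]\times T^3)} = O(\exp(-\delta t))$. Setting $\sigma := \sigma_1\cdot h$ kills the $dt$-component, so $\alpha = 0$, and transforms the spatial part by $A_{\sigma,i} = h^{-1}A_{\sigma_1,i}h + h^{-1}\partial_{x^i}h$; the corrections $h^{-1}A_{\sigma_1,i}h - A_{\sigma_1,i}$ and $h^{-1}\partial_{x^i}h$ are both $O(\exp(-\delta t))$ in $C^{3,\lambda}$ because $h - \mathrm{Id}$ is. Collecting terms, I would put $\eps_{1,i} := \tilde{\eps}_{1,i}$, $\eps_{2,i} := \tilde{\eps}_{2,i}$, and let $\eps_{3,i}(t,x)$ be the sum of all the remaining $(t,x)$-dependent pieces; the three estimates then follow from Theorem \ref{Thm:Asymp of Nahm} and the exponential bounds above (after shrinking $\delta$ if necessary). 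The main obstacle is this last step: one must verify that the temporal-gauge transformation $h$, which genuinely depends on $x$ through $\beta$, retains the regularity and exponential decay claimed, so that $\sigma = \sigma_0\,g\,h$ remains of $C^{4,\lambda}$-class. This amounts to tracking how the ODE propagates the H\"older regularity and the $x$-derivatives of $\beta$ through the normalization at $t\to\infty$, and is where the elliptic regularity of the instanton in temporal gauge enters.
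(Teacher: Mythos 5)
Your proposal matches the paper's own treatment: the paper states this corollary with no proof beyond declaring it ``a consequence of Theorem \ref{Thm:Appro of Inst} and Theorem \ref{Thm:Asymp of Nahm},'' and your three-step composition (the Morgan--Mrowka--Ruberman approximation by a $T^3$-invariant instanton, Biquard's asymptotics applied to the resulting Nahm solution, then the ODE gauge transformation $h$ killing the residual exponentially small $dt$-component) is precisely that combination, spelled out. The regularity and decay bookkeeping for $h$ that you flag at the end --- whether $\sigma=\sigma_0\,g\,h$ stays $C^{4,\lambda}$ and whether $h^{-1}\partial_{x^i}h$ retains the $C^{3,\lambda}$ exponential bound, which hinges on fourth-order $x$-derivatives of $h$ not directly supplied by the ODE --- is indeed the only delicate point, and it is one the paper glosses over entirely.
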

	\begin{Remark}\label{Rem:Asymp of Inst on neg. half line}
		For any model solution $(\Gamma,N)$,
		the tuple of $\alpha=0$ and $A_i:=\Gamma_i+N_i/t$ also forms a solution of the Nahm equation on $(-\infty,0)$.
		Hence, a similar result holds for $L^2$-finite instantons on $(-\infty,0)\times T^3$.
    \end{Remark}
    Let $(V,h,A)$ be an $L^2$-finite instanton on $\mathbb{R}\times T^3$.
    Applying Corollary \ref{Cor:Asymp of Inst} to $(V,h,A)|_{(0,\infty)\times T^3}$ and $(V,h,A)|_{(-\infty,0)\times T^3}$,
    we obtain model solutions $(\Gamma_{\pm},N_{\pm})$ which approximate $(V,h,A)$ at $t \to \pm\infty$.
    
    Since the simultaneous eigenvalues of $\sum_i\Gamma_{\pm,i}dx^i \in \Omega^1_{T^3}(\mathfrak{u}(r))$ are
    $T^3$-invariant pure imaginary $1$-forms on $T^3$,
    they can be regarded as elements of $\mathrm{Hom}(\mathbb{R}^3,\sqrt{-1}\mathbb{R})$.
    Thus we take $\widetilde{\mathrm{Spec}(\Gamma_{\pm})} \subset \mathrm{Hom}(\mathbb{R}^3,\mathbb{R})$
    as the set of $(2\pi\sqrt{-1})^{-1}$ times simultaneous eigenvalues of $\sum_i\Gamma_{\pm,i}dx^i$.
    We take unitary representations $\rho_{\pm}:\mathfrak{su}(2) \to \mathfrak{u}(r)$
    induced by $N_{\pm}$ to be $\rho(\sum_i a_ie_i) := \sum_i a_iN_{\pm,i}$,
    where $(e_i)_{i=1,2,3}$ is a basis of $\mathfrak{su}(2)$ satisfying 
    $e_i = [e_j,e_k]$ for any even permutation $(ijk)$ of $(123)$.
    Because $N_{\pm,i} \in \mathrm{Center}(\Gamma_{\pm})$,
    we have the decomposition
    $\rho_{\pm} = \displaystyle\bigoplus_{\xi \in \widetilde{\mathrm{Spec}(\Gamma_{\pm})}} \rho_{\pm,\xi}$
    which is induced by the simultaneous eigen decomposition of $\Gamma_{\pm}$.
    \begin{Def}\label{Def:sing set}\;\,\ 
      \begin{itemize}
      	\item
          We define the spectrum set $\mathrm{Spec}(\Gamma_{\pm}) \subset \hat{T}^3$ to be the image of $\widetilde{\mathrm{Spec}(\Gamma_{\pm})}$	
          by the quotient map $\mathrm{Hom}(\mathbb{R}^3,\mathbb{R}) \to \hat{T}^3$.
        \item
        	We define the singularity set of $(V,h,A)$ as
          $\mathrm{Sing}(V,h,A) := \mathrm{Spec}(\Gamma_{+}) \cup \mathrm{Spec}(\Gamma_{-})$.
        \item
        	We assume that the quotient map $\widetilde{\mathrm{Spec}(\Gamma_{\pm})} \to \mathrm{Spec}(\Gamma_{\pm})$ is bijective,
          and identify $\widetilde{\mathrm{Spec}(\Gamma_{\pm})}$ with $\mathrm{Spec}(\Gamma_{\pm})$. (See below Remark \ref{Remark:some attentions} \ref{Item:fundamental domain in Remark:some attentions}.)
          For $\xi \in \mathrm{Sing}(V,h,A)$,
          we define the unitary representations $\rho_{\pm,\xi}$ of $\mathfrak{su}(2)$ by
          putting $\rho_{\pm,\xi}:= 0$ for $\xi \in \mathrm{Sing}(V,h,A) \setminus \mathrm{Spec}(\Gamma_{\pm})$.
      \end{itemize}
    \end{Def}

    \begin{Remark}\label{Remark:some attentions}
    \ 
    	\begin{enumerate}[label=(\roman*)]
      	\item\label{Item:fundamental domain in Remark:some attentions}
		  We assume that there exists a fundamental domain $H \subset \mathrm{Hom}(\mathbb{R}^3,\mathbb{R})$ of $\hat{T}^3$
      		such that $0 \in H$ and $\widetilde{\mathrm{Spec}(\Gamma_{\pm})} \subset H$.
          Indeed, we can always take a suitable $\mathbb{R}$-invariant gauge transformation to satisfy this assumption.
        \item
        	For $\xi \in \hat{T}^3$, we take a flat Hermitian line bundle $L_{\xi}$ on $\mathbb{R}\times T^3$ as in subsection \ref{SubSec:Tori}.
          For any $\xi \in \hat{T}^3$ and any $L^2$-finite instanton $(V,h,A)$,
          we define $(V,h,A_{\xi}):=(V,h,A)\otimes L_{-\xi}$.
          Then we have
          \[
          	\mathrm{Sing}(V,h,A_{\xi}) = \mathrm{Sing}(V,h,A) - \xi = \{\mu-\xi \mid \mu \in \mathrm{Sing}(V,h,A)\}.
          \]
      \end{enumerate}
    \end{Remark}
  \subsection{Fredholmness of Dirac operators}\label{SubSec:Fredholm of Dirac op}
  	Let $(V,h,A)$ be an $L^2$-finite instanton on $\mathbb{R}\times T^3$.
    Take $\xi \in \hat{T}^3 \setminus\mathrm{Sing}(V,h,A)$,
    and we set $(V,h,A_{\xi}) := (V,h,A) \otimes L_{-\xi}$.
    We shall study the Dirac operators associated to $(V,h,A_{\xi})$ by following \cite{Ref:Cha2}.
    
    Let $\sigma_{0}$ be a global trivialization $(V,h)$ (see Corollary \ref{Cor:Topological triviality of inst}).
    Let $R_{\pm}>0$ be constants as in Corollary \ref{Cor:Asymp of Inst} with
    the $L^2$-finite instantons $(V,h,A)|_{(0,\infty)\times T^3}$ and $(V,h,A)|_{(-\infty,0)\times T^3}$ respectively.
    We set $R:=\mathrm{max}(R_+,R_-)$.
    We also denote by $\sigma_{\pm}$ trivializations of $(V,h)$ on $(R,\infty)\times T^3$ and $(-\infty,-R)\times T^3$
    in Corollary \ref{Cor:Asymp of Inst} respectively.
    Let $\sigma$ denote the triple $(\sigma_{-},\sigma_{0},\sigma_{+})$.
    Let $S_{\mathbb{R}\times T^3}=S^{+}\oplus S^{-}$ denote the spinor bundle of $\mathbb{R}\times T^3$ with respect to the trivial spin structure.
    \begin{Def}
      For $0\leq k \leq 4$, we define a norm 
      $||\cdot||_{L^2_{k,\sigma}} : L^2_{k,\mathrm{loc}}(\mathbb{R}\times T^3, V\otimes S^{\pm}) \to \mathbb{R}_{\geq 0} \cup \{\infty\}$
      as follows:
      \[
      	||f||^2_{L^{2}_{k, \sigma}} = ||\sigma_{+}(f)||_{L^{2}_{k}([R+1, \infty))}
        	 +||\sigma_{0}(f)||^2_{L^{2}_{k}(\left[-(R+2), R+2\right])}
           +||\sigma_{-}(f)||^2_{L^{2}_{k}((-\infty, -(R+1)])}.
      \]
      We set 
      $L^2_{k,\sigma}(\mathbb{R}\times T^3, V\otimes S^{\pm}) := \{f \in L^2_{k,\mathrm{loc}}(\mathbb{R}\times T^3, V\otimes S^{\pm}) \mid ||f||_{L^2_{k,\sigma}} < \infty\}$.
    \end{Def}
    \begin{Remark}
    	Since $||f||_{L^2} \leq ||f||_{L^2_{0,\sigma}} \leq 3||f||_{L^2}$,
      the ordinary $L^2$-norm and $L^2_{0,\sigma}$-norm are equivalent.
    \end{Remark}
    
    Let $S_{T^3}$ be the spinor bundle on $T^3$ with respect to the trivial spin structure.
    Let $p:\mathbb{R}\times T^3\to T^3$ is the projection map.
    There exists the isomorphism $S^{\pm} \simeq p^{\ast}S_{T^3}$
    such that the Clifford product can be written as follows.
    \begin{align*}
      	\mathrm{clif}(dt) &=
        	\left(
						\begin{array}{cc}
							0 & -\mathrm{Id}_{S_{T^3}} \\
							\mathrm{Id}_{S_{T^3}} & 0
						\end{array}
					\right),\\
				\mathrm{clif}(dx^i) &=
        	\left(
						\begin{array}{cc}
							0 & \mathrm{clif}_{T^3}(dx^i) \\
							\mathrm{clif}_{T^3}(dx^i) & 0
						\end{array}
					\right).
    \end{align*}
    Hence we obtain the following lemma.
    \begin{Lem}\label{Lem:Exp of Dirac op}
      Under the identification between $S^{+},S^{-}$ and $p_2^{\ast}S_{T^3}$,
      the Dirac operators $\dirac^{\pm}_{A}:V\otimes S^{\pm} \to V\otimes S^{\mp}$ with respect to
      the trivialization $\sigma_{\pm}$ can be written as follows:
      \[
    		\dirac_{A}^{\pm} = \pm\frac{\partial}{\partial t} + \Dirac_{A|_{\{t\}\times T^3}},
      \]
      where $\Dirac_{A|_{\{t\}\times T^3}}$ is the Dirac operator of $(V, A)|_{\{t\}\times T^3}$ on $T^3$.
    \end{Lem}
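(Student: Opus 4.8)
The plan is to substitute the explicit block form of the Clifford multiplication displayed just above the lemma directly into the definition of the Dirac operator and then read off its two chirality components. Recall that the Dirac operator of the connection $A$ is $\dirac_{A} = \sum_{\mu} \mathrm{clif}(dx^{\mu})\nabla_{A,\mu}$, where the sum runs over the global orthonormal coframe $(dt, dx^{1}, dx^{2}, dx^{3})$ of $\mathbb{R}\times T^3$ (here $dx^{0}:=dt$) and $\nabla_{A,\mu}$ denotes the corresponding covariant derivative on $V\otimes S^{\pm}$. The first point to record is that, since $\mathbb{R}\times T^3$ is flat with the product metric, the Levi-Civita connection is trivial in this frame, so the induced spin connection vanishes; hence under the identification $S^{\pm}\simeq p^{\ast}S_{T^3}$ the operator $\nabla_{A,\mu}$ acts simply as $\partial_{\mu}+A_{\mu}$, with no extra curvature term coming from the spinor bundle.

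First I would insert the matrices $\mathrm{clif}(dt)$ and $\mathrm{clif}(dx^{i})$ into this sum. The $dt$-term contributes the odd operator whose $S^{+}\to S^{-}$ block is $+\nabla_{A,t}$ and whose $S^{-}\to S^{+}$ block is $-\nabla_{A,t}$, while each $dx^{i}$-term contributes $\mathrm{clif}_{T^3}(dx^{i})\nabla_{A,i}$ in both off-diagonal slots. Collecting the block that maps $S^{\pm}\to S^{\mp}$ then yields $\dirac_{A}^{\pm} = \pm\nabla_{A,t} + \sum_{i}\mathrm{clif}_{T^3}(dx^{i})\nabla_{A,i}$.

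To finish I would invoke the two defining features of the trivialization $\sigma_{\pm}$ supplied by Corollary \ref{Cor:Asymp of Inst}. By part~(1) the gauge is temporal, i.e.\ the $dt$-component $\alpha$ of the connection form vanishes, so $\nabla_{A,t}=\partial/\partial t$. The remaining spatial operator $\sum_{i}\mathrm{clif}_{T^3}(dx^{i})\nabla_{A,i}$ is, by definition, the Dirac operator $\Dirac_{A|_{\{t\}\times T^3}}$ of the restriction of $(V,A)$ to the slice $\{t\}\times T^3$, which produces exactly the asserted identity $\dirac_{A}^{\pm} = \pm\partial/\partial t + \Dirac_{A|_{\{t\}\times T^3}}$.

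The only genuine thing to verify, and what I would treat as the main obstacle, is the bookkeeping of conventions: one must confirm that the chosen orientation together with the stated block form of $\mathrm{clif}(dt)$ really produces $+\partial_{t}$ on $S^{+}$ (and $-\partial_{t}$ on $S^{-}$) rather than the opposite signs, and that the spin connection on the flat product contributes nothing beyond $\partial_{\mu}$. Both assertions follow from flatness of the product metric and from the explicit Clifford matrices recorded before the lemma, so once the sign conventions are pinned down the identity is immediate.
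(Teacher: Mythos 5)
Your proposal is correct and follows essentially the same route as the paper: the paper's proof consists of the single observation that the connection forms of $A$ with respect to $\sigma_{\pm}$ are temporal (no $dt$ terms), taking the block computation with the displayed Clifford matrices as immediate, which is exactly the argument you spell out in more detail. Your extra care about the vanishing spin connection and the sign bookkeeping is sound but is precisely what the paper leaves implicit.
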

    \begin{proof}
    	The connection forms of $A$ with respect to $\sigma_{\pm}$ are temporal \textit{i.e.} they have no $dt$ terms.
    \end{proof}
    \begin{Prp}\label{Prp:a priori est}
    	For any $1\leq k\leq4$, there exist $K_k,C_k>0$ such that the following estimates hold for any
      $f \in L^2_{k,\sigma}(\mathbb{R}\times T^3, V\otimes S^{\pm})$.
      \[
      	||f||_{L^2_{k,\sigma}} \leq C_k \left(||f||_{L^2(|t|<K_k)} + ||\dirac^{\pm}_{A_{\xi}}(f)||_{L^2_{k-1,\sigma}}\right)
			\]
    \end{Prp}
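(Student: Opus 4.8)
The plan is to prove the estimate by combining interior elliptic regularity on the compact middle slab with a uniform translation-type estimate on each of the two cylindrical ends, and then gluing these together with cut-off functions. The operator $\dirac^{\pm}_{A_{\xi}}$ is first-order elliptic, and by Corollary \ref{Cor:Asymp of Inst} its coefficients with respect to $\sigma$ are bounded in $C^{3,\lambda}$ on the ends (and smooth on the compact piece), so for $1\le k\le 4$ the standard G{\aa}rding inequality applies. First I would record the interior estimate: for a bounded slab $S=[a,b]\times T^3$ and a slightly larger slab $S'$,
\[
||f||_{L^2_k(S)} \le C\bigl( ||f||_{L^2(S')} + ||\dirac^{\pm}_{A_{\xi}} f||_{L^2_{k-1}(S')} \bigr).
\]

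Next I would analyse the ends. By Lemma \ref{Lem:Exp of Dirac op} we have $\dirac^{\pm}_{A_{\xi}} = \pm\partial_t + D(t)$ on $(R,\infty)\times T^3$, where $D(t)=\Dirac_{A_{\xi}|_{\{t\}\times T^3}}$ is self-adjoint on $L^2(T^3,V\otimes S_{T^3})$. Corollary \ref{Cor:Asymp of Inst} shows that the terms $N_{+,i}/t$ and $\eps_{1,i},\eps_{2,i},\eps_{3,i}$ all tend to $0$, so $D(t)$ converges to the operator $D_+$ attached to the flat model connection determined by $\Gamma_+$ and twisted by $L_{-\xi}$. The crux of the whole argument is that $D_+$ is invertible: decomposing $V$ into the simultaneous eigenbundles of $\Gamma_+$, the operator $D_+$ splits into flat Dirac operators on $T^3$ twisted by the characters $\beta-\xi$ (with $\beta$ the eigenvalue forms of $\Gamma_+$), and such a twisted Dirac operator on $T^3$ has nontrivial kernel precisely when its twisting character is trivial, i.e.\ when $\xi\in\mathrm{Spec}(\Gamma_+)$. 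Since $\xi\notin\mathrm{Sing}(V,h,A)=\mathrm{Spec}(\Gamma_+)\cup\mathrm{Spec}(\Gamma_-)$ by hypothesis, $D_+$ has a spectral gap $||D_+u||_{L^2(T^3)}\ge c\,||u||_{L^2(T^3)}$ for some $c>0$; the same holds at the negative end with $D_-$.

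From this gap I would derive a uniform end estimate. Taking the Fourier transform in $t$ turns the constant-coefficient model $\pm\partial_t+D_+$ into $\pm i\tau+D_+$, and since the eigenvalues $\lambda$ of $D_+$ satisfy $|\lambda|\ge c$ we obtain $|\pm i\tau+\lambda|\ge\max(|\tau|,c)$; Plancherel together with elliptic regularity in the $T^3$-directions then yields $||u||_{L^2_k}\le C\,||(\pm\partial_t+D_+)u||_{L^2_{k-1}}$ for all $u$ supported in an end. Because $||D(t)-D_+||$ is small (in the relevant $C^{k-1}$-norm, by the decay estimates of Corollary \ref{Cor:Asymp of Inst}) once $t$ exceeds some $T_0$, the perturbation $D(t)-D_+$ is absorbed into the left-hand side, so the same estimate holds for $\pm\partial_t+D(t)$ on functions supported in $\{t\ge T_0\}$, and symmetrically on $\{t\le -T_0\}$.

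Finally I would patch the pieces. Choosing $K_k$ large enough (at least $T_0+1$) and cut-off functions $\chi_-,\chi_0,\chi_+$ forming a partition of unity subordinate to a cover of $\mathbb{R}$ by $(-\infty,-T_0]$, a bounded interval, and $[T_0,\infty)$, with all transition regions contained in $\{|t|<K_k\}$, I apply the interior estimate to $\chi_0 f$ and the end estimates to $\chi_{\pm}f$. Each commutator $[\dirac^{\pm}_{A_{\xi}},\chi_\bullet]f=\mathrm{clif}(d\chi_\bullet)f$ is supported in $\{|t|<K_k\}$, hence is controlled by $||f||_{L^2(|t|<K_k)}$; summing the three contributions yields the asserted estimate. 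I expect the main obstacle to be the middle two steps, namely verifying that the asymptotic operators $D_{\pm}$ are genuinely invertible under the hypothesis $\xi\notin\mathrm{Sing}(V,h,A)$ and converting the resulting spectral gap into a perturbation-stable estimate on the non-compact ends; by contrast the interior estimate and the cut-off gluing are routine.
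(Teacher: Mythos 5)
Your proposal is correct, and its skeleton is the one the paper uses: an interior elliptic estimate on a compact slab, a coercive estimate on each cylindrical end whose source is the invertibility of the limiting flat Dirac operator on $T^3$ (exactly where $\xi\notin\mathrm{Sing}(V,h,A)$ enters, justified by the same Fourier-series argument on $T^3$), and a patching by cut-offs whose commutators $\mathrm{clif}(d\chi)f$ are supported in $\{|t|<K_k\}$. Where you diverge is in how the end estimate is manufactured. The paper works directly with the variable-coefficient operator: for $k=1$ it expands $\|\dirac^{\pm}_{A}(\varphi_{U_1}f)\|_{L^2}$, kills the cross term via the integration-by-parts identity $\langle\partial_t u,\Dirac_{\Gamma_\pm}u\rangle_{L^2}=-\langle\Dirac_{\Gamma_\pm}u,\partial_t u\rangle_{L^2}$ (valid since $\Gamma_\pm$ is $t$-independent), invokes the spectral gap $B_1\|g\|_{L^2_1(T^3)}\le\|\Dirac_{\Gamma_\pm}g\|_{L^2(T^3)}$, and absorbs the $C^0$-small perturbation $\mathrm{clif}(A_{\sigma_\pm}-\Gamma_\pm)$; the cases $2\le k\le 4$ are then handled by induction on $k$, commuting $\dirac^{\pm}_{A}$ with $\partial_t$ and $\partial_i$ and re-using the interior estimate. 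You instead prove the estimate $\|u\|_{L^2_k}\le C\|(\pm\partial_t+D_+)u\|_{L^2_{k-1}}$ for the exactly translation-invariant model by Fourier transform in $t$ and Plancherel, using $\tau^2+\lambda^2\ge c'(1+|\tau|+|\lambda|)^2$ when $|\lambda|\ge c$, which yields all Sobolev levels at once, and then absorb $D(t)-D_+$ by $C^{k-1}$-smallness of the coefficients for large $t$. Both routes rest on the identical key lemma; yours avoids the induction and commutator bookkeeping, at the price of needing strict translation-invariance of the model and the spectral characterization $\|v\|_{H^m(T^3)}\simeq\|(1+|D_+|)^m v\|_{L^2(T^3)}$, while the paper's argument is more elementary (only integration by parts) and applies verbatim to the variable-coefficient operator. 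One point you should make explicit: absorption at level $k$ requires smallness of $A_{\sigma_\pm}-\Gamma_\pm$ in $C^{k-1}$ on bands $[t,t+1]\times T^3$, and Corollary \ref{Cor:Asymp of Inst} supplies this only for $k-1\le 3$; this is precisely why Proposition \ref{Prp:a priori est} is restricted to $1\le k\le 4$.
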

    \begin{proof}
    	By considering $(V,h,A_{\xi})$ instead of $(V,h,A)$ we may assume $\xi=0$.
      We consider the case $k=1$.
      By the assumption $\xi = 0 \not\in \mathrm{Sing}(V,h,A)$
      we have $\mathrm{Ker}(\Dirac_{\Gamma_{\pm}})=\{0\}$,
      where $\Gamma_{\pm} = d + \sum_{i} (\Gamma_{\pm})_idx^i$ are flat unitary connections on the product bundle
      $(\underline{\mathbb{C}^r}, \underline{h})$ on $T^3$.
      Thus we can easily prove by using the Fourier series expansion that
      there exists $B_1 > 0$ such that for any section $g \in L^2_1(T^3, \underline{\mathbb{C}^r}\otimes S_{T^3})$ we have
      \[ B_1||g||_{L^2_1(T^3)} \leq ||\Dirac_{\Gamma_{\pm}}(g)||_{L^2(T^3)}. \]
      Take $U_1 > R+1$ such that
      \begin{equation}\label{Eqn:error est. in Prp:a priori est}
      	||A_{\sigma_{+}} - \Gamma_{+}||_{C^{0}(t > U_1 - 1)} < B_1/4, \quad
        ||A_{\sigma_{-}} - \Gamma_{-}||_{C^{0}(t < -(U_1 - 1))} < B_1/4.
      \end{equation}
      We take a smooth function $\varphi^{+}_{U_1}: \mathbb{R} \to [0,1]$ which satisfies the following:
      \begin{align*}
      	\varphi^{+}_{U_1}(t) &=
        	\left\{\begin{array}{ll}
 						1 &\quad (t > U_1)\\
						0 &\quad (t < U_1-1).
					\end{array}\right.\\
      \end{align*}
      We also set $\varphi^{-}_{U_1}(t) := \varphi^{+}_{U_1}(-t)$ and $\varphi_{U_1}(t):=\varphi^{+}_{U_1}(t)+\varphi^{-}_{U_1}(t)$.
      For any $f \in L^2_{1,\sigma}(\mathbb{R}\times T^3, V\otimes S^{\pm})$, we have
      \begin{eqnarray*}
      	\begin{array}{lll}
        	||\dirac_{A}^{\pm}(\varphi_{U_1} f)||_{L^2} &=& ||\partial_t(\varphi_{U_1} f) \pm \Dirac_{A}(\varphi_{U_1} f)||_{L^2}\\
          &\geq &
            ||\partial_t(\varphi_{U_1} f) \pm \left( \Dirac_{\Gamma_{+}}(\varphi^{+}_{U_1} f) + \Dirac_{\Gamma_{-}}(\varphi^{-}_{U_1} f)\right)||_{L^2} \\
            & &- ||\mathrm{clif}(A_{\sigma_{+}}-\Gamma_{+})\cdot (\varphi^{+}_{U_1} f)||_{L^2}
            - ||\mathrm{clif}(A_{\sigma_{-}}-\Gamma_{-})\cdot (\varphi^{-}_{U_1} f)||_{L^2}.
        \end{array}
      \end{eqnarray*}
      Here we use an equality
      $
      	\i<\partial_t(\varphi^{\pm}_{U_1} f), \Dirac_{\Gamma_{\pm}}(\varphi^{\pm}_{U_1} f)>_{L^2}
        = -\i<\Dirac_{\Gamma_{\pm}}(\varphi^{\pm}_{U_1} f), \partial_t(\varphi^{\pm}_{U_1} f)>_{L^2}
      $, then we have
      \begin{eqnarray*}
      	\begin{array}{lll}
        	||\dirac_{A}^{\pm}(\varphi_{U_1} f)||_{L^2}
        	& \geq&
          	\displaystyle\frac{1}{3}\left(
            	||\partial_t(\varphi_{U_1} f)||_{L^2} 
          		+ ||\Dirac_{\Gamma_{+}}(\varphi^{+}_{U_1} f)||_{L^2}
          		+ ||\Dirac_{\Gamma_{-}}(\varphi^{-}_{U_1} f)||_{L^2}
            \right)\\
            & & - ||\mathrm{clif}(A_{\sigma_{+}}-\Gamma_{+})\cdot (\varphi^{+}_{U_1} f)||_{L^2}
            - ||\mathrm{clif}(A_{\sigma_{-}}-\Gamma_{-})\cdot (\varphi^{-}_{U_1} f)||_{L^2}.
         \end{array}
      \end{eqnarray*}
      By this inequality and the inequalities (\ref{Eqn:error est. in Prp:a priori est}),
      there exists $B_2 > 0$ such that we have
      \[ ||\dirac_{A}^{\pm}(\varphi_{U_1} f)||_{L^2} \geq B_2 ||\varphi_{U_1} f||_{L^2_{1, \sigma}}. \]
      Therefore, the following inequalities hold.
      \begin{eqnarray*}
      	\begin{array}{ll}
        	||f||_{L^2_{1, \sigma}}
    	    	&\leq ||\varphi_{U_1} f||_{L^2_{1, \sigma}} + ||f||_{L^2_{1, \sigma}(|t|<U_1)}\\
            &\leq {B_2}^{-1} ||\dirac_{A}^{\pm}(\varphi_{U_1} f)||_{L^2} + ||f||_{L^2_{1, \sigma}(|t|<U_1)}\\
            &\leq {B_2}^{-1}(||\varphi_{U_1} \dirac_{A}^{\pm}(f)||_{L^2} + ||\mathrm{clif}(d\varphi_{U_1})f||_{L^2} ) + ||f||_{L^2_{1, \sigma}(|t|<U_1)}\\
            &\leq {B_2}^{-1}(||\dirac_{A}^{\pm}(f)||_{L^2}) + (1 + {B_2}^{-1}||\partial_t\varphi_{U_1}||_{L^{\infty}})||f||_{L^2_{1, \sigma}(|t|<U_1)}
        \end{array}
      \end{eqnarray*}
      Applying the interior estimate for elliptic operators to the above inequality,
      we obtain an inequality
      \[ ||f||_{L^2_{1, \sigma}} \leq  C_1 \left(||f||_{L^2([-K_1,K_1]\times T^3)} + ||\dirac^{\pm}_{A}(f)||_{L^2}\right),\]
      where $C_1>0$ and $K_1>U_1$ are constants independent of $f$.
      This is the desired inequality for $k=1$.
      
      We use an induction on $k$.
      Suppose that we have already obtained the desired inequality in the case $k=k_0$.
      We take a constant $U_2 > K_{k_0}$ and take a function $\varphi_{U_2}$ as above.
      Then, for any $f \in L^{2}_{k_0+1, \sigma}$ we have
      \begin{align}
      	||f||_{L^{2}_{k_0+1, \sigma}}
        	&\leq ||f||_{L^{2}_{k_0+1, \sigma}(|t|<K_2)} + ||\varphi_{U_2}f||_{L^{2}_{k_0+1, \sigma}}\nonumber \\
          &\leq 
          	||f||_{L^{2}_{k_0+1, \sigma}(|t|<U_2)} + 
            ||\varphi_{U_2}f||_{L^{2}_{k_0, \sigma}} + 
            ||\partial_t(\varphi_{U_2}f)||_{L^{2}_{k_0, \sigma}} + 
            \sum_{i=1}^3 ||\partial_i(\varphi_{U_2}f)||_{L^{2}_{k_0, \sigma}}, \label{Eqn:Mid Est}
      \end{align}
      where $\partial_t(\varphi_{U_2}f)$ and $\partial_i(\varphi_{U_2}f)$
      are taken under the trivializations $\sigma_{\pm}$.
      Since we can apply the interior estimate of elliptic operators and the assumption of the induction
      to the first and second terms of (\ref{Eqn:Mid Est}),
      there exist $B_3 > 0$ and $U_3 > U_2$ such that we have
      \begin{equation}\label{Eqn:Last Est1}
      	||f||_{L^{2}_{k_0+1, \sigma}(|t|<U_2)} + ||\varphi_{U_2}f||_{L^{2}_{k_0, \sigma}}
        	\leq B_3 \left(||f||_{L^2(|t|<U_3)} + ||\dirac^{\pm}_{A}(f)||_{L^2_{k_0,\sigma}}\right).
      \end{equation}
      We also make an estimate of the third and fourth terms of (\ref{Eqn:Mid Est}) as follows:
      \begin{align}
	      	\ &||\partial_t(\varphi_{U_2}f)||_{L^{2}_{k_0, \sigma}} + \sum_{i=1}^3 ||\partial_i(\varphi_{U_2}f)||_{L^{2}_{k_0, \sigma}}\nonumber\\
          \leq& ||\dirac_{A}^{\pm}(\partial_t(\varphi_{U_2}f))||_{L^{2}_{k_0-1, \sigma}} + \sum_{i=1}^3 ||\dirac_{A}^{\pm}(\partial_i(\varphi_{U_2}f))||_{L^{2}_{k_0-1, \sigma}}\nonumber\\
          \leq& ||\partial_t(\dirac_{A}^{\pm}(\varphi_{U_2}f))||_{L^{2}_{k_0-1, \sigma}}+ ||[ \dirac_{A}^{\pm}, \partial_t](\varphi_{U_2}f)||_{L^{2}_{k_0-1, \sigma}}\nonumber\\
          \ & \;\;\; + \sum_{i=1}^3\left\{||\partial_i(\dirac_{A}^{\pm}(\varphi_{U_2}f))||_{L^{2}_{k_0-1, \sigma}} + ||[ \dirac_{A}^{\pm}, \partial_i](\varphi_{U_2}f)||_{L^{2}_{k_0-1, \sigma}}\right\}\nonumber\\
        	\leq& 4||\dirac_{A}^{\pm}(\varphi_{U_2}f)||_{L^{2}_{k_0, \sigma}} + ||[ \dirac_{A}^{\pm}, \partial_t](\varphi_{K_2}f)||_{L^{2}_{k_0-1, \sigma}} + \sum_{i=1}^3||[ \dirac_{A}^{\pm}, \partial_i](\varphi_{U_2}f)||_{L^{2}_{k_0-1, \sigma}}\nonumber\\
          \leq& B_4(||f||_{L^{2}([|t|<U_4)} + ||\dirac_{A}^{\pm}(f)||_{L^{2}_{k_0, \sigma}}).\label{Eqn:Last Est2}
      \end{align}
      Here $B_4 >0$ and $U_4 > U_2$ is a constant independent of $f$.
      As a consequence of (\ref{Eqn:Mid Est}), (\ref{Eqn:Last Est1}) and (\ref{Eqn:Last Est2}),
      we obtain the desired inequality for $k=k_0+1$,
      and the proof is complete.
    \end{proof}
    \begin{Cor}\label{Cor:Reg of Dirac op}
    	For $0\leq k \leq 3$,
      if $f \in L_k^2(\mathbb{R}\times T^3,V \times S^{\pm})$ satisfies
      $\dirac^{\pm}_{A_{\xi}}(f)=g \in L_k^2(\mathbb{R}\times T^3,V \times S^{\mp})$ as a distribution,
      then $f \in L^2_{k+1,\sigma}(\mathbb{R}\times T^3,V \times S^{\pm})$.
    \end{Cor}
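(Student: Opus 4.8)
The corollary is an elliptic regularity statement: it upgrades a weak solution lying in $L^2_k$ to a genuine element of $L^2_{k+1,\sigma}$. The natural tool is the a priori estimate of Proposition \ref{Prp:a priori est} applied at level $k+1$, which is available precisely because $1\leq k+1\leq 4$ corresponds to the range $0\leq k\leq 3$ of the corollary. However, that estimate cannot be invoked directly, since it presupposes that the section already belongs to $L^2_{k+1,\sigma}$. The plan is therefore to split the problem into a purely local gain of one derivative, established by interior elliptic estimates, followed by a cutoff exhaustion in the $t$-direction that promotes the local regularity to a global $L^2_{k+1,\sigma}$-bound.

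First I would establish the local regularity. The operator $\dirac^{\pm}_{A_{\xi}}$ is first-order elliptic, and by Corollary \ref{Cor:Asymp of Inst} its coefficients are controlled by a connection form of class $C^{3,\lambda}$, which is enough regularity to run the standard interior estimates up to order $k+1\leq 4$. Hence from $f\in L^2_{k,\mathrm{loc}}$ and $\dirac^{\pm}_{A_{\xi}}(f)=g\in L^2_{k,\mathrm{loc}}$ one concludes $f\in L^2_{k+1,\mathrm{loc}}(\mathbb{R}\times T^3, V\otimes S^{\pm})$. This step is entirely local and imposes no condition at infinity.

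Next I would control the global norm. Fix a family of cutoff functions $\chi_T(t)=\psi(|t|-T)$ for a fixed profile $\psi$, equal to $1$ on $\{|t|\leq T\}$ and to $0$ on $\{|t|\geq T+1\}$, so that the $C^{k+1}$-norms of $\chi_T$ and the $C^{k}$-norms of $d\chi_T$ are bounded uniformly in $T$. Since $f\in L^2_{k+1,\mathrm{loc}}$ by the previous step and $\chi_T f$ has compact support in $t$, we have $\chi_T f\in L^2_{k+1,\sigma}$, so Proposition \ref{Prp:a priori est} applies to it at level $k+1$:
\[
||\chi_T f||_{L^2_{k+1,\sigma}} \leq C_{k+1}\left(||\chi_T f||_{L^2(|t|<K_{k+1})} + ||\dirac^{\pm}_{A_{\xi}}(\chi_T f)||_{L^2_{k,\sigma}}\right).
\]
Using the Leibniz rule $\dirac^{\pm}_{A_{\xi}}(\chi_T f)=\chi_T g + \mathrm{clif}(d\chi_T)f$, the first contribution is bounded by $||g||_{L^2_{k,\sigma}}$ and the commutator contribution by $C||f||_{L^2_k(\{T\leq|t|\leq T+1\})}\leq C||f||_{L^2_k}$, both uniformly in $T$; the localized term $||\chi_T f||_{L^2(|t|<K_{k+1})}$ is bounded by $||f||_{L^2}$. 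Thus the right-hand side is bounded independently of $T$. Letting $T\to\infty$, so that $\partial^{\alpha}(\chi_T f)\to\partial^{\alpha}f$ pointwise almost everywhere for $|\alpha|\leq k+1$, Fatou's lemma (the lower semicontinuity of the $L^2_{k+1,\sigma}$-norm) gives $||f||_{L^2_{k+1,\sigma}}<\infty$, that is, $f\in L^2_{k+1,\sigma}$.

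The only delicate point, and the place where circularity could otherwise intrude, is the logical ordering: Proposition \ref{Prp:a priori est} may be invoked at level $k+1$ only after the interior regularity step has placed $\chi_T f$ genuinely in $L^2_{k+1,\sigma}$, and the cutoff commutator $\mathrm{clif}(d\chi_T)f$ must be estimated uniformly in $T$, which works because it is supported where $d\chi_T\neq 0$ while $f$ is globally of class $L^2_k$. Everything else is routine bookkeeping of multiplication by $\chi_T$ on the $\sigma$-weighted Sobolev norms.
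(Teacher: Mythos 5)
Your proof is correct and follows essentially the same route as the paper's: interior elliptic regularity first gives $f \in L^2_{k+1,\mathrm{loc}}$, and then cutoff functions in the $t$-direction combined with Proposition \ref{Prp:a priori est} at level $k+1$ (legitimately applicable because the truncated sections lie in $L^2_{k+1,\sigma}$) yield the global conclusion. The only difference is cosmetic: the paper shows the truncations $\varphi_n f$ form a Cauchy sequence in $L^2_{k+1,\sigma}$ --- the localized term vanishes for $n,m>K$ since both cutoffs equal $1$ there, and the error terms $\mathrm{clif}(d\varphi_n)f$ and $(\varphi_n-\varphi_m)g$ are tails of $L^2_k$ sections, hence small --- and then invokes completeness, whereas you establish a $T$-uniform bound and conclude by Fatou's lemma.
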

    \begin{proof}
    	By the regularity of elliptic operators,
      we have $f \in L^2_{k+1,\mathrm{loc}}(\mathbb{R}\times T^3,V \times S^{\pm})$.
      For $n \in \mathbb{N}$, we take bump functions $\varphi_n : \mathbb{R} \to [0,1]$ satisfying
      \begin{align*}
      	\varphi_{n}(t) &=
        	\left\{\begin{array}{ll}
 						1 &\quad (|t| < n)\\
						0 &\quad (|t| > n+1).
					\end{array}\right.
      \end{align*}
    	From Proposition \ref{Prp:a priori est}, there exist $C,K>0$ such that
      we have 
      \[
      	||\varphi_{n} f - \varphi_{m} f||_{L^2_{k+1, \sigma}} \leq 
        C \left(
        	||\mathrm{clif}(d\varphi_{n}) f||_{L^2_{k, \sigma}} + ||\mathrm{clif}(d\varphi_{m}) f||_{L^2_{k, \sigma}} +
          ||(\varphi_{n}-\varphi_{m})g||_{L^2_{k, \sigma}}
        \right)
      \]
      for any natural numbers $n,m >K$.
      Hence $\{\varphi_{n} f\}$ is a Cauchy sequence in $L^2_{k+1, \sigma}$.
      Moreover, this sequence converges pointwise to $f$.
      Therefore $f \in L^2_{k+1, \sigma}(\mathbb{R}\times T^3,V \times S^{\pm})$.
    \end{proof}
    \begin{Thm}\label{Thm:Dirac op is Fredholm}
    	For $1\leq k\leq 4$,
      the operators
      \[ \dirac^{\pm}_{A_{\xi}} : L^2_{k,\sigma}(\mathbb{R}\times T^3,V \times S^{\pm}) \to L^2_{k-1, \sigma}(\mathbb{R}\times T^3,V \times S^{\mp})\]
      are Fredholm,
       and $\mathrm{Ker}(\dirac^{\pm}_{A_{\xi}})$ are independent of $k$.
    \end{Thm}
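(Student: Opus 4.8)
The plan is to derive Fredholmness from the a priori estimate of Proposition~\ref{Prp:a priori est} together with the elliptic regularity of Corollary~\ref{Cor:Reg of Dirac op}, using the standard semi-Fredholm criterion and an $L^2$-duality identification of the cokernel. First I would note that $\dirac^{\pm}_{A_{\xi}}$ is bounded from $L^2_{k,\sigma}$ to $L^2_{k-1,\sigma}$: by Theorem~\ref{Thm:Appro of Inst} the connection form satisfies $||A_\sigma||_{C^{3,\lambda}([t,t+1]\times T^3)}=O(1)$, so the operator has coefficients bounded in every $L^2_{k,\sigma}$-norm. The restriction map $f\mapsto f|_{\{|t|<K_k\}\times T^3}$ from $L^2_{k,\sigma}$ into $L^2(\{|t|<K_k\}\times T^3)$ is compact, since it factors as the bounded restriction into $L^2_k$ of the relatively compact region $\{|t|<K_k\}\times T^3$ followed by the Rellich embedding $L^2_k\hookrightarrow L^2$ (compact because $k\ge 1$). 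Proposition~\ref{Prp:a priori est} then has exactly the shape $||f||_X\le C(||\dirac^{\pm}_{A_{\xi}}f||_Y+||Kf||_Z)$ with $K$ compact, so the standard semi-Fredholm lemma gives that $\mathrm{Ker}(\dirac^{\pm}_{A_{\xi}})$ is finite-dimensional and that the range is closed in $L^2_{k-1,\sigma}$.

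The independence of the kernel from $k$ is immediate from Corollary~\ref{Cor:Reg of Dirac op}: if $f\in L^2_{1,\sigma}$ satisfies $\dirac^{\pm}_{A_{\xi}}(f)=0\in L^2_{1,\sigma}$, then applying the regularity statement successively with $k=1,2,3$ promotes $f$ into $L^2_{4,\sigma}$, and since $L^2_{k,\sigma}\subset L^2_{1,\sigma}$ the kernels at all levels $1\le k\le 4$ coincide.

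It remains to bound the cokernel, and here I would first treat $k=1$. By Lemma~\ref{Lem:Exp of Dirac op} we have $\dirac^{\pm}_{A_{\xi}}=\pm\partial_t+\Dirac_{A_{\xi}|_{\{t\}\times T^3}}$, in which $\partial_t$ is formally skew-adjoint and $\Dirac_{A_{\xi}|_{\{t\}\times T^3}}$ formally self-adjoint; hence $\dirac^{\mp}_{A_{\xi}}$ is the formal $L^2$-adjoint of $\dirac^{\pm}_{A_{\xi}}$. Since the range of $\dirac^{\pm}_{A_{\xi}}:L^2_{1,\sigma}\to L^2_{0,\sigma}$ is closed and the $L^2_{0,\sigma}$- and $L^2$-norms are equivalent, the cokernel is isomorphic to the $L^2$-orthogonal complement of the range; any $g$ in this complement satisfies $\dirac^{\mp}_{A_{\xi}}(g)=0$ in the distributional sense, so by Corollary~\ref{Cor:Reg of Dirac op} (with $k=0$) one gets $g\in L^2_{1,\sigma}\cap\mathrm{Ker}(\dirac^{\mp}_{A_{\xi}})$, a finite-dimensional space. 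This proves the case $k=1$ and identifies $\mathrm{coker}(\dirac^{\pm}_{A_{\xi}})\cong\mathrm{Ker}(\dirac^{\mp}_{A_{\xi}})$.

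For general $k$ I would bootstrap with Corollary~\ref{Cor:Reg of Dirac op}: whenever $g\in L^2_{k-1,\sigma}$ lies in the $L^2$-range of $\dirac^{\pm}_{A_{\xi}}|_{L^2_{1,\sigma}}$, the solution is automatically in $L^2_{k,\sigma}$, so the level-$k$ range equals the level-$1$ range intersected with $L^2_{k-1,\sigma}$. Because $\mathrm{Ker}(\dirac^{\mp}_{A_{\xi}})$ consists of smooth, rapidly decaying sections lying in every $L^2_{k,\sigma}$, the $L^2$-orthogonal splitting $L^2=\mathrm{Range}\oplus\mathrm{Ker}(\dirac^{\mp}_{A_{\xi}})$ restricts to $L^2_{k-1,\sigma}$, giving $\mathrm{coker}(\dirac^{\pm}_{A_{\xi}}:L^2_{k,\sigma}\to L^2_{k-1,\sigma})\cong\mathrm{Ker}(\dirac^{\mp}_{A_{\xi}})$, again finite-dimensional, which completes the proof. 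The semi-Fredholm part is routine once the compactness of the restriction is noted; the point requiring genuine care — and the main obstacle I expect — is the cokernel for $k\ge 2$, where the target $L^2_{k-1,\sigma}$ is a Sobolev rather than an $L^2$ space so naive duality fails, the remedy being the bootstrap above that reduces everything to the level-$1$ computation.
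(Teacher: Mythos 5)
Your proposal is correct and follows essentially the same route as the paper: the a priori estimate of Proposition~\ref{Prp:a priori est} combined with compactness of the restriction to a compact slab gives finite-dimensional kernel and closed range, while Corollary~\ref{Cor:Reg of Dirac op} yields both the identification $\mathrm{Cok}(\dirac^{\pm}_{A_{\xi}})\cong\mathrm{Ker}(\dirac^{\mp}_{A_{\xi}})$ and the reduction of the general case to $k=1$. The only difference is one of exposition: the paper proves the closed-range step by the explicit contradiction/subsequence argument on $(\mathrm{Ker}(\dirac^{\pm}_{A_{\xi}}))^{\perp_{L^2}}$ where you invoke the standard semi-Fredholm lemma, and you spell out the duality and bootstrap details that the paper states in one line.
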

    \begin{proof}
    	By Corollary \ref{Cor:Reg of Dirac op},
      it suffices to prove the case $k=1$.
      Thus we prove the following assertions.
      \begin{enumerate}[label=(\roman*)]
      	\item $\mathrm{dim}(\mathrm{Ker}(\dirac_{A_{\xi}}^{\pm})) < \infty$\label{Enum:ker finite}
        \item $\mathrm{dim}(\mathrm{Cok}(\dirac_{A_{\xi}}^{\pm})) < \infty$\label{Enum:cok finite}
        \item $R(\dirac_{A_{\xi}}^{\pm}) \subset L^2(V \times S^{\mp})$ is closed\label{Enum:range closed}
      \end{enumerate}
      If a normed space has a relatively compact neighborhood of the origin,
			then it is finite dimensional.
      Hence \ref{Enum:ker finite} is an easy consequence of Proposition \ref{Prp:a priori est} and 
      the compactness of the restriction map $L^2_{1, \sigma}(\mathbb{R}\times T^3, V\otimes S^{\pm}) \to L^2([-K, K]\times T^3, V\otimes S^{\pm})$.
      By Corollary \ref{Cor:Reg of Dirac op}, we have $\mathrm{Cok}(\dirac^{\pm}_{A_{\xi}}) = \mathrm{Ker}(\dirac^{\mp}_{A_{\xi}})$.
      Therefore, \ref{Enum:cok finite} is deduced from \ref{Enum:ker finite}.
      
      To prove \ref{Enum:range closed},
      it is enough to prove that there exists $C>0$ such that for any $f \in (\mathrm{Ker}(\dirac^{\pm}_{A_{\xi}}))^{\perp_{L^2}}$ we have
      \begin{equation}\label{Eqn: upper estimate in Thm:Dirac op is Fredholm}
      	||f||_{L^{2}_{1, \sigma}} \leq C||\dirac^{\pm}_{A_{\xi}}(f)||_{L^{2}},
      \end{equation}
      where $(\mathrm{Ker}(\dirac^{\pm}_{A_{\xi}}))^{\perp_{L^2}}$ means the orthogonal complement of $\mathrm{Ker}(\dirac^{\pm}_{A_{\xi}})$
      in $L^2_{1,\sigma}$ with respect to the ordinary $L^2$ inner product.
      Suppose that there is no constant $C>0$ satisfying the inequality (\ref{Eqn: upper estimate in Thm:Dirac op is Fredholm})
      for any $f \in (\mathrm{Ker}(\dirac^{\pm}_{A_{\xi}}))^{\perp_{L^2}}$.
      Take $f_n \in (\mathrm{Ker}(\dirac^{\pm}_{A_{\xi}}))^{\perp_{L^2}}$
      satisfying $||f_n||_{L^{2}_{1, \sigma}} = 1 > n||\dirac^{\pm}_{A_{\xi}}(f_n)||_{L^{2}}$ for any $n \in \mathbb{N}$.
      Since the restriction map $L^2_{1,\sigma}(\mathbb{R}\times T^3, V\otimes S^{\pm}) \to L^2([-K, K]\times T^3, V\otimes S^{\pm})$
      is compact, we may assume that $\{f_n|_{[-K, K]\times T^3}\}$ converges in $L^2([-K, K]\times T^3, V\otimes S^{\pm})$.
      We have $||\dirac^{\pm}_{A_{\xi}}(f_n)||_{L^{2}} < 1/n \to 0\;(n\to\infty)$,
      and hence $\{f_n\}$ also converges to some $f_{\infty} \in L^2_{1,\sigma}(\mathbb{R}\times T^3, V\otimes S^{\pm})$ by Proposition \ref{Prp:a priori est}.
      Then we have $f_{\infty} \in \mathrm{Ker}(\dirac^{\pm}_{A_{\xi}})$ and $f_{\infty} \neq 0$.
      This contradicts $f_n \in (\mathrm{Ker}(\dirac^{\pm}_{A_{\xi}}))^{\perp_{L^2}}$.
      Therefore the inequality (\ref{Eqn: upper estimate in Thm:Dirac op is Fredholm}) holds for some $C>0$.
    \end{proof}
  \subsection{Index of Dirac operators}\label{SubSec:Index thm of Dirac op}
  	We calculate the index of Dirac operators by following Charbonneau \cite{Ref:Cha2}.
    \begin{Thm}\label{Thm:Index of dirac op}
    	Let $(V,h,A)$ be an $L^2$-finite instanton on $\mathbb{R}\times T^3$ of rank $r$ and take $\xi \in \hat{T}^3 \setminus \mathrm{Sing}(V,h,A)$.
      The index of $\dirac^{+}_{A_{\xi}}$ is given by
      \[ \mathrm{index}(\dirac^{+}_{A_{\xi}}) = -\frac{1}{8\pi^2}||F(A_{\xi})||^{2}_{L^2} =-\frac{1}{8\pi^2}||F(A)||^{2}_{L^2}.\]
    \end{Thm}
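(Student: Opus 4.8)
The second equality is immediate and I would dispose of it first: since $A_\xi=A\otimes L_{-\xi}$ with $L_{-\xi}$ flat, the curvature of the tensor product is $F(A_\xi)=F(A)\otimes\mathrm{Id}$ under the canonical isometry $\mathrm{End}(V\otimes L_{-\xi})\simeq\mathrm{End}(V)$, so $\|F(A_\xi)\|_{L^2}=\|F(A)\|_{L^2}$. It therefore suffices to compute $\mathrm{index}(\dirac^+_{A_\xi})$ for a fixed $\xi\in\hat T^3\setminus\mathrm{Sing}(V,h,A)$. The plan is to identify this index with a Chern--Weil integral and then to convert that integral into the $L^2$-norm using the ASD equation. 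Since $\mathbb{R}\times T^3$ is flat we have $\hat A(\mathbb{R}\times T^3)=1$, and since $\mathrm{ch}(L_{-\xi})=1$ the only relevant characteristic form is $\mathrm{ch}_2(V)=-\tfrac{1}{8\pi^2}\mathrm{tr}(F(A)\wedge F(A))$. Thus the whole statement reduces to
\[
	\mathrm{index}(\dirac^+_{A_\xi})=-\frac{1}{8\pi^2}\int_{\mathbb{R}\times T^3}\mathrm{tr}(F(A_\xi)\wedge F(A_\xi)),
\]
after which the conclusion follows from the convention $\langle A,B\rangle=-\mathrm{tr}(AB)$ and the ASD equation $\ast F=-F$, which give $|F|^2\,\mathrm{dvol}=-\mathrm{tr}(F\wedge\ast F)=\mathrm{tr}(F\wedge F)$, so that the right-hand side equals $-\tfrac{1}{8\pi^2}\|F(A)\|^2_{L^2}$; this is finite by the $L^2$-finiteness hypothesis.

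To obtain the displayed Chern--Weil identity I would apply the Atiyah--Patodi--Singer index theorem on the truncations $M_T:=[-T,T]\times T^3$, whose boundary slices carry the tangential Dirac operators $\Dirac_{A_\xi|_{\{\pm T\}\times T^3}}$. On $M_T$ the interior contribution is $\int_{M_T}\hat A\wedge\mathrm{ch}(V\otimes L_{-\xi})=\int_{M_T}\mathrm{ch}_2(V)$, which converges as $T\to\infty$ to $-\tfrac{1}{8\pi^2}\int_{\mathbb{R}\times T^3}\mathrm{tr}(F\wedge F)$ because $\mathrm{tr}(F\wedge F)$ is integrable (it is controlled by $|F|^2\in L^1$). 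The boundary corrections are governed by $\tfrac12(h_\pm+\eta_\pm)$, where $h_\pm$ and $\eta_\pm$ denote the kernel dimension and eta invariant of the slice operators. By Corollary \ref{Cor:Asymp of Inst} and Remark \ref{Rem:Asymp of Inst on neg. half line} the connection form converges to a model solution, so $\Dirac_{A_\xi|_{\{\pm T\}\times T^3}}\to\Dirac_{\Gamma_\pm}$ in operator norm as $T\to\infty$, the terms $N_\pm/t$ and the errors $\eps_{1,i},\eps_{2,i},\eps_{3,i}$ all tending to $0$.

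The decisive point is that these limiting operators contribute nothing. Since $\xi\notin\mathrm{Sing}(V,h,A)$, no twisted simultaneous eigenvalue of $\Gamma_\pm$ meets the dual lattice, so $\Dirac_{\Gamma_\pm}$ is invertible and $h_\pm=0$ for all large $T$. Moreover, on the flat torus $T^3$ the twisted Dirac operator decomposes over Fourier modes into Clifford multiplication by real vectors on the two-dimensional spinor space, whose eigenvalues occur in symmetric pairs $\pm|v|$; hence its spectrum is symmetric about $0$ and $\eta_\pm=0$. Because for $T$ large no eigenvalue of the slice operator crosses $0$, the quantity $\tfrac12(h_\pm+\eta_\pm)$ is continuous there and converges to $\tfrac12(0+\eta(\Dirac_{\Gamma_\pm}))=0$, so the boundary corrections vanish in the limit $T\to\infty$.

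Finally one must link $\lim_{T\to\infty}\mathrm{index}_{\mathrm{APS}}(\dirac^+_{A_\xi}|_{M_T})$ with the $L^2$-Fredholm index of $\dirac^+_{A_\xi}$ furnished by Theorem \ref{Thm:Dirac op is Fredholm}. I expect this identification to be the main obstacle: the ends of $\mathbb{R}\times T^3$ are only asymptotically cylindrical, the approach to the model being merely $O(t^{-1})$ in the central directions through the term $N_\pm/t$, so the comparison must rely on the invertibility and spectral gap of $\Dirac_{\Gamma_\pm}$ together with the a priori estimate of Proposition \ref{Prp:a priori est} to ensure that $L^2$-solutions of $\dirac^\pm_{A_\xi}$ decay at the ends and that truncation does not alter the index. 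Once this is established, combining it with the convergence of the interior integral and the vanishing of the boundary terms yields $\mathrm{index}(\dirac^+_{A_\xi})=-\tfrac{1}{8\pi^2}\|F(A)\|^2_{L^2}$, as claimed.
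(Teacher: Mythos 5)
Your route is genuinely different from the paper's. The paper never invokes the Atiyah--Patodi--Singer theorem or eta invariants: it replaces $A$ by the connection $a_K=\phi_{-}\Gamma_{-}+\phi_{0}A+\phi_{+}\Gamma_{+}$, which is exactly flat (hence translation-invariant) on the ends, observes that $\dirac^{+}_{A}-\dirac^{+}_{a_K}$ is Clifford multiplication by a coefficient decaying at infinity and therefore a compact operator (so the indices agree), deforms the flat structure at $+\infty$ through flat connections avoiding the spectrum so that the two ends match, glues the ends to obtain a bundle with connection on a closed $T^4$, and then applies the Gromov--Lawson relative index theorem together with Atiyah--Singer on $T^4$; the boundary terms you must control never appear. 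Several of your individual claims are correct and would serve a valid alternative proof: the eta invariant of $\Dirac_{\Gamma_{\pm}}$ on the flat $T^3$ vanishes because each Fourier mode contributes a symmetric pair of eigenvalues $\pm 2\pi|n+\mu|$, one has $h_{\pm}=0$ for large $T$ since $\xi\notin\mathrm{Sing}(V,h,A)$ makes $\Dirac_{\Gamma_{\pm}}$ invertible and the slice operators differ from it by a multiplication of size $O(T^{-1})$, and the interior integral converges because $|F(A)|^{2}\in L^{1}$.

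However, there is a genuine gap, precisely at the step you yourself flag: the equality between the $L^2$-Fredholm index furnished by Theorem \ref{Thm:Dirac op is Fredholm} and the APS index of the truncations $M_T=[-T,T]\times T^3$. This is not mere bookkeeping, for two reasons. First, the APS theorem in its standard form requires the operator to be of product type near the boundary, whereas the slice operator $\Dirac_{A_{\xi}|_{\{t\}\times T^3}}$ genuinely depends on $t$ near $\{\pm T\}\times T^3$ (the term $N_{\pm}/t$ decays only like $1/t$), so one needs either a non-product version with transgression corrections or a preliminary deformation of the connection near the boundary. Second, even granting the APS formula on each $M_T$, nothing in your argument relates $\mathrm{index}_{\mathrm{APS}}(M_T)$ to the index of $\dirac^{+}_{A_{\xi}}$ on the full cylinder: invertibility of $\Dirac_{\Gamma_{\pm}}$ and Proposition \ref{Prp:a priori est} give decay of harmonic spinors, but they do not by themselves identify the two integers. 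The standard repair is exactly the paper's first move: pass to a comparison connection that is exactly translation-invariant outside a compact set (either $a_K$ as in the paper, or the connection obtained by freezing $A$ at $|t|=T$), note that the difference of Dirac operators is a multiplication operator which is compact in the first case (decaying coefficient) and of operator norm $O(T^{-1})$ in the second, so that the $L^2$-index is unchanged, and only then invoke the APS $L^2$-index theory for exact cylindrical ends with invertible asymptotic operator. With that bridge inserted, your eta-invariant computation closes the argument; without it, the chain from $\mathrm{index}(\dirac^{+}_{A_{\xi}})$ to the Chern--Weil integral is not complete.
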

    \begin{proof}
    	Replacing $(V,h,A)$ with $(V,h,A_{\xi})$, and we may assume $\xi=0$.
      We may also assume that any $\Gamma_{\pm,i}$ are diagonal matrices because $\Gamma_{\pm} =(\Gamma_{\pm,i})$ are commuting Hermitian matrices.
    	Take a positive constant $K>R$,
      and a partition of unity $\{\phi_{-},\phi_{0},\phi_{+}\}$ on
      $\mathbb{R}$ which is subordinate to the open cover $\left\{\left( -\infty,-K \right),\left( -(K+1),K+1 \right),\left( K+1,\infty \right)\right\}$.
      We set a connection $a_K := \phi_{-}\Gamma_{-} + \phi_{0}A + \phi_{+}\Gamma_{+}$,
      where $\Gamma_{\pm}$ are the connections given by $d + \sum_{i}\Gamma_{\pm,i} dx^i$ with respect to the trivialization $\sigma_{\pm}$.
      Then $\dirac^{+}_{A} - \dirac^{+}_{a_K} = \mathrm{clif}\left( \phi_{-}(A - \Gamma_{-}) + \phi_{+}(A - \Gamma_{+})\right)$
      is a compact operator, hence we have
      \[ \mathrm{index}(\dirac^{+}_{A}) = \mathrm{index}(\dirac^{+}_{a_K}). \]
      We take a continuous family of flat unitary connections $\{\Gamma_s\}_{s \in [0,1]}$
      on $(V,h)|_{(R,\infty)\times T^3}$ satisfying the following conditions:
      \begin{itemize}
      	\item
        	$\Gamma_0 = \Gamma_{+}$.
        \item
        	The connection form of $\Gamma_1$ with respect to $\sigma_{+}$ is given by
          $\sum_i\Gamma_{-,i}dx^i$.
        \item
        	For any $s \in [0,1]$, $0 \not\in \mathrm{Spec}(\Gamma_s)$.
      \end{itemize}
      We set connections $\{a^{s}_K\}$ on $(V,h)$ as 
      $a^{s}_K = \phi_{-}\Gamma_{-} + \phi_{0}{A_{xi}} + \phi_{+}\Gamma_{s}$.
      Then, $\{\dirac^{+}_{a^{s}_K}\}$ forms a continuous family of Fredholm operators.
      Hence we have
      \[ \mathrm{index}(\dirac^{+}_{A}) = \mathrm{index}(\dirac^{+}_{a_K}) = \mathrm{index}(\dirac^{+}_{a^{1}_K}). \]
      We construct a Hermitian vector bundle $(\tilde{V},\tilde{h})$ on a four-dimensional torus $T^4$
      by gluing $(V,h)$ on $t<-(K+1)$ and $t>K+1$ with trivializations $\sigma_{\pm}$ respectively.
      Since the connection forms of $a^{1}_K$ on $|t|>K+1$ with respect to $\sigma_{+}$ and $\sigma_{-}$ are equal,
      we also construct a connection $\widetilde{a^{1}_K}$ on $(\tilde{V},\tilde{h})$ from $a^{1}_K$.
      Then the relative index theorem in \cite{Ref:Gro-Law} tells us
      \begin{equation}\label{Eqn:relative index}
      	\mathrm{index}(\dirac^{+}_{a^{1}_K}) - \mathrm{index}(\dirac^{+}_{\gamma_{-}}) = \mathrm{index}(\dirac^{+}_{\widetilde{a^{1}_K}}) - \mathrm{index}(\dirac^{+}_{\widetilde{\gamma_{-}}}),
      \end{equation}
      where $\gamma_{-}$ (resp. $\widetilde{\gamma_{-}}$) is a flat connection on the product bundle $(\underline{\mathbb{C}^r},\underline{h})$
      on $\mathbb{R}\times T^3$ (resp. $T^4$) whose connection form is given by $\sum_i\Gamma_{-,i}dx^i$.
      By the assumption $\xi= 0 \not\in \mathrm{Sing}(V,h,A)$,
      we have $\mathrm{index}(\dirac^{+}_{\gamma_{-}}) = \mathrm{index}(\dirac^{+}_{\widetilde{\gamma_{-}}}) = 0$.
      Hence we obtain $\mathrm{index}(\dirac^{+}_{a^{1}_K}) = \mathrm{index}(\dirac^{+}_{\widetilde{a^{1}_K}})$.
      By the Atiyah-Singer index theorem, we obtain
      \[
      	\mathrm{index}(\dirac^{+}_{\widetilde{a^{1}_K}})
        = \mathrm{ch}_2(\widetilde{a^{1}_K})/[T^4].
      \]
      Hence we have
      \begin{align*}
      	\mathrm{index}(\dirac^{+}_{A})
        &= \frac{1}{8\pi^2}\int_{T^4} \mathrm{Tr}\left( F(\widetilde{a^{1}_K})\wedge F(\widetilde{a^{1}_K}) \right)\\
        &= \frac{1}{8\pi^2}\int_{[-(K+1), K+1]\times T^3} \mathrm{Tr}\left( F(a^{1}_K)\wedge F(a^{1}_K) \right).
			\end{align*}
      Since any $\Gamma_{\pm,i}$ are assumed to be diagonal matrices,
      by Corollary \ref{Cor:Asymp of Inst} we have 
      \[ \left| \int_{[-(K+1), K+1]\times T^3}\mathrm{Tr}\left( F(a^{1}_K)\wedge F(a^{1}_K) \right) - \int_{\mathbb{R}\times T^3}\mathrm{Tr}\left( F(A)\wedge F(A)\right)\ \right| = O(K^{-2}). \]
      Taking the limit of $K \to \infty$, we obtain
      \[\mathrm{index}(\dirac^{+}_{A}) = \frac{1}{8\pi^2}\int_{\mathbb{R}\times T^3}\mathrm{Tr}\left( F(A)\wedge F(A) \right) = -\frac{||F_{A}||^{2}_{L^2}}{8\pi^2},\]
			which proves the theorem.
    \end{proof}
    \begin{Remark}\label{Rem:surj of neg Dirac op}
    	Let $\xi \in \hat{T}^3 \setminus \mathrm{Sing}(V,h,A)$.
    	Since $\mathbb{R}\times T^3$ has infinite volume,
      the Weitzenb\"{o}ck formula $\dirac^{-}_{A_{\xi}}\dirac^{+}_{A_{\xi}} = \nabla^{\ast}_{A_{\xi}}\nabla_{A_{\xi}}$
      tells us
      \[
      	\mathrm{dim}\left(\mathrm{Ker}(\dirac^{-}_{A_{\xi}})\right) = 
        \mathrm{dim}\left(\mathrm{Cok}(\dirac^{+}_{A_{\xi}})\right) = 
        \frac{||F_{A_{\xi}}||^{2}_{L^2}}{8\pi^2},
      \]\[
      	\mathrm{dim}\left(\mathrm{Ker}(\dirac^{+}_{A_{\xi}})\right) = 
        \mathrm{dim}\left(\mathrm{Cok}(\dirac^{-}_{A_{\xi}})\right) = 
        0.
      \]
    \end{Remark}
  \subsection{Asymptotic behavior of Harmonic spinors}\label{SubSec:Asymp of Harmonic Spinor}
  	Let $(V,h,A)$ be an $L^2$-finite instanton on $\mathbb{R}\times T^3$.
    
  	\begin{Prp}\label{Prp:ODIneq for Harmonic Spinor}
    	There exist $K,\kappa:\mathbb{R}_{>0} \to \mathbb{R}_{>0}$ such that conditions are satisfied.
      \begin{itemize}
      	\item
	        $K(d),\kappa(d)^{-1} =  O(d^{-1})$ as $d \to 0$.
        \item
        	Let $\xi \in \hat{T}^3 \setminus \mathrm{Sing}(V,h,A)$ and $f \in \mathrm{Ker}(\dirac^{-}_{A_{\xi}})\cap L^2$.
          Set $F(t) := \int_{\{t\}\times T^3} |f(t,x)|^2dx$,
          where $dx$ means the volume form of $T^3$.
          For any $t > K(d)$ (resp. $t<-K(d)$)
          we have $F'(t) \leq -\kappa(d) F(t)$ (resp. $F'(t) \geq \kappa(d) F(t)$).
          Here we abbreviate $\mathrm{dist}(\xi,\mathrm{Sing}(V,h,A))$ to $d$.
    	\end{itemize}
    \end{Prp}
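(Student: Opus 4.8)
The plan is to exploit the fact that, on the ends, a harmonic spinor satisfies a first‑order ODE in $t$ whose coefficient operator has a spectral gap proportional to $d$. After replacing $(V,h,A)$ by $(V,h,A_{\xi})$ I may assume $\xi=0$, so that $d=\mathrm{dist}(0,\mathrm{Sing}(V,h,A))$; I will treat the end $t\to+\infty$, the case $t\to-\infty$ being entirely symmetric. On $(R,\infty)\times T^3$ the trivialization $\sigma_{+}$ is temporal, so by Lemma \ref{Lem:Exp of Dirac op} the equation $\dirac^{-}_{A}f=0$ becomes $\partial_t f=\Dirac_{A|_{\{t\}\times T^3}}f=:D_t f$, where $D_t$ is the formally self‑adjoint Dirac operator of the slice. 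Writing $Q(t):=\langle D_t f,f\rangle_{L^2(T^3)}$, self‑adjointness makes $Q$ real and gives $F'(t)=2Q(t)$ and, differentiating once more and using $\partial_t f=D_t f$,
\[
F''(t)=2\langle \dot D_t f,f\rangle_{L^2(T^3)}+4\,\|D_t f\|^2_{L^2(T^3)}.
\]
Since $f$ is smooth by elliptic regularity (Corollary \ref{Cor:Reg of Dirac op}), these manipulations are legitimate and $F$ is smooth.

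The main analytic input is a quantitative spectral gap for $D_t$ at large $t$. In the flat limit $D_{\infty}:=\Dirac_{\Gamma_{+}}$ the operator splits along the simultaneous eigendecomposition of $\Gamma_{+}$ into flat Dirac operators on the eigen‑line bundles $L_{\mu}$ with $\mu\in\widetilde{\mathrm{Spec}(\Gamma_{+})}$; a Fourier series computation, exactly of the type used in the proof of Proposition \ref{Prp:a priori est}, shows the eigenvalues of $D_{\infty}$ on $L_{\mu}$ to be $\pm 2\pi|\mu+\lambda|$ over $\lambda\in\Lambda_3^{\ast}$. Since $0\notin\mathrm{Sing}\supseteq\mathrm{Spec}(\Gamma_{+})$, this yields $\|D_{\infty}g\|_{L^2(T^3)}\geq 2\pi\,\mathrm{dist}(0,\mathrm{Spec}(\Gamma_{+}))\,\|g\|_{L^2(T^3)}\geq 2\pi d\,\|g\|_{L^2(T^3)}$. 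By Corollary \ref{Cor:Asymp of Inst} the slice connection obeys $A|_{\{t\}\times T^3}-\Gamma_{+}=N_{+}/t+\eps_1+\eps_2+\eps_3$, so $D_t-D_{\infty}=\mathrm{clif}(A|_{\{t\}\times T^3}-\Gamma_{+})$ has $L^2$‑operator norm $O(t^{-1})$ and $\|\dot D_t\|=O(t^{-2})$, the slowest term in both being $N_{+}/t$. Choosing $K(d):=Cd^{-1}$ with $C$ large (depending on $\|N_{+}\|$) forces $\|D_t-D_{\infty}\|\leq\pi d$ and $\|\dot D_t\|\leq(\pi d)^2$ for $t>K(d)$; this is precisely where the $N_{+}/t$ term dictates the scaling $K(d)=O(d^{-1})$. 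Consequently $\|D_t f\|^2\geq(\pi d)^2F$ there, and substituting into the formula for $F''$ gives $F''(t)\geq\kappa(d)^2F(t)$ for all $t>K(d)$ with $\kappa(d)\geq c_0 d$, so in particular $\kappa(d)^{-1}=O(d^{-1})$.

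It remains to turn the second‑order inequality into the first‑order one. I may assume $f\neq0$, so $F>0$ on $(K(d),\infty)$ by uniqueness for $\partial_t f=D_t f$. Because $\int_{K(d)}^{\infty}F\,dt\leq\|f\|_{L^2}^2<\infty$ while $F''\geq\kappa(d)^2F>0$, the function $F$ is convex and integrable on the half‑line, hence monotonically decreasing to $0$ with $F'\to0$ (a positive, increasing, convex function cannot be integrable). Setting $g:=F'+\kappa(d)F$, one has $g'=F''+\kappa(d)F'\geq\kappa(d)^2F+\kappa(d)F'=\kappa(d)g$, so $e^{-\kappa(d)t}g$ is non‑decreasing; if $g(t_0)>0$ for some $t_0>K(d)$ then $g(t)\geq g(t_0)e^{\kappa(d)(t-t_0)}\to\infty$, yet $F'\leq0$ forces $g\leq\kappa(d)F$ and hence $F\to\infty$, a contradiction. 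Therefore $g\leq0$, i.e. $F'(t)\leq-\kappa(d)F(t)$ for all $t>K(d)$; the mirror argument on the end $t\to-\infty$ gives $F'(t)\geq\kappa(d)F(t)$ for $t<-K(d)$. The hard part is the middle step: identifying the gap of $D_{\infty}$ with the geometric distance $d$ and verifying that the $N_{+}/t$ perturbation only costs a threshold of order $d^{-1}$, so that both $K(d)$ and $\kappa(d)^{-1}$ are $O(d^{-1})$; the concluding ODE argument is elementary.
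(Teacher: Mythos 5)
Your proposal is correct and takes essentially the same approach as the paper's proof: the same temporal-gauge computation giving $F''=4\|D_tf\|^2_{L^2(T^3)}+2\langle \dot D_t f,f\rangle_{L^2(T^3)}$, the same Fourier-series spectral gap of order $d$ for $\Dirac_{\Gamma_{\pm}}$ combined with the $O(t^{-1})$ and $O(t^{-2})$ perturbation bounds from Corollary \ref{Cor:Asymp of Inst} (which is exactly what forces $K(d),\kappa(d)^{-1}=O(d^{-1})$), and the same integrability argument converting $F''\geq\kappa(d)^2F$ into the first-order inequality. Indeed your auxiliary function $g=F'+\kappa(d) F$ equals $e^{-\kappa(d) t}\tilde F'$ for the paper's $\tilde F=e^{\kappa(d) t}F$, so even the concluding step is the paper's argument in different notation.
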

    \begin{proof}
    	We may assume $f \neq 0$.
      By the interior regularity of elliptic operators, we have $F \in C^4(\mathbb{R}) \cap L^1(\mathbb{R})$.
      Hence we can calculate derivatives of $F$.
      \begin{align*}
      	F'(t)  &= 2\int_{T^3} \i<\partial_{t}f, f> dx.\\
        F''(t) &= 2\left\{ \int_{T^3} \i<\partial^2_{t}f, f> dx +  2\int_{T^3} |\partial_{t}f|^2 dx \right\}.
      \end{align*}
      By Lemma \ref{Lem:Exp of Dirac op}, Dirac operators with respect to $\sigma_{+}$ can be written as
      $\dirac_{A_{\xi}}^{-} = -\partial_t + \Dirac_{{A_{\xi}}|_{\{t\}\times T^3}}$.
      Thus, for $t>R$ we have
      \begin{align}
      	F'(t)  &= 2\int_{T^3} \i<\Dirac_{A_z}f, f> dx. \nonumber\\
        F''(t) 
        	&= 2\left\{ \int_{T^3}\i<\partial_{t}(\Dirac_{A_{\xi}}(f)), f> dx +  \int_{T^3} |\Dirac_{A_{\xi}}f|^2 dx \right\} \nonumber\\
        	&= 2\left\{ \int_{T^3}\i<\Dirac_{A_{\xi}}(\partial_{t}(f)), f> dx + \int_{T^3}\i<[\partial_{t}, \Dirac_{A_{\xi}}](f), f> dx +  \int_{T^3} |\Dirac_{A_{\xi}}f|^2 dx \right\} \nonumber\\
      		&= 2\left\{ \int_{T^3}\i<\Dirac_{A_{\xi}}(\Dirac_{A_{\xi}}(f)), f> dx + \int_{T^3}\i<[\partial_{t}, \Dirac_{A_{\xi}}](f), f> dx +  \int_{T^3} |\Dirac_{A_{\xi}}f|^2 dx \right\} \nonumber\\
      		&= 4\int_{T^3} |\Dirac_{A_{\xi}}f|^2 dx + 2\int_{T^3}\i<[\partial_{t}, \Dirac_{A_{\xi}}](f), f> dx \nonumber\\
          &= 4\int_{T^3} |\Dirac_{\Gamma_{+, {\xi}}}(f) + \mathrm{clif}(A - \Gamma_{+})(f)|^2 dx + 2\int_{T^3}\i<[\partial_{t}, \Dirac_{A_{\xi}}](f), f> dx. \label{Eqn:Deriv of F}
      \end{align}
      
      Now we use the Fourier series expansion on $T^3$ and get the following estimate:
      there exists $C_1 >0$ such that we have
      \begin{equation}
      	\int_{T^3} |\Dirac_{\Gamma_{+, z}}(f)|^2 dx \geq d^2 C_1 F(t). \label{Eqn:Est of Dirac}
      \end{equation}
      Moreover, Corollary \ref{Cor:Asymp of Inst} tells us
      \begin{align}
      	\left| \int_{T^3}|\mathrm{clif}(A - \Gamma_{+})(f)|^2 \right| = O(t^{-2})\cdot F(t), \label{Eqn:Est of comm1}\\
        \left| \int_{T^3}\i<[\partial_{t}, \Dirac_{A_{\xi}}](f), f> \right| = O(t^{-2})\cdot F(t). \label{Eqn:Est of comm2}
      \end{align}
      By applying (\ref{Eqn:Est of Dirac}), (\ref{Eqn:Est of comm1}) and (\ref{Eqn:Est of comm2}) to (\ref{Eqn:Deriv of F}),
      we can take positive functions $K(d) = O(d^{-1})$ and $\kappa(d)^{-1} = O(d^{-1})$ such that if $t>K(d)$, then $F''(t) > \kappa(d)^2 F(t)$.
      
      We set $\tilde{F}(t) := \exp(\kappa(d) t)F(t)$.
      Then, the inequality $F''(t) > \kappa(d)^2 F(t)$ is equivalent to $\tilde{F}''(t) > 2\kappa(d) \tilde{F}'(t)$.
      If we suppose there exists $t_0 > K(d)$ such that $\tilde{F}'(t_0) > 0$, then $F(t)\exp(-\kappa(d) t/2) \to \infty\;(t\to\infty)$
      and this contradicts $F \in L^1(\mathbb{R})$.
      Therefore, for any $t>K$ we have $\tilde{F}'(t_0) \leq 0$ i.e.\ $F'(t) \leq -\kappa(d) F(t)$.
      
      The same proof works for $t<0$ mutatis mutandis.
    \end{proof}
    \begin{Cor}\label{Cor:decay of harmonic spinor}
    	Let $\xi \in \hat{T}^3 \setminus \mathrm{Sing}(V,h,A)$.
      There exists $C>0$ such that the following estimate holds for any $f \in \mathrm{Ker}(\dirac^{-}_{A_{\xi}})\cap L^2$:
      \[ ||f||_{C^{4, \lambda}([t, t+1]\times T^3)} = O\left( \exp(-C|t|) \right). \]
    \end{Cor}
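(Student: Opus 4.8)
The plan is to integrate the differential inequality of Proposition \ref{Prp:ODIneq for Harmonic Spinor} to obtain slicewise exponential decay in $L^2$, and then to promote this to the stated $C^{4,\lambda}$-estimate by uniform interior elliptic estimates. First I would fix $f \in \mathrm{Ker}(\dirac^{-}_{A_{\xi}})\cap L^2$, set $d := \mathrm{dist}(\xi,\mathrm{Sing}(V,h,A)) > 0$, and recall $F(t) = \int_{\{t\}\times T^3}|f(t,x)|^2\,dx$. By Proposition \ref{Prp:ODIneq for Harmonic Spinor} there are $K(d),\kappa(d) > 0$ with $F'(t)\leq -\kappa(d)F(t)$ for $t > K(d)$ and $F'(t)\geq \kappa(d)F(t)$ for $t < -K(d)$. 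Hence $\frac{d}{dt}\bigl(e^{\kappa(d)t}F(t)\bigr)\leq 0$ on $(K(d),\infty)$ and $\frac{d}{dt}\bigl(e^{-\kappa(d)t}F(t)\bigr)\geq 0$ on $(-\infty,-K(d))$, so a Gronwall-type comparison gives $F(t)\leq F(K(d))\,e^{-\kappa(d)(|t|-K(d))}$ for $|t| > K(d)$. Integrating over a unit interval yields
\[
\int_{[t,t+1]\times T^3}|f|^2\,dV = \int_t^{t+1}F(s)\,ds = O\!\left(e^{-\kappa(d)|t|}\right),
\]
i.e.\ exponential decay of $f$ in $L^2$ on the strips.

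It then remains to upgrade this $L^2$-bound to the $C^{4,\lambda}$-norm. Since $\dirac^{-}_{A_{\xi}}f = 0$ and $\dirac^{-}_{A_{\xi}}$ is a first-order elliptic operator whose coefficients are governed by the connection form of $A_{\xi}$, I would apply interior elliptic estimates on $[t,t+1]\times T^3$ using the enlarged strip $[t-1,t+2]\times T^3$ as ambient domain. Concretely, one first bootstraps $f$ from $L^2$ to a Sobolev space $L^2_k$ large enough to embed into $C^0$ (or directly into $C^{4,\lambda}$), and then invokes the first-order Schauder estimate $||f||_{C^{4,\lambda}(\Omega')}\leq C\bigl(||f||_{C^0(\Omega)} + ||\dirac^{-}_{A_{\xi}}f||_{C^{3,\lambda}(\Omega)}\bigr)$, the last term vanishing. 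By Corollary \ref{Cor:Asymp of Inst} the connection form satisfies $||A_{\sigma}||_{C^{3,\lambda}([s,s+1]\times T^3)} = O(1)$ uniformly in $s$, so the constants in all these estimates may be chosen independently of $t$. Thus there is $C'>0$, independent of $t$, with
\[
||f||_{C^{4,\lambda}([t,t+1]\times T^3)} \leq C'\,||f||_{L^2([t-1,t+2]\times T^3)} = O\!\left(e^{-\kappa(d)|t|/2}\right),
\]
which proves the claim with any $0 < C < \kappa(d)/2$, the factor of $2$ harmlessly absorbing the passage from the enlarged strip to its central unit substrip.

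The only step requiring genuine care is the uniformity in $t$ of the elliptic estimate and the preservation of the exponential rate. At each bootstrapping step $\dirac^{-}_{A_{\xi}}(\partial^{\beta}f) = [\dirac^{-}_{A_{\xi}},\partial^{\beta}]f$ the loss is controlled by derivatives of the connection form, which are uniformly bounded in $C^{2,\lambda}$ by Corollary \ref{Cor:Asymp of Inst}; since these bounds are independent of the strip, the resulting constant $C'$ is uniform and the decay rate survives up to the halving above. The integration of the differential inequality, by contrast, is elementary, so I expect the regularity upgrade with uniform constants to be the main technical obstacle.
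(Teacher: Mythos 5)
Your proof is correct and is essentially the argument the paper intends: the paper states this corollary without proof as a direct consequence of Proposition \ref{Prp:ODIneq for Harmonic Spinor}, namely integration of the differential inequality to get exponential $L^2$-decay on unit strips, followed by interior elliptic (Sobolev--Schauder) estimates whose constants are uniform in $t$ thanks to the $C^{3,\lambda}$-bounds on the connection form from Corollary \ref{Cor:Asymp of Inst}. One cosmetic remark: the halving of the exponent in your final display comes from passing from $\int F$ (the squared $L^2$-norm) to the $L^2$-norm itself, not from the enlargement of the strip, which only shifts $|t|$ by a bounded amount absorbed into the constant; the displayed estimate is nevertheless correct.
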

  \section{Construction of the Nahm transform}\label{Sec:Construction of Nahm trans}
	\subsection{Construction of monopoles}\label{SubSec:Construction of monopole}
  	We construct the Nahm transform by following Charbonneau \cite{Ref:Cha2}.
  	Let $(V,h,A)$ be an $L^2$-finite instanton on $\mathbb{R}\times T^3$.
    Let $(\mathcal{V},||\cdot||_{L^2},d)$ be a flat Hermitian vector bundle on $\hat{T}^3$ which is the quotient of the product bundle 
    $\underline{\left( L^2(\mathbb{R}\times T^3, V\otimes S^{-}) , ||\cdot||_{L^2} \right)}$ on $\mathbb{R}^3$
    by the $\Lambda^{\ast}_3$-action $v\cdot(\xi, f) := (\xi + v, \exp(2\pi\sqrt{-1}\i< x,v >)f)$.
    We set $(\hat{V},\hat{h})$ as the finite-dimensional subbundle of $\mathcal{V}|_{\hat{T}^3 \setminus \mathrm{Sing}(V,h,A)}$
    defined by $\hat{V}_{\xi} := \mathrm{Ker}\left( \dirac^{-}_{A_{\xi}} \right) \cap L^2$.
    Indeed, as mentioned in Remark \ref{Rem:surj of neg Dirac op},
    $\dirac^{-}_{A_{\xi}}:L^2_{1,\sigma} \to L^2$ is a continuous family of surjective Fredholm operators,
    hence $(\hat{V},\hat{h})$ is a finite-dimensional subbundle of $\mathcal{V}|_{\hat{T}^3 \setminus \mathrm{Sing}(V,h,A)}$
    by the implicit function theorem.
    Moreover, Theorem \ref{Thm:Index of dirac op} tells us $\mathrm{rank}(\hat{V})=(8\pi^{2})^{-1}||F(A)||^{2}_{L^2}$.
    
    Let $\hat{A}$ be the connection on $(\hat{V},\hat{h})$ induced by the flat connection $d_{\mathcal{V}}$ on $\mathcal{V}$,
    namely $\hat{A} = Pd_{\mathcal{V}}$, where $P:\mathcal{V}|_{\hat{T}^3 \setminus \mathrm{Sing}(V,h,A)} \to \hat{V}$ is the orthogonal projection.
    
    Let $\hat{\Phi}$ denote a skew-Hermitian section of $\mathrm{End}(\hat{V})$ given by $\Phi_{\xi}(f) := P_{\xi}(2\pi\sqrt{-1} tf)$.
    Since any $f \in \mathrm{Ker}\left( \dirac^{-}_{A_{\xi}} \right) \cap L^2$ decays exponentially in $t \to \pm\infty$,
    $2\pi\sqrt{-1} tf$ is an $L^2$ section.
    \begin{Def}
    	$(\hat{V}, \hat{h}, \hat{A}, \hat{\Phi})$ is called the Nahm transform of $(V,h,A)$.
    \end{Def}
    \begin{Prp}\label{Prp:transformed bundle is monopole}
    	$(\hat{V}, \hat{h}, \hat{A}, \hat{\Phi})$ is a monopole on $\hat{T}^3 \setminus \mathrm{Sing}(V,h,A)$.
    \end{Prp}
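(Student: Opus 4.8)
The statement amounts to verifying the Bogomolny equation $F(\hat{A}) = \ast\nabla_{\hat{A}}(\hat{\Phi})$, since $\hat{A}$ is automatically unitary (being induced from the flat unitary connection $d_{\mathcal{V}}$ by the orthogonal projection $P$) and $\hat{\Phi}$ is automatically skew-Hermitian (being $P$ composed with multiplication by the skew-Hermitian operator $2\pi\sqrt{-1}\,t$). The plan is the standard Green's-operator computation of the Nahm transform. First I would set up the operator framework. By Remark \ref{Rem:surj of neg Dirac op} the Weitzenb\"ock formula gives $\dirac^{-}_{A_{\xi}}\dirac^{+}_{A_{\xi}} = \nabla^{\ast}_{A_{\xi}}\nabla_{A_{\xi}}$ together with $\mathrm{Ker}(\dirac^{+}_{A_{\xi}}) = 0$, so $\dirac^{-}_{A_{\xi}}\dirac^{+}_{A_{\xi}}$ is invertible; write $G_{\xi} := (\dirac^{-}_{A_{\xi}}\dirac^{+}_{A_{\xi}})^{-1}$ for its Green operator, so that the orthogonal projection onto $\hat{V}_{\xi} = \mathrm{Ker}(\dirac^{-}_{A_{\xi}})$ is $P = 1 - \dirac^{+}_{A_{\xi}}G_{\xi}\dirac^{-}_{A_{\xi}}$. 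Using the identification $S^{\pm}\simeq p^{\ast}S_{T^3}$ of Lemma \ref{Lem:Exp of Dirac op}, all operators act on the single space $L^2(\mathbb{R}\times T^3, V\otimes p^{\ast}S_{T^3})$, with $\dirac^{\pm}_{A_{\xi}} = \pm\partial_t + \Dirac_{A_{\xi}}$, and the Clifford matrices $\gamma_i := \mathrm{clif}_{T^3}(dx^i)$ are \emph{constant} endomorphisms of $p^{\ast}S_{T^3}$. Since $A_{\xi} = A - 2\pi\sqrt{-1}\sum_i \xi_i\,dx^i$, I have the fundamental relation $\partial_{\xi_i}\dirac^{\pm}_{A_{\xi}} = -2\pi\sqrt{-1}\,\gamma_i$. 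The exponential decay of harmonic spinors (Corollary \ref{Cor:decay of harmonic spinor}) guarantees that $tf \in L^2$ for $f\in\hat{V}_{\xi}$, that $\hat{V}$ is a smooth finite-rank subbundle, and that the integrations by parts below produce no boundary terms at $t\to\pm\infty$.

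Next I would compute the two sides. As $\hat{V}\subset\mathcal{V}$ is a subbundle of a flat bundle, the induced curvature is given by the second-fundamental-form formula $F(\hat{A})_{ij} = P(\partial_{\xi_i}P)(\partial_{\xi_j}P) - (i\leftrightarrow j)$ on $\hat{V}$. Substituting $P = 1 - \dirac^{+}G\dirac^{-}$ and repeatedly using $\dirac^{-}s = 0$ for $s\in\hat{V}$, $P\dirac^{+} = 0$, and $\dirac^{-}\dirac^{+}G = 1$, one finds
\[
  F(\hat{A})_{ij} = -4\pi^2\,P(\gamma_i G\gamma_j - \gamma_j G\gamma_i)\quad\text{on }\hat{V}.
\]
For the Higgs field I would write $\hat{\Phi} = P\comp(2\pi\sqrt{-1}\,t)$ and use the commutators $[t,\dirac^{\pm}] = \mp 1$, which give $\dirac^{-}(ts) = -s$ for $s\in\mathrm{Ker}(\dirac^{-})$ and $P\comp(t\dirac^{+}) = -P$; the same kind of manipulation then yields
\[
  \nabla_{\hat{A},k}\hat{\Phi} = 4\pi^2\,P(\gamma_k G + G\gamma_k)\quad\text{on }\hat{V},
\]
where $\nabla_{\hat{A},k}$ denotes the covariant derivative in the coordinate $\xi_k$.

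Finally I would combine these through the anti-self-duality. The decisive input is that $\dirac^{-}_{A_{\xi}}\dirac^{+}_{A_{\xi}} = \nabla^{\ast}_{A_{\xi}}\nabla_{A_{\xi}}$ — this is exactly where $F^{+}(A)=0$ enters — acts only on the $V$-factor and hence commutes with the constant Clifford matrices $\gamma_i$; therefore $[G,\gamma_i]=0$. Using this together with the three-dimensional Clifford identity $\gamma_i\gamma_j = \gamma_k$ for cyclic $(ijk)$, the curvature collapses to $F(\hat{A})_{ij} = -8\pi^2\,PG\gamma_k$, while $\nabla_{\hat{A},k}\hat{\Phi} = 8\pi^2\,PG\gamma_k$. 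Hence $F(\hat{A})_{ij} = -\nabla_{\hat{A},k}\hat{\Phi}$ for cyclic $(ijk)$, which is the Bogomolny equation $F(\hat{A}) = \ast\nabla_{\hat{A}}(\hat{\Phi})$ once the orientation and Hodge-star conventions on $\hat{T}^3$ are fixed; thus $(\hat{V},\hat{h},\hat{A},\hat{\Phi})$ is a monopole on $\hat{T}^3\setminus\mathrm{Sing}(V,h,A)$.

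The genuinely delicate part, and the step I expect to be the main obstacle, is the rigorous justification of this operator calculus on the noncompact manifold $\mathbb{R}\times T^3$: that $P$, $G$ and $\hat{\Phi}$ depend smoothly on $\xi$ (from the smooth family of surjective Fredholm operators), that the adjoint relations $\i<\dirac^{+}u, v> = \i<u, \dirac^{-}v>$ hold with no contribution from infinity — which is precisely where Corollary \ref{Cor:decay of harmonic spinor} is essential — and, above all, the correct deployment of the ASD condition through the Weitzenb\"ock identity to obtain $[G,\gamma_i]=0$, since it is the vanishing of the self-dual curvature on $S^{+}$ that makes the two computed expressions proportional and thereby forces the Bogomolny equation.
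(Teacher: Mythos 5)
Your Green's-operator computation is formally correct: I verified the curvature formula $F(\hat{A})_{ij} = -4\pi^2 P(\gamma_i G\gamma_j - \gamma_j G\gamma_i)$, the Higgs-field formula $\nabla_{\hat{A},k}\hat{\Phi} = 4\pi^2 P(\gamma_k G + G\gamma_k)$, and the collapse via $[G,\gamma_i]=0$ from the Weitzenb\"{o}ck identity, and the final sign ambiguity you defer is indeed just the choice of irreducible Clifford module ($\gamma_i\gamma_j=\pm\gamma_k$). However, this is not the route the paper takes, and the difference is instructive: the paper does not reproduce this calculus at all. It quotes it from Charbonneau \cite{Ref:Cha2} (Subsections 3.1 and 3.2) in the form of a criterion — $(\hat{V},\hat{h},\hat{A},\hat{\Phi})$ is a monopole \emph{provided} that for every local section $f$ of $\hat{V}$ the derivative $(d_{\mathcal{V}}f)_{\xi}$ decays exponentially as $t\to\pm\infty$ — and the entire content of the paper's proof is the verification of that decay hypothesis. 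So your proposal and the paper's proof are complementary halves: you carry out the algebra the paper outsources, and you leave open, as "the main obstacle," precisely the analysis the paper actually supplies.

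That flagged step is a genuine gap rather than a routine technicality. Exponential decay of harmonic spinors (Corollary \ref{Cor:decay of harmonic spinor}) gives $tf\in L^2$ and makes $\hat{\Phi}$ well defined, but it does not by itself justify your operator calculus: your identities involve the $\xi$-derivatives of sections of $\hat{V}$ — concretely the terms $G\gamma_k s$ and $\dirac^{+}G\gamma_k s$, which up to constants constitute the component of $\partial_{\xi_k}s$ orthogonal to $\hat{V}$ — and the adjointness and commutator manipulations ($P\dirac^{+}=0$, $[t,\dirac^{\pm}]=\mp 1$) require control of these sections at $t\to\pm\infty$, not only of the harmonic spinors themselves. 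The paper closes exactly this gap: it observes that $g:=d_{\mathcal{V}}f$ satisfies the inhomogeneous equation $\partial_t g = \Dirac_{A_\xi}(g) + \mathrm{clif}_{\mathbb{R}\times T^3}(\i<d\xi,dx>)f$, whose source term already decays exponentially by Corollary \ref{Cor:decay of harmonic spinor}, and then reruns the differential-inequality argument of Proposition \ref{Prp:ODIneq for Harmonic Spinor} on $\int_{\{t\}\times T^3}|g|^2\,dx$ to conclude that $g$ decays exponentially as well. If you add this lemma (or an equivalent statement that $G_{\xi}$ maps exponentially decaying sections to exponentially decaying sections, locally uniformly in $\xi\in\hat{T}^3\setminus\mathrm{Sing}(V,h,A)$), your argument becomes a complete proof; as written, it establishes the Bogomolny equation only formally.
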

    \begin{proof}
    	According to Charbonneau \cite[Subsection 3.1 and 3.2]{Ref:Cha2},
      $(\hat{V}, \hat{h}, \hat{A}, \hat{\Phi})$ is a monopole
      if for any open subset $U \subset \hat{T}^3\setminus \mathrm{Sing}(V,h,A)$ and any local section $f\in\Gamma(U,\hat{V})$,
      $(d_{\mathcal{V}}f)_{\xi} \in L^2(\mathbb{R}\times T^3,V\otimes S^{-})\otimes \Omega^1_{\hat{T}^3,\xi}$ decays exponentially at $t\to\pm\infty$ for any $\xi\in U$.
      Since $d_{\mathcal{V}}f$ satisfies the partial differential equation
      $\partial_t(d_{\mathcal{V}}f) = \Dirac_{A_\xi}(d_{\mathcal{V}}f) + \mathrm{clif}_{\mathbb{R}\times T^3}(\i<d\xi,dx>)f$ and Corollary \ref{Cor:decay of harmonic spinor},
      the decay condition of $d_{\mathcal{V}}f$ can be proved by a similar way with the proof of Proposition \ref{Prp:ODIneq for Harmonic Spinor}.
    \end{proof}
  \subsection{Singularities of the Nahm transform}\label{SubSec:Sing pts of monopole}
  	Let $(V,h,A)$ be an $L^2$-finite instanton on $\mathbb{R}\times T^3$ and
    $(\hat{V}, \hat{h}, \hat{A}, \hat{\Phi})$ be the Nahm transform of $(V,h,A)$.
    In this subsection, we prove the following theorem.
    \begin{Thm}\label{Thm:Transformed monopole is Dirac type}
    	Each point of $\mathrm{Sing}(V,h,A)$ is a Dirac-type singularity of $(\hat{V}, \hat{h}, \hat{A}, \hat{\Phi})$.
    \end{Thm}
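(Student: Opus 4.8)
The plan is to verify the growth criterion of Proposition \ref{Prp:Moc's criterion for Dirac sing}. Fix $\xi_0 \in \mathrm{Sing}(V,h,A)$ and choose a flat chart identifying a small neighborhood of $\xi_0$ in $\hat{T}^3$ with a ball $U \subset \mathbb{R}^3$ centered at $0$; for $\xi$ close enough to $\xi_0$ the quantity $\mathrm{dist}(\xi,\xi_0)$ then agrees with $d := \mathrm{dist}(\xi,\mathrm{Sing}(V,h,A))$. Since $(\hat{V},\hat{h},\hat{A},\hat{\Phi})$ is a monopole on $\hat{T}^3 \setminus \mathrm{Sing}(V,h,A)$ by Proposition \ref{Prp:transformed bundle is monopole}, Proposition \ref{Prp:Moc's criterion for Dirac sing} reduces the whole theorem to the single pointwise bound $|\hat{\Phi}(\xi)| = O(d^{-1})$ as $\xi \to \xi_0$. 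The first step is to rewrite this norm analytically. Because $\hat{\Phi}_{\xi}$ is a skew-Hermitian endomorphism of the finite-dimensional space $\hat{V}_{\xi}$, its operator norm equals $\sup_{f}|\i<\hat{\Phi}_{\xi}f,f>_{L^2}|$ over $L^2$-unit vectors $f \in \hat{V}_{\xi}$ (and all norms on the fixed-rank bundle $\mathrm{End}(\hat{V})$ are equivalent); moreover, since $P_{\xi}f = f$ with $P_{\xi}$ self-adjoint, $\i<\hat{\Phi}_{\xi}f,f> = 2\pi\sqrt{-1}\int_{\mathbb{R}\times T^3} t|f|^2$. Writing $F(t) := \int_{\{t\}\times T^3}|f|^2\,dx$, I therefore have to bound $\int_{\mathbb{R}}|t|F(t)\,dt$ by a multiple of $d^{-1}$, uniformly over harmonic spinors $f \in \mathrm{Ker}(\dirac^{-}_{A_{\xi}})\cap L^2$ normalized by $\int_{\mathbb{R}}F = \|f\|_{L^2}^2 = 1$.

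For this I would use the differential inequality of Proposition \ref{Prp:ODIneq for Harmonic Spinor}, which provides $K=K(d)$ and $\kappa=\kappa(d)$ with $K,\kappa^{-1} = O(d^{-1})$ such that $F' \le -\kappa F$ on $(K,\infty)$ and $F' \ge \kappa F$ on $(-\infty,-K)$. Splitting $\int_{\mathbb{R}}|t|F$ at $|t|=K$, the central part is immediately controlled by $\int_{|t|\le K}|t|F \le K\int_{\mathbb{R}}F = K = O(d^{-1})$, using only the normalization. For the two tails I would \emph{not} insert a pointwise bound on $F(K)$ (which is only $O(1)$ and would yield the too-weak $O(d^{-2})$); instead I set $G(t):=\int_{t}^{\infty}F(s)\,ds$ and observe that, from $F' \le -\kappa F$, one gets $F(s)\le F(t)e^{-\kappa(s-t)}$ and hence $F(t)\ge \kappa G(t)$, so that $G$ inherits the same inequality $G' = -F \le -\kappa G$ on $(K,\infty)$. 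Integrating by parts, $\int_{K}^{\infty} tF\,dt = K\,G(K) + \int_{K}^{\infty}G(t)\,dt$, and now $KG(K)\le K = O(d^{-1})$ together with $\int_{K}^{\infty}G \le G(K)/\kappa \le \kappa^{-1} = O(d^{-1})$ give the desired $O(d^{-1})$ tail bound; the negative tail is handled symmetrically using the inequality on $(-\infty,-K)$. Combining the three pieces yields $|\hat{\Phi}(\xi)| = O(d^{-1})$, and Proposition \ref{Prp:Moc's criterion for Dirac sing} finishes the proof.

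The main obstacle is precisely this first-moment estimate: the naive route through a uniform pointwise bound $F=O(1)$ loses a factor and only gives $O(d^{-2})$, which is insufficient for the Dirac-type criterion. The two ingredients that rescue it are (i) the reduction of the endomorphism norm to the \emph{first} moment $\int t|f|^2$, made possible by the skew-Hermitian symmetry of $\hat{\Phi}_{\xi}$, and (ii) the integration-by-parts identity combined with the fact that the tail mass $G$ satisfies the same exponential decay inequality as $F$. A secondary point requiring care is the uniformity of all constants as $\xi \to \xi_0$: this is guaranteed because the explicit dependence $K,\kappa^{-1}=O(d^{-1})$ is built into Proposition \ref{Prp:ODIneq for Harmonic Spinor}, whose proof in turn rests on the uniform $C^{3,\lambda}$-bounds of the connection from Corollary \ref{Cor:Asymp of Inst}.
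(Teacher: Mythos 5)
Your proposal is correct and follows essentially the same route as the paper's proof: the reduction via Proposition \ref{Prp:Moc's criterion for Dirac sing} to the single bound $|\hat{\Phi}(\xi)| = O(d^{-1})$, the passage to the quadratic form $\i<\hat{\Phi}f,f>$ using the skew-Hermitian symmetry, and the splitting of $\int_{\mathbb{R}}|t|F(t)\,dt$ at $|t|=K$ with integration by parts plus the exponential decay inequality of Proposition \ref{Prp:ODIneq for Harmonic Spinor} to gain the factor $\kappa^{-1}$ on the tails. Your bookkeeping via $G(t)=\int_t^{\infty}F(s)\,ds$ is merely a repackaging of the paper's estimate $\int_{|s|>t}F(s)\,ds \le \kappa^{-1}\bigl(F(t)+F(-t)\bigr)$, so the two arguments coincide in substance.
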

    \begin{proof}
    	By Proposition \ref{Prp:Moc's criterion for Dirac sing},
      it suffices to show $|\hat{\Phi}(\xi)| = O(d(\xi,p)^{-1})\;(\xi \to p)$ for any $p \in \mathrm{Sing}(V,h,A)$.
      Since $\hat{\Phi}$ is skew-Hermitian with respect to $\hat{h}$,
      we have $|\hat{\Phi}(\xi)| \leq \mathrm{rank}(\hat{V}) \cdot \mathrm{max}\{|\i<\hat{\Phi}(f),f>| \mid f\in \hat{V}_{\xi}\mbox{ and }|f|=1\}$.
      Hence we have only to show $|\i<\hat{\Phi}(f),f>| = O(d(\xi,p)^{-1})||f||^2_{L^2}$ for $f \in \mathrm{Ker}(\dirac^{-}_{A_{\xi}}) \cap L^2$.
      We set $F(t) := \int_{\{t\}\times T^3} |f(t,x)|^2 dx$.
      By Proposition \ref{Prp:ODIneq for Harmonic Spinor},
      we take functions $K,\kappa:\mathbb{R}_{>0} \to \mathbb{R}_{>0}$.
      We abbreviate $K(d(\xi,p))$ and $\kappa(d(\xi,p))$ to $K$ and $\kappa$ respectively.
      Then we have
      \begin{align*}
      	|\i<\hat{\Phi}f, f>|
        	\leq& \int_{-K}^{K} |t|F(t) dt + \int_{\{|t|>K\}} |t|F(t) dt \\
          \leq&  K ||f||^2_{L^2} + \int_{\{|t|>K\}} |t|F(t) dt.
      \end{align*}
      Here we use integration by parts, then
      \begin{align*}
      	|\i<\hat{\Phi}f, f>|
        	\leq&  K ||f||^2_{L^2} + K \int_{\{|t|>K\}} F(t) dt + \int_{\{t>K\}}\left\{\int_{|s|>t} F(s) ds \right\}dt \\
          \leq&  2K ||f||^2_{L^2}  + \int_{\{t>K\}}\left\{\int_{|s|>t} F(s) ds \right\}dt.
      \end{align*}
      Since the inequality $F'(t) \leq -\kappa(d) F(t)$ (resp. $F'(t) \geq \kappa(d) F(t)$) holds for any $t > K(d)$ (resp. $t<-K(d)$),
      we have $\int_{|s|>t} F(s) ds \leq \kappa^{-1}( F(t) + F(-t) )$.
      Therefore we obtain
      \[
      	|\i<\hat{\Phi}f, f>|
          \leq  2K ||f||^2_{L^2}  + \kappa^{-1}\int_{\{|t|>K\}}F(t)dt 
          \leq (2K + \kappa^{-1})||f||^2_{L^2}.
      \]
      Since $K,\kappa^{-1} = O(d(\xi,p)^{-1})$, we have $|\i<\hat{\Phi}(f),f>| = O(d(\xi,p)^{-1})||f||^2_{L^2}$.
    \end{proof}
	\section{Algebraic Nahm transform}\label{Algebraic Nahm transform}
	In this section,
  we assume that $T^3$ is isomorphic to the product of a circle $S^1 = \mathbb{R}/\mathbb{Z}$ and a 2-dimensional torus $T^2 = \mathbb{R}^2/\Lambda_2$ as a Riemannian manifold.
  Then, we have $\hat{T}^3 = S^1 \times \hat{T}^2$, where $\hat{T}^2 = \mathrm{Hom}(\mathbb{R}^2,\mathbb{R})/\Lambda_2^{\ast}$ is the dual torus of $T^2$.
  Under this assumption, we can regard $\mathbb{R}\times T^3$ as a K\"{a}hler manifold by setting holomorphic coordinates
  $\tau = t + \sqrt{-1}x^1 \in \mathbb{R}\times S^1$ and $w = x^2 + \sqrt{-1}x^3 \in T^2$.
  Since the map $\mathbb{R} \times S^1 \ni \tau = t + \sqrt{-1}x^1 \to \exp(2\pi\tau) = z \in \mathbb{C}^{\ast}$ is biholomorphic,
  we have a biholomorphic and isometric map $\mathbb{R}\times T^3 \simeq (\mathbb{C}^{\ast}, dzd\bar{z}/|2\pi z|^2)\times T^2$.
  
  We will construct a stable filtered bundle on $(\mathbb{P}^1\times T^2,\{0,\infty\}\times T^2)$
  from an $L^2$-finite instanton on $\mathbb{R}\times T^3 \simeq \mathbb{C}^{\ast}\times T^2$
  as a prolongation of holomorphic vector bundles.
  Next, from a stable filtered bundle on $(\mathbb{P}^1\times T^2,\{0,\infty\}\times T^2)$ of rank $r>1$
  we construct a mini-holomorphic bundle on $\hat{T}^3 = S^1 \times\hat{T}^2$ outside a finite subset,
  and we call this construction \textit{the algebraic Nahm transform}.
  Finally, for an irreducible $L^2$-finite instanton $(V,h,A)$ on $\mathbb{R}\times T^3$ of rank $r>1$
  and the associated stable filtered bundle $P_{\ast\ast}V$ on $(\mathbb{P}^1\times T^2,\{0,\infty\}\times T^2)$,
  we show that the algebraic Nahm transform of $P_{\ast\ast}V$
  is isomorphic to the underlying mini-holomorphic bundle of the Nahm transform of $(V,h,A)$.
	\subsection{Asymptotic behavior of $L^2$-finite instantons as holomorphic bundles}\label{SubSec:Asymp of Inst as hol bdle}
  	We refine Corollary \ref{Cor:Asymp of Inst}
    in order to make it compatible with the complex structure of $\mathbb{R}\times T^3$.
    \begin{Prp}\label{Prp:hol Asymp of Inst}
    	Let $(V,h,A)$ be an $L^2$-finite instanton on $(0,\infty)\times T^3$ of rank $r$.
      If we take a sufficiently large $R>0$, then there exist a $C^2$-frame $\mb{v}=(v_i)$ of $V$ on $(R,\infty)\times T^3$,
      a model solution $(\Gamma,N)$ of the Nahm equation and a positive number $\delta>0$
      such that the following holds.
      \begin{enumerate}[label=(\roman*)]
      	\item\label{Enum:1st in Prp:hol Asymp of Inst}
        	If we write the $(0,1)$-part of connection form of $A$ with respect to $\mb{v}$ as
        	$\nabla^{0,1}_A(\mb{v}) = \mb{v}\left( A_{\bar{\tau}} d\bar{\tau} + A_{\bar{w}} d\bar{w} \right)$,
          then $A_{\bar{\tau}}$ and $A_{\bar{w}}$ are $T^2$-invariant, and we have
          $[A_{\bar{\tau}}, \Gamma_{\bar{w}}] = [A_{\bar{w}}, \Gamma_{\bar{w}}]=0$,
          where $\Gamma_{\bar{\tau}}d\bar{\tau} + \Gamma_{\bar{w}}d\bar{w} := (\sum_i\Gamma_idx^i)^{(0,1)}$.
        \item\label{Enum:2nd in Prp:hol Asymp of Inst}
        	We also take $N_{\bar{w}}, N_{\bar{\tau}}$ to be $N_{\bar{\tau}}d\bar{\tau} + N_{\bar{w}}d\bar{w} := (\sum_iN_idx^i)^{(0,1)}$.
          We set $\tilde{\eps}_{\bar{w}} := A_{\bar{w}} - (\Gamma_{\bar{w}} + N_{\bar{w}} / t)$
          and $\tilde{\eps}_{\bar{\tau}} := A_{\bar{\tau}} - (\Gamma_{\bar{\tau}} + N_{\bar{\tau}} / t)$.
          Then, the following estimates hold:
          \begin{eqnarray}
          	\left\{\begin{array}{ll}
            	|\tilde{\eps}_{\bar{\tau}}|, |\tilde{\eps}_{\bar{w}}| &= O(t^{-(1+\delta)})\\
            	|\partial_t \tilde{\eps}_{\bar{\tau}}|, |\partial_t \tilde{\eps}_{\bar{w}}| &= O(t^{-(2+\delta)})\\
            	|\partial_1 \tilde{\eps}_{\bar{\tau}}|, |\partial_1 \tilde{\eps}_{\bar{w}}| &= O(\exp(- \delta t)),
            \end{array}\right.\nonumber
          \end{eqnarray}
          where $\partial_1$ means the partial derivative with respect to $x^1$.
        \item\label{Enum:3rd in Prp:hol Asymp of Inst}
        	For any $1\leq i,j \leq r$,
          we have $||\i< v_i,v_j > - \delta_{ij}||_{C^2([t,t+1]\times T^3)} = O(\exp(- \delta t))$.
      \end{enumerate}
    \end{Prp}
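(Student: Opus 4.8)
The plan is to complexify Corollary~\ref{Cor:Asymp of Inst}. That corollary provides a unitary, temporal, $C^{4,\lambda}$-trivialization $\sigma$ on $(R,\infty)\times T^3$, a model solution $(\Gamma,N)$, and the splitting $A_i = \Gamma_i + N_i/t + \eps_{1,i} + \eps_{2,i} + \eps_{3,i}$. On top of $\sigma$ I would construct a \emph{complex} gauge transformation $g\colon (R,\infty)\times T^3 \to GL(r,\mathbb{C})$ with $g = \mathrm{Id} + O(\exp(-\delta t))$ in $C^2$, and set $\mb{v} := \sigma\cdot g$. Then~\ref{Enum:3rd in Prp:hol Asymp of Inst} is automatic, since $\i<v_i,v_j> = (g^{\ast}g)_{ij} = \delta_{ij} + O(\exp(-\delta t))$, the $C^2$-bound descending from that on $\gamma := \log g$.

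First I would translate Corollary~\ref{Cor:Asymp of Inst} into the $(0,1)$-language. In the temporal gauge $\alpha = 0$, the coordinates $\tau = t + \sqrt{-1}x^1$ and $w = x^2 + \sqrt{-1}x^3$ give $\nabla^{0,1}_A(\sigma) = \sigma\,(A_{\bar\tau}\,d\bar\tau + A_{\bar w}\,d\bar w)$ with $A_{\bar\tau} = \tfrac{\sqrt{-1}}{2}A_1$ and $A_{\bar w} = \tfrac{1}{2}(A_2 + \sqrt{-1}A_3)$, so the splitting passes linearly to $A_{\bar\tau}$ and $A_{\bar w}$. The model pieces $\Gamma^{(0,1)}$, $N^{(0,1)}/t$ and $\eps_1^{(0,1)}$ lie in $\mathrm{Center}(\Gamma)$, hence commute with $\Gamma_{\bar w}$, and are $T^3$-invariant, with $\eps_1^{(0,1)} = O(t^{-(1+\delta)})$; these are exactly the pieces I want to keep as $\tilde\eps_{\bar\tau},\tilde\eps_{\bar w}$. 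The only obstruction to~\ref{Enum:1st in Prp:hol Asymp of Inst} comes from the ``bad'' pieces: the part of $\eps_2 + \eps_3$ failing to commute with $\Gamma_{\bar w}$, together with the $w$-oscillating (i.e.\ $T^2$-non-invariant) part of $\eps_3$. Both are $O(\exp(-\delta t))$.

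For the core step, I would use that, on the K\"ahler manifold $\mathbb{R}\times T^3$, the ASD equation forces $F^{(0,2)}(A) = 0$, so that $\deebar_A$ and the model operator $\deebar + \mathrm{ad}(\Gamma^{(0,1)} + N^{(0,1)}/t)$ are both integrable. I would then solve, by a fixed-point argument in exponentially weighted $C^2$-spaces, for $g$ that makes the bad part of $g^{-1}\deebar_A\,g$ vanish identically, so that its coefficients become exactly $T^2$-invariant and exactly commuting with $\Gamma_{\bar w}$. The linearization is $\gamma \mapsto \deebar\gamma + [\Gamma^{(0,1)} + N^{(0,1)}/t,\gamma]$ restricted to the bad subspace, and its invertibility there is the key point: on nonzero $w$-Fourier modes $\deebar_w$ has a uniform spectral gap, while on the $w$-constant modes the non-commuting directions form $\mathrm{im}(\mathrm{ad}\,\Gamma_{\bar w})$, on which the semisimple operator $\mathrm{ad}\,\Gamma_{\bar w}$ is invertible. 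The two holomorphic directions are coupled by integrability: concretely I would first put the holomorphic structure on each $T^2$-fibre into its $T^2$-invariant normal form---possible because it is an exponentially small perturbation of the model $\Gamma_{\bar w} + N_{\bar w}/t$---making $A_{\bar w}$ $T^2$-invariant, and then use $[\deebar_{A,\bar\tau},\deebar_{A,\bar w}] = 0$ to propagate invariance to $A_{\bar\tau}$. As the source is $O(\exp(-\delta t))$, the solution obeys $\gamma = O(\exp(-\delta' t))$ together with its first two derivatives.

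With $g$ constructed, the estimates~\ref{Enum:2nd in Prp:hol Asymp of Inst} follow by bookkeeping: $\tilde\eps_{\bar\tau}$ and $\tilde\eps_{\bar w}$ equal the retained center term $\eps_1^{(0,1)}$ plus conjugation corrections built from $\deebar\gamma$ and $[\,\cdot\,,\gamma]$, which are exponentially small, whence $|\tilde\eps| = O(t^{-(1+\delta)})$ and $|\partial_t\tilde\eps| = O(t^{-(2+\delta)})$. The asymmetry $|\partial_1\tilde\eps| = O(\exp(-\delta t))$ reflects that the dominant retained term $\eps_1^{(0,1)}$ is $T^3$-invariant, so $\partial_1$ kills it and only the exponentially small $\gamma$-corrections survive. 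The main obstacle I anticipate is precisely this core step: carrying out the weighted fixed-point argument so as to achieve the \emph{exact} commutation and $T^2$-invariance of~\ref{Enum:1st in Prp:hol Asymp of Inst} while cleanly separating the retained \emph{polynomially} decaying center contribution from the \emph{exponentially} decaying remainder, and verifying the invertibility of the linearized operator on the bad subspace uniformly in $t$ with the sharp exponential rate.
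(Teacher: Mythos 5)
Your plan is essentially the paper's own proof. The paper also starts from Corollary \ref{Cor:Asymp of Inst}, builds a complex gauge transformation $g=\exp\bigl(G(\eps_{1,\bar{w}},\,\eps_{2,\bar{w}}+\pi(\eps_{3,\bar{w}}))\bigr)$ exponentially close to the identity which kills exactly your ``bad part'' of $A_{\bar{w}}$ (the Banach spaces $X_i=\bigl(\bigoplus_{\alpha}C^{i,\lambda}(T^2,\underline{\mathrm{End}(E_{\alpha})})^{\perp}\bigr)\oplus\bigl(\bigoplus_{\alpha\neq\beta}C^{i,\lambda}(T^2,\underline{\mathrm{Hom}(E_{\alpha},E_{\beta})})\bigr)$ are precisely your bad subspace), and then propagates $T^2$-invariance and $[\,\cdot\,,\Gamma_{\bar{w}}]=0$ to $A_{\bar{\tau}}$ via $\nabla^{0,1}_A\comp\nabla^{0,1}_A=0$ and Fourier expansion, exactly as you propose. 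Two implementation details in the paper are worth noting because they resolve the difficulties you flag at the end: the solve is done fibrewise on $T^2$ (only $\deebar_w$ enters the nonlinear map), so uniformity in $t$ is automatic; and the polynomially decaying center term $\eps_{1,\bar{w}}$ is fed into the implicit function theorem as a \emph{parameter} $a$ rather than as a source, so that $F(0,a,0)=0$ forces $G(a,0)=0$ and hence $\|G(a,\eps)\|_{C^{2,\lambda}}\leq M\|\eps\|_{C^{1,\lambda}}$ (Lemma \ref{Lem:est of G}); this is the mechanism that cleanly retains the $O(t^{-(1+\delta)})$ center contribution while only the exponentially small part sources $g-\mathrm{Id}$.

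The one concrete gap is in your invertibility argument for the linearized operator. You split the bad subspace into nonzero $w$-Fourier modes (invertibility claimed from the spectral gap of $\deebar_w$) and $w$-constant modes in $\mathrm{im}(\mathrm{ad}\,\Gamma_{\bar{w}})$ (invertibility of $\mathrm{ad}\,\Gamma_{\bar{w}}$). But on a nonzero Fourier mode of an off-diagonal block $\mathrm{Hom}(E_{\alpha},E_{\beta})$ the operator is $\deebar_w+(\alpha-\beta)$, and the constant $(\alpha-\beta)$ can cancel a nonzero eigenvalue of $\deebar_w$: this resonance occurs exactly when the distinct eigenvalues $\alpha,\beta$ of $\Gamma_{\bar{w}}$ coincide modulo the lattice of Fourier frequencies of $T^2$, i.e.\ when they project to the same point of $\hat{T}^2$. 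In that case the linearization has kernel and no fixed-point or implicit-function argument can remove the bad part, so the statement is simply false without a normalization. The paper excludes this by taking the eigenvalues in a fundamental domain, so that distinct eigenvalues stay distinct modulo the lattice (Remark \ref{Remark:some attentions} \ref{Item:fundamental domain in Remark:some attentions}; cf.\ the condition $\alpha_i-\beta_i\notin 2\pi\sqrt{-1}\mathbb{Z}$ arranged in the proof of Theorem \ref{Thm:Appro of Inst}). Note that the same non-resonance hypothesis is also what makes your propagation step work: to deduce $A^{\beta}_{\bar{\tau},\alpha}=0$ from $\deebar_w(A^{\beta}_{\bar{\tau},\alpha})=A^{\beta}_{\bar{\tau},\alpha}A_{\bar{w},\alpha}-A_{\bar{w},\beta}A^{\beta}_{\bar{\tau},\alpha}$ by Fourier expansion, one uses $A_{\bar{w},\alpha}\to\alpha\,\mathrm{Id}$ together with $\alpha-\beta$ avoiding the lattice. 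Once you import this normalization, your argument closes up and coincides with the paper's.
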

	\begin{Remark}
		As Corollary \ref{Cor:Asymp of Inst} and Remark \ref{Rem:Asymp of Inst on neg. half line},
		we obtain a similar result for $L^2$-finite instantons on $(-\infty,0)\times T^3$.
	\end{Remark}
    \begin{proof}
    	Applying Corollary \ref{Cor:Asymp of Inst} to $(V,h,A)$, we obtain
      a trivialization $(V,h)|_{(R,\infty)\times T^3} \simeq (\underline{\mathbb{C}^r},\underline{h})$
      and a model solution $(\Gamma,N)$ of the Nahm equation.
      Take an orthonormal frame $\mb{u}$ on $(R,\infty)\times T^3$
      such that $\Gamma = (\Gamma_i)$ are diagonal.
      We take a Hermitian vector space $E = (\mathbb{C}^r,h_{\mathbb{C}^r})$
      and the eigen decomposition of $\Gamma_{\bar{w}}$ \textit{i.e.}  $E = \bigoplus_{\alpha \in \mathbb{C}}E_{\alpha}$ and
      $\Gamma_{\bar{w}} = \sum_{\alpha} \alpha \mathrm{Id}_{E_{\alpha}}$.
      
      We take Banach spaces $X_1$ and $X_2$ as
      \[ X_i := \Bigl(\bigoplus_{\alpha} C^{i, \lambda}(T^2, \underline{\mathrm{End}(E_{\alpha})})^{\perp}\Bigr)\oplus\Bigl(\bigoplus_{\alpha \neq \beta} C^{i, \lambda}(T^2, \underline{\mathrm{Hom}(E_{\alpha}, E_{\beta})})\Bigr), \]
      where $C^{i, \lambda}(T^2, \underline{\mathrm{End}(E_{\alpha})})^{\perp}$ is the kernel of the linear map $f \mapsto \int_{T^2}f$.
      We define a smooth map
      \[F:X_{2}\times \left(\bigoplus_{\alpha}\mathrm{End}(E_{\alpha}) \right)\times X_{1} \to X_{1}\]
      as
      \[ F(f, a, \eps) := \pi\Bigl( \mathrm{Ad}(\exp(f))(\Gamma_{\bar{w}}+a+\eps) + \exp(-f) \deebar_{w} \exp(f) \Bigr), \]
      where $\pi:C^{1, \lambda}(T^2, \underline{\mathrm{End}(E)}) \to X_1$ is the projection.
      Then, ${dF}_{(0, 0, 0)}|_{X_2} : X_2 \to X_1$ is an isomorphism because we have $dF_{(0,0,0)}(f,0,0) = \pi(\deebar_{w}(f))$.
      Therefore, the implicit function theorem shows that there exist a small neighborhood $U$ of $(0,0) \in \bigl(\bigoplus_{\alpha}\mathrm{End}(E_{\alpha}) \bigr)\times X_1$
      and a smooth map $G:U \to X_{2}$ such that for any $(f,a,\eps) \in \mathrm{Dom}(F)$ sufficiently close to $(0,0,0)$,
      $F(f,a,\eps)=0$ and $f = G(a,\eps)$ are equivalent.
      \begin{Lem}\label{Lem:est of G}
      	There exists $M>0$ such that for any $(a,\eps) \in U$ we have
        \begin{align*}
        	&||G(a, \eps)||_{C^{2, \lambda}(T^2)} \leq M||\eps||_{C^{1, \lambda}(T^2)}\\
          &||{\partial_a}G(a, \eps)||_{\mathcal{B}(\bigoplus_{\alpha}\mathrm{End}(E_{\alpha}), C^{2, \lambda}(T^2))} \leq M||\eps||_{C^{1, \lambda}(T^2)}\\
          &||\partial^{2}_{a} G(a, \eps)||_{\mathcal{B}(\bigoplus_{\alpha}\mathrm{End}(E_{\alpha}), \mathcal{B}(\bigoplus_{\alpha}\mathrm{End}(E_{\alpha}), C^{2, \lambda}(T^2)))} \leq M||\eps||_{C^{1, \lambda}(T^2)},
      	\end{align*}
        where $\mathcal{B}(X,Y)$ is the Banach space consisting of bounded linear maps from a Banach spaces $X$ to a Banach space $Y$.
      \end{Lem}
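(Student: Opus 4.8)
The plan is to reduce everything to one structural fact: when $\eps = 0$ the equation $F = 0$ is solved by $f = 0$ for every $a$, so that $G(a,0) \equiv 0$. Granting this, all three estimates follow from a first-order Taylor expansion of $G$, $\partial_a G$ and $\partial_a^2 G$ in the $\eps$-variable about $\eps = 0$, since each of these vanishes there.

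First I would check that $F(0,a,0) = 0$ for all $a$ near $0$. Because $\exp(0) = \mathrm{Id}$ and $\deebar_w(\mathrm{Id}) = 0$, we get $F(0,a,0) = \pi(\Gamma_{\bar{w}} + a)$. Now $\Gamma_{\bar{w}} = \sum_{\alpha}\alpha\,\mathrm{Id}_{E_\alpha}$ and $a \in \bigoplus_\alpha \mathrm{End}(E_\alpha)$ are both $T^2$-independent and block-diagonal for the decomposition $E = \bigoplus_\alpha E_\alpha$. On each off-diagonal block $\mathrm{Hom}(E_\alpha, E_\beta)$ with $\alpha \neq \beta$ the section $\Gamma_{\bar{w}} + a$ is zero, and on each diagonal block $\mathrm{End}(E_\alpha)$ it is constant, hence maps to zero under the projection onto the integral-zero subspace $C^{1,\lambda}(T^2, \underline{\mathrm{End}(E_\alpha)})^{\perp}$. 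Thus $\pi(\Gamma_{\bar{w}} + a) = 0$, i.e.\ $F(0,a,0)=0$. Since $0$ lies in the $X_2$-neighborhood produced by the implicit function theorem, its uniqueness clause forces $G(a,0) = 0$ for all $a$ in a neighborhood of $0$; differentiating this identity once and twice in $a$ then gives $\partial_a G(a,0) = 0$ and $\partial_a^2 G(a,0) = 0$.

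Next I would convert these vanishings into the bounds. After shrinking $U$ to a relatively compact neighborhood, smoothness of $G$ makes the mixed derivatives $D_\eps G$, $D_\eps \partial_a G$ and $D_\eps \partial_a^2 G$ continuous, hence uniformly bounded in the appropriate operator norms by some $M > 0$ on $U$. Writing $\Psi$ for any of $G$, $\partial_a G$, $\partial_a^2 G$, the identity $\Psi(a,0) = 0$ together with the fundamental theorem of calculus in the $\eps$-direction gives $\Psi(a,\eps) = \int_0^1 (D_\eps\Psi)(a,s\eps)[\eps]\,ds$, and therefore $\|\Psi(a,\eps)\| \leq M\,\|\eps\|_{C^{1,\lambda}(T^2)}$. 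The three choices of $\Psi$ are exactly the three asserted inequalities.

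I expect the only real obstacle to be the first step — establishing $F(0,a,0) = 0$, which is where the specific design of the projection $\pi$ and of the Banach space $X_1$ (integral-zero diagonal blocks, full off-diagonal blocks) interacts with the block structure of $\Gamma_{\bar{w}}$. Once that identity is in hand, the passage to $G(a,0) = 0$ and its $a$-derivatives is immediate from the implicit function theorem, and the remaining estimates are routine Banach-space calculus.
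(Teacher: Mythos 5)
Your proposal is correct and takes essentially the same approach as the paper: the paper's proof consists precisely of the observation that $F(0,a,0)=0$ forces $G(a,0)=0$ by the uniqueness clause of the implicit function theorem, hence $\partial_a G(a,0)=\partial^2_a G(a,0)=0$, with the passage to the three estimates left implicit as the smoothness/Taylor argument you spell out via the fundamental theorem of calculus. One cosmetic remark: the ambient space containing $U$ is infinite-dimensional, so no neighborhood is relatively compact, but none is needed --- continuity of $D_\eps G$, $D_\eps \partial_a G$, $D_\eps \partial^2_a G$ already gives their boundedness on a smaller ball around $(0,0)$, which is all your integral estimate requires.
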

      \begin{proof}
      	By $F(0,a,0)=0$ and the definition of $G$, we have $G(a, 0)=0$.
        Thus, we have ${\partial_a}G(a, 0)=0$ and $\partial^{2}_{a} G(a, 0)=0$, and this proves the lemma.
      \end{proof}
      For the error terms $\eps_{i,j}\;(1\leq i,j \leq 3)$ in Corollary \ref{Cor:Asymp of Inst},
      we set $\eps_{i,\bar{\tau}}d\bar{\tau}+\eps_{i,\bar{w}}d\bar{w} := (\sum_j \eps_{i,j}dx^{j})^{(0,1)}$.
      Set a gauge transformation $g := \exp\bigl( G(\eps_{1,\bar{w}}, \eps_{2,\bar{w}} +\pi(\eps_{3,\bar{w}})) \bigr)$
      and take a frame $\mb{v} = \mb{u}g$.
      By Lemma \ref{Lem:est of G} and the estimates of $\eps_{2,j},\eps_{3,j}$ in Corollary \ref{Cor:Asymp of Inst},
      we have $||g - \mathrm{Id}||_{C^{2}([t, t+1]\times T^3)} = O(\mathrm{exp}(-\delta t))$.
      This proves \ref{Enum:2nd in Prp:hol Asymp of Inst} and \ref{Enum:3rd in Prp:hol Asymp of Inst}.
      
      We prove \ref{Enum:1st in Prp:hol Asymp of Inst}.
      By the definition of $F$,
      $A_{\bar{w}}$ is $T^2$-invariant and $[A_{\bar{w}},\Gamma_{\bar{w}}]=0$.
      We will prove that $A_{\bar{\tau}}$ is also $T^2$-invariant and $[A_{\bar{\tau}},\Gamma_{\bar{w}}]=0$.
      For an eigenvalue $\alpha \in \mathbb{C}$ of $\Gamma_{\bar{w}}$,
      let us denote by $\mb{v}_{\alpha}$ the subset of $\mb{v}$ corresponding to $\alpha$.
      Then we write $\nabla^{0,1}_A$ as
      \[
      	\nabla^{0.1}_{A}(\mb{v}_\alpha) =
      	(\sum_{\beta}\mb{v}_{\beta} A^{\beta}_{\bar{\tau}, \alpha})d\bar{\tau}+ \mb{v}_{\alpha} A_{\bar{w}, \alpha}d\bar{w}.
      \]
      Then, rewriting $\nabla^{0, 1}_{A} \comp \nabla^{0, 1}_{A} = 0$
      gives
      \[\deebar_{w}(A^{\beta}_{\bar{\tau}, \alpha}) = A^{\beta}_{\bar{\tau}, \alpha}A_{\bar{w}, \alpha} - A_{\bar{w}, \beta}A^{\beta}_{\bar{\tau}, \alpha}\]
      for any eigenvalues $\alpha \neq \beta$ of $\Gamma_{\bar{w}}$.
      We have $\lim_{t\to\infty}A_{\bar{w}, \alpha} = \alpha\mathrm{Id}$,
      and by Remark \ref{Remark:some attentions} (i)
      we also have $\alpha - \beta \not\in 2\pi\mathbb{Z}$ for any eigenvalues $\alpha \neq \beta$ of $\Gamma_{\bar{w}}$.
      Therefore, if $R>0$ is sufficiently large, then we obtain $A_{\bar{\tau}, \alpha}^{\beta} = 0$ on $(R,\infty)\times T^3$ by the Fourier series expansion.
      This is equivalent to $[A_{\bar{\tau}}, \Gamma_{\bar{w}}] =0$.
      Here we use $\nabla^{0, 1}_{A} \comp \nabla^{0, 1}_{A} = 0$ again,
      and we have
      \begin{equation}\label{Eqn:integral condition}
      	\deebar_{\tau}(A_{\bar{w}, \alpha}) - \deebar_{w}(A^{\alpha}_{\bar{\tau}, \alpha}) + [A^{\alpha}_{\bar{\tau}, \alpha}, A_{\bar{w}, \alpha}]=0.
      \end{equation}
      Removing the $T^2$-invariant part of (\ref{Eqn:integral condition}), and we obtain
      \[ \deebar_{w}\left( (A^{\alpha}_{\bar{\tau}, \alpha})^{\perp} \right) + \left[ A_{\bar{w}, \alpha} , (A^{\alpha}_{\bar{\tau}, \alpha})^{\perp} \right] =0,\]
      where $(A^{\alpha}_{\bar{\tau}, \alpha})^{\perp}$ means the non-constant part of the Fourier series expansion of $A^{\alpha}_{\bar{\tau}, \alpha}$.
      This equation implies that $(A^{\alpha}_{\bar{\tau}, \alpha})^{\perp}$ is a constant function on $T^2$, and hence $0$.
      Thus, $A_{\bar{\tau}}$ is $T^2$-invariant.
    \end{proof}
	\begin{Cor}\label{Cor:asymp. ortho. and hol. decomp. of inst.}
    	Set $\widetilde{\Gamma_{\bar{w}}} \in \Gamma((R,\infty)\times T^3,\mathrm{End}(V))$ as
    	$\widetilde{\Gamma_{\bar{w}}}(\mb{v}) := \mb{v}\Gamma_{\bar{w}}$.
    	For an eigenvalue $\alpha \in \mathbb{C}$ of $\Gamma_{\bar{w}}$,
		we take a subbundle $V_{\alpha}$ of $V|_{(R,\infty)\times T^3}$ to be $V_{\alpha} := \mathrm{Ker}(\widetilde{\Gamma_{\bar{w}}} - \alpha\mathrm{Id}_V)$.
		Then the decomposition $V|_{(R,\infty)\times T^3} = \bigoplus_{\alpha} V_{\alpha}$ is a holomorphic decomposition.
		Moreover, there exist $C,\delta>0$ such that for any $(t,x)\in\mathbb{R}\times T^3$, for any eigenvalues $\alpha\neq\beta$,
		for any $v_\alpha \in (V_{\alpha})_{(t,x)}$ and for any $v_\beta \in (V_{\beta})_{(t,x)}$,
		we have $|\i<v_\alpha,v_\beta>| < C\exp(-\delta t)|v_\alpha|\cdot|v_\beta|$.
    \end{Cor}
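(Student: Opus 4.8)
The plan is to read off both assertions directly from Proposition~\ref{Prp:hol Asymp of Inst}, since the frame $\mb{v}$ constructed there is precisely adapted to $\Gamma_{\bar{w}}$. Recall that $\mb{v}$ was produced from an orthonormal frame diagonalizing $\Gamma=(\Gamma_i)$, so in the frame $\mb{v}$ the matrix $\Gamma_{\bar{w}}$ is a fixed diagonal constant matrix; hence $\widetilde{\Gamma_{\bar{w}}}(\mb{v}) = \mb{v}\Gamma_{\bar{w}}$ has constant eigenvalues and each $V_\alpha$ is simply the $C^2$-subbundle spanned by those frame vectors $v_i$ with $(\Gamma_{\bar{w}})_{ii}=\alpha$. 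First I would record the associated spectral projection $\Pi_\alpha$, which is again a constant matrix in this frame.

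For the holomorphic decomposition I would use part~\ref{Enum:1st in Prp:hol Asymp of Inst}. In the frame $\mb{v}$ the operator $\deebar_V = \nabla^{0,1}_A$ acts as $\deebar + A_{\bar{\tau}}d\bar{\tau} + A_{\bar{w}}d\bar{w}$, and the relations $[A_{\bar{\tau}},\Gamma_{\bar{w}}] = [A_{\bar{w}},\Gamma_{\bar{w}}]=0$ show that $A_{\bar{\tau}}$ and $A_{\bar{w}}$ are block diagonal for the eigenspace decomposition of $\Gamma_{\bar{w}}$, hence commute with the constant matrix $\Pi_\alpha$. Since $\deebar\Pi_\alpha=0$ this gives $[\deebar_V,\Pi_\alpha]=0$, so each $\Pi_\alpha$ is a holomorphic projection and each $V_\alpha=\mathrm{Im}(\Pi_\alpha)$ is $\deebar_V$-invariant; together with the integrability $\deebar_V\comp\deebar_V=0$ this exhibits $V|_{(R,\infty)\times T^3}=\bigoplus_\alpha V_\alpha$ as a holomorphic decomposition.

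For the exponential orthogonality I would use part~\ref{Enum:3rd in Prp:hol Asymp of Inst}: writing $h_{ij}:=\i<v_i,v_j>$, it provides $C_0,\delta>0$ with $|h_{ij}-\delta_{ij}|\le C_0\exp(-\delta t)$, which in particular makes $\mb{v}$ uniformly close to orthonormal. Expanding $v_\alpha=\sum_{(\Gamma_{\bar w})_{ii}=\alpha}a_iv_i$ and $v_\beta=\sum_{(\Gamma_{\bar w})_{jj}=\beta}b_jv_j$ with $\alpha\neq\beta$, the index sets are disjoint, so every term of $\i<v_\alpha,v_\beta>=\sum_{i,j}a_i\bar b_j h_{ij}$ has $i\neq j$ and therefore $h_{ij}=h_{ij}-\delta_{ij}=O(\exp(-\delta t))$; a Cauchy--Schwarz estimate then bounds $|\i<v_\alpha,v_\beta>|$ by a constant times $\exp(-\delta t)(\sum|a_i|^2)^{1/2}(\sum|b_j|^2)^{1/2}$, and the two-sided comparison between these coefficient norms and $|v_\alpha|,|v_\beta|$ (again from near-orthonormality) yields the claimed bound $|\i<v_\alpha,v_\beta>|<C\exp(-\delta t)|v_\alpha||v_\beta|$.

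I do not expect a deep obstacle here: the corollary is essentially a repackaging of Proposition~\ref{Prp:hol Asymp of Inst}. The points requiring a little care are (i) justifying that the $\deebar_V$-invariant $C^2$-subbundle $V_\alpha$ is genuinely a holomorphic subbundle, which follows from integrability and elliptic regularity giving local holomorphic frames; and (ii) keeping all constants uniform in $(t,x)$ and interpreting the statement on the domain $(R,\infty)\times T^3$ where $\mb{v}$ lives (for bounded $t$ the exponential bound holds trivially after enlarging $C$). The genuinely new input beyond bookkeeping is the commutation $[A_{\bar\tau},\Gamma_{\bar w}]=[A_{\bar w},\Gamma_{\bar w}]=0$ from part~\ref{Enum:1st in Prp:hol Asymp of Inst}, which is what forces the decomposition to be holomorphic rather than merely $C^2$.
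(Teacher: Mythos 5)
Your proposal is correct and takes essentially the same approach as the paper: the paper states this corollary without a separate proof, treating it as an immediate consequence of Proposition \ref{Prp:hol Asymp of Inst}, and your deduction --- holomorphicity of each $V_\alpha$ from the commutation relations $[A_{\bar{\tau}},\Gamma_{\bar{w}}]=[A_{\bar{w}},\Gamma_{\bar{w}}]=0$ of part \ref{Enum:1st in Prp:hol Asymp of Inst} together with the constancy of the spectral projections $\Pi_\alpha$, and the exponential orthogonality from the near-orthonormality estimate of part \ref{Enum:3rd in Prp:hol Asymp of Inst} combined with Cauchy--Schwarz --- is precisely the intended one. The two points of care you flag (local holomorphic frames for the $\deebar_A$-invariant subbundle via integrability, and absorbing the bounded-$t$ region into the constant $C$) are also handled correctly.
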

    \begin{Cor}\label{Cor:hol push down of asymp inst}
      Let $\pi:(0,\infty)\times T^3 \to (0,\infty)\times S^1$ be the projection.
      There exist a holomorphic Hermitian vector bundle $(E, \deebar_E, h_E)$
      on $(R,\infty)\times S^1$ and a holomorphic endomorphism $f \in \Gamma((R, \infty)\times S^1, \mathrm{End}(E))$
      such that the following holds:
      \begin{enumerate}[label=(\roman*)]
      	\item
        	$(V, \deebar_A)$ and $(\pi^{\ast}E, \pi^{\ast}(\deebar_E) + fd\bar{w})$ are isomorphic.
        \item
        	Under the isomorphism between $(V, \deebar_A)$ and $(\pi^{\ast}E, \pi^{\ast}(\deebar_E) + fd\bar{w})$,
        	we have the estimates
        	$
          		||h-\pi^{\ast}h_E||_{C^{2}([t,t+1]\times T^3)}
          		= O(\mathrm{exp}(-\delta t)),
			$
			where $\delta$ is a positive number, and the norm is induced by $h$.
        \item
        	For the Chern connection $\nabla_E$ of $(E,h_E,\deebar_{E})$,
        	we have $|F(\nabla_E)|_{h_E} = O(t^{-2})$.
        \item
        	$(E, \deebar_{E}, f)$ has a holomorphic and orthogonal decomposition
			\[(E, \deebar_{E}, h_E, f) = \bigoplus_{\alpha} (E_{\alpha}, \deebar_{E_{\alpha}}, h_{E_{\alpha}}, f_{\alpha}),\]
          	which is compatible with the decomposition $V = \bigoplus_{\alpha} V_{\alpha}$ in Corollary \ref{Cor:asymp. ortho. and hol. decomp. of inst.}.
      \end{enumerate}
    \end{Cor}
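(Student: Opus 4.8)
The plan is to descend the holomorphic and metric data of $(V,\deebar_A)$ along $\pi$, exploiting the $T^2$-invariance provided by Proposition \ref{Prp:hol Asymp of Inst}. Concretely, I would start from the $C^2$-frame $\mb{v}$ and the model solution $(\Gamma,N)$ of that proposition on $(R,\infty)\times T^3$. Since $A_{\bar{\tau}}$ and $A_{\bar{w}}$ are $T^2$-invariant by Proposition \ref{Prp:hol Asymp of Inst}\ref{Enum:1st in Prp:hol Asymp of Inst}, they descend to endomorphism-valued data on $(R,\infty)\times S^1$: let $E$ be the trivial bundle $\underline{\mathbb{C}^r}$ on $(R,\infty)\times S^1$ with frame $\mb{e}$, define $\deebar_E$ by $\deebar_E(\mb{e}) = \mb{e}\,A_{\bar{\tau}}d\bar{\tau}$, set $f(\mb{e}) := \mb{e}\,A_{\bar{w}}$, and let $h_E$ be the metric making $\mb{e}$ orthonormal. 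The operator $\deebar_E$ is automatically integrable because $(R,\infty)\times S^1$ is a Riemann surface, and the holomorphicity $\deebar_E f = 0$ is exactly the $T^2$-invariant part of the integrability relation (\ref{Eqn:integral condition}), i.e.\ $\deebar_{\tau}A_{\bar{w}} + [A_{\bar{\tau}},A_{\bar{w}}] = 0$. Under the identification $\mb{v} \leftrightarrow \pi^{\ast}\mb{e}$ the isomorphism \ref{Enum:1st in Prp:hol Asymp of Inst}, that $(V,\deebar_A)\simeq(\pi^{\ast}E,\pi^{\ast}\deebar_E + fd\bar{w})$, then holds tautologically.

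Claim \ref{Enum:2nd in Prp:hol Asymp of Inst} would be immediate: $\pi^{\ast}h_E$ makes $\mb{v}$ orthonormal, so $h - \pi^{\ast}h_E$ has frame matrix $(\i<v_i,v_j> - \delta_{ij})$, whose $C^2$-norm is $O(\exp(-\delta t))$ by Proposition \ref{Prp:hol Asymp of Inst}\ref{Enum:3rd in Prp:hol Asymp of Inst}. For the decomposition I would use that both $A_{\bar{\tau}}$ and $A_{\bar{w}}$ commute with $\Gamma_{\bar{w}}$: writing $\mb{e} = \coprod_{\alpha}\mb{e}_{\alpha}$ for the $\Gamma_{\bar{w}}$-eigenframes and $E_{\alpha} := \mathrm{span}(\mb{e}_{\alpha})$, both $\deebar_E$ and $f$ preserve $\bigoplus_{\alpha}E_{\alpha}$, while $h_E$ makes the $E_{\alpha}$ mutually orthogonal; pulling back recovers the holomorphic orthogonal decomposition $V = \bigoplus_{\alpha}V_{\alpha}$ of Corollary \ref{Cor:asymp. ortho. and hol. decomp. of inst.}.

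The substantive step is the curvature bound, and this is where I expect the main work. Since $\mb{e}$ is $h_E$-orthonormal, the Chern connection of $(E,\deebar_E,h_E)$ has skew-Hermitian frame matrix $\theta = -A_{\bar{\tau}}^{\dagger}d\tau + A_{\bar{\tau}}d\bar{\tau}$, so on the Riemann surface $(R,\infty)\times S^1$ its curvature is
\[
	F(\nabla_E) = \bigl(\partial_{\tau}A_{\bar{\tau}} + \partial_{\bar{\tau}}A_{\bar{\tau}}^{\dagger} + [A_{\bar{\tau}},A_{\bar{\tau}}^{\dagger}]\bigr)\,d\tau\wedge d\bar{\tau}.
\]
I would estimate the three terms via the splitting $A_{\bar{\tau}} = B_0 + c + e$ with $B_0 := \Gamma_{\bar{\tau}} + N_{\bar{\tau}}/t$, where $c$ commutes with every $\Gamma_i$ and satisfies $|c| = O(t^{-(1+\delta)})$, and $|e| = O(\exp(-\delta t))$ (with matching derivative bounds); this comes from Corollary \ref{Cor:Asymp of Inst} together with the fact that the gauge change producing $\mb{v}$ is $\mathrm{Id} + O(\exp(-\delta t))$. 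Because $\Gamma_i$ and $N_i$ are skew-Hermitian, $\Gamma_{\bar{\tau}}$ and $N_{\bar{\tau}}$ are Hermitian and commute, so $B_0$ is Hermitian and $[B_0,B_0^{\dagger}] = 0$; the derivative term is then governed by $\partial_{\tau}(N_{\bar{\tau}}/t)$ and its conjugate, which is $O(t^{-2})$. For the commutator, the only potentially dangerous cross terms $[\Gamma_{\bar{\tau}},c^{\dagger}]$ and $[c,\Gamma_{\bar{\tau}}]$ vanish because $c$ commutes with $\Gamma$, while the remaining pairings of $N_{\bar{\tau}}/t$ or $e$ against $c,e$ are $O(t^{-(2+\delta)})$ or exponentially small. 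Hence $F(\nabla_E) = O(t^{-2})$, as required.

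The crux, and the step most likely to need care, is precisely this cancellation in $[A_{\bar{\tau}},A_{\bar{\tau}}^{\dagger}]$: a naive bound using only $|A_{\bar{\tau}} - B_0| = O(t^{-(1+\delta)})$ yields merely $O(t^{-(1+\delta)})$, which is weaker than $O(t^{-2})$ when $\delta < 1$. It is essential to invoke the finer structure of Corollary \ref{Cor:Asymp of Inst}, namely that the polynomially decaying part of the error lies in $\mathrm{Center}(\Gamma)$ and so commutes with the Hermitian leading term $\Gamma_{\bar{\tau}}$, contributing to the commutator only at order $t^{-(2+\delta)}$. I would also need to verify that the exponentially small gauge correction of Proposition \ref{Prp:hol Asymp of Inst} does not spoil these bounds, which is routine given its $C^2$-estimate.
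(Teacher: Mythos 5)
Your construction coincides with the paper's own proof: the paper likewise takes $(E,h_E)$ to be the trivial Hermitian bundle on $(R,\infty)\times S^1$ with an orthonormal frame $\mb{e}$, defines $\deebar_E(\mb{e})=\mb{e}A_{\bar{\tau}}d\bar{\tau}$ and $f(\mb{e})=\mb{e}A_{\bar{w}}$, and then cites Proposition \ref{Prp:hol Asymp of Inst} for all four properties. Your explicit verification of the curvature bound (iii) --- exploiting that the polynomially decaying error lies in $\mathrm{Center}(\Gamma)$ and that the non-exponentially-small part of $A_{\bar{\tau}}$ is Hermitian, so $[A_{\bar{\tau}},A_{\bar{\tau}}^{\dagger}]$ contributes only at order $O(t^{-(2+\delta)})$ --- is a correct filling-in of details the paper leaves implicit.
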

	\begin{proof}
		Let $(E,h_E)$ be a trivial Hermitian vector bundle on $(R,\infty)\times S^1$ and $\mb{e}$ be a orthonormal frame of $(E,h_E)$.
		We set $\deebar_E(\mb{e})=\mb{e}A_{\tau}$ and $f(\mb{e})=\mb{e}A_{\bar{w}}$.
		Then all conditions are satisfied by Proposition \ref{Prp:hol Asymp of Inst}.
	\end{proof}
  \subsection{Prolongation of $L^2$-finite instantons}\label{SubSec:Prolong of inst}
  	By following \cite{Ref:Moc1} and \cite{Ref:Moc3},
    we construct a polystable filtered bundles on $(\mathbb{P}^1\times T^2,\{0,\infty\}\times T^2)$
  	from $L^2$-finite instantons on $\mathbb{R}\times T^3 \simeq \mathbb{C}^{\ast}\times T^2$.
    \begin{Def}\label{Def:Prolongation}
    	Let $(X,g)$ be a K\"{a}hler manifold.
      Let $(V,h,A)$ be a holomorphic Hermitian bundle on $\Delta^{\ast} \times X$,
      where $\Delta^{\ast} = \{z \in \mathbb{C}\;|\;0<|z|<1\}$.
      Let $i:\Delta^{\ast}\times X\to\Delta\times X$ be the inclusion.
    	\begin{enumerate}
      	\item
        	$(V,h,A)$ is \textit{acceptable} if $|F(A)|$ is bounded on a neighborhood of $\{0\} \times X$,
          where the norm is induced by $h$ and the Poincar\'{e}-like metric $g + |z|^{-2}(\log|z|)^{-2}dzd\bar{z}$.
        \item
        	For any $a \in \mathbb{R}$,
			we define a (possibly non-coherent) $\mathcal{O}_{\Delta \times X}$-submodule $P_{a}V$ of $i_\ast V$ as follows:
			For any open subset $U\subset \Delta\times X$,
			a section $s \in \Gamma(U,i_{\ast}V)$ belongs to $\Gamma(U,P_{a}V)$ if and only if
			for any $p \in (\{0\}\times X) \cap U$ and for any $\eps>0$,
			an estimate $|s| = O(|z|^{-(a+\eps)})$ holds around $p$.
			We call $P_{\ast}V := \{P_{a}V\}_{a\in\mathbb{R}}$ the prolongation of $V$.
        \item
        	For a section $s \in \Gamma(X \times \Delta, P_{a}V)$,
          we define $\mathrm{ord}(s) \in \mathbb{R}$ to be
          \[
          	\mathrm{ord}(s) := \mathrm{min}\{ a' \in \mathbb{R} \mid s \in \Gamma(X \times \Delta, P_{a'}V)\}.
          \]
      \end{enumerate}
    \end{Def}
    \vspace{2mm}
    Mochizuki \cite{Ref:Moc3} proved the following theorem.
    \begin{Thm}\label{Thm:acceptable imp prolongable}
    	Let $(X,g), (V,h,A)$ be as in Definition \ref{Def:Prolongation}.
      	If $(V,h,A)$ is acceptable,
      	then $P_{\ast}V = \{P_{a}V\}_{a \in \mathbb{R}}$ forms a filtered bundle on $(\Delta\times X, \{0\}\times X)$.
    \end{Thm}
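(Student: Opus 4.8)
The plan is to argue locally near a point $p \in \{0\}\times X$ and to separate the purely formal filtered-sheaf axioms from the analytic content. Fixing the coordinate $z$ on $\Delta$ together with local coordinates on $X$ and writing $D = \{0\}\times X$, the inclusions $P_{a'}V \subset P_{a}V$ for $a'\leq a$, the identity $P_{a}V|_{\Delta^{\ast}\times X} = V$, the local constancy of $P_{a}V$ along the $X$-directions, and the periodicity $P_{a+1}V = P_{a}V(D)$ all follow immediately from the growth definition of $\mathrm{ord}$ in Definition \ref{Def:Prolongation}. The real content is three assertions: each $P_{a}V$ is a locally free $\mathcal{O}_{\Delta\times X}$-module of rank $r = \mathrm{rank}(V)$; the jumps of the filtration are discrete, so that the semicontinuity axiom holds; and the graded pieces ${}^{1}\mathrm{Gr}_{a}(V)$ are locally free on $D$. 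These statements generalize, to the acceptable-in-families setting, the one-variable norm estimates of Simpson for acceptable bundles on a punctured disk.

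The analytic core is a norm estimate governing the growth of holomorphic sections, extracted from the acceptability hypothesis. For a holomorphic section $s$ of $(V,h)$ one has the standard Bochner identity, which by the Cauchy--Schwarz inequality yields
\[
  \sqrt{-1}\,\partial\deebar \log|s|^{2}_{h} \;\geq\; -\frac{\sqrt{-1}\,\i<F(A)s,s>}{|s|^{2}_{h}}.
\]
Taking the trace with respect to the Poincar\'e-like metric $g + |z|^{-2}(\log|z|)^{-2}dzd\bar{z}$ and using that $|F(A)|$ is bounded in this metric (acceptability), the right-hand side is bounded, so $\log|s|^{2}_{h}$ is subharmonic up to a bounded error for the associated Laplacian. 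Since this metric is complete with bounded geometry in the $z$-direction, a comparison against the harmonic function $\log|z|$, as in Simpson's work and its extension by Mochizuki in \cite{Ref:Moc3}, shows that
\[
  \mathrm{ord}(s) \;=\; -\lim_{z\to 0}\frac{\log|s(z,w)|_{h}}{\log|z|}
\]
exists, is finite, and that the set of values of $\mathrm{ord}$ on holomorphic sections is discrete and locally independent of $w \in X$.

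Given these estimates I would construct an adapted holomorphic frame. Near $p$ one selects holomorphic sections $s_{1},\ldots,s_{r}$ of $V$ whose orders $a_{1},\ldots,a_{r}$ realize the filtration jumps and whose norms satisfy $|s_{i}|_{h} \sim |z|^{-a_{i}}$, so that $h$ is mutually bounded with the diagonal model metric $\mathrm{diag}(|z|^{-2a_{i}})$. Then $P_{a}V$ is generated over $\mathcal{O}_{\Delta\times X}$ by $\{z^{\lceil a_{i}-a\rceil} s_{i}\}_{i=1}^{r}$; these are $r$ sections, linearly independent over the fraction field, whose growth matches the definition of $P_{a}V$, so they form a free basis, proving local freeness of rank $r$. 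The graded piece ${}^{1}\mathrm{Gr}_{a}(V)$ is then identified with the $\mathcal{O}_{D}$-span of the restrictions to $D$ of those $s_{i}$ with $a_{i}=a$, which is locally free, and discreteness of the $a_{i}$ gives the required semicontinuity. This establishes that $P_{\ast}V$ is a filtered bundle.

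The main obstacle is the uniformity and family structure in the $X$-directions. Simpson's norm estimates are one-dimensional, so the delicate point is to run the subharmonicity and comparison argument with constants uniform over compact subsets of $X$ and, crucially, to show that the weights $a_{i}$ are \emph{locally constant} in $w \in X$ rather than merely defined fiberwise; this is exactly what upgrades the fiberwise norm control to local freeness of both $P_{a}V$ and the graded pieces. A related subtlety is verifying that the growth-defined $\mathcal{O}$-module $P_{a}V$ is coherent at all, which is precisely what the adapted frame supplies. These are the points handled in \cite{Ref:Moc3}, and I would follow that analysis, treating $X$ as a parameter space throughout.
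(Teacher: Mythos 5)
The first thing to observe is that the paper does not prove this theorem: it is imported verbatim from Mochizuki \cite{Ref:Moc3} (``Mochizuki proved the following theorem''), so there is no internal argument to compare yours against, and your sketch has to be measured against the cited literature (Simpson \cite{Ref:Sim2} in dimension one, Mochizuki \cite{Ref:Moc3} in the relative setting). Your overall route --- subharmonicity of $\log|s|^2_h$ forced by acceptability, comparison against $\log|z|$ to define $\mathrm{ord}(s)$, adapted frames, then local freeness and discreteness of the jumps --- is indeed the Simpson--Mochizuki strategy, so the plan is the right one.

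However, one step is false as stated, and it is the step your local-freeness argument leans on. For a general acceptable bundle you cannot choose sections with $|s_i|_h \sim |z|^{-a_i}$ so that $h$ is mutually bounded with the pure-power model $\mathrm{diag}(|z|^{-2a_i})$: logarithmic corrections are unavoidable. A concrete counterexample is the trivial rank-two holomorphic bundle on the punctured disk with metric $h=\mathrm{diag}\bigl(-\log|z|,\,(-\log|z|)^{-1}\bigr)$; its Chern curvature has Euclidean norm $O\bigl(|z|^{-2}(\log|z|)^{-2}\bigr)$, hence is bounded with respect to the Poincar\'e-like metric, so the bundle is acceptable and all its parabolic weights are $0$, yet $h$ is not mutually bounded with $\mathrm{diag}(1,1)$. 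This is exactly why the paper's own norm estimate (Proposition~\ref{Prp:norm estimate}) carries the factors $(-\log|z|^2)^{\mathrm{deg}^{W}(b_i)}$. What actually holds for acceptable bundles is only the two-sided bound $|z|^{-a_i+\eps} \lesssim |s_i|_h \lesssim |z|^{-a_i-\eps}$ for every $\eps>0$; that suffices to define $\mathrm{ord}$, but the claim that $\{z^{\lceil a_i-a\rceil}s_i\}$ is a free basis of $P_aV$ then requires a genuine argument (non-degeneracy of the Gram matrix of the frame up to $|z|^{\pm\eps}$, i.e.\ adaptedness in Simpson's sense), which your proposal does not supply. Moreover, the points you yourself flag as the real difficulty --- coherence of the growth-defined sheaves, local constancy of the weights in $w$, uniformity of the constants in the $X$-direction --- are precisely the content of \cite{Ref:Moc3}, and you defer them to that reference; since the theorem \emph{is} that reference's theorem, your proposal ultimately reduces to the same citation the paper makes, preceded by a sketch one of whose steps needs repair.
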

	Let $(V,h,A)$ be an $L^2$-finite instanton on $(-\infty,0)\times T^3$.
	Applying Corollary \ref{Cor:hol push down of asymp inst} to $(V,h,A)$,
	we take a positive number $R>0$, a holomorphic vector bundle $(E,\deebar_E, h_E)$ on $(-\infty,-R)\times S^1$ and a holomorphic endomorphism $f \in \Gamma((-\infty,-R)\times S^1, \mathrm{End}(E))$.
	For $r>0$,
	we set $\Delta(r):=\{z\in\mathbb{C}\mid |z|<\exp(-2\pi r)\}$
	and $\Delta(r)^{\ast} := \Delta(r)\setminus\{0\}$.
	Under the isomorphism $\Delta(R)^{\ast}\simeq (-\infty,R)\times S^1$ and $\Delta(0)^{\ast}\times T^2 \simeq (\infty,0)\times T^3$,
	we obtain the prolongation $P_{\ast}E$ and $P_{\ast}V$ from
	$(E,h_E,\deebar_{E})$ and $(V,h,A)$ respectively.
    \begin{Cor}\label{Cor:Inst is prolongable}
    	Both $(V,h,A)$ and $(E,h_E,\deebar_{E})$ are acceptable on $\Delta(0)^{\ast}\times T^2$ and $\Delta(R)^{\ast}$ respectively.
    	In particular, the prolongations $P_{\ast}V$ and $P_{\ast}E$ are filtered bundles on $(\Delta(0) \times T^2,\{0\}\times T^2)$ and $(\Delta(R),\{0\})$ respectively.
    \end{Cor}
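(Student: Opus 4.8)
The plan is to reduce both assertions to the single statement that $(V,h,A)$ and $(E,h_E,\deebar_E)$ are acceptable in the sense of Definition \ref{Def:Prolongation}, since the filtered-bundle conclusion is then immediate from Theorem \ref{Thm:acceptable imp prolongable}. Acceptability is a boundedness condition on the curvature norm measured with the Poincar\'e-like metric $g + |z|^{-2}(\log|z|)^{-2}dz\,d\bar w$ near $\{0\}\times X$, so the first step is to rewrite that metric in the real coordinates $(t,x^1,x^2,x^3)$. Using $z = \exp(2\pi\tau)$ with $\tau = t + \sqrt{-1}x^1$ one has $\log|z| = 2\pi t$ and $dz\,d\bar z = 4\pi^2|z|^2(dt^2 + (dx^1)^2)$, so that $|z|^{-2}(\log|z|)^{-2}dz\,d\bar z = |t|^{-2}(dt^2 + (dx^1)^2)$. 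Writing $g_P$ for the Poincar\'e-like metric, this gives
\[
	g_P = |t|^{-2}\bigl(dt^2 + (dx^1)^2\bigr) + g_{T^2}
\]
for $(V,h,A)$ (where $X = T^2$), and $g_P = |t|^{-2}(dt^2 + (dx^1)^2)$ for $(E,h_E,\deebar_E)$ (where $X$ is a point). In particular $g_P$ is block-diagonal, with the $T^2$-block equal to the fixed $t$-independent metric $g_{T^2}$ and the $\tau$-block equal to $|t|^{-2}$ times the Euclidean metric.

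Next I would record the curvature decay with respect to the flat product metric. For $E$ this is precisely Corollary \ref{Cor:hol push down of asymp inst}\,(iii), namely $|F(\nabla_E)|_{h_E} = O(|t|^{-2})$. For $(V,h,A)$ the same order follows from the model-solution approximation of Corollary \ref{Cor:Asymp of Inst} (and Proposition \ref{Prp:hol Asymp of Inst}): in the temporal gauge we have $A_i = \Gamma_i + N_i/t + \eps_{1,i} + \eps_{2,i} + \eps_{3,i}$, so $F_{ti} = \partial_t A_i = -N_i/t^2 + O(|t|^{-(2+\delta)})$, while, because $[\Gamma_i,\Gamma_j]=[\Gamma_i,N_j]=0$ and $\eps_{1,i},\eps_{2,i}$ are $x$-independent,
\[
	F_{ij} = [A_i,A_j] + \partial_i\eps_{3,j} - \partial_j\eps_{3,i} = t^{-2}[N_i,N_j] + O(|t|^{-(2+\delta)}) + O(\exp(-\delta|t|)).
\]
Hence every component of $F(A)$ is $O(|t|^{-2})$ in the flat metric.

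Finally I would convert these flat-metric bounds into the $g_P$-norm. Since $g_P$ is block-diagonal, its inverse carries a factor $|t|^{2}$ on each index lying along the $\tau$-directions $(t,x^1)$ and is $t$-independent on the $T^2$-directions; thus a $2$-form component $F_{\mu\nu}$ is amplified in $|\cdot|_{g_P}^2$ by $|t|^{2}$ for each of its indices along $\tau$. The component $F_{t,x^1}$ carries both indices along $\tau$ and is amplified by $|t|^4$, each mixed component by $|t|^2$, and $F_{x^2,x^3}$ by $O(1)$. Combining with $|F_{\mu\nu}| = O(|t|^{-2})$ yields
\[
	|F(A)|_{g_P}^2 \;\le\; |t|^4\cdot O(|t|^{-4}) + |t|^2\cdot O(|t|^{-4}) + O(|t|^{-4}) \;=\; O(1),
\]
and the identical (in fact simpler) estimate holds for $E$, where only the $F_{t,x^1}$ term occurs. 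This bounds $|F|_{g_P}$ on a punctured neighborhood of the singular locus, hence near $\{0\}\times X$, so both objects are acceptable and Theorem \ref{Thm:acceptable imp prolongable} completes the proof. The one delicate point is the term $F_{t,x^1}$: its $|t|^4$ amplification exactly cancels the $|t|^{-4}$ coming from the curvature, so the sharp $O(|t|^{-2})$ decay supplied by the $N_i/t$ part of the model solution is essential — a bound of size merely $o(|t|^{-1})$ on this component would not suffice to keep $|F|_{g_P}$ bounded.
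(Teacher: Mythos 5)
Your proposal is correct and takes essentially the same route as the paper: rewrite the Poincar\'e-like metric as $(dt^2+(dx^1)^2)/t^2$ under $z=\exp(2\pi(t+\sqrt{-1}x^1))$, get $|F|=O(t^{-2})$ in the flat metric from Corollary \ref{Cor:Asymp of Inst} (resp.\ Corollary \ref{Cor:hol push down of asymp inst} (iii)), conclude acceptability, and invoke Theorem \ref{Thm:acceptable imp prolongable}. The only difference is that you make explicit two steps the paper leaves implicit --- the derivation of the $O(t^{-2})$ curvature bound from the model-solution asymptotics and the weight bookkeeping ($|t|^4$, $|t|^2$, $O(1)$ per component) when passing to the Poincar\'e-metric norm --- both of which you carry out correctly.
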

    \begin{proof}
    	Under the coordinate change $z = \exp(2\pi(t + \sqrt{-1}x^{1}))$,
      	the Poincar\'{e} metric $|z|^{-2}(\log|z|)^{-2}dzd\bar{z}$ is written as $(dt^2 + (dx^1)^2) / t^2$.
      	By Corollary \ref{Cor:Asymp of Inst},
      	we have $|F(A)| = O(t^{-2})$,
      	where the norm is induced by $h$ and $g_{\mathbb{R}\times T^3}$.
      	Thus, $(V,h,A)$ is acceptable on $\Delta(0)^{\ast}\times T^2$.
      	By Corollary \ref{Cor:hol push down of asymp inst} (iii),
      	we also have $|F(\nabla_E)| = O(t^{-2})$,
      	and $(E,h_E,\deebar_{E})$ is acceptable on $\Delta(R)^{\ast}$.
      	Therefore, by Theorem \ref{Thm:acceptable imp prolongable},
      	$P_{\ast}V$ and $P_{\ast}E$ are filtered bundles on $(\Delta(0) \times T^2,\{0\}\times T^2)$ and $(\Delta(R),\{0\})$ respectively.
    \end{proof}
	Let us prove that the isomorphism $(V,\deebar_{A})\simeq (\pi^{\ast}E,\pi^{\ast}\deebar_{E} + \pi^{\ast}fd\bar{w})$ in Corollary \ref{Cor:hol push down of asymp inst}
	is extended over $\Delta(R) \times T^2$.
	\begin{Prp}\label{Prp:Rough property of prolong}
		For any $a \in \mathbb{R}$,
		we have the holomorphic isomorphism
		$P_{a}(V|_{\Delta(R)^{\ast} \times T^2}) \cong (\pi^{\ast}P_{a}E, \pi^{\ast}(\deebar_{P_{a}E}) + \pi^{\ast}(P_{a}f) d\bar{w})$
		constructed from the isomorphism in Corollary \ref{Cor:hol push down of asymp inst}.
		In particular, the vector bundle $P_{a}V|_{\{0\}\times T^2}$ is semistable of degree $0$.
    \end{Prp}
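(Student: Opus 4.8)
The plan is to transport the prolongation of $V$ through the isomorphism of Corollary \ref{Cor:hol push down of asymp inst} and to verify that prolongation commutes with the pullback $\pi^{\ast}$ together with the $f$-twist; the crucial structural fact I would exploit is that $\deebar_E$, $f$ and $h_E$ all depend only on the disk variable $z$, since $A_{\bar\tau}$ and $A_{\bar w}$ are $T^2$-invariant by Proposition \ref{Prp:hol Asymp of Inst}\ref{Enum:1st in Prp:hol Asymp of Inst}. First I would compare the two metrics: by Corollary \ref{Cor:hol push down of asymp inst}(ii) the difference $h - \pi^{\ast}h_E$ decays near $\{0\}\times T^2$, so $h$ and $\pi^{\ast}h_E$ are mutually bounded there. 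Hence the growth condition $|s| = O(|z|^{-(a+\eps)})$ in Definition \ref{Def:Prolongation} is insensitive to the choice between $h$ and $\pi^{\ast}h_E$, and the isomorphism $(V,\deebar_A)\cong(\pi^{\ast}E,\pi^{\ast}\deebar_E + f\,d\bar w)$ of Corollary \ref{Cor:hol push down of asymp inst}(i), being metric-preserving up to a bounded factor, identifies $P_a V$ with the prolongation of the twisted pullback bundle computed with respect to $\pi^{\ast}h_E$.

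Next I would record that the twist is harmless. By Proposition \ref{Prp:hol Asymp of Inst}\ref{Enum:2nd in Prp:hol Asymp of Inst}, $f = A_{\bar w} = \Gamma_{\bar w} + N_{\bar w}/t + \tilde{\eps}_{\bar w}$ is a bounded holomorphic endomorphism that depends only on $z$ and satisfies $\lim_{z\to 0} f = \Gamma_{\bar w}$. Boundedness gives $\mathrm{ord}(f)\le 0$, so $f$ extends holomorphically across $\{0\}$ by the removable singularity theorem and preserves each $P_b E$; thus $(\pi^{\ast}P_a E,\ \pi^{\ast}\deebar_{P_a E} + (P_a f)\,d\bar w)$ is a well-defined holomorphic bundle on $\Delta(R)\times T^2$, and $P_{\ast}V$, $P_{\ast}E$ are already filtered bundles by Corollary \ref{Cor:Inst is prolongable}.

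The heart of the argument is then the equality $P_a(\pi^{\ast}E,\text{twisted}) = (\pi^{\ast}P_a E,\text{twisted})$, which I would prove locally near a point $(0,w_0)\in\{0\}\times T^2$. A germ killed by $\pi^{\ast}\deebar_E + f\,d\bar w$ is determined by the propagator $s(z,w) = \exp\!\bigl(-f(z)(\bar w - \bar w_0)\bigr)\,s_0(z)$, where $s_0$ is a $\deebar_E$-holomorphic germ in $z$; conversely every such $s_0$ yields a twisted-holomorphic germ. Since $f$ is bounded and $w$ ranges over a compact set, $\exp\!\bigl(\mp f(z)(\bar w-\bar w_0)\bigr)$ is uniformly bounded and invertible, so $|s|_{\pi^{\ast}h_E} \asymp |s_0|_{h_E}$; as $h_E$ is pulled back from $\Delta(R)$, the $z$-growth of $|s_0|_{h_E}$ is governed exactly by the filtered structure of $P_{\ast}E$, i.e.\ $s_0\in P_a E$. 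Running this comparison in both directions gives the inclusions $\pi^{\ast}P_a E \subseteq P_a V$ and $P_a V \subseteq \pi^{\ast}P_a E$, hence the asserted isomorphism. I expect this decoupling of the normal ($z$) growth from the $w$-dependence forced by the $f$-twist to be the main obstacle, with everything else reducing to the norm comparisons above.

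Finally, for the last assertion I would restrict the established isomorphism to the fiber $\{0\}\times T^2$. There $\pi^{\ast}P_a E|_{\{0\}\times T^2}\cong \mathcal{O}_{T^2}\otimes (P_a E)_0$ is trivial of rank $r$, and the restricted holomorphic structure is $\deebar + f(0)\,d\bar w$ with $f(0)=\Gamma_{\bar w}=\bigoplus_{\alpha}\alpha\,\mathrm{Id}_{E_\alpha}$ by the eigenspace decomposition of Corollaries \ref{Cor:asymp. ortho. and hol. decomp. of inst.} and \ref{Cor:hol push down of asymp inst}(iv). Thus $P_a V|_{\{0\}\times T^2}\cong \bigoplus_{\alpha} L_{\alpha}^{\oplus \mathrm{rank}(E_\alpha)}$, a direct sum of flat line bundles of degree $0$, which is semistable of degree $0$ (indeed of the model form in Proposition \ref{Prp:decomp of ss deg0} with vanishing nilpotent part, the nilpotent $N_{\bar w}$ surviving only on the nearby fibers $\{z\}\times T^2$).
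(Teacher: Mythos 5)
Your construction of the isomorphism itself is essentially the paper's own proof: the forward inclusion is the mutual-boundedness argument ($h$ and $\pi^{\ast}h_E$ are mutually bounded by Corollary \ref{Cor:hol push down of asymp inst}, so the growth condition defining $P_{a}V$ can be tested against $\pi^{\ast}h_E$), and the reverse inclusion is the paper's slice argument in propagator disguise. One slip there: a germ killed by $\pi^{\ast}\deebar_E + f\,d\bar{w}$ has the form $\exp\bigl(-f(z)(\bar{w}-\bar{w}_0)\bigr)u(z,w)$ with $u$ holomorphic in \emph{both} variables, not $u=s_0(z)$; your claimed bijection with $\deebar_E$-holomorphic germs in $z$ misses every $w$-dependent section (already $u=w$ locally). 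This is repairable — the exponential factor is bounded with bounded inverse, so the growth comparison applies verbatim to $u$, and joint holomorphy of the coefficients of $u$ in a frame of $P_{a}E$ follows from the slice-wise statement via the Cauchy integral formula — but as written your reverse inclusion only covers a proper subsheaf.

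The genuine error is in your final paragraph. The endomorphism induced on $P_{a}V|_{\{0\}\times T^2}$ is $P_{a}f|_{\{0\}}$, i.e.\ the value at $z=0$ of the matrix of $f$ in a \emph{holomorphic} frame of $P_{a}E$, and this is not the pointwise limit $\Gamma_{\bar{w}}$ of $f$ in the $C^{\infty}$-frame: the two frames differ by a gauge transformation growing like powers of $\log|z|$, and conjugation by it resurrects the decaying nilpotent term. Concretely, $N_{\bar{w}}$ is an eigenvector of $\mathrm{ad}(N_{\bar{\tau}})$ with eigenvalue $\tfrac{1}{2}$, so conjugating by $\exp(-2N_{\bar{\tau}}\log t)$ turns $N_{\bar{w}}/t$ into $N_{\bar{w}}$; this is exactly the computation in the proof of Lemma \ref{Lem:explicit formula for gr. of model bundle}, which shows that the induced endomorphism on the gradation is $(\Gamma_{\bar{w}}+N_{\bar{w}})|_{X_{0}}$, nilpotent part included (see also Corollary \ref{Cor:Rough property of Gr}). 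Your parenthetical claim that the nilpotent part ``survives only on the nearby fibers'' is exactly backwards; were it true, all Jordan blocks would have size $1$ and the weight correspondence of Theorem \ref{Thm:Correspodence between weights} would degenerate. The conclusion you need survives without semisimplicity: the restriction is $\bigl(\pi^{\ast}(P_{a}E|_{\{0\}}),\ \deebar_{T^2}+\pi^{\ast}(P_{a}f|_{\{0\}})\,d\bar{w}\bigr)$ for \emph{some} constant endomorphism, and any constant-matrix twist on $T^2$ is semistable of degree $0$: splitting by generalized eigenspaces, each summand is a degree-$0$ flat line bundle tensored with an iterated extension of $\mathcal{O}_{T^2}$, i.e.\ the model form of Proposition \ref{Prp:decomp of ss deg0} with possibly nonzero nilpotent part. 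Replace your identification of $P_{a}f|_{\{0\}}$ with $\Gamma_{\bar{w}}$ by this weaker statement and the proof is correct.
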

	\begin{proof}
		Let $W \subset \Delta(R)\times T^2$ be an open subset,
		and take any $s \in \Gamma\bigl(W,\left(\pi^{\ast}P_{a}E, \pi^{\ast}(\deebar_{P_{a}E}) + \pi^{\ast}(P_{a}f) d\bar{w}\right)\bigr)$.
		Then $s|_{W \setminus (\{0\} \times T^2)}$ is a holomorphic section of $(V,\deebar_A)$.
		Since $h$ and $\pi^{\ast}h_E$ are mutually bounded, $s$ is a holomorphic section of $P_{a}V$.
		
		Let $s'$ be a holomorphic section of $P_{a}V$ on $W$.
		We obtain a holomorphic section ${s'}|_{W \setminus (\{0\} \times T^2)}$ of $(\pi^{\ast}E, \pi^{\ast}\deebar_{E} + \pi^{\ast}f d\bar{w})$.
		Because of the definition of $P_{a}E$, $s'|_{W \cap (\mathbb{P}^1 \times \{w\})}$ is a holomorphic section of $P_{a}E$.
		Hence $s'$ is a holomorphic section of $(\pi^{\ast}P_{a}E, \pi^{\ast}\deebar_{P_{a}E} + \pi^{\ast}P_{a}f d\bar{w})$.
		
		Under this isomorphism,
		$P_{a}V|_{\{0\}\times T^2}$ is naturally isomorphic to 
		$\left( \pi^{\ast}(P_{a}E|_{\{0\}}), \deebar_{T^2} + \pi^{\ast}(P_{a}f|_{\{0\}}) d\bar{w} \right)$.
		Thus this vector bundle is semistable and of degree $0$.
	\end{proof}
	\begin{Cor}\label{Cor:Rough property of Gr}
		For any $a \in \mathbb{R}$, we have an isomorphism of vector bundles
		$\mathrm{Gr}_{a}(V) \simeq ( \mathrm{Gr}_{a}(E) \times T^2, \deebar_{T^2} + \mathrm{Gr}_{a}(f) d\bar{w})$ on $T^2$,
		where we regard the skyscraper sheaf $\mathrm{Gr}_{a}(E)$ and the endomorphism $\mathrm{Gr}_{a}(f)$
		as a vector space with an endomorphism.
	\end{Cor}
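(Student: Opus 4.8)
The plan is to read off the corollary by applying the graded functor $\mathrm{Gr}_a = P_a(\,\cdot\,)/P_{<a}(\,\cdot\,)$ to the isomorphism of Proposition \ref{Prp:Rough property of prolong} and then tracking the induced holomorphic structure. First I would extend that isomorphism from $P_a$ to $P_{<a}$. Since the parabolic weights of a filtered bundle over the one-dimensional base $\Delta(R)$—and hence, via $P_aV \cong \pi^{\ast}P_aE$, those of $V$ along $\{0\}\times T^2$—form a discrete set, we have $P_{<a}E = P_{a'}E$ and $P_{<a}V = P_{a'}V$ for $a'$ slightly less than $a$. Applying Proposition \ref{Prp:Rough property of prolong} to $a'$ then yields $P_{<a}(V|_{\Delta(R)^{\ast}\times T^2}) \cong (\pi^{\ast}P_{<a}E, \pi^{\ast}\deebar_{P_{<a}E} + \pi^{\ast}(P_{<a}f)d\bar{w})$, and because it is a restriction of the single underlying isomorphism of Corollary \ref{Cor:hol push down of asymp inst}, it is compatible with the inclusion $P_{<a}V \hookrightarrow P_aV$.

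Next I would pass to the quotient. By definition $\mathrm{Gr}_a(V) = (P_aV/P_{<a}V)|_{\{0\}\times T^2}$, and the compatible isomorphisms identify $P_aV/P_{<a}V$ with $\pi^{\ast}(P_aE)/\pi^{\ast}(P_{<a}E)$. As the projection $\pi$ is flat, $\pi^{\ast}$ is exact, so this quotient equals $\pi^{\ast}(P_aE/P_{<a}E) = \pi^{\ast}\mathrm{Gr}_a(E)$. Here $\mathrm{Gr}_a(E)$ is the skyscraper at $0 \in \Delta(R)$ whose stalk is the vector space $\mathrm{Gr}_a(E)$, and flat base change along the fibre square $\{0\}\times T^2 \hookrightarrow \Delta(R)\times T^2$ over $\{0\}\hookrightarrow\Delta(R)$ identifies $\pi^{\ast}\mathrm{Gr}_a(E)$ with the trivial bundle $\mathrm{Gr}_a(E)\times T^2$ on $\{0\}\times T^2 \simeq T^2$. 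This gives the claimed underlying vector bundle.

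Finally I would check that the induced holomorphic structure is the asserted one. Under the isomorphism the holomorphic structure of $P_aV$ is $\pi^{\ast}\deebar_{P_aE} + \pi^{\ast}(P_af)d\bar{w}$; passing to the graded piece supported on $\{0\}\times T^2$, the pulled-back operator $\pi^{\ast}\deebar_{P_aE}$ contributes nothing in the base direction—since $\mathrm{Gr}_a(E)$ is the stalk at a single point—and thus leaves only the flat $\deebar_{T^2}$, while $f$ descends to the endomorphism $\mathrm{Gr}_a(f)$ of $\mathrm{Gr}_a(E)$. This produces precisely $\deebar_{T^2} + \mathrm{Gr}_a(f)d\bar{w}$. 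The main obstacle I anticipate is exactly this last bookkeeping: making rigorous the decoupling of the two directional components of $\deebar_A$ under the graded quotient, and checking that $f$ genuinely preserves the filtration $P_{\ast}E$—so that $\mathrm{Gr}_a(f)$ is well defined—which follows from the boundedness $f \in P_0\mathrm{End}(E)$ guaranteed by the asymptotics $A_{\bar{w}}\to\Gamma_{\bar{w}}$ of Proposition \ref{Prp:hol Asymp of Inst} together with the mutual boundedness of $h$ and $\pi^{\ast}h_E$.
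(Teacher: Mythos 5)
Your proposal is correct and follows essentially the same route as the paper: the paper states this corollary without any written proof, treating it as the immediate consequence of applying the graded functor $P_{a}/P_{<a}$ to the isomorphism of Proposition \ref{Prp:Rough property of prolong}, which is precisely your argument. Your supplementary checks --- discreteness of the parabolic weights to handle $P_{<a}$, exactness of $\pi^{\ast}$, and well-definedness of $\mathrm{Gr}_{a}(f)$ from the boundedness $f \in P_{0}\mathrm{End}(E)$ --- are exactly the bookkeeping the paper leaves implicit, and they are consistent with its definitions.
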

	Let $(V,h,A)$ be an $L^2$-finite instanton on $\mathbb{R}\times T^3$.
	We will denote by $P_{\ast\ast}V$ the associated filtered bundle on $(\mathbb{P}^1\times T^2,\{0,\infty\}\times T^2).$
    We prove that $P_{\ast\ast}V$ is polystable.
  	\begin{Thm}\label{Thm:inst is stable}
      The prolongation $P_{\ast\ast}V$ is polystable and $\mathrm{par\mathchar`-deg}(P_{\ast\ast}V)=0$.
      In particular, if $(V,h,A)$ is irreducible, then $P_{\ast\ast}V$ is stable.
    \end{Thm}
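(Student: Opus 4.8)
The plan is to verify the two conditions of Definition \ref{Def:stable filtered bundle} together with the vanishing of the parabolic degree, and then to obtain the stable case for irreducible $(V,h,A)$ from the equality analysis in the Kobayashi--Hitchin mechanism. First I would compute $\mathrm{par\mathchar`-deg}(P_{\ast\ast}V)=0$. Since the parabolic degree is insensitive to passing to the determinant, it suffices to treat $\det(P_{\ast\ast}V)=P_{\ast\ast}(\det V)$. Now $(\det V,\det h,\mathrm{tr}\,A)$ is a rank-$1$ $L^2$-finite instanton, hence flat by Lemma \ref{Lem:rank1 inst is flat}. A flat Hermitian line bundle on $\mathbb{C}^{\ast}\times T^2$ has, around $0$ and $\infty$, parabolic weights that are negatives of one another, because the monodromies about the two punctures are inverse to each other in $\mathbb{C}^{\ast}=\mathbb{P}^1\setminus\{0,\infty\}$; therefore its prolongation has total parabolic degree $0$, and $\mathrm{par\mathchar`-deg}(P_{\ast\ast}V)=0$.

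Condition (\ref{Enum:gr semistable}) of Definition \ref{Def:stable filtered bundle} for $P_{\ast\ast}V$ is exactly Proposition \ref{Prp:Rough property of prolong} together with its analogue at $\infty$: every $P_{ab}V|_{\{0\}\times T^2}$ and $P_{ab}V|_{\{\infty\}\times T^2}$ is semistable of degree $0$. For the slope test this property is imposed on the competing subsheaves by definition. To set up polystability I would use that, by the ASD equation on the K\"ahler surface, the holomorphic bundle $(V,\deebar_A)$ carries the instanton metric $h$ as a Hermitian--Einstein metric with vanishing Einstein factor, i.e.\ $\Lambda_\omega F(A)=0$ as an endomorphism, and that $(V,h,A)$ is acceptable by Corollary \ref{Cor:Inst is prolongable}.

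The polystability itself is the main point, and I would argue by the filtered Donaldson--Uhlenbeck--Yau argument following \cite{Ref:Moc1,Ref:Moc3}. Let $P_{\ast\ast}V'\subset P_{\ast\ast}V$ be a filtered subsheaf satisfying (\ref{Enum:gr semistable}) with $0<\mathrm{rank}(V')<\mathrm{rank}(V)$, and let $\pi$ be the associated weakly holomorphic $L^2_1$ orthogonal projection onto $V'$. For such a projection the Chern--Weil--Simpson identity, valid in the acceptable filtered setting, gives
\[
  \mathrm{par\mathchar`-deg}(P_{\ast\ast}V')
  = \frac{\sqrt{-1}}{2\pi}\int \mathrm{tr}\!\left(\pi\,\Lambda_\omega F(A)\right)
  - \frac{1}{2\pi}\int |\deebar_A \pi|^2 ,
\]
where condition (\ref{Enum:gr semistable}) is what lets the boundary contributions at $D$ be absorbed into the parabolic weights and makes the $T^2$-direction degree of $V'$ vanish, so that the surviving slope is precisely the $\mathbb{P}^1$-direction parabolic degree used in Definition \ref{Def:stable filtered bundle}. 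Since $\Lambda_\omega F(A)=0$, the first term is zero, whence $\mathrm{par\mathchar`-deg}(P_{\ast\ast}V')\le 0=\mathrm{par\mathchar`-deg}(P_{\ast\ast}V)$, the required slope inequality. Equality forces $\deebar_A\pi=0$, so $\pi$ is a genuine holomorphic and $h$-orthogonal projection; it yields an instanton-preserving splitting $V=V'\oplus V''$, and iterating decomposes $(V,h,A)$ into irreducible summands each of whose prolongation is stable of slope $0$, which is polystability. If $(V,h,A)$ is irreducible no such splitting exists, every competitor satisfies the strict inequality, and $P_{\ast\ast}V$ is stable.

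The hard part will be the Chern--Weil identity on the non-compact surface $\mathbb{C}^{\ast}\times T^2$: justifying the integration by parts near $D$ and checking that the resulting boundary terms are exactly the parabolic-weight corrections, rather than spurious contributions. This is precisely where the sharp asymptotics of Corollary \ref{Cor:Asymp of Inst} and Proposition \ref{Prp:hol Asymp of Inst}, acceptability, and Mochizuki's analysis of acceptable and filtered bundles \cite{Ref:Moc1,Ref:Moc3} enter; in particular one must carefully reconcile the $\mathbb{P}^1$-direction parabolic degree of Definition \ref{Def:stable filtered bundle} with the full Hermitian--Einstein $\omega$-slope, using condition (\ref{Enum:gr semistable}) to eliminate the $T^2$-direction term.
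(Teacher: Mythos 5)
Your proposal is correct and follows essentially the same route as the paper: parabolic degree zero via flatness of the determinant (Lemma \ref{Lem:rank1 inst is flat}), condition (\ref{Enum:gr semistable}) via Proposition \ref{Prp:Rough property of prolong}, and polystability via the Chern--Weil/ASD slope inequality, whose equality case forces $\deebar_A\pi=0$ and hence a parallel splitting into instanton summands. The ``hard part'' you defer --- identifying the parabolic degree of a competing filtered subsheaf with the analytic degree on the noncompact surface --- is exactly what the paper establishes beforehand in Proposition \ref{Prp:norm estimate}, Proposition \ref{Prp:ana-deg is par-deg in P^1}, Lemma \ref{Lem:finite degree equivalence} and Proposition \ref{Prp:ana-deg is par-deg}, so your argument matches the paper's proof in both structure and supporting machinery.
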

    Once we admit this theorem, we obtain the next corollary from Proposition \ref{Prp:vanish cohom of stable bundle}.
    \begin{Cor}
    	Let $p:\mathbb{P}^1\times T^2\to T^2$ be the projection.
    	Let $(V,h,A)$ be an irreducible $L^2$-finite instanton of rank $r>1$ on $\mathbb{R}\times T^3$.
      Then, we have
  	  $H^0(\mathbb{P}^1 \times T^2, P_{00}V\otimes p^{\ast}F) = H^2(\mathbb{P}^1 \times T^2, P_{<0<0}V\otimes p^{\ast}F) = 0$
      for any $F \in \mathrm{Pic}^{0}(T^2)$.
  	\end{Cor}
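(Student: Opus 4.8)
The plan is to deduce this entirely from the two results it is placed after, namely Theorem~\ref{Thm:inst is stable} together with Proposition~\ref{Prp:vanish cohom of stable bundle}, by a direct specialization; no new analytic or cohomological work is required. The only point to check is that the hypotheses of the proposition are met by the filtered bundle $P_{\ast\ast}V$ attached to $(V,h,A)$.

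First I would invoke Theorem~\ref{Thm:inst is stable}. Since $(V,h,A)$ is assumed irreducible, that theorem guarantees both that the prolongation $P_{\ast\ast}V$ on $(\mathbb{P}^1\times T^2,\{0,\infty\}\times T^2)$ is a \emph{stable} filtered bundle and that $\mathrm{par\mathchar`-deg}(P_{\ast\ast}V)=0$. Combined with the standing hypothesis $\mathrm{rank}(V)=r>1$, this shows that $P_{\ast\ast}V$ satisfies exactly the three conditions required in Proposition~\ref{Prp:vanish cohom of stable bundle}: stability, vanishing parabolic degree, and rank greater than one. (The semistability and degree-zero property of the restrictions $P_{ab}V|_{\{0\}\times T^2}$ and $P_{ab}V|_{\{\infty\}\times T^2}$ are already built into the definition of a stable filtered bundle, so nothing further has to be verified.)

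Second, I would apply Proposition~\ref{Prp:vanish cohom of stable bundle} to $E=V$ with the parameter $\hat{t}=0$ and with $F\in\mathrm{Pic}^{0}(T^2)$, which is precisely a holomorphic line bundle of degree $0$ on $T^2$ as demanded there. Setting $\hat{t}=0$ turns $P_{-\hat{t}\hat{t}}V$ into $P_{00}V$ and $P_{<-\hat{t}<\hat{t}}V$ into $P_{<0<0}V$, and the proposition then yields
\[
  H^{i}(\mathbb{P}^1\times T^2,\,P_{00}V\otimes p^{\ast}F)
  = H^{i}(\mathbb{P}^1\times T^2,\,P_{<0<0}V\otimes p^{\ast}F) = 0
  \qquad (i\neq 1).
\]
Reading off the two instances $i=0$ for $P_{00}V$ and $i=2$ for $P_{<0<0}V$ gives exactly the asserted vanishing $H^{0}(\mathbb{P}^1\times T^2,P_{00}V\otimes p^{\ast}F)=H^{2}(\mathbb{P}^1\times T^2,P_{<0<0}V\otimes p^{\ast}F)=0$.

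Since the argument is a pure citation-and-substitution, there is no genuine obstacle to isolate; the only thing one must be careful about is the bookkeeping of the parabolic indices under $\hat{t}=0$ and the observation that the hypothesis $r>1$ is essential precisely because Proposition~\ref{Prp:vanish cohom of stable bundle} fails for rank one (a degree-zero line bundle on an elliptic curve can carry nonzero sections). All the substantive content has been absorbed into Theorem~\ref{Thm:inst is stable}, whose proof supplies the stability and degree-zero facts that drive the vanishing.
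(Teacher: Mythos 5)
Your proposal is correct and coincides with the paper's own argument: the paper derives this corollary by exactly the same citation-and-substitution, admitting Theorem~\ref{Thm:inst is stable} (stability and $\mathrm{par\mathchar`-deg}=0$ of $P_{\ast\ast}V$, with $\mathrm{rank}>1$ from the hypothesis) and then specializing Proposition~\ref{Prp:vanish cohom of stable bundle} to $\hat{t}=0$, reading off $i=0$ for $P_{00}V\otimes p^{\ast}F$ and $i=2$ for $P_{<0<0}V\otimes p^{\ast}F$. Your bookkeeping of the parabolic indices and your remark on why $r>1$ is needed are both accurate, so nothing further is required.
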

  
	\subsubsection{Norm estimate}
    As a preparation of the proof of Theorem \ref{Thm:inst is stable}, we show the following norm estimates.
    
    Let $(V,h,A)$ be an $L^2$-finite instanton on $(-\infty,0)\times T^3$ of rank $r$.
    Applying Corollary \ref{Cor:hol push down of asymp inst} to $(V,h,A)$,
    we take a positive number $R>0$,
    a holomorphic Hermitian vector bundle $(E, h_E, \deebar_E)$ on $(-\infty,-R)\times S^1\simeq \Delta(R)^{\ast}$ and a holomorphic endomorphism $f \in \Gamma(\Delta(R)^{\ast},\mathrm{End}(E))$.
    Let $P_{\ast}E$ denote the prolongation of $E$ on $\Delta(R)$.
    On the fiber $(P_{a}E)|_{0}$,
    the parabolic filtration $\{F_{c}(P_{a}E|_{0})\}_{a-1<c\leq a}$ is induced by the natural inclusion $P_{c}E \hookrightarrow P_{a}E$.
    Then an endomorphism $\mathrm{Gr}^{F}_{c}(f|_0)$ on $\mathrm{Gr}^{F}_{c}(P_aE|_{0})$ is induced by $f$,
    and the nilpotent part of $\mathrm{Gr}^{F}_{c}(f|_0)$ induces the weight filtration $\{W_k\mathrm{Gr}^{F}_{c}(P_aE|_{0})\}_{k\in\mathbb{Z}}$.
    For $b \in \mathrm{Gr}^{F}_{c}(P_{a}E|_{0})$,
    we denote by $\mathrm{deg}^{W}(b_i) \in \mathbb{Z}$ the degree of $b$ with respect to the weight filtration.
    For a holomorphic frame $\mb{b}=(b_1,\ldots,b_r)$ of $P_{a}E$ on $\Delta(R)$,
    $\mb{b}=(b_i)$ is compatible with the parabolic filtration and the weight filtration if the following conditions are satisfied:
    \begin{itemize}
    	\item 
    		For any $c\in (a-1,a]$,
    		$\{b_i|_{0} \mid \mathrm{ord}(b_i)\leq c\}$ forms a basis of $F_{c}(P_{a}E)$.
    	\item
    		For any $c\in (a-1,a]$ and any $k\in\mathbb{Z}$,
    		$\{[b_i] \mid \mathrm{ord}(b_i)=c,\;\mathrm{deg}^{W}([b_i])\leq k\}$
    		forms a basis of $W_k\mathrm{Gr}^{F}_{c}(P_aE)$,
    		where $[b_i]$ is the image of $b_i$ in $\mathrm{Gr}^{F}_{c}(P_aE|_{0})$.
    \end{itemize}
    \begin{Prp}\label{Prp:norm estimate}
    	Let $a\in \mathbb{R}$ and $\mb{b}=(b_1,\ldots,b_r)$ be a holomorphic frame of $P_{a}E$ on $\Delta(R)$ that is compatible with the parabolic filtration and the weight filtration.
      	We take a Hermitian metric $h_1$ on $E|_{\Delta(R)^{\ast}}$ given by $h_1(b_i, b_j) := \delta_{ij}|z|^{-2\mathrm{ord}(b_i)} (-\log|z|^2)^{\mathrm{deg}^{W}(b_i)}$.
      	Then $h_1$ and $h_E$ are mutually bounded.
    \end{Prp}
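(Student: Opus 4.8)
The plan is to reduce the estimate to the explicit model solution $\Gamma + N/t$ provided by Proposition \ref{Prp:hol Asymp of Inst}, and then to read off the growth of an adapted holomorphic frame from the representation theory of the $\mathfrak{su}(2)$-triple $(N_i)$. First I would use that $A_{\bar\tau}$ and $A_{\bar w}$ commute with $\Gamma_{\bar w}$, together with Corollary \ref{Cor:asymp. ortho. and hol. decomp. of inst.}, to split $(E,\deebar_E,f)$ into the $\Gamma_{\bar w}$-eigenbundles $E_\alpha$, which are holomorphic, $f$- and $\Gamma_{\bar\tau}$-invariant, and mutually orthogonal up to an exponentially decaying error. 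Since both the parabolic filtration $F$ (which records the $|z|$-growth, hence $\Gamma_{\bar\tau}$) and the weight filtration $W$ (defined by the nilpotent part of $f$) are compatible with this splitting, it suffices to prove mutual boundedness on each $E_\alpha$; the off-diagonal blocks of $h_E$ are then exponentially small and harmless. On $E_\alpha$ one has $f = \alpha\cdot\mathrm{Id} + N_{\bar w}/t + O(t^{-(1+\delta)})$ and a $\deebar_E$ whose connection form is $\Gamma_{\bar\tau} + N_{\bar\tau}/t + O(t^{-(1+\delta)})$.

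The heart of the matter is that $N_{\bar\tau}$ and $N_{\bar w}$ belong to the same $\mathfrak{su}(2)$-triple: up to normalisation $N_{\bar\tau}$ is the Cartan generator, so in a basis adapting the irreducible $\mathfrak{su}(2)$-strings its eigenvalue on each weight vector coincides with the weight-filtration degree $\mathrm{deg}^{W}$ attached to the nilpotent $N_{\bar w}$. In such a basis $\Gamma_{\bar\tau}$ and $N_{\bar\tau}$ are simultaneously diagonal with real eigenvalues $\gamma$ and $\nu$, and a $\deebar_E$-holomorphic lift of a weight vector has coefficient $\phi$ solving $\partial_{\bar\tau}\phi = -(\gamma + \nu/t)\phi$. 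Using $\log|z|^2 = 4\pi t$ with $t<0$, one finds $|\phi|^2 \sim |z|^{-2\,\mathrm{ord}}(-\log|z|^2)^{\mathrm{deg}^{W}}$, where the order is determined by $\gamma$ and the logarithmic exponent is exactly $\mathrm{deg}^{W}$. For the exact model this yields a holomorphic frame compatible with $F$ and $W$ in which $h_E$ is diagonal with entries $|z|^{-2\,\mathrm{ord}}(-\log|z|^2)^{\mathrm{deg}^{W}}$, that is, equal to $h_1$ up to bounded factors.

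To pass from the model to the actual bundle I would construct $\mb{b}$ as a perturbation of the model weight basis, the existence of a frame of $P_aE$ compatible with the filtrations being guaranteed by Corollary \ref{Cor:Inst is prolongable}. The error terms of $A_{\bar\tau}, A_{\bar w}$ are $O(t^{-(1+\delta)})$ with exponentially small $x^1$-derivatives, so the equation determining the frame change reduces, after discarding the negligible $x^1$-dependence, to an ODE in $t$ whose inhomogeneity is integrable at $t \to -\infty$; hence $\int^t O(s^{-(1+\delta)})\,ds = O(|t|^{-\delta})$ and the frame change differs from the model one by a factor $\exp(O(|t|^{-\delta})) = O(1)$. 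Therefore $h_E$ expressed in $\mb{b}$ is mutually bounded with the diagonal model metric, and hence with $h_1$; the off-diagonal entries are dominated via the Cauchy--Schwarz inequality together with the exponential orthogonality of the $E_\alpha$.

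The main obstacle is the bookkeeping for $\mathfrak{su}(2)$-strings of length greater than two: I must verify that a single $\deebar_E$-holomorphic frame can be chosen compatibly with $F$ on $P_aE|_0$ and with $W$ on every graded piece, and that the growth computed above is correct for all weights, not merely the extremal ones, including the case where $\Gamma_{\bar\tau}$ fails to be scalar on $E_\alpha$ and mixes weight vectors of equal order. I expect this to follow from decomposing $N$ into irreducible strings and from the compatibility of $W$ with that decomposition, but it is the step demanding the most care.
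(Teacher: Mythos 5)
Your strategy is genuinely different from the paper's. The paper's proof is a short reduction to Simpson's norm estimate \cite[Corollary 4.3]{Ref:Sim2}: one only has to check that $\bigl|F(E,h_E)+[f\,dz/z,f^{\dag}d\bar{z}/\bar{z}]\bigr|$ lies in $L^p$ for some $p>1$, and this is verified by pulling back to $\Delta(R)^{\ast}\times T^2$, comparing with the instanton curvature through the ASD equation, and quoting the $O(|z|^{-2+\delta})$ decay from Corollary \ref{Cor:hol push down of asymp inst}. Your plan --- compute the model $\Gamma+N/t$ explicitly via the $\mathfrak{su}(2)$-structure and then perturb --- is viable in principle, and is in fact close to the paper's later Proposition \ref{Prp:Isom to model bundle}; but be aware that Proposition \ref{Prp:Isom to model bundle} \emph{uses} the present proposition (the norm estimates of the frames $\mb{b}_{\alpha}$, $\mb{b}'_{\alpha}$ enter the bound on $K_{\alpha,ij}$ before \cite[Lemma 7.1]{Ref:Sim2} is applied), so your construction must be carried out entirely in the unitary frame $\mb{e}$ of Proposition \ref{Prp:hol Asymp of Inst} to avoid assuming what is being proved.

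Within that plan there are genuine gaps. First, the perturbation step is not an ODE with a scalar integrating factor. Writing a candidate holomorphic frame of $E$ as $\mb{e}(\mathrm{Id}+S)M_0$, with $M_0=\exp(-\Gamma_{\bar{\tau}}\bar{\tau}-2N_{\bar{\tau}}\log|t|)$ the model frame change, $S$ must solve $\partial_{\bar{\tau}}S+[\Gamma_{\bar{\tau}}+N_{\bar{\tau}}/t,\,S]=-\tilde{\eps}_{\bar{\tau}}(\mathrm{Id}+S)$, a $\deebar$-equation twisted by $\mathrm{ad}(\Gamma_{\bar{\tau}}+N_{\bar{\tau}}/t)$ whose homogeneous solutions carry factors $e^{-(\gamma_i-\gamma_j)t}|t|^{-2(\nu_i-\nu_j)}$. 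So ``integrable inhomogeneity, hence a bounded factor $\exp(O(|t|^{-\delta}))$'' is false entry by entry: you must integrate each $\mathrm{ad}$-eigencomponent from the correct end (from $t=-\infty$ or from a finite base point, according to the signs of $\gamma_i-\gamma_j$ and $\nu_i-\nu_j$), treat the $x^1$-fluctuation separately using its exponential smallness, and handle the nonlinearity by iteration --- this is exactly what \cite[Lemma 7.1]{Ref:Sim2} packages. Second, and more seriously, the weight filtration: $\mathrm{deg}^{W}$ is defined by the nilpotent part of $\mathrm{Gr}(f)$, not of the model $\mathrm{Gr}(f')$, and even once a bounded isomorphism to the model is in hand, the transported error does \emph{not} vanish on the graded pieces; the available decay only forces the vanishing at $z=0$ of those matrix entries that lower $\mathrm{deg}^{W}$ by at most $2$, leaving possible nonzero terms lowering it by $3$ or more. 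To conclude that the filtration is unchanged you need the uniqueness characterization of the weight filtration: if a nilpotent $N'$ preserves $W$, maps $W_k$ into $W_{k-2}$, and induces the same maps as $N$ on $\mathrm{Gr}^{W}$, then $W$ is also the weight filtration of $N'$. You correctly flag this as the delicate step, but it is the crux of your argument, not bookkeeping. Finally, a small point: the proposition quantifies over every compatible frame, so after proving the estimate for your constructed frame you still need the elementary lemma that any two compatible frames of $P_{a}E$ yield mutually bounded model metrics $h_1$.
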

    \begin{proof}
    	According to \cite[Corollary 4.3]{Ref:Sim2},
      	we only need to prove that there exists $p>1$ such that we have $\bigl|F(E,h_E)+[fdz/z,f^{\dag}d\bar{z}/\bar{z}]\bigr|\in L^p(\Delta(R)^{\ast})$,
      	where $f^{\dag}$ is the adjoint of $f$ with respect to the metric $h_E$, and the norm is induced by $h_E$ and $dzd\bar{z}$.
      	We set a holomorphic Hermitian vector bundle $(\tilde{E},\deebar_{\tilde{E}},h_{\tilde{E}})$ on $\Delta(R)^{\ast}\times T^2$ by
      	$(\tilde{E},\deebar_{\tilde{E}},h_{\tilde{E}}) := (\pi^{\ast}E, \pi^{\ast}(\deebar_E) + fd\bar{w}, \pi^{\ast}h_E)$.
      	We write the curvature of the Chern connection of $(\tilde{E},\deebar_{\tilde{E}},h_{\tilde{E}})$ as 
      	$F(\tilde{E},h_{\tilde{E}}) = \tilde{F}_{z\bar{z}}dz\wedge d\bar{z} + \tilde{F}_{w\bar{w}}dw\wedge d\bar{w} +
      	\tilde{F}_{w\bar{z}}dw\wedge d\bar{z} + \tilde{F}_{z\bar{w}}dz\wedge d\bar{w}$.
      	Then we have
      	\begin{align*}
      		\pi^{\ast}\left( F(E,h_E) + [fdz/z , f^{\dag}d\bar{z}/\bar{z} ] \right)
        	&= \left(\tilde{F}_{z\bar{z}} + |z|^{-2}\tilde{F}_{w\bar{w}}\right) dz \wedge d\bar{z}.
      	\end{align*}
      	Since $(V,h,A)$ is an instanton, the ASD equation $F(A)_{z\bar{z}} = -|z|^{-2}F(A)_{w\bar{w}}$ holds,
      	where $F(A)_{z\bar{z}}$ and $F(A)_{w\bar{w}}$ are the components of the curvature $F(A)$.
      	Hence we have
      	\begin{align*}
      		\pi^{\ast}\left( F(E,h_E) + [fdz/z , f^{\dag}d\bar{z}/\bar{z} ] \right)
        	& = 
        	\left(
          		(F_{z\bar{z}} - F(A)_{z\bar{z}})
            	+ |z|^{-2}(F_{w\bar{w}} - F(A)_{w\bar{w}})
          	\right) dz \wedge d\bar{z}.
      	\end{align*}
      	Applying Corollary \ref{Cor:hol push down of asymp inst}, we obtain
      	\[ \left| \pi^{\ast}\left( F(E,h_E) + [fdz/z , f^{\dag}d\bar{z}/\bar{z} ] \right) \right| = O(|z|^{-2+\delta}), \]
      	where $\delta$ is a positive number.
      	Hence we have $\left|F(E,h_E) + [fdz/z , f^{\dag}d\bar{z}/\bar{z} ]\right| \in L^p(\Delta(R)^{\ast})$ for some $p>1$.
    \end{proof}
  	\subsubsection{Analytic degree for acceptable bundles on $(\mathbb{P}^1,\{0,\infty\})$}
  		As a preparation to prove Theorem \ref{Thm:inst is stable},
  		by following \cite{Ref:Moc1},
  		we introduce the notion of analytic degree of acceptable bundles on $(\mathbb{P}^1,\{0,\infty\})$,
  		and consider the relation between the parabolic degree and the analytic degree of acceptable bundles on $(\mathbb{P}^1,\{0,\infty\})$.

  		Let $(E, \deebar_E, h_E)$ be an acceptable holomorphic Hermitian vector bundle on $\mathbb{C}^{\ast}$.
  		We assume that for any non-zero holomorphic section $f$ of $E$ on a neighborhood of $0 \in \mathbb{P}^1$
  		there exist $C_1>0$ and $k_{0}(f) \in \mathbb{R}$ such that we have an estimate
  		\[
  			C_{1}^{-1}|z|^{-\mathrm{ord}_{0}(f)}(-\log|z|^2)^{k_{0}(f)} \leq |f|_{h_E} \leq C_{1}|z|^{-\mathrm{ord}_{0}(f)}(-\log|z|^2)^{k_{0}(f)}.
  		\]
  		We also assume a similar condition at $\infty \in \mathbb{P}^1$.
  		
  		For any holomorphic subbundle $P_{00}\mathcal{L}$ of $P_{00}E$,
  		we set $\mathcal{L} := P_{00}\mathcal{L}|_{\mathbb{C}^{\ast}}$ and $h_{\mathcal{L}}$ be the Hermitian metric induced by $h$.
  		We define the analytic degree and the parabolic degree of $\mathcal{L}$ by
  		\[
  			\mathrm{deg}(\mathcal{L}, h_E) := 
  				\sqrt{-1}\int_{\mathbb{C}^{\ast}} \mathrm{Tr}(\Lambda F(h_{\mathcal{L}})) d\mathrm{vol}_{\mathbb{C}^{\ast}}
  		\]
  		and
  		\[
  			\mathrm{par\mathchar`-deg}(P_{\ast\ast}\mathcal{L}) := \int_{P^1} \mathrm{par\mathchar`-c_1}(P_{\ast\ast}\mathcal{L}),
  		\]
  		where $P_{\ast\ast}\mathcal{L}$ is the strict filtered subbundle given by $P_{ab}\mathcal{L} := \mathcal{L} \cap P_{ab}E$.
  		\begin{Prp}\label{Prp:ana-deg is par-deg in P^1}
  			We have the equality $\mathrm{deg}(\mathcal{L}, h_E) = 2\pi\ \mathrm{par\mathchar`-deg}(P_{\ast\ast}\mathcal{L})$.
  		\end{Prp}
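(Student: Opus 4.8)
The plan is to compute the analytic degree by Chern--Weil theory, applying Stokes' theorem on an exhausting family of annuli, and then to match the two boundary contributions (near $0$ and $\infty$) with the parabolic weights while the ``interior'' winding reproduces the ordinary degree $\mathrm{deg}(P_{00}\mathcal{L})$ of the extension; together these reconstruct $\mathrm{par\mathchar`-c_1}(P_{\ast\ast}\mathcal{L})$. First I would recall the intrinsic identity: for any holomorphic frame $\mb{s}$ of $\mathcal{L}$ with Gram matrix $G=(h_{\mathcal{L}}(s_i,s_j))$ one has $\mathrm{Tr}(F(h_{\mathcal{L}})) = \deebar\partial\log\det G = d(\partial\log\det G)$, which depends only on $(\mathcal{L},h_{\mathcal{L}})$ and not on the embedding into $E$. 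Since $\mathbb{C}^{\ast}$ is an open Riemann surface, $\mathcal{L}$ is holomorphically trivial, so such a global $\mb{s}$ exists, and in complex dimension one $\sqrt{-1}\,\mathrm{Tr}(\Lambda F(h_{\mathcal{L}}))\,d\mathrm{vol}_{\mathbb{C}^{\ast}} = \sqrt{-1}\,\mathrm{Tr}(F(h_{\mathcal{L}}))$ as $2$-forms. I would then write
\[
\mathrm{deg}(\mathcal{L}, h_E) = \lim_{\eps \to 0}\sqrt{-1}\int_{\eps \leq |z| \leq 1/\eps}d(\partial\log\det G)
= \lim_{\eps \to 0}\sqrt{-1}\left(\oint_{|z|=1/\eps}\partial\log\det G - \oint_{|z|=\eps}\partial\log\det G\right),
\]
the convergence being guaranteed by the acceptability of $(E,h_E)$ from Definition \ref{Def:Prolongation}, which bounds $|F(h_E)|$ and makes $\mathrm{Tr}(F(h_{\mathcal{L}}))$ integrable on $\mathbb{C}^{\ast}$.

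Next I would evaluate each boundary integral by comparing $\mb{s}$ with a holomorphic frame $\mb{b}=(b_i)$ of $P_{00}\mathcal{L}$ near the relevant puncture that is compatible with the parabolic filtration and the weight filtration, exactly as in the setup preceding Proposition \ref{Prp:norm estimate}. Writing $\mb{s} = \mb{b}T$ with $T$ holomorphic and invertible gives $\partial\log\det G = \partial\log\det H + \partial\log\det T$, where $H$ is the Gram matrix of $\mb{b}$. The holomorphic term $\partial\log\det T = d\log\det T$ contributes $2\pi\sqrt{-1}$ times the winding numbers of $\det T$ at the two punctures, which assemble into $\mathrm{deg}(P_{00}\mathcal{L})$. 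For the term $\partial\log\det H$ I would invoke Proposition \ref{Prp:norm estimate} together with the compatibility of $\mb{b}$: the diagonal norms satisfy $|b_i|^2_{h_E} \sim |z|^{-2\,\mathrm{ord}_0(b_i)}(-\log|z|^2)^{\mathrm{deg}^{W}(b_i)}$ up to mutually bounded factors, while the off-diagonal Gram entries are of strictly lower growth, so that
\[
\log\det H = -2\Bigl(\sum_i \mathrm{ord}_0(b_i)\Bigr)\log|z| + \Bigl(\sum_i\mathrm{deg}^{W}(b_i)\Bigr)\log(-\log|z|^2) + O(1).
\]
The $|z|$-power term yields $\oint_{|z|=\eps}\partial\log\det H \to -2\pi\sqrt{-1}\sum_i\mathrm{ord}_0(b_i) = -2\pi\sqrt{-1}\sum_{-1 < a \leq 0} a\,\mathrm{rank}_{0}(\mathrm{Gr}_a(\mathcal{L}))$, the log-log term is $O(|\log\eps|^{-1})$ and drops out, and the same analysis applies at $\infty$.

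The hard part will be to show that the remainder $O(1)$ in $\log\det H$ — equivalently, the discrepancy between $h_E$ and the model metric of Proposition \ref{Prp:norm estimate} arising from mutual boundedness — contributes nothing to the boundary limits: mere $C^0$-boundedness does not control $\oint_{|z|=\eps}\partial(O(1))$, since the relevant first derivatives can a priori be of size $|z|^{-1}$. To close this gap I would extract decay of these first derivatives from the acceptability of $(E,h_E)$, i.e.\ from the curvature bound with respect to the Poincar\'e-type metric combined with interior elliptic estimates on the shrinking annuli, following \cite{Ref:Moc1} and \cite{Ref:Sim2}. Once this vanishing is established, collecting the three contributions gives
\[
\sqrt{-1}\int_{\mathbb{C}^{\ast}}\mathrm{Tr}(F(h_{\mathcal{L}})) = 2\pi\Bigl(\mathrm{deg}(P_{00}\mathcal{L}) - \sum_{p \in \{0,\infty\}}\sum_{-1 < a \leq 0} a\,\mathrm{rank}_{p}(\mathrm{Gr}_a(\mathcal{L}))\Bigr),
\]
which is precisely $2\pi\,\mathrm{par\mathchar`-deg}(P_{\ast\ast}\mathcal{L})$ by the definitions of $\mathrm{par\mathchar`-c_1}$ and $\mathrm{par\mathchar`-deg}$, completing the proof.
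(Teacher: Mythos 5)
Your route (Stokes' theorem on exhausting annuli, with boundary terms producing the parabolic weights and the winding of the transition matrix producing $\deg(P_{00}\mathcal{L})$) is a legitimate alternative organization to the paper's proof, which instead reduces to rank one, compares $h_{\mathcal{L}}$ with a model metric via $h_{1,\mathcal{L}}=e^{\varphi}h_{\mathcal{L}}$, and shows $\int_{\mathbb{C}^{\ast}}\deebar\partial\varphi=0$ by a cutoff and dominated-convergence argument. However, your proposal has a genuine gap, located exactly where you flag ``the hard part,'' and it also appears in disguise at the very first step. Twice you invoke acceptability of $(E,h_E)$ as if it controlled the curvature of the \emph{subbundle}: first when you claim $\mathrm{Tr}(F(h_{\mathcal{L}}))$ is integrable on $\mathbb{C}^{\ast}$, and then when you propose to kill the boundary contribution of the $O(1)$ discrepancy by ``interior elliptic estimates from the curvature bound.'' Acceptability bounds $|F(h_E)|$, not $|F(h_{\mathcal{L}})|$: by the Chern--Weil (Gauss--Codazzi) identity used elsewhere in the paper, $\sqrt{-1}\Lambda\mathrm{Tr}\,F(h_{\mathcal{L}}) = \sqrt{-1}\Lambda\mathrm{Tr}(\pi F(h_E)\pi) - |\deebar\pi|^2$, and the second fundamental form term $|\deebar\pi|^2$ is not bounded by any hypothesis available at this stage; its integrability is essentially equivalent to the finiteness of $\deg(\mathcal{L},h_E)$, i.e.\ to part of what is being proved. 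For the same reason the elliptic-estimate idea is circular: the discrepancy $u=\log\det H - (\mathrm{model})$ satisfies $\deebar\partial u = \mathrm{Tr}\,F(h_{\mathcal{L}}) - \deebar\partial(\mathrm{model})$, whose right-hand side contains precisely this uncontrolled term, so there is no elliptic problem with controlled data to which interior estimates on shrinking annuli could be applied.

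What is actually needed, and what the paper supplies as its technical core, is a first-derivative bound on the metric discrepancy obtained \emph{without} ever bounding $F(h_{\mathcal{L}})$ pointwise: Lemma \ref{Lem:Est of partial varphi} proves $|\partial\varphi|_{g_1}\in L^2$ using only the ambient curvature bound, the norm estimates on holomorphic sections, and a self-improving integration-by-parts inequality (an estimate of the form $X_N \le C_5 + C_6 X_N^{1/2}$ with cutoffs $b_N$, which forces $X_N$ to stay bounded). If you import that $L^2$ estimate, your scheme closes: since $\int |\partial u|^2$ is finite near each puncture, a mean-value argument yields radii $r_{\eps}\in[\eps,2\eps]$ with $\oint_{|z|=r_{\eps}}|\partial u|^2 = o(\eps^{-1})$, whence $\bigl|\oint_{|z|=r_{\eps}}\partial u\bigr| \le (2\pi r_{\eps})^{1/2}\bigl(\oint_{|z|=r_{\eps}}|\partial u|^2\bigr)^{1/2} = o(1)$, and your three contributions do assemble into $2\pi\,\mathrm{par\mathchar`-deg}(P_{\ast\ast}\mathcal{L})$ (your full-rank bookkeeping with $\log\det G$ is fine, modulo the same determinant-reduction to $\det\mathcal{L}\subset\bigwedge^{\mathrm{rank}\mathcal{L}}E$ that the paper performs). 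As written, though, the crucial estimate is missing, and the mechanism you propose for obtaining it cannot work.
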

  		\begin{proof}
  			By considering $\bigwedge^{\mathrm{rank}(\mathcal{L})}E$ and $\mathrm{det}(\mathcal{L})$ instead of $E$ and $\mathcal{L}$,
  			we may assume $\mathrm{rank}(\mathcal{L}) = 1$.
  			Let $e_0 , e_{\infty}$ be holomorphic frames of $P_{00}\mathcal{L}$ on a neighborhood of $0 , \infty \in \mathbb{P}^1$.
  			We set a smooth function $\psi : \mathbb{C}^{\ast} \to \mathbb{R}$ satisfying
  			\begin{eqnarray*}
  				\psi(z)=
  				\left\{
  				\begin{array}{ll}
  					\mathrm{ord}(e_0)\log|z|^2 & \mbox{(on a small neighborhood of $0 \in \mathbb{P}^1$)}\\
  					-\mathrm{ord}(e_{\infty})\log|z|^2 & \mbox{(on a small neighborhood of $\infty \in \mathbb{P}^1$)}
  				\end{array}
  				\right.
  			\end{eqnarray*}
  			and set a metric $h'_E$ on $(E,\deebar_{E})$ by $h'_E := h_{E}e^{\psi}$.
  			We will denote by $P'_{\ast\ast}E, P'_{\ast\ast}\mathcal{L}$ the prolongation of $E$ and $\mathcal{L}$ with respect to the metric $h'_E$ respectively.
  			Then we have
  			\begin{eqnarray*}
  				\mathrm{par\mathchar`-deg}(P_{\ast\ast}\mathcal{L}) = \mathrm{par\mathchar`-deg}(P'_{\ast\ast}\mathcal{L}) - \mathrm{ord}(e_0) -\mathrm{ord}(e_{\infty})\\
  				\mathrm{deg}(\mathcal{L},h_E) = \mathrm{deg}(\mathcal{L},h'_E) - 2\pi\ \mathrm{ord}(e_0) + 2\pi\ \mathrm{ord}(e_{\infty}).
  			\end{eqnarray*}
  			Therefore, by replacing $h_E$ with $h'_E$,
  			we may assume $\mathrm{par\mathchar`-deg}(P_{\ast\ast}\mathcal{L}) = \mathrm{deg}(P_{00}\mathcal{L})$ and $\mathrm{ord}(e_0)=\mathrm{ord}(e_{\infty})=0$.
  			We take another metric $h_{1,\mathcal{L}}$ of $P_{00}\mathcal{L}$ satisfying $h_{1,\mathcal{L}}(e_0, e_0) = h_{1,\mathcal{L}}(e_{\infty}, e_{\infty}) = 1$.
  			We take a smooth function $\varphi:\mathbb{C}^{\ast} \to \mathbb{R}$ as $h_{1,\mathcal{L}} = e^{\varphi}h_{\mathcal{L}}$.
  			By the definition of $h_{1,\mathcal{L}}$, we have $\mathrm{deg}(P_{00}L) = (2\pi)^{-1}\sqrt{-1}\int_{\mathbb{P}^1} F(h_{1,\mathcal{L}})$.
  			We consider the following lemma.
  			\begin{Lem}\label{Lem:integrability on C ast}
  				The $2$-form $\deebar\partial\varphi$ is integrable on $\mathbb{C}^{\ast}$,
  				and we have $\int_{\mathbb{C}^{\ast}} \deebar\partial\varphi = 0$.
  			\end{Lem}
  			Once we admit this lemma,
  			then we obtain the desired equality $\mathrm{deg}(L,h_E) = 2\pi \mathrm{par\mathchar`-deg}(P_{\ast\ast}\mathcal{L})$
  			because we have $F(h_{\mathcal{L}}) = F(h_{1,\mathcal{L}}) + \deebar\partial\varphi$.
  		\end{proof}
  		\begin{proof}[{\upshape \textbf{proof of Lemma \ref{Lem:integrability on C ast}}}]
  			We set $\eta := z^{-1}$ and take a metric $g_1$ on $\mathbb{P}^1$ satisfying
  			$g_1 = dzd\bar{z}$ on a small neighborhood of $z=0$ and $g_1 = d\eta d\bar{\eta}$ on a small neighborhood of $\eta=0$.
  			We take a smooth function $\rho : \mathbb{R} \to [0,1]$ satisfying
  			$\rho(t)=1 \;( |t| < 1/2)$ and $\rho(t)=0\;(|t|>1)$ 
  			and set $\chi_{N}: \mathbb{C}^{\ast} \to [0,1]$ as $\chi_{N}(z) := \rho(N^{-1} \log |z|^2)$ for $N \in \mathbb{N}$.
  			Since $\chi_{N}$ is a compact supported function,
  			we have
  			\begin{equation}
  			0 
  			= \int_{\mathbb{C}^{\ast}}\deebar\partial(\chi_{N}\varphi)
  			= 
  			\int_{\mathbb{C}^{\ast}}\deebar\partial(\chi_{N})\varphi
  			+ \int_{\mathbb{C}^{\ast}}\deebar\chi_{N} \cdot \partial\varphi
  			- \int_{\mathbb{C}^{\ast}}\partial\chi_{N} \cdot \deebar\varphi
  			+ \int_{\mathbb{C}^{\ast}}\chi_{N} \deebar\partial(\varphi). \label{Eqn:Integral of phi}
  			\end{equation}
  			Here we consider the following lemma.
  			\begin{Lem}\label{Lem:dominating int.}
  				The integrands of the first to third term of rhs of (\ref{Eqn:Integral of phi}) are dominated by an integrable functions independent of $N$.
  				In particular,
  				the dominated convergence theorem shows
  				\[
  				\lim_{N \to \infty} \left(
  				\int_{\mathbb{C}^{\ast}}\deebar\partial(\chi_{N})\varphi
  				+ \int_{\mathbb{C}^{\ast}}\deebar\chi_{N} \cdot \partial\varphi
  				- \int_{\mathbb{C}^{\ast}}\partial\chi_{N} \cdot \deebar\varphi
  				\right) = 0.
  				\]
  			\end{Lem}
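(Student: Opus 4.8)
The plan is to pass to the cylindrical coordinate $z = e^{s + \sqrt{-1}\theta}$, so that $\log|z|^2 = 2s$ and $\chi_{N} = \rho(2s/N)$, and to rewrite each of the three integrands as a density with respect to $ds\wedge d\theta$. Since $\chi_N$ depends only on $s$, one has $\partial\chi_N = \tfrac12\chi_N'(s)\,\tfrac{dz}{z}$ and $\deebar\chi_N = \tfrac12\chi_N'(s)\,\tfrac{d\bar z}{\bar z}$ with $\chi_N'(s) = \tfrac{2}{N}\rho'(2s/N)$, while $\deebar\partial\chi_N = \tfrac{\sqrt{-1}}{2}\chi_N''(s)\,ds\wedge d\theta$ with $\chi_N''(s) = \tfrac{4}{N^2}\rho''(2s/N)$. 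All three densities are therefore supported in the transition annulus $N/4 < |s| < N/2$, on which $N \asymp |s|$ and on which $\chi_N',\chi_N''$ are $O(N^{-1}) = O(|s|^{-1})$ and $O(N^{-2}) = O(|s|^{-2})$ respectively. Everything is then controlled by two a priori bounds on $\varphi$ near the punctures. First, near $z = 0$ the normalisation $h_{1,\mathcal{L}}(e_0, e_0) \equiv 1$ together with $\mathrm{ord}(e_0) = 0$ gives $\varphi = -\log|e_0|^2_{h_E}$, and the acceptability estimate $|e_0|^2_{h_E} \asymp (-\log|z|^2)^{2k_0(e_0)}$ yields $|\varphi| = O(\log|s|)$; the same holds near $\infty$ using $e_{\infty}$. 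Second, a gradient bound $|\varphi_s|, |\varphi_\theta| = O(|s|^{-1}\log|s|)$ holds.

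For the first integrand, combining $|\deebar\partial\chi_N| = O(|s|^{-2})$ on its support with $|\varphi| = O(\log|s|)$ shows that $|\deebar\partial(\chi_N)\varphi|$ is bounded, for all large $N$, by the fixed function $C\,|s|^{-2}\log|s|\cdot\mathbf{1}_{\{|s|>1\}}$, which is integrable on $\mathbb{R}_s\times S^1_\theta$ since $\int_1^{\infty}s^{-2}\log s\,ds<\infty$. For the two cross terms, $|\deebar\chi_N\wedge\partial\varphi|$ and $|\partial\chi_N\wedge\deebar\varphi|$ are both bounded on the annulus by $|\chi_N'|(|\varphi_s|+|\varphi_\theta|) = O(|s|^{-1})\cdot O(|s|^{-1}\log|s|) = O(|s|^{-2}\log|s|)$, hence again dominated by the same $C\,|s|^{-2}\log|s|\cdot\mathbf{1}_{\{|s|>1\}}$. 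Thus all three integrands admit a common integrable dominating function independent of $N$.

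Establishing the gradient bound is the main point. The idea is that $\deebar\partial\varphi = F(h_{\mathcal L}) - F(h_{1,\mathcal L})$ is bounded with respect to the Poincaré metric $g_P = (ds^2+d\theta^2)/(4s^2)$: for $F(h_{1,\mathcal L})$ this is clear since $h_{1,\mathcal L}$ is smooth on $\mathbb{P}^1$, and for $F(h_{\mathcal L})$ it follows from acceptability of $(E,h_E)$. One then applies an interior elliptic ($W^{2,p}$ or Schauder) estimate to $\varphi$ on $g_P$-balls: rescaling coordinates near $(s_0,\theta_0)$ by the factor $|s_0|$ makes $g_P$ uniformly equivalent to the Euclidean metric on a unit ball, so the interior estimate gives $|\nabla_{g_P}\varphi| \lesssim \sup|\varphi| + \sup|\Delta_{g_P}\varphi| = O(\log|s|)$, i.e.\ $|\varphi_s|,|\varphi_\theta| = O(|s|^{-1}\log|s|)$ after undoing the conformal factor. (The loss of a $\log|s|$ is harmless; subtracting the explicit radial model $2k_0\log(2|s|)$, whose $g_P$-gradient is $O(1)$ and which differs from $\varphi$ by an $O(1)$ term, improves this to $O(|s|^{-1})$.) I expect this elliptic step to be the main obstacle, since pointwise derivative control of $\varphi$ does not follow from the norm comparison alone. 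One can also sidestep it entirely: in the combination $\int\deebar\chi_N\wedge\partial\varphi - \int\partial\chi_N\wedge\deebar\varphi$ the $\varphi_\theta$-contributions cancel, leaving $\sqrt{-1}\int\chi_N'\varphi_s\,ds\wedge d\theta$, and integrating by parts in $s$ turns this into $-\sqrt{-1}\int\chi_N''\varphi\,ds\wedge d\theta$, dominated exactly as the first term using only $|\varphi| = O(\log|s|)$.

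Finally, for each fixed $z \in \mathbb{C}^{\ast}$ we have $\rho'(2s/N), \rho''(2s/N) \to 0$ as $N \to \infty$, because $\rho \equiv 1$ near $0$; hence all three integrands tend to $0$ pointwise. Combined with the integrable dominations above, the dominated convergence theorem shows that each of the three integrals, and therefore their sum, tends to $0$, which is the assertion.
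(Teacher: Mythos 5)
Your treatment of the first term is correct and coincides with the paper's: on the support of $\rho'(N^{-1}\log|z|^2)$ one has $N\asymp -\log|z|^2$, so the derivative bounds on $\chi_N$ become independent of $N$, and $|\varphi|=O(\log(-\log|z|))$ follows from the norm hypothesis on $e_0,e_\infty$. The genuine gap is in your primary treatment of the two cross terms. You assert that $\deebar\partial\varphi = F(h_{\mathcal L})-F(h_{1,\mathcal L})$ is bounded with respect to the Poincar\'{e} metric because ``it follows from acceptability of $(E,h_E)$''. Acceptability bounds $F(h_E)$, not $F(h_{\mathcal L})$: by Gauss--Codazzi, $F(h_{\mathcal L})$ differs from the restriction of $F(h_E)$ to $\mathcal{L}$ by the square of the second fundamental form of $\mathcal{L}\subset E$, and nothing in the hypotheses gives a pointwise bound on that term; controlling exactly this quantity is the crux of the whole Chern--Weil argument. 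The paper's Lemma \ref{Lem:Est of partial varphi} obtains only an $L^2$ bound on $|\partial\varphi|_{g_1}$, via the inequality $|2\partial\varphi|\le |e_0|_{h_E}^{-1}|\nabla_z e_0|_{h_E}$ and an integration by parts against cutoffs that uses the curvature bound on $E$, not on $\mathcal{L}$. So the elliptic-estimate route is not merely ``the main obstacle'' as you put it: its key input is unproven, and no pointwise bound of this kind is established anywhere in the paper.

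Your sidestep, however, is correct and is genuinely different from (and more elementary than) the paper's argument. Writing $z=e^{s+\sqrt{-1}\theta}$, the combination $\int\deebar\chi_N\wedge\partial\varphi-\int\partial\chi_N\wedge\deebar\varphi$ equals $\sqrt{-1}\int\chi_N'(s)\,\partial_s\varphi\,ds\wedge d\theta$ (the $\partial_\theta\varphi$ contributions cancel), and since $\chi_N'$ is smooth with compact support in $s$ and $\varphi$ is smooth on $\mathbb{C}^{\ast}$, integration by parts in $s$ turns this into $-\sqrt{-1}\int\chi_N''(s)\,\varphi\,ds\wedge d\theta$, which is dominated and sent to $0$ exactly like the first term, using only $|\varphi|=O(\log|s|)$. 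This yields the displayed limit --- which is all that the proof of Lemma \ref{Lem:integrability on C ast} actually uses --- while bypassing Lemma \ref{Lem:Est of partial varphi} entirely. Note, though, that it proves slightly less than the lemma literally asserts: the second and third integrands are not individually dominated by your argument, only their difference after integration by parts. The paper gets the individual domination by combining $|\deebar\chi_N|_{g_1}\le C_2|z|^{-1}(-\log|z|)^{-1}$, which is an $L^2$ function on $(\mathbb{P}^1,g_1)$ independent of $N$, with $|\partial\varphi|_{g_1}\in L^2$ from Lemma \ref{Lem:Est of partial varphi}, so that each cross term is dominated by a fixed $L^1$ function via Cauchy--Schwarz. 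If you rely on the sidestep, you should restate the lemma accordingly, or note explicitly that the weaker conclusion suffices for (\ref{Eqn:Integral of phi}) and everything downstream.
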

  			If we admit this lemma,
  			then we have $\lim_{N \to \infty}  \int_{\mathbb{C}^{\ast}}\chi_{N} \deebar\partial(\varphi) = 0$.
  			Therefore, $\deebar\partial(\varphi)$ is integrable, and we have $\int_{\mathbb{C}^{\ast}} \deebar\partial\varphi = 0$.
  		\end{proof}
  		\begin{proof}[{\upshape \textbf{proof of Lemma \ref{Lem:dominating int.}}}]
  			We calculate the derivatives of $\chi_{N}$, then
  			\begin{align*}
  			&\partial \chi_{N} = N^{-1} \rho'(N^{-1} \log |z|^2)z^{-1}dz \\
  			&\deebar \chi_{N} = N^{-1} \rho'(N^{-1} \log |z|^2)\bar{z}^{-1}d\bar{z} \\
  			&\deebar\partial \chi_{N} = - N^{-2} \rho''(N^{-1} \log |z|^2)|z|^{-2}dz \wedge d\bar{z}.
  			\end{align*}
  			Thus, there exists $C_2 > 0$ independent from $N$ such that on a small neighborhood of $z=0$ we have
  			\begin{align}
  			&|\partial \chi_{N}|_{g_1} = |\deebar \chi_{N}|_{g_1} \;\leq\; C_{2}|z|^{-1}(-\log|z|)^{-1} \label{Eqn:chi Est around 0} \\
  			&|\deebar\partial \chi_{N}|_{g_1} \;\leq\;  C_{2}|z|^{-2}(-\log|z|)^{-2} \label{Eqn:deldel chi Est around 0},
  			\end{align}
  			and on a small neighborhood of $\eta=0$ we also have
  			\begin{align}
  			&|\partial \chi_{N}|_{g_1} = |\deebar \chi_{N}|_{g_1} \;\leq\; C_{2}|\eta|^{-1}(-\log|\eta|)^{-1} \label{Eqn:chi Est around infty} \\
  			&|\deebar\partial \chi_{N}|_{g_1} \;\leq\;  C_{2}|\eta|^{-2}(-\log|\eta|)^{-2}. \label{Eqn:deldel chi Est around infty}
  			\end{align}
  			Hence by (\ref{Eqn:deldel chi Est around 0}), (\ref{Eqn:deldel chi Est around infty}) and the assumption of the norm of $e_0,e_{\infty}$,
  			there exist an integrable function dominating the first term of (\ref{Eqn:Integral of phi}) and independent of $N$.
  			To estimate the other terms, we prove the next lemma.
  			\begin{Lem}\label{Lem:Est of partial varphi}
  				$|\partial(\varphi)|_{g_1} = |\deebar(\varphi)|_{g_1}$ is an $L^2$-function on $(\mathbb{P}^1,g_1)$.
  			\end{Lem}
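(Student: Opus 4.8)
The plan is to localize near the two punctures $0,\infty\in\mathbb{P}^1$ and run a Caccioppoli (integration-by-parts) estimate with the cutoffs $\chi_N$ already introduced in the enclosing proof. Away from $\{0,\infty\}$ the function $\varphi$ is smooth on the compact region, so $|\partial\varphi|_{g_1}=|\deebar\varphi|_{g_1}$ (these agree since $\varphi$ is real) is bounded there and contributes a finite amount; only the behaviour near the punctures needs care, and by the symmetry $z\leftrightarrow\eta=z^{-1}$ it suffices to treat a neighbourhood of $0$, where $g_1=dzd\bar z$ is Euclidean and the claim reduces to $\int_{0<|z|<r_0}|\partial_z\varphi|^2\,\tfrac{\sqrt{-1}}{2}dz\wedge d\bar z<\infty$.

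The two pointwise inputs I would establish first are a growth bound on $\varphi$ and a bound on its Laplacian. For the growth bound, recall that near $0$ we arranged $\mathrm{ord}(e_0)=0$ and $h_{1,\mathcal{L}}(e_0,e_0)=1$, so $\varphi=-\log|e_0|^2_{h_{\mathcal{L}}}$; applying the norm estimate assumed at the start of the subsubsection to the holomorphic frame $e_0$ gives $|e_0|_{h_{\mathcal{L}}}\sim(-\log|z|^2)^{k_0(e_0)}$, hence $\varphi=-2k_0(e_0)\log(-\log|z|^2)+O(1)=O(\log(-\log|z|))$. For the Laplacian bound, since $h_{1,\mathcal{L}}$ is flat near $0$ (its norm on $e_0$ is constant there), $\deebar\partial\varphi=-F(h_{\mathcal{L}})$ on that neighbourhood, and the acceptability established in Corollary \ref{Cor:Inst is prolongable} gives $|F(h_{\mathcal{L}})|$ bounded with respect to the Poincar\'e-like metric, i.e.\ $|\partial_z\partial_{\bar z}\varphi|\le C|z|^{-2}(\log|z|)^{-2}$.

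With these in hand I would integrate by parts: for each $N$,
\[
\int_{\mathbb{C}^{\ast}}\chi_N^2|\partial_z\varphi|^2 \;\lesssim\; \int_{\mathbb{C}^{\ast}}\chi_N^2\,|\varphi|\,|\partial_z\partial_{\bar z}\varphi| \;+\; \int_{\mathbb{C}^{\ast}}|\varphi|^2\,|\partial_z\chi_N|^2 .
\]
Using the cutoff bounds $|\partial\chi_N|_{g_1}=|\deebar\chi_N|_{g_1}\le C_2|z|^{-1}(-\log|z|)^{-1}$ from (\ref{Eqn:chi Est around 0}) and (\ref{Eqn:chi Est around infty}), both terms on the right are dominated by integrals of the shape $\int(\text{slowly growing})\cdot|z|^{-2}(\log|z|)^{-2}$, which after the substitution $s=-\log|z|$ become $\int s^{-2}(\log s)^{a}\,ds<\infty$ near $s=\infty$; hence the right-hand side is bounded uniformly in $N$. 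Letting $N\to\infty$ by monotone convergence yields $\int|\partial_z\varphi|^2<\infty$ near each puncture, and combined with the compact middle region this proves $|\partial\varphi|_{g_1}=|\deebar\varphi|_{g_1}\in L^2(\mathbb{P}^1,g_1)$.

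The main obstacle is the Laplacian bound, because after the reduction $\mathcal{L}$ is the induced sub-line-bundle $\mathrm{det}(\mathcal{L})\subset\bigwedge^{\mathrm{rank}(\mathcal{L})}E$, and the curvature of an induced metric carries a second-fundamental-form term that is not obviously bounded by the Poincar\'e metric. I expect to dispatch this exactly as the norm estimate is dispatched: the tameness/acceptability of $E$ together with the controlled growth of the norms of holomorphic sections (the hypothesis of this subsubsection, via \cite{Ref:Sim2}) forces the induced metric $h_{\mathcal{L}}$ to be acceptable as well, so that its full curvature, second fundamental form included, obeys the Poincar\'e bound. Everything else is the elementary convergence of $\int s^{-2}(\log s)^{a}\,ds$ and routine integration by parts.
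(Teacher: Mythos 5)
Your reduction to a punctured neighbourhood of $0$, the growth bound $\varphi=O(\log(-\log|z|))$, and the Caccioppoli estimate with the cutoffs $\chi_N$ are all sound, and that machinery would indeed close the argument \emph{if} your Laplacian bound held. But that bound --- $|F(h_{\mathcal{L}})|=O\bigl(|z|^{-2}(\log|z|)^{-2}\bigr)$, i.e.\ acceptability of the induced metric on the sub-line-bundle --- is exactly the step you do not prove, and it is not a dispatchable technicality: it is the entire content of the lemma. For a holomorphic subbundle the curvature of the induced metric is $F(h_E)|_{\mathcal{L}}-\theta^{\dagger}\wedge\theta$, where $\theta$ is the second fundamental form, so $-\deebar\partial\varphi$ contains the nonnegative term $|\theta|^2$ in addition to the Poincar\'e-bounded ambient term. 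Acceptability of $E$ says nothing pointwise about $\theta$, and the norm-estimate hypothesis does not help: it passes trivially to $h_{\mathcal{L}}$ (sections of $\mathcal{L}$ are sections of $E$ with the same norm), but it controls norms of holomorphic sections, not their covariant derivatives. The reference you lean on, \cite{Ref:Sim2}, is used in Proposition \ref{Prp:norm estimate} in the opposite direction (from an $L^p$ curvature bound to norm estimates), not from norm estimates to curvature bounds for sub-metrics. Note also that nothing in the hypotheses excludes behaviour like $|\theta|^2\sim|z|^{-2}(-\log|z|)^{-3/2}$, which is integrable --- so the lemma's conclusion still holds --- yet violates the Poincar\'e bound; so the pointwise statement you want is strictly stronger than the lemma, and assuming it begs the question.

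The paper's proof is engineered precisely to avoid any curvature statement about $h_{\mathcal{L}}$. From $\exp(-2\varphi)=|e_0|^2_{h_E}$ one gets $|2\partial\varphi|\le|e_0|_{h_E}^{-1}|\nabla_z e_0|_{h_E}$, and by the norm estimate of $e_0$ the lemma reduces to the weighted bound $(-\log|z|^2)^{-k}|\nabla_z e_0|_{h_E}\in L^2$. This is proved by integrating $b_N\,h_E(\nabla_z e_0,\nabla_z e_0)(-\log|z|^2)^{-2k}$ by parts: after the integration by parts the only curvature that appears is $h_E(e_0,F(h_E)e_0)$ --- the \emph{ambient} curvature, which is Poincar\'e-bounded by hypothesis --- while the dangerous quadratic quantity sits on the left-hand side and is absorbed via the self-bounding inequality $X_N\le C_5+C_6X_N^{1/2}$, uniformly in $N$. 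So the fix for your argument is structural rather than cosmetic: instead of testing the equation against $\varphi$ (which forces you to know $\deebar\partial\varphi$ pointwise), test against the weighted $\nabla_z e_0$ itself, so that only $F(h_E)$ ever enters, and recover the $L^2$ bound on $\partial\varphi$ at the end rather than assuming a pointwise bound at the start.
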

  			If we suppose that the lemma is true, by (\ref{Eqn:chi Est around 0}) and (\ref{Eqn:chi Est around infty})
  			we obtain an integrable function dominating the second and third terms of (\ref{Eqn:Integral of phi}) independent of $N$.
  			Hence the proof of Proposition \ref{Prp:ana-deg is par-deg} is complete.
  		\end{proof}
  		\begin{proof}[{\upshape \textbf{proof of Lemma \ref{Lem:Est of partial varphi}}}]
  			If we prove $|\partial(\varphi)|_{g_1}$ is an $L^2$-function on a neighborhood of $z=0$,
  			then the same proof works for a neighborhood of $\eta=0$.
  			Thus, we only needs to show $|\partial(\varphi)|_{g_1} \in L^2(\Delta^{\ast},g_1)$,
  			where $\Delta^{\ast} = \{z\in\mathbb{C}\mid 0<|z|<1\}$.
  			
  			Let $\nabla$ be the Chern connection of $(E,h_E,\deebar_{E})$.
  			By definition of $\varphi$ we have $\exp(-2\varphi)=|e_0|_{h_E}^2$.
  			Therefore we obtain $|\partial\exp(-2\varphi)| = |2\partial\varphi|\exp(-2\varphi)=|h_E(\nabla_{z}e_0,e_0)|\leq|e_0|_{h_E}\cdot|\nabla_{z}e_0|_{h_E}$,
  			where we set $\nabla =: \nabla_z dz + \nabla_{\bar{z}}d\bar{z}$.
  			Hence we have $|2\partial\varphi| \leq |e_0|_{h_E}^{-1}\cdot |\nabla_{z}e_0|_{h_E}$.
  			By the norm estimate of $e_0$, it suffices to prove that $|(-\log |z|^2)^{-k}|\nabla_z(e_0)|_{h_E} \in L^2(\Delta)$, where $k = k_{0}(e_0)$.
  			We take a smooth function $a: \mathbb{R} \to [0,1]$ satisfying $a(t)=0\;(t > 1)$, $a(t)=1\;(t < 1/2)$
  			and the condition that $a^{1/2}$ and $a' \cdot a^{-1/2}$ are also smooth.
  			We set a function $b_{N}:\Delta^{\ast} \to \mathbb{R}$ as \[b_{N}(z) := \left(1-a(-\log|z|^2) \right) \cdot a(-N^{-1}\log|z|^2)\]
  			for $N \in \mathbb{N}$.
  			Then $\partial(b_{N}) \cdot b_{N}^{-1/2}$ is a smooth function on $\Delta^{\ast}$ because of the definition of $a$.
  			Moreover, there exists $C_3 > 0$ such that we have
  			$|\partial(b_{N}) b_{N}^{-1/2}|_{g_1} \leq C_3 |z|^{-1}(-\log|z|^2)^{-1}$.
  			We consider the following integral.
  			\begin{align*}
  			&\int_{\Delta^{\ast}} b_{N}\cdot h_E(\nabla_{z} e_0, \nabla_{z} e_0)(-\log|z|^2)^{-2k} d\mathrm{vol} = \\
  			&\;\;\;\;-\int_{\Delta^{\ast}} \partial(b_{N})\cdot h_{E}(e_0, \nabla_{z} e_0)(-\log|z|^2)^{-2k} d\mathrm{vol} \\
  			&\;\;\;\; -\int_{\Delta^{\ast}} b_{N}\cdot h_{E}(e_0, F(h_E) e_0)(-\log|z|^2)^{-2k} d\mathrm{vol} \\
  			&\;\;\;\; +\int_{\Delta^{\ast}} b_{N}\cdot h_{E}(e_0, \nabla_{z} e_0)\cdot (-2k)(-\log|z|^2)^{-2k-1}z^{-1} d\mathrm{vol},
  			\end{align*}
  			where $d\mathrm{vol}=\sqrt{-1}dz\wedge d\bar{z}$. 
  			We have the following estimate on the first term of rhs.
  			\[
  			\left| \partial(b_{N})\cdot h_{E}(e_0, \nabla_{z} e_0)(-\log|z|^2)^{-2k} \right|
  			\leq \left( C_{3}C_{1}|z|^{-1}(\log|z|^2)^{-1} \right) \left( b^{1/2}(z)\cdot |\nabla_{z}e_0|_{h_E}(-\log|z|^2)^{-k} \right).
  			\]
  			For the second term, because of $(E,\deebar_{E},h_{E})$ is acceptable, there exists $C_4>0$ such that we have
  			\[
  			\left| b_{N}\cdot h_{E}(e_0, F(h_E) e_0)(-\log|z|^2)^{-2k} \right|
  			\leq C_{4} |z|^{-2}(-\log|z|^{2})^{-2}.
  			\]
  			For the third term, we also have
  			\begin{align*}
  			& \left| b_{N}\cdot h_{E}(e_0, \nabla_{z} e_0)\cdot (-2k)(-\log|z|^2)^{-2k-1}z^{-1} \right| \\
  			&\;\;\;\;\leq \left( C_1 b_N^{1/2} \cdot |z|^{-1}(-\log|z|^2)^{-1} \right)
  			\left(b_{N}^{1/2}|\nabla_{z}e_0|_{h_E}(-\log|z|^{2})^{-k} \right).
  			\end{align*}
  			Therefore, there exist $C_5, C_6 > 0$ such that
  			\begin{eqnarray*}
  				\int_{\Delta^{\ast}} b_{N}\cdot h_E(\nabla_{z} e_0, \nabla_{z} e_0)(-\log|z|^2)^{-2k} d\mathrm{vol}\\
  				\,\ \leq C_5 + C_6 \left(\int_{\Delta^{\ast}} b_{N}\cdot h_E(\nabla_{z} e_0, \nabla_{z} e_0)(-\log|z|^2)^{-2k} d\mathrm{vol} \right)^{1/2}
  				<\infty.
  			\end{eqnarray*}
  			Thus, we obtain
  			\[
  			\int_{\Delta^{\ast}} b_{N}\cdot h_E(\nabla_{z} e_0, \nabla_{z} e_0)(-\log|z|^2)^{-2k} d\mathrm{vol} < C_7,
  			\]
  			where $C_7$ is a constant independent of $N$.
  			Therefore, we conclude $|\partial(\varphi)|_{g_1} \in L^2(\mathbb{C}^{\ast},g_1)$.
  		\end{proof}
  	\subsubsection{Analytic degree and parabolic degree on $\mathbb{R}\times T^3$}
    	In order to prove Theorem \ref{Thm:inst is stable},
      	we also consider the analytic degree of holomorphic subsheaves of $L^2$-finite instantons on $\mathbb{R}\times T^3$ by following \cite{Ref:Moc1}.
  	  	\begin{Def}
      		Let $(V,h,A)$ be an $L^2$-finite instanton on $\mathbb{R}\times T^3 = \mathbb{C}^{\ast}\times T^2$.
        	Let $\mathcal{F}$ be a saturated $\mathcal{O}_{\mathbb{C}^{\ast} \times T^2}$-submodule of $(V,\deebar_A)$
        	and $h_{\mathcal{F}}$ the induced metric of smooth part of $\mathcal{F}$.
        	We define the analytic degree of $\mathcal{F}$ by
        	\[
        		\mathrm{deg}(\mathcal{F},h) := 
        		\sqrt{-1}\int_{\mathbb{C}^{\ast} \times T^2} \mathrm{Tr}(\Lambda F(h_{\mathcal{F}})) d\mathrm{vol}_{\mathbb{C}^{\ast} \times T^2},
        	\]
        	where $d\mathrm{vol}_{\mathbb{C}^{\ast} \times T^2}$ is the volume form
        	with respect to the Riemannian metric $|z|^{-2}dzd\bar{z} + dwd\bar{w}$.
        	By the Chern-Weil formula in \cite{Ref:Sim},
        	this can be written as
        	\[
        		\mathrm{deg}(\mathcal{F},h) = 
        			- \int_{\mathbb{C}^{\ast} \times T^2} |\deebar \pi|^2_{h} d\mathrm{vol}_{\mathbb{C}^{\ast} \times T^2},
        	\]
        	where $\pi:V \to \mathcal{F}$ is the orthogonal projection.
        \end{Def}

    	\begin{Lem}\label{Lem:finite degree equivalence}
      	Let $\mathcal{F}$ be a saturated subsheaf of an $L^2$-finite instanton $(V,h,A)$ on $\mathbb{R}\times T^3$.
      	Then, $\mathrm{deg}(\mathcal{F},h)$ is finite if and only if the following conditions are satisfied.
    	  \begin{enumerate}[label=(\roman*)]
    	  	\item\label{Enum:extension}
          	$\mathcal{F}$ can be extended to a saturated subsheaf $P_{00}\mathcal{F}$ of $P_{00}V$.
        	\item\label{Enum:deg_T^2 equals 0}
          	For any $z\in \mathbb{C}^{\ast}$, we have $\mathrm{deg}(\mathcal{F}|_{\{z\} \times T^2}) = 0$.
      	\end{enumerate}
    	\end{Lem}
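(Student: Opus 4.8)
The plan is to start from the Chern--Weil formula $\mathrm{deg}(\mathcal{F},h) = -\int_{\mathbb{C}^{\ast}\times T^2}|\deebar\pi|^2_{h}\,d\mathrm{vol}$ recorded in the definition above, so that the integrand is nonnegative and the whole question becomes: when is $\deebar\pi$ square-integrable? Since $\mathcal{F}$ is saturated it is reflexive, hence locally free on the smooth surface $\mathbb{C}^{\ast}\times T^2$, and $\pi$ is smooth away from the finite set where $V/\mathcal{F}$ fails to be locally free; these interior points are of codimension two and contribute only a finite amount, so over any relatively compact region the integral is finite. Thus the finiteness of $\mathrm{deg}(\mathcal{F},h)$ is a purely local question at the two ends $z\to 0$ and $z\to\infty$. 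Near each end I would invoke Corollary \ref{Cor:hol push down of asymp inst} to replace $(V,h,A)$ by its model $(\pi^{\ast}E,\pi^{\ast}(\deebar_E)+fd\bar w,\pi^{\ast}h_E)$; because $h$ and $\pi^{\ast}h_E$ are mutually bounded, this replacement does not affect whether $\deebar\pi\in L^2$.

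Next I would split $\deebar\pi = (\deebar_z\pi)\,d\bar z + (\deebar_w\pi)\,d\bar w$. With respect to the Poincar\'e-like metric $g=|z|^{-2}dz\,d\bar z + dw\,d\bar w$ the pointwise norm decouples as $|\deebar\pi|^2_{h} = |z|^2|\deebar_z\pi|^2 + |\deebar_w\pi|^2$, and I would treat the fibre part $|\deebar_w\pi|^2$ and the transverse part $|z|^2|\deebar_z\pi|^2$ separately, matching them to conditions \ref{Enum:deg_T^2 equals 0} and \ref{Enum:extension} respectively.

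For the fibre part I would integrate over each torus $\{z\}\times T^2$ and use the Gauss--Codazzi (subbundle Chern--Weil) identity on the fibre,
\[
2\pi\,\mathrm{deg}(\mathcal{F}|_{\{z\}\times T^2}) = \sqrt{-1}\int_{T^2}\mathrm{Tr}\big(\pi\,\Lambda_{T^2}F(h)\big) - \int_{T^2}|\deebar_w\pi|^2 .
\]
Because $(V,h,A)$ is an instanton on a K\"ahler surface we have $\Lambda F(h)=0$, equivalently the ASD identity $F(A)_{z\bar z}=-|z|^{-2}F(A)_{w\bar w}$; together with the decay $|F(A)|=O(t^{-2})$ from Corollary \ref{Cor:Asymp of Inst} this forces the first term on the right to tend to $0$ as $z\to 0$ (resp.\ $z\to\infty$). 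Hence $\int_{T^2}|\deebar_w\pi|^2 = -2\pi\,\mathrm{deg}(\mathcal{F}|_{\{z\}\times T^2}) + o(1)$. Since this degree is a locally constant integer in $z$ and the transverse variable $t=\tfrac{1}{2\pi}\log|z|$ runs over an infinite half-line, the transverse integral of $\int_{T^2}|\deebar_w\pi|^2$ can converge only if the fibrewise degree vanishes, giving condition \ref{Enum:deg_T^2 equals 0}; conversely, once it vanishes the remaining $o(1)$ is controlled by the exponential convergence of $h$ to the model in Corollary \ref{Cor:hol push down of asymp inst}, so the fibre part is integrable.

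For the transverse part I would assume condition \ref{Enum:deg_T^2 equals 0}, so that $\mathcal{F}|_{\{z\}\times T^2}$ is a degree-zero subsheaf of the degree-zero semistable bundle $V|_{\{z\}\times T^2}$ (Proposition \ref{Prp:Rough property of prolong}); such a subsheaf descends, through the push-forward of Corollary \ref{Cor:hol push down of asymp inst} and the isomorphism of Proposition \ref{Prp:Rough property of prolong}, to a saturated subsheaf $\mathcal{G}\subset E$ on the punctured disk, and the transverse contribution of $\mathcal{F}$ is comparable to the analytic degree of $\mathcal{G}$ on $(\mathbb{P}^1,\{0,\infty\})$. The acceptable-bundle analysis of the previous subsubsection---Proposition \ref{Prp:ana-deg is par-deg in P^1} together with the norm estimate Proposition \ref{Prp:norm estimate}---then shows this analytic degree is finite exactly when $\mathcal{G}$ extends to a saturated subsheaf of $P_{00}E$, which by Proposition \ref{Prp:Rough property of prolong} is equivalent to $\mathcal{F}$ extending to a saturated subsheaf $P_{00}\mathcal{F}\subset P_{00}V$, i.e.\ condition \ref{Enum:extension}. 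Combining the two ends yields the stated equivalence. The hard part will be making the decoupling of the two directions rigorous: extracting condition \ref{Enum:deg_T^2 equals 0} from the infinite-length cusp argument while controlling the ASD cross-term, and, for the converse, descending $\mathcal{F}$ to $\mathcal{G}\subset E$ and identifying its transverse contribution with the parabolic degree on $(\mathbb{P}^1,\{0,\infty\})$ even at the isolated points where $\mathcal{F}$ fails to be a subbundle.
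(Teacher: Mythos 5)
Your proposal follows the same skeleton as the paper's proof: decompose $\deebar\pi$ into its $T^2$- and $\mathbb{C}^{\ast}$-components, get condition \ref{Enum:deg_T^2 equals 0} from the fibrewise Gauss--Codazzi identity together with the infinite length of the cusp, and tie condition \ref{Enum:extension} to the one-dimensional degree theory of Propositions \ref{Prp:norm estimate} and \ref{Prp:ana-deg is par-deg in P^1}. However, there is a genuine gap in how you reach condition \ref{Enum:extension}: you assert that once the fibrewise degree vanishes, $\mathcal{F}$ ``descends'' to a saturated subsheaf $\mathcal{G}\subset E$ on the punctured disk. This is not formal and is not proved: a saturated subsheaf of $\pi^{\ast}E$ with the twisted holomorphic structure $\pi^{\ast}(\deebar_E)+f\,d\bar{w}$ need not be a pullback from the base, and establishing such a descent under hypothesis \ref{Enum:deg_T^2 equals 0} is essentially as hard as the lemma itself. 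The paper avoids descent altogether: Fubini gives $\int_{\mathbb{C}^{\ast}\times\{w\}}|\deebar_{\mathbb{C}^{\ast}}\pi|^2<\infty$ for all $w$ in a thick subset $A\subset T^2$; Simpson's criterion (Lemmas 10.5 and 10.6 of \cite{Ref:Sim}) then extends $\mathcal{F}|_{\mathbb{C}^{\ast}\times\{w\}}$ across the punctures slice by slice; and, crucially, Siu's extension theorem (\cite[Theorem 4.5]{Ref:Siu}) upgrades this slicewise information to a genuine extension of $\mathcal{F}$ to a saturated subsheaf of $P_{00}V$. Some such two-dimensional extension theorem is unavoidable, and your proposal contains no substitute for it.

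A second gap is your reduction to the model: mutual boundedness of $h$ and $\pi^{\ast}h_E$ does \emph{not} imply that $\deebar\pi\in L^2$ for one metric iff for the other, because boundedness gives no control of derivatives. For instance, with $V=\mathcal{O}^2$, $\mathcal{F}$ spanned by the constant vector $(1,1)$, and $h'=\mathrm{diag}(1,\lambda)$ with $\lambda$ bounded between $1$ and $3$ but oscillating so fast that $\deebar\lambda\notin L^2$ near the end, one has $\deebar\pi_{h}=0$ while $|\deebar\pi_{h'}|\sim|\deebar\lambda|\notin L^2$. This is exactly why Proposition \ref{Prp:ana-deg is par-deg in P^1} requires the norm estimates on holomorphic sections and the integration-by-parts arguments of Lemmas \ref{Lem:integrability on C ast} and \ref{Lem:Est of partial varphi}, rather than mere boundedness; in the paper these hypotheses are supplied slicewise by Proposition \ref{Prp:norm estimate}. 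Your step could be repaired by using the exponential $C^2$-closeness $||h-\pi^{\ast}h_E||_{C^2([t,t+1]\times T^3)}=O(\exp(-\delta t))$ from Corollary \ref{Cor:hol push down of asymp inst}, but as written the justification is insufficient. Finally, a smaller point of the same nature: before \ref{Enum:extension} is established, the locus where $\mathcal{F}$ fails to be a subbundle is only known to be discrete in $\mathbb{C}^{\ast}\times T^2$, not finite---it could a priori accumulate at the two ends---so you cannot yet treat it as a finite interior contribution; finiteness of this locus comes only after the extension to the compact surface.
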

    	\begin{proof}
      	Assume that $\mathrm{deg}(\mathcal{F},h)$ is finite.
        We write $\deebar = \deebar_{\mathbb{C}^{\ast}} + \deebar_{T^2}$.
        On one hand we have
    	  \[
    	  	\int_{T^2}d\mathrm{vol}_{T^2} \int_{\mathbb{C}^{\ast}} |\deebar_{\mathbb{C}^{\ast}}\pi|^2 d\mathrm{vol}_{\mathbb{C}^{\ast}}
    	    \leq \int_{T^2}d\mathrm{vol}_{T^2} \int_{\mathbb{C}^{\ast}} |\deebar\pi|^2 d\mathrm{vol}_{\mathbb{C}^{\ast}}
    	    = -\mathrm{deg}(\mathcal{F},h)<\infty.
    	  \]
        Therefore, there exists a measurable subset $A \subset T^2$ such that $|A| = |T^2|$ and
        $\int_{\mathbb{C}^{\ast} \times \{w\}} |\deebar_{\mathbb{C}^{\ast}}\pi|^2 d\mathrm{vol}_{\mathbb{C}^{\ast}} < \infty$
        holds for any $w \in A$.
        According to \cite[Lemma 10.5, Lemma 10.6]{Ref:Sim},
        this is equivalent to the condition that $\mathcal{F}|_{\mathbb{C}^{\ast} \times \{w\}}$ can be extended to a saturated subsheaf of
        $P_{00}V|_{\mathbb{P}^1 \times \{w\}}$.
        Moreover, since $A$ is a thick subset of $T^2$,
        by \cite[Theorem 4.5]{Ref:Siu} $\mathcal{F}$ can be extended to a saturated subsheaf of $P_{00}V$.
        On the other hand,
        we have
        \[ \int_{\mathbb{C}^{\ast}} d\mathrm{vol}_{\mathbb{C}^{\ast}} \int_{T^2} |\deebar_{T^2}\pi|^2 d\mathrm{vol}_{T^2}
    	  	\leq -\mathrm{deg}(\mathcal{F},h)<\infty.
    	  \]
        Hence there exists a sequence $\{z_i\}$ on $\mathbb{C}^{\ast}$ such that
        $z_i \to \infty$ and $ \int_{\{z_i\} \times T^2} |\deebar_{T^2}\pi|^2 d\mathrm{vol}_{T^2} \to 0$.
        We also have $|F(A)|_{\{z_i\}\times T^2} \to 0$ by Corollary \ref{Cor:Asymp of Inst}.
        Therefore, we have $\mathrm{deg}(\mathcal{F}|_{\{z_i\} \times T^2}) \to 0$ because of the Chern-Weil formula
        \begin{align}
    	  	\mathrm{deg}(\mathcal{F}|_{\{z\} \times T^2})
    	    	=& \sqrt{-1}\int_{\{z\} \times T^2} \mathrm{Tr}\left( \Lambda_{T^2}F(h_{\mathcal{F}}) \right) d\mathrm{vol}_{T^2}\nonumber \\
    	      =& \sqrt{-1}\int_{\{z\} \times T^2} \mathrm{Tr}\left( \pi\Lambda_{T^2}F(A) \right) d\mathrm{vol}_{T^2}
    	      	- \int_{\{z\} \times T^2} |\deebar_{T^2}\pi|^2 d\mathrm{vol}_{T^2}.\label{Eqn:Gauss-Codazzi in T^2}
    	  \end{align}
      	Since $\mathrm{deg}(\mathcal{F}|_{\{z\} \times T^2})$ is a continuous and $2\pi\mathbb{Z}$-valued function,
        we conclude $\mathrm{deg}(\mathcal{F}|_{\{z\} \times T^2}) = 0$ for any $z \in \mathbb{C}^{\ast}$.
      	
        Conversely, We assume \ref{Enum:extension} and \ref{Enum:deg_T^2 equals 0}.
        We have
        \begin{align*}
        	-\mathrm{deg}(\mathcal{F},h)
          &= \int_{\mathbb{C}^{\ast}\times T^2} (|\deebar_{\mathbb{C}^{\ast}}\pi|^2 + |\deebar_{T^2}\pi|^2)d\mathrm{vol}_{\mathbb{C}^{\ast}\times T^2}\\
          &= \int_{\mathbb{C}^{\ast}\times T^2}|\deebar_{\mathbb{C}^{\ast}}\pi|^2 d\mathrm{vol}_{\mathbb{C}^{\ast}\times T^2}
          	+\int_{\mathbb{C}^{\ast}\times T^2}|\deebar_{T^2}\pi|^2 d\mathrm{vol}_{\mathbb{C}^{\ast}\times T^2}.
        \end{align*}
        Here we use \ref{Enum:deg_T^2 equals 0} and (\ref{Eqn:Gauss-Codazzi in T^2}), then we obtain
        \begin{align*}
          -\mathrm{deg}(\mathcal{F},h)
          &= \int_{\mathbb{C}^{\ast}\times T^2}|\deebar_{\mathbb{C}^{\ast}}\pi|^2 d\mathrm{vol}_{\mathbb{C}^{\ast}\times T^2}
          	+\sqrt{-1}\int_{\mathbb{C}^{\ast}\times T^2}\mathrm{Tr}\left( \pi\Lambda_{T^2}F(A) \right) d\mathrm{vol}_{\mathbb{C}^{\ast}\times T^2}\\
          &\leq \int_{\mathbb{C}^{\ast}\times T^2}|\deebar_{\mathbb{C}^{\ast}}\pi|^2 d\mathrm{vol}_{\mathbb{C}^{\ast}\times T^2} + ||F(A)||_{L^1(\mathbb{R}\times T^3)}.
        \end{align*}
        By Proposition \ref{Prp:norm estimate},
        $(V, \deebar_A, h)|_{\mathbb{C}^\ast \times \{w\}}$ satisfies the assumption in Proposition \ref{Prp:ana-deg is par-deg in P^1} for any $w\in T^2$.
        Hence we have
        \begin{align*}
          -\mathrm{deg}(\mathcal{F},h)
          	&= -2\pi\int_{T^2}\mathrm{par\mathchar`-deg}((P_{\ast\ast}\mathcal{F})|_{\mathbb{C}^{\ast}\times\{w\}})d\mathrm{vol}_{T^2}
          		+\sqrt{-1}\int_{\mathbb{C}^{\ast}\times T^2}\mathrm{Tr}\left( \pi\Lambda_{\mathbb{C}^{\ast}}F(A) \right) d\mathrm{vol}_{\mathbb{C}^{\ast}\times T^2}\\
          	 	&\ \ \ +||F(A)||_{L^1(\mathbb{R}\times T^3)}\\
          &\leq -2\pi\int_{T^2}\mathrm{par\mathchar`-deg}((P_{\ast\ast}\mathcal{F})|_{\mathbb{C}^{\ast}\times\{w\}})d\mathrm{vol}_{T^2}
          			+2||F(A)||_{L^1(\mathbb{R}\times T^3)}.
        \end{align*}
        Then $\mathrm{par\mathchar`-deg}((P_{\ast\ast}\mathcal{F})|_{\mathbb{C}^{\ast}\times\{w\}})$ is a constant on $T^2$,
        and we have $||F(A)||_{L^1(\mathbb{R}\times T^3)}<\infty$ by Corollary \ref{Cor:Asymp of Inst}.
        Therefore we obtain $0\leq -\mathrm{deg}(\mathcal{F},h)<\infty$.
    	\end{proof}
    	\begin{Prp}\label{Prp:ana-deg is par-deg}
      	Let $\mathcal{F}$ be a saturated subsheaf of an $L^2$-finite instanton $(V,h,A)$ on $\mathbb{R}\times T^3$.
      	If $\mathrm{deg}(\mathcal{F},h)$ is finite,
        then we have $\mathrm{deg}(\mathcal{F},h) = 2\pi\mathrm{Vol}(T^2)\cdot\mathrm{par\mathchar`-deg}(P_{\ast\ast}\mathcal{F})$,
        where $P_{\ast\ast}\mathcal{F}$ is a filtered subsheaf defined by $P_{ab}\mathcal{F} := P_{ab}V \cap \mathcal{F}$.
    	\end{Prp}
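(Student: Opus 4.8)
The plan is to reduce this four-dimensional identity to the one-dimensional statement of Proposition~\ref{Prp:ana-deg is par-deg in P^1} by slicing $\mathbb{C}^{\ast}\times T^2$ in the $T^2$-direction and applying Fubini's theorem. Since the Riemannian metric is the product metric $|z|^{-2}dzd\bar{z}+dwd\bar{w}$, the contraction operator and the volume form decompose as $\Lambda=\Lambda_{\mathbb{C}^{\ast}}+\Lambda_{T^2}$ and $d\mathrm{vol}_{\mathbb{C}^{\ast}\times T^2}=d\mathrm{vol}_{\mathbb{C}^{\ast}}\,d\mathrm{vol}_{T^2}$. Writing $\pi:V\to\mathcal{F}$ for the orthogonal projection and $h_{\mathcal{F}}$ for the induced metric, whose curvature restricts compatibly to each slice in the respective directions, the definition of $\mathrm{deg}(\mathcal{F},h)$ splits as
$$
\mathrm{deg}(\mathcal{F},h)
= \sqrt{-1}\int \mathrm{Tr}(\Lambda_{\mathbb{C}^{\ast}}F(h_{\mathcal{F}}))\,d\mathrm{vol}
+ \sqrt{-1}\int \mathrm{Tr}(\Lambda_{T^2}F(h_{\mathcal{F}}))\,d\mathrm{vol}.
$$
The integrability needed to invoke Fubini on both terms is the separate finiteness of $\int|\deebar_{\mathbb{C}^{\ast}}\pi|^2$ and $\int|\deebar_{T^2}\pi|^2$ together with $F(A)\in L^1(\mathbb{R}\times T^3)$, all of which were obtained in the proof of Lemma~\ref{Lem:finite degree equivalence} once $\mathrm{deg}(\mathcal{F},h)$ is assumed finite.

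First I would dispose of the second term. By Fubini it equals $\int_{\mathbb{C}^{\ast}}\mathrm{deg}(\mathcal{F}|_{\{z\}\times T^2})\,d\mathrm{vol}_{\mathbb{C}^{\ast}}$, and this vanishes because the finiteness hypothesis gives, via Lemma~\ref{Lem:finite degree equivalence}, that $\mathrm{deg}(\mathcal{F}|_{\{z\}\times T^2})=0$ for every $z\in\mathbb{C}^{\ast}$. Hence $\mathrm{deg}(\mathcal{F},h)=\int_{T^2}\mathrm{deg}(\mathcal{F}|_{\mathbb{C}^{\ast}\times\{w\}},h)\,d\mathrm{vol}_{T^2}$. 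For almost every $w\in T^2$ the slice $\mathbb{C}^{\ast}\times\{w\}$ avoids the codimension-two singular locus of the saturated subsheaf $\mathcal{F}$, so $\mathcal{F}|_{\mathbb{C}^{\ast}\times\{w\}}$ is a holomorphic subbundle of $(V,\deebar_A)|_{\mathbb{C}^{\ast}\times\{w\}}$; by Proposition~\ref{Prp:norm estimate} this restriction satisfies the norm hypotheses of Proposition~\ref{Prp:ana-deg is par-deg in P^1}, so that $\mathrm{deg}(\mathcal{F}|_{\mathbb{C}^{\ast}\times\{w\}},h)=2\pi\,\mathrm{par\mathchar`-deg}((P_{\ast\ast}\mathcal{F})|_{\mathbb{C}^{\ast}\times\{w\}})$.

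It remains to identify the fibrewise parabolic degree with the total one and integrate. By the product description of the prolongation in Proposition~\ref{Prp:Rough property of prolong} together with the definition $P_{ab}\mathcal{F}=P_{ab}V\cap\mathcal{F}$, the parabolic weights of $P_{\ast\ast}\mathcal{F}$ along $\{0,\infty\}\times T^2$ and the ranks of the graded pieces are independent of $w$; hence $\mathrm{par\mathchar`-deg}((P_{\ast\ast}\mathcal{F})|_{\mathbb{C}^{\ast}\times\{w\}})$ is the constant $\mathrm{par\mathchar`-deg}(P_{\ast\ast}\mathcal{F})$, and integrating it over $T^2$ produces the factor $\mathrm{Vol}(T^2)$ and the claimed equality $\mathrm{deg}(\mathcal{F},h)=2\pi\,\mathrm{Vol}(T^2)\,\mathrm{par\mathchar`-deg}(P_{\ast\ast}\mathcal{F})$. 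The main obstacle I expect is exactly this last identification: one must verify that restricting the two-variable filtered sheaf $P_{\ast\ast}\mathcal{F}$ to a generic slice reproduces the one-variable prolongation entering Proposition~\ref{Prp:ana-deg is par-deg in P^1}, and that the normalizations of par-deg on $(\mathbb{P}^1,\{0,\infty\})$ and on $(\mathbb{P}^1\times T^2,\{0,\infty\}\times T^2)$ are matched so that the fibrewise degree is genuinely constant in $w$. Handling the measure-zero set of bad $w$, where the slice meets the singular locus of $\mathcal{F}$ or the extension degenerates, is routine once the slice integrand is known to be bounded, but the compatibility of the parabolic structures under restriction is where the real content lies.
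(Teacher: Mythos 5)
Your proposal is correct and follows essentially the same route as the paper's proof: both use Lemma \ref{Lem:finite degree equivalence} to extend $\mathcal{F}$ and to kill the $T^2$-direction contribution, then Fubini to reduce to $\mathbb{P}^1$-slices away from a negligible set of bad $w$, Proposition \ref{Prp:norm estimate} to verify the hypotheses of Proposition \ref{Prp:ana-deg is par-deg in P^1} on each slice, and the constancy in $w$ of the slice parabolic degree to produce the factor $2\pi\,\mathrm{Vol}(T^2)$. The only cosmetic difference is that the paper names the good set $B$ (with $T^2\setminus B$ finite, since $P_{00}\mathcal{F}$ is saturated) where you invoke a generic-slice argument.
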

    	\begin{proof}
      	Since $\mathrm{deg}(\mathcal{F},h)$ is finite,
        $\mathcal{F}$ can be extended to the saturated subsheaf $P_{00}\mathcal{F}$ of $P_{00}V$ by Lemma \ref{Lem:finite degree equivalence}.
        We denote by $B\subset T^2$ the set of all $w \in T^2$ that  $P_{00}\mathcal{F}|_{\mathbb{P}^1 \times \{w\}}$ is a subbundle of $P_{00}V|_{\mathbb{P}^1 \times \{w\}}$.
        Since $T^2$ is compact and $P_{00}\mathcal{F}$ is saturated, $T^2 \setminus B$ is a finite subset.
        By applying \ref{Enum:deg_T^2 equals 0} of Lemma \ref{Lem:finite degree equivalence},
        we have
    	  \begin{align*}
    	  	\mathrm{deg}(\mathcal{F},h)
    	  	&=\sqrt{-1}\int_{\mathbb{C}^{\ast} \times T^2} \Lambda_{\mathbb{C}^{\ast}} F(h_{\mathcal{F}}) d\mathrm{vol}_{\mathbb{C}^{\ast} \times T^2}\\
    	    &=\int_B 	\mathrm{deg}(\mathcal{F}|_{\mathbb{C}^{\ast} \times \{w\}}, h|_{\mathbb{C}^{\ast} \times \{w\}}) d\mathrm{vol}_{T^2}\\
          &=\int_B \mathrm{deg}(\mathcal{F}|_{\mathbb{C}^{\ast} \times \{w\}}, h|_{\mathbb{C}^{\ast} \times \{w\}}) d\mathrm{vol}_{T^2}.
    	  \end{align*}
        By Proposition \ref{Prp:norm estimate},
        $(V, \deebar_A, h)|_{\mathbb{P}^1 \times \{w\}}$ satisfy the assumption in Proposition \ref{Prp:ana-deg is par-deg in P^1}.
        Hence we have
        \[
    	  	\mathrm{deg}(\mathcal{F},h)
           = \int_B \mathrm{par\mathchar`-deg}(P_{\ast\ast}(F)|_{\mathbb{P}^1 \times \{w\}}) d\mathrm{vol}_{T^2}
           = 2\pi|T^2|\mathrm{par\mathchar`-deg}(P_{\ast\ast}\mathcal{F}).
        \]
    	  This is the desired equality.
      \end{proof}
  	\subsubsection{Proof of Theorem \ref{Thm:inst is stable}}
    	By Lemma \ref{Lem:rank1 inst is flat},
      $(\mathrm{det}(V), \mathrm{det}(h), \mathrm{Tr}(A))$ is a flat Hermitian line bundle.
      Hence we have $\mathrm{par\mathchar`-deg}(P_{\ast\ast}V)= \mathrm{par\mathchar`-deg}(P_{\ast\ast}(\mathrm{det}(V))) =0$.
      It is proved in Proposition \ref{Prp:Rough property of prolong} that
      $P_{ab}V|_{\{0\} \times T^2}$ and $P_{ab}V|_{\{\infty\} \times T^2}$ are semistable vector bundles of degree $0$ for any $a,b \in \mathbb{R}$.
      Let $P_{\ast\ast}\mathcal{F}$ be a filtered subsheaf of $P_{\ast\ast}V$ satisfying $0<\mathrm{rank}(\mathcal{F}) < \mathrm{rank}(V)$
      and (\ref{Enum:gr semistable}) in Definition \ref{Def:stable filtered bundle}.
      We may assume that $P_{00}\mathcal{F}$ is saturated.
      We set $\mathcal{F} := P_{\ast\ast}\mathcal{F}|_{\mathbb{C}^{\ast} \times T^2}$.
      Let $U\subset \mathbb{R}\times T^3$ be the maximal open subset such that $\mathcal{F}|_U$ is a subbundle of $V|_U$.
      Since $P_{00}\mathcal{F}$ is a saturated subsheaf of  $P_{00}V$,
      $(\mathbb{R}\times T^3)\setminus U$ is a finite subset.
      By Lemma \ref{Lem:finite degree equivalence} and Proposition \ref{Prp:ana-deg is par-deg},
      we have $\mathrm{deg}(\mathcal{F},h) = \mathrm{par\mathchar`-deg}(P_{\ast\ast}\mathcal{F})$.
      Therefore, $\mathrm{par\mathchar`-deg}(P_{\ast\ast}\mathcal{F}) \leq 0$ holds by the Chern-Weil formula.
      Moreover, if $\mathrm{par\mathchar`-deg}(P_{\ast\ast}\mathcal{F})=0$ holds, then we have $\deebar\pi =0$.
      Hence $\mathcal{F}|_U$ and $(\mathcal{F}|_U)^{\perp}$ become instantons by the induced metric from $h$,
      and  $(V,h,A)|_U = (\mathcal{F}|_U)\oplus (\mathcal{F}|_U)^{\perp}$ is a decomposition as instantons.
      Moreover, this decomposition is invariant under parallel transports.
      Thus it can be extended to the decomposition on whole $\mathbb{R}\times T^3$.
      By repeating these arguments, we can prove that $P_{\ast\ast}V$ is polystable.
      In particular, if $(V,h,A)$ is irreducible, then $P_{\ast\ast}V$ is stable.
			\qed
  \subsection{Some properties of $\mathrm{Gr}_{a}(P_{\ast}V)$}\label{SubSec:Prop of Gr}
  	Let $(V,h,A)$ be an $L^2$-finite instanton on $(-\infty,0)\times T^3$.
    Applying Proposition \ref{Prp:hol Asymp of Inst} to $(V,h,A)$,
    we take a positive number $R>0$,
    a $C^2$-frame $\mb{v}=(v_i)$ of $V$ on $(-\infty,-R)\times T^3$ and a model solution $(\Gamma,N)$ of the Nahm equation.
  	Assume that $\Gamma = (\Gamma_i)$ are diagonal, and
    we will denote by $\mb{v}_{\alpha}$ the subset of $\mb{v}$
    corresponding to an eigenvalue $\alpha \in \mathbb{C}$ of $\Gamma_{\bar{w}}$.
    Applying Corollary \ref{Cor:hol push down of asymp inst} to $(V,h,A)$,
    we take a holomorphic vector bundle $(E,\deebar_E, h_E)= \bigoplus_{\alpha}(E_{\alpha},\deebar_{E_{\alpha}},h_{E_{\alpha}})$ on $(-\infty,-R)\times S^1$
    and a holomorphic endomorphism $f=\bigoplus_{\alpha}f_{\alpha} \in \Gamma((-\infty,-R)\times S^1, \mathrm{End}(E))$.
    Let $\mb{e}$ (resp. $\mb{e}_{\alpha}$) be the $C^{\infty}$-frame of $E$ (resp. $E_{\alpha}$) which corresponds to $\mb{v}$ (resp. $\mb{v}_{\alpha}$).
    We will denote by $P_{\ast}V$ and $P_{\ast}E$ the prolongations of $V$ over $\Delta(0) \times T^2$ and $E$ over $\Delta(R)$ respectively,
    where $\Delta(s) := \{z \in \mathbb{C}\mid |z|< \exp(-2\pi s)\}$ and $\Delta(s)^{\ast} := \Delta(s) \setminus \{0\}$.
    For $a\in\Par(P_{\ast}E_{\alpha})$,
    we have the weight filtration $\{W_{i}\mathrm{Gr}_{a}(P_{\ast}E_{\alpha})\}_{i\in\mathbb{Z}}$ on $\mathrm{Gr}_{a}(P_{\ast}E_{\alpha})$ which is induced by the nilpotent part of 
    $\mathrm{Gr}_{a}(f_{\alpha})$ on $\mathrm{Gr}_{a}(E_{\alpha})$.
    
    We set a holomorphic Hermitian vector bundle $(E', \deebar_{E'}, h_{E'})$ on $\Delta(R)^{\ast}$
    and a holomorphic endomorphism $f' \in \Gamma(\Delta(R)^{\ast},E')$ as
    \begin{eqnarray*}
    	\left\{\begin{array}{ll}
    		\deebar_{E'}(\mb{e'}) &= \mb{e'}(\Gamma_{\bar{\tau}} + ((2\pi)^{-1}\log|z|)^{-1}N_{\bar{\tau}})d\bar{z}/2\pi\bar{z} \\
    		h_{E'}(e'_i, e'_j) &= \delta_{ij} \\
    		f'(\mb{e'}) &= \mb{e'}(\Gamma_{\bar{w}} + ((2\pi)^{-1}\log|z|)^{-1}N_{\bar{w}}),
    	\end{array}\right.
    \end{eqnarray*}
    where $\mb{e'} = (e'_i)$ is a $C^{\infty}$-frame of $E'$ on $\Delta(R)^{\ast}$.
    Let $E' = \bigoplus E'_{\alpha}$ be the holomorphic decomposition induced by the eigen decomposition of $\Gamma_{\bar{w}}$.
    We have $f'(E'_{\alpha})\subset E'_{\alpha}$,
    hence we write $f'= \bigoplus_{\alpha} f'_{\alpha}$.
    Then $(E'_{\alpha},h_{E'_{\alpha}},\deebar_{E'_{\alpha}})$ is also acceptable
    as $(E_{\alpha},h_{\alpha},\deebar_{E_{\alpha}})$.
    Moreover,
    for $a\in\Par(P_{\ast}E'_{\alpha})$
    we also have the weight filtration $\{W_{i}\mathrm{Gr}_{a}(P_{\ast}E'_{\alpha})\}_{i\in\mathbb{Z}}$ on $\mathrm{Gr}_{a}(P_{\ast}E'_{\alpha})$ which is induced by the nilpotent part of 
    $\mathrm{Gr}_{a}(f'_{\alpha})$ on $\mathrm{Gr}_{a}(E'_{\alpha})$.
    \begin{Prp}\label{Prp:Isom to model bundle}
      There exists a holomorphic isomorphism $\Psi : (E, h_E, \deebar_E) \to (E', h_{E'}, \deebar_{E'})$
      such that $\Psi$ and $\Psi^{-1}$ are bounded, and $\Psi$ preserves the decompositions $E = \bigoplus E_{\alpha}$ and $E' = \bigoplus E'_{\alpha}$.
      In particular, we have $\Par(P_{\ast}E'_{\alpha}) = \Par(P_{\ast}E_{\alpha})$ and
      the induced isomorphisms $\Psi :\mathrm{Gr}^{W}(\mathrm{Gr}_{a}(E_{\alpha})) \to \mathrm{Gr}^{W}(\mathrm{Gr}_{a}(E'_{\alpha}))$.
    \end{Prp}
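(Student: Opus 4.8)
The plan is to construct $\Psi$ as a $T^2$-invariant bounded holomorphic gauge transformation intertwining the $\deebar$-operators together with the Higgs fields $f,f'$; the conclusions on $\Par$ and $\mathrm{Gr}^W$ then follow formally. Identifying $E$ and $E'$ with the trivial bundle through the frames $\mb{e}$ and $\mb{e'}$ and pulling back to $\Delta(R)^\ast\times T^2$, Corollary \ref{Cor:hol push down of asymp inst} gives $(V,\deebar_A)\simeq(\pi^\ast E,\pi^\ast\deebar_E+f\,d\bar w)$, and the two structures are encoded by the one-forms $a=A_{\bar\tau}\,d\bar\tau+A_{\bar w}\,d\bar w$ and $a'=(\Gamma_{\bar\tau}+N_{\bar\tau}/t)\,d\bar\tau+(\Gamma_{\bar w}+N_{\bar w}/t)\,d\bar w$. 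By Proposition \ref{Prp:hol Asymp of Inst}\,\ref{Enum:2nd in Prp:hol Asymp of Inst} they differ only by the error $\eta:=a-a'=\tilde{\eps}_{\bar\tau}\,d\bar\tau+\tilde{\eps}_{\bar w}\,d\bar w$. A $T^2$-invariant matrix $G$, which descends to a homomorphism $E\to E'$, gives the desired $\Psi$ exactly when it is a holomorphic section of $\mathcal{H}om$ for the combined operator, $\deebar_{\mathcal{H}om}G=\deebar G+a'G-Ga=0$. Writing $G=\mathrm{Id}+g$, this is the \emph{linear} equation $\deebar_{\mathcal{H}om}g=\eta$.

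I would first reduce to eigenblocks: by Proposition \ref{Prp:hol Asymp of Inst}\,\ref{Enum:1st in Prp:hol Asymp of Inst} we have $[A_{\bar\tau},\Gamma_{\bar w}]=[A_{\bar w},\Gamma_{\bar w}]=0$, so both structures and $\eta$ respect the $\Gamma_{\bar w}$-eigendecomposition; solving block by block produces $G=\bigoplus_\alpha G_\alpha$ and the decomposition-preserving property automatically. On a block, $\deebar_{\mathcal{H}om}g=\eta$ splits into a differential $d\bar\tau$-equation and, for $T^2$-invariant $g$, an algebraic $d\bar w$-equation $[\,N_{\bar w}/t,\,g\,]=(\mathrm{Id}+g)\tilde{\eps}_{\bar w}$. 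This pair is overdetermined but consistent, the compatibility being a consequence of the integrability $\deebar_A\comp\deebar_A=0$ of both structures. I would then expand $g$ in Fourier modes along the $S^1$-coordinate $x^1$: the estimates $|\partial_1\tilde{\eps}_{\bar\tau}|,|\partial_1\tilde{\eps}_{\bar w}|=O(\exp(-\delta|t|))$ make the nonzero modes of $\eta$ exponentially small, and the nonresonance condition of Remark \ref{Remark:some attentions}\,\ref{Item:fundamental domain in Remark:some attentions} ($\alpha-\beta\notin 2\pi\mathbb{Z}$ for distinct eigenvalues) makes the corresponding constant-coefficient Fourier operators invertible, so those modes contribute a bounded, exponentially decaying piece. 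The zero mode obeys a first-order ODE in $t$ whose inhomogeneity satisfies $|\eta|=O(|t|^{-1-\delta})$; since this is integrable as $t\to-\infty$, integrating from $-\infty$ with vanishing boundary value yields a solution $g$ with $g\to0$ as $t\to-\infty$.

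With $g\to0$, the map $G=\mathrm{Id}+g$ and its inverse are bounded, and after shrinking the disk (enlarging $R$) $G$ is invertible on $\Delta(R)^\ast$; since $h$ is asymptotic to the trivial metric and $h_{E'}$ is trivial, boundedness relative to the trivial metric is equivalent to boundedness relative to $h_E,h_{E'}$, so $\Psi$ is the required bounded holomorphic isomorphism with bounded inverse. A bounded holomorphic isomorphism with bounded inverse preserves the growth orders defining the prolongations, whence $\Psi(P_aE_\alpha)=P_aE'_\alpha$ and $\Par(P_\ast E_\alpha)=\Par(P_\ast E'_\alpha)$. Finally, because $\Psi$ intertwines $f$ and $f'$ it carries $\mathrm{Gr}_a(f_\alpha)$ to $\mathrm{Gr}_a(f'_\alpha)$ and hence respects the weight filtrations defined by their nilpotent parts, inducing the stated isomorphisms $\mathrm{Gr}^W(\mathrm{Gr}_a(E_\alpha))\to\mathrm{Gr}^W(\mathrm{Gr}_a(E'_\alpha))$.

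The main difficulty is the analytic core of the second step: because the perturbation decays only polynomially, like $|t|^{-1-\delta}$, I must ensure that the model operator on $\mathcal{H}om$ — which carries the borderline nilpotent terms $N_{\bar w}/t$ and $N_{\bar\tau}/t$ — admits a bounded right inverse producing a decaying solution, and that the overdetermined pair of $d\bar\tau$- and $d\bar w$-equations is genuinely compatible. Consistency is guaranteed by integrability, while boundedness rests on the integrability of $|t|^{-1-\delta}$ together with the nonresonance $\alpha-\beta\notin2\pi\mathbb{Z}$; keeping track of the $1/t$ nilpotent directions, which govern the weight filtration, is the most delicate bookkeeping.
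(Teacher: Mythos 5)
Your plan asks for strictly more than the proposition states, and that extra requirement is where the argument breaks. The proposition only requires a holomorphic isomorphism $\Psi:(E,\deebar_E)\to(E',\deebar_{E'})$, bounded with bounded inverse and compatible with the eigen-decompositions; it does \emph{not} require $\Psi$ to intertwine $f$ and $f'$. By insisting that $G$ solve the full equation $\deebar G + a'G - Ga=0$, including its $d\bar w$-component, you are demanding a pointwise conjugacy $f'G=Gf$ on $\Delta(R)^{\ast}$, and this is in general impossible: for $z\neq 0$ the endomorphism $f(z)$ need not be conjugate to $f'(z)$ at all. For instance, if $N_{\bar w}=0$ on a block, the model Higgs field is the scalar $f'=\alpha\,\mathrm{Id}$, while $f=\alpha\,\mathrm{Id}+\tilde\eps_{\bar w}$ may have a nontrivial non-scalar part for every $z\neq 0$ which merges into the model only as $z\to 0$; then $Gf=f'G$ forces $G\tilde\eps_{\bar w}=0$, contradicting invertibility of $G$. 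Correspondingly, your appeal to integrability does not close this gap: integrability of both structures only shows that $\eta$ is $\deebar_{\mathcal{H}om}$-closed, which is necessary but not sufficient for solving $\deebar_{\mathcal{H}om}g=\eta$. Concretely, your algebraic $d\bar w$-equation $[N_{\bar w}/t,g]=(\mathrm{Id}+g)\tilde\eps_{\bar w}$ requires the right-hand side to lie in the image of the \emph{nilpotent} operator $\mathrm{ad}(N_{\bar w})$ (which is $\{0\}$ in the example above), a pointwise constraint that closedness does not supply. The asymptotic agreement of the two pairs is captured in the paper only at the level of gradations (Corollary \ref{Cor:Isom to model bundle of Gr}), via equality of Jordan types, not by a global intertwiner of $f$ and $f'$.

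Your scheme becomes essentially the paper's proof once the $d\bar w$-equation is dropped. The paper sets $\Psi_{1,\alpha}(\mb{e}_\alpha):=\mb{e'}_\alpha$, observes that the failure of holomorphy is $\tilde\eps_{\bar\tau,\alpha}\,d\bar z/\bar z$, and corrects it by solving $\deebar S_\alpha = \tilde\eps_{\bar\tau,\alpha}\,d\bar z/\bar z$ — not by Fourier modes and ODE integration, but by Simpson's lemma \cite[Lemma 7.1]{Ref:Sim2}, applied to the matrix of $\bar z^{-1}\tilde\eps_{\bar\tau,\alpha}$ in holomorphic frames $\mb{b}_\alpha,\mb{b'}_\alpha$ compatible with the parabolic and weight filtrations (Proposition \ref{Prp:norm estimate}). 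This choice is what rescues the last claim of the statement: boundedness of $\Psi$ and $\Psi^{-1}$ alone gives $\Psi(P_aE_\alpha)=P_aE'_\alpha$ and hence $\Par(P_\ast E_\alpha)=\Par(P_\ast E'_\alpha)$, but it does \emph{not} give compatibility with the weight filtrations once you no longer have the intertwining of $f$ and $f'$ to invoke. In the paper this compatibility comes from the refined estimate $|S_{\alpha,ij}|=O\bigl(|z|^{\mathrm{ord}(b_{\alpha,i})-\mathrm{ord}(b'_{\alpha,j})}(-\log|z|)^{-\mathrm{deg}^{W}(b_{\alpha,i})+\mathrm{deg}^{W}(b'_{\alpha,j})-\delta}\bigr)$, which forces the matrix entries of $\Psi$ to vanish at $z=0$ exactly when they would violate the parabolic or weight filtration. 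If you wish to keep your ODE approach for the $d\bar\tau$-equation, you would have to rerun it relative to such filtration-adapted frames and track the $(\log|z|)^{k}$ factors generated by the $N_{\bar\tau}/t$ coefficient, in order to recover these refined estimates; without them the statement about $\mathrm{Gr}^{W}(\mathrm{Gr}_{a}(E_\alpha))\to\mathrm{Gr}^{W}(\mathrm{Gr}_{a}(E'_\alpha))$ is unproved.
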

    \begin{proof}
      For an eigenvalue $\alpha \in \mathbb{C}$ of $\Gamma_{\bar{w}}$,
      let $\mb{e'}_{\alpha}$ be the subset of $\mb{e'}$ which spans $E'_{\alpha}$.
      We define $\Psi_{1,\alpha} : E_{\alpha} \to E'_{\alpha}$ as  $\Psi_{1,\alpha}(\mb{e}_{\alpha}) := \mb{e'}_{\alpha}$.
      Then we have $\deebar\Psi_{1,\alpha}(\mb{e}_{\alpha}) = \mb{e'}_{\alpha}\tilde{\eps}_{\bar{\tau},\alpha} d\bar{z}/\bar{z}$,
      where $\tilde{\eps}_{\bar{\tau}} = \sum_{\alpha} \tilde{\eps}_{\bar{\tau},\alpha}$
      is the decomposition of $\tilde{\eps}_{\bar{\tau}}$ in Proposition \ref{Prp:hol Asymp of Inst} induced by
      the decomposition $V|_{(-\infty,-R)\times T^3}=\bigoplus_{\alpha} V_{\alpha}$ in Corollary \ref{Cor:asymp. ortho. and hol. decomp. of inst.}.
      Let $\mb{b}_{\alpha}=(b_{\alpha,i})$ and $\mb{b'}_{\alpha}= (b'_{\alpha,j})$ be holomorphic frames of $P_{a}E_{\alpha}$ and $P_{a}E'_{\alpha}$ respectively
      that they have the norm estimates in Proposition \ref{Prp:norm estimate}.
      We set a function $K_{\alpha}=(K_{\alpha,ij}):\Delta^{\ast}(R) \to \mathrm{Mat}(r_{\alpha},\mathbb{C})$ as the change of basis of
      $\bar{z}^{-1}\tilde{\eps}_{\bar{\tau},\alpha}$ from the frames $\mb{e}_{\alpha}$
      and $\mb{e'}_{\alpha}$ to the frames $\mb{b}_{\alpha}$ and $\mb{b'}_{\alpha}$.
      Because of the estimate of $\tilde{\eps}_{\bar{\tau}}$ in Proposition  \ref{Prp:hol Asymp of Inst} and the norm estimates of $\mb{b}_{\alpha}$ and $\mb{b'}_{\alpha}$ in Proposition \ref{Prp:norm estimate},
      we have the following estimate
      \[
      	|K_{\alpha,ij}| = O\biggl(|z|^{\mathrm{ord}(b_{\alpha,i}) - \mathrm{ord}(b'_{\alpha,j}) - 1} (-\log|z|)^{-\mathrm{deg}^{W}(b_{\alpha,i}) + \mathrm{deg}^{W}(b'_{\alpha,j}) - 1 - \delta}\biggr).
      \]
      Therefore, according to \cite[Lemma 7.1]{Ref:Sim2}
      there exist functions $S_{\alpha,ij}$ such that we have $\deebar(S_{\alpha,ij}) = K_{\alpha,ij}d\bar{z}$, and it satisfies the following estimate
      \[
      	|S_{\alpha,ij}| = O\biggl(|z|^{\mathrm{ord}(b_{\alpha,i}) - \mathrm{ord}(b'_{\alpha,j})} (-\log|z|)^{-\mathrm{deg}^{W}(b_{\alpha,i}) + \mathrm{deg}^{W}(b'_{\alpha,j}) - \delta}\biggr).
      \]
      We set $S_{\alpha}:E_{\alpha} \to E'_{\alpha}$ as $S_{\alpha}(b_i) := \sum_{j} S_{\alpha,ij} \cdot b'_j$.
      Then we have $\deebar S_{\alpha} = \tilde{\eps}_{\bar{\tau},\alpha} d\bar{z}/\bar{z}$ and $|S_{\alpha}| = O((-\log|z|)^{-\delta})$.
      Therefore, for some $R'> R$, $\Psi_{\alpha} := \Psi_{1,\alpha} - S_{\alpha}$ is a holomorphic isomorphism on $\Delta^{\ast}(R')$
      such that $\Psi_{\alpha}$ and $\Psi_{\alpha}^{-1}$ are bounded.
      Finally, we set $\Psi := \oplus_{\alpha} \Psi_{\alpha}$. 
      Then $\Psi$ is holomorphic isomorphism,
      and both $\Psi$ and $\Psi^{-1}$ are bounded.
      We write $\Psi_{\alpha}(b_{\alpha,i}) = \sum_{j} \Psi_{\alpha,ij} \cdot b'_{\alpha,j}$.
      Then if $\mathrm{ord}(b_{\alpha,i}) < \mathrm{ord}(b'_{\alpha,j})$, or $\mathrm{ord}(b_{\alpha,i}) = \mathrm{ord}(b'_{\alpha,j})$ and $\mathrm{deg}^{W}(b_{\alpha,i}) < \mathrm{deg}^{W}(b'_{\alpha,j})$,
      then we have $\Psi_{ij}(0)= 0$ because of the norm estimates of $\mb{b}_{\alpha}$ and $\mb{b}'_{\alpha}$.
      Hence remains follow from it.
    \end{proof}
	For $a \in \Par(P_{\ast}E) = \Par(P_{\ast}E')$,
	the gradations $\mathrm{Gr}_{a}(E)$ and $\mathrm{Gr}_{a}(E')$ are skyscraper sheaves with the supports $\{0\}\subset\Delta(R)$,
	and $f$ and $f'$ induces the endomorphisms $\mathrm{Gr}_{a}(f)$ and $\mathrm{Gr}_{a}(f')$ on $\mathrm{Gr}_{a}(E)$ and $\mathrm{Gr}_{a}(E')$ respectively.
	We regard $(\mathrm{Gr}_{a}(E^{(\prime)}),\mathrm{Gr}_{a}(f^{(\prime)}))$ as
	a vector space with an endomorphism.
    \begin{Cor}\label{Cor:Isom to model bundle of Gr}
    	From the isomorphism $\Psi$,
    	we can construct an isomorphism 
		$(\mathrm{Gr}_{a}(E) \times T^2, \deebar_{T^2} + \mathrm{Gr}_{a}(f) d\bar{w} ) \simeq ( \mathrm{Gr}_{a}(E') \times T^2, \deebar_{T^2} + \mathrm{Gr}_{a}(f') d\bar{w} )$
      	for $a \in \Par(P_{\ast}E)$.
    \end{Cor}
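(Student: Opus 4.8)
The plan is to reduce the statement to a comparison of two nilpotent endomorphisms through their Jordan types, and then to realize the desired isomorphism by a \emph{constant} gauge transformation on $T^2$. First I would decompose everything according to the eigenvalues of $\Gamma_{\bar{w}}$. Since $\Psi$ preserves the decompositions $E = \bigoplus_{\alpha} E_{\alpha}$ and $E' = \bigoplus_{\alpha} E'_{\alpha}$ (Proposition \ref{Prp:Isom to model bundle}), it suffices to construct, for each eigenvalue $\alpha$ and each $a \in \Par(P_{\ast}E_{\alpha}) = \Par(P_{\ast}E'_{\alpha})$, an isomorphism
\[
	(\mathrm{Gr}_{a}(E_{\alpha})\times T^2, \deebar_{T^2} + \mathrm{Gr}_{a}(f_{\alpha})\,d\bar{w}) \simeq (\mathrm{Gr}_{a}(E'_{\alpha})\times T^2, \deebar_{T^2} + \mathrm{Gr}_{a}(f'_{\alpha})\,d\bar{w}),
\]
and then take the direct sum over $\alpha$. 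On each summand I would write $\mathrm{Gr}_{a}(f_{\alpha}) = \alpha\,\mathrm{Id} + n_{\alpha}$ and $\mathrm{Gr}_{a}(f'_{\alpha}) = \alpha\,\mathrm{Id} + n'_{\alpha}$, where $n_{\alpha}, n'_{\alpha}$ denote the nilpotent parts, since on $E_{\alpha}$ (resp.\ $E'_{\alpha}$) the endomorphism $f$ (resp.\ $f'$) has semisimple limit $\alpha\,\mathrm{Id}$.

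The key step is to show that $n_{\alpha}$ and $n'_{\alpha}$ have the same Jordan type. By construction the weight filtrations on $\mathrm{Gr}_{a}(P_{\ast}E_{\alpha})$ and $\mathrm{Gr}_{a}(P_{\ast}E'_{\alpha})$ are the monodromy weight filtrations of $n_{\alpha}$ and $n'_{\alpha}$ respectively, so the sizes of the Jordan blocks of each nilpotent are determined by the dimensions of the graded pieces $\mathrm{Gr}^{W}_{i}$. Since Proposition \ref{Prp:Isom to model bundle} supplies a grading-preserving isomorphism $\Psi : \mathrm{Gr}^{W}(\mathrm{Gr}_{a}(E_{\alpha})) \to \mathrm{Gr}^{W}(\mathrm{Gr}_{a}(E'_{\alpha}))$, we obtain $\dim \mathrm{Gr}^{W}_{i}(\mathrm{Gr}_{a}(E_{\alpha})) = \dim \mathrm{Gr}^{W}_{i}(\mathrm{Gr}_{a}(E'_{\alpha}))$ for every $i$, whence $n_{\alpha}$ and $n'_{\alpha}$ are conjugate. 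Choosing a linear isomorphism $\phi_{\alpha} : \mathrm{Gr}_{a}(E_{\alpha}) \to \mathrm{Gr}_{a}(E'_{\alpha})$ with $\phi_{\alpha} n_{\alpha} = n'_{\alpha} \phi_{\alpha}$ gives $\phi_{\alpha}\,\mathrm{Gr}_{a}(f_{\alpha}) = \mathrm{Gr}_{a}(f'_{\alpha})\,\phi_{\alpha}$, the semisimple parts being both $\alpha\,\mathrm{Id}$, and the constant gauge transformation $\phi_{\alpha}$ is then a holomorphic isomorphism of the two trivial bundles on $T^2$ equipped with $\deebar_{T^2} + (\cdot)\,d\bar{w}$.

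I expect the main obstacle to lie in the bookkeeping of the splitting into semisimple and nilpotent parts: namely confirming that the semisimple part of each $\mathrm{Gr}_{a}(f^{(\prime)}_{\alpha})$ is genuinely $\alpha\,\mathrm{Id}$ (so that $\phi_{\alpha}$ intertwines the full endomorphisms and hence respects the flat line bundle $F_{\alpha}$ underlying the $\alpha$-summand), together with the verification that $\Psi$ is strictly grading-preserving on $\mathrm{Gr}^{W}$ so that the block-size counts match index by index. The correspondence between the monodromy weight filtration of a nilpotent operator and its Jordan block sizes is standard $\mathfrak{sl}_2$-representation theory; once the dimension counts are in hand, the conjugacy of $n_{\alpha}$ and $n'_{\alpha}$, and therefore the existence of the desired isomorphism on $T^2$, is immediate and consistent with the uniqueness part of Proposition \ref{Prp:decomp of ss deg0}.
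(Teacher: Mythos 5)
Your proposal is correct and takes essentially the same approach as the paper: the paper's proof likewise decomposes along the eigenvalues of $\Gamma_{\bar{w}}$, invokes Proposition \ref{Prp:Isom to model bundle} for the induced isomorphisms $\mathrm{Gr}^{W}(\mathrm{Gr}_{a}(E_{\alpha})) \simeq \mathrm{Gr}^{W}(\mathrm{Gr}_{a}(E'_{\alpha}))$, and concludes that the Jordan normal forms of $\mathrm{Gr}_{a}(f)$ and $\mathrm{Gr}_{a}(f')$ are equivalent. Your write-up simply makes explicit the steps the paper leaves implicit, namely that the graded dimensions of the monodromy weight filtration determine the Jordan type of the nilpotent part, that the semisimple parts are both $\alpha\,\mathrm{Id}$, and that the resulting conjugating map is a constant gauge isomorphism of the bundles over $T^2$.
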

    \begin{proof}
    	In Proposition \ref{Prp:Isom to model bundle},
      we proved that $\Psi$ induces an isomorphism between $\mathrm{Gr}_{a}(E_{\alpha})$ and $\mathrm{Gr}_{a}(E'_{\alpha})$,
      and it also induces an isomorphism between their gradation of the weight filtrations. 
      Therefore, the Jordan normal forms of $\mathrm{Gr}_{a}(f)$ and $\mathrm{Gr}_{a}(f')$ are equivalent.
      Hence the proof is complete.
    \end{proof}
	\subsection{Algebraic Nahm transform}\label{SubSec:Alg. Nahm trans.}
	  	We set a  hypersurface $D = D_1 \sqcup D_2 := (\{0\}\times T^2) \sqcup (\{\infty\}\times T^2) \subset \mathbb{P}^1\times T^2$.
	  	Let $P_{\ast\ast}V$ be a stable filtered bundle of degree $0$ and rank $r>1$ on $(\mathbb{P}^1\times T^2,D)$ unless otherwise noted.
	    We set $\mathrm{Sing}_{\mathbb{R}}(P_{\ast\ast}V):=-\Par(P_{\ast\ast}V,1)\cup\Par(P_{\ast\ast}V,2) \subset \mathbb{R}$,
	    where $-\Par(P_{\ast\ast}V,1) := \{a\in\mathbb{R}\mid -a \in \Par(P_{\ast\ast}V,1)\}$.
	    We also set $\mathrm{Sing}_{S^1}(P_{\ast\ast}V):=\pi(\mathrm{Sing}_{\mathbb{R}}(P_{\ast\ast}V)) \subset S^1$,
	    where $\pi:\mathbb{R}\to S^1$ is the quotient map.
	    
	    Let $\mathcal{L} \to T^2\times \hat{T}^2$ be the Poincar\'{e} bundle of $T^2$.
	    For $I \subset \{1,2,3\}$, let $p_I$ be the projection of $\mathbb{P}^1\times T^2\times \hat{T}^2$
	    onto the product of the $i$-th components $(i \in I)$.
	    We define a functor $F^i:\mathrm{Coh}(\mathcal{O}_{\mathbb{P}^1\times T^2}) \to \mathrm{Coh}(\mathcal{O}_{\hat{T}^2})$ to be
	    $F^i(\mathcal{F}):=R^{i}{p_3}_{\ast}(p^{\ast}_{12}\mathcal{F} \otimes p^{\ast}_{23}\mathcal{L})$.
	    \begin{Prp}\label{Prp:Algebraic Nahm trans}
	    	The sheaves $F^1(P_{<-\hat{t}<\hat{t}}V)$ and $F^1(P_{-\hat{t}\hat{t}}V)$ are locally free for any $\hat{t} \in \mathbb{R}$.
	    \end{Prp}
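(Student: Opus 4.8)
The plan is to view $F^i$ as the relative Fourier--Mukai transform along the $T^2$-factor and to upgrade the fiberwise cohomology vanishing of Proposition \ref{Prp:vanish cohom of stable bundle} to local freeness of $R^1{p_3}_\ast$ by means of the theorem on cohomology and base change.

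First I would check the flatness hypotheses. Both $P_{-\hat t\hat t}V$ and $P_{<-\hat t<\hat t}V$ are locally free $\mathcal{O}_{\mathbb{P}^1\times T^2}$-modules: for $P_{-\hat t\hat t}V$ this is the defining property of a filtered bundle, while for $P_{<-\hat t<\hat t}V$ it follows from the discreteness of the jumping set $\Par(P_{\ast\ast}V,i)$, which gives $P_{<-\hat t<\hat t}V = P_{-\hat t-\eps,\hat t-\eps}V$ for all small $\eps>0$, again a member of the filtered bundle. Hence, writing $\mathcal{G} := p_{12}^{\ast}(P_{-\hat t\hat t}V)\otimes p_{23}^{\ast}\mathcal{L}$ (and likewise for $P_{<-\hat t<\hat t}V$), $\mathcal{G}$ is locally free on $\mathbb{P}^1\times T^2\times\hat T^2$, hence flat over $\hat T^2$; since $p_3$ is projective with fibers of dimension $2$, the hypotheses of cohomology and base change are satisfied.

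Next I would identify the fibers. For $\alpha\in\hat T^2$ the fiber $p_3^{-1}(\alpha)$ is $\mathbb{P}^1\times T^2$ and $\mathcal{G}|_{p_3^{-1}(\alpha)}\cong P_{-\hat t\hat t}V\otimes p^{\ast}(\mathcal{L}|_{T^2\times\{\alpha\}})$, where $p:\mathbb{P}^1\times T^2\to T^2$ is the projection. By the Poincar\'e-bundle lemma $\mathcal{L}|_{T^2\times\{\alpha\}}\cong L_{-\alpha}\in\mathrm{Pic}^0(T^2)$ has degree $0$, so Proposition \ref{Prp:vanish cohom of stable bundle} applied with $F=L_{-\alpha}$ gives $H^i\bigl(p_3^{-1}(\alpha),\mathcal{G}|_{p_3^{-1}(\alpha)}\bigr)=0$ for every $i\neq 1$ and every $\alpha$; the same holds for $P_{<-\hat t<\hat t}V$.

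Finally I would run the base-change propagation. Because the fibers are surfaces, the top base-change map $\phi^2(\alpha)\colon R^2{p_3}_\ast\mathcal{G}\otimes k(\alpha)\to H^2(p_3^{-1}(\alpha),\mathcal{G}|_{p_3^{-1}(\alpha)})$ is surjective; as its target vanishes, Nakayama forces $R^2{p_3}_\ast\mathcal{G}=0$, which is in particular locally free. Grothendieck's criterion then makes $\phi^1(\alpha)$ surjective, while $\phi^0(\alpha)$ is surjective trivially since $H^0(p_3^{-1}(\alpha),\mathcal{G}|_{p_3^{-1}(\alpha)})=0$; applying the criterion at level $i=1$ yields that $R^1{p_3}_\ast\mathcal{G}=F^1(P_{-\hat t\hat t}V)$ is locally free near every $\alpha$, hence locally free, and the identical argument handles $F^1(P_{<-\hat t<\hat t}V)$. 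Equivalently, $R{p_3}_\ast\mathcal{G}$ is a perfect complex whose derived restriction to each point has cohomology concentrated in degree $1$, so $R^1{p_3}_\ast\mathcal{G}$ is locally free and the other direct images vanish. The only real work is the bookkeeping that secures flatness and perfectness of $R{p_3}_\ast\mathcal{G}$; once the fiber is identified with a degree-$0$ twist, the vanishing input is immediate from Proposition \ref{Prp:vanish cohom of stable bundle} and the base-change machinery does the rest.
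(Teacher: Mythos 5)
Your proof is correct, and it rests on the same key input as the paper's: the fiberwise vanishing of Proposition \ref{Prp:vanish cohom of stable bundle}, applied to the restrictions of the Fourier--Mukai kernel to the fibers of $p_3$, which are exactly the twists $P_{-\hat{t}\hat{t}}V \otimes p^{\ast}L_{-\alpha}$ with $L_{-\alpha} \in \mathrm{Pic}^0(T^2)$ (your identification of these restrictions, and the reduction $P_{<-\hat{t}<\hat{t}}V = P_{-\hat{t}-\eps,\,\hat{t}-\eps}V$ for small $\eps>0$, are points the paper leaves implicit). Where you genuinely diverge is the mechanism converting fiberwise vanishing into local freeness of $R^1{p_3}_{\ast}$: the paper invokes the Riemann--Roch--Hirzebruch theorem to conclude that $h^1(\mathbb{P}^1\times T^2,\, P_{-\hat{t}\hat{t}}V\otimes p^{\ast}F)$ is constant in $F\in\mathrm{Pic}^0(T^2)$ (the Euler characteristic is deformation invariant and $h^0=h^2=0$), and then relies on Grauert's constancy criterion, while you run the formal cohomology-and-base-change cascade: surjectivity of the top base-change map, Nakayama to kill $R^2{p_3}_{\ast}$, then Grothendieck's criterion twice. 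Both mechanisms are sound here, since everything in sight is projective and $\hat{T}^2$ is reduced. Your route avoids Riemann--Roch altogether, would work over a non-reduced base, and yields slightly more than the stated conclusion, namely the vanishings $F^0=F^2=0$ and the base-change isomorphism $F^1(P_{-\hat{t}\hat{t}}V)\otimes k(\alpha) \simeq H^1(\mathbb{P}^1\times T^2,\, P_{-\hat{t}\hat{t}}V\otimes p^{\ast}L_{-\alpha})$. This surplus is not wasted: the paper later cites the present proposition for the vanishing $F^2(P_{\eps-\eps}V)=0$ (in the proof of Proposition \ref{Prp:merom comparison}), a fact that on the paper's route is only implicit in the proof, whereas on yours it is part of the conclusion.
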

	  	\begin{proof}
	  		Let $p:\mathbb{P}^1\times T^2\to T^2$ be the projection.
	  		By Proposition \ref{Prp:vanish cohom of stable bundle},
	  		we have $H^0(\mathbb{P}^1\times T^2, P_{-\hat{t}\hat{t}}V \otimes p^{\ast}F) = H^2(\mathbb{P}^1\times T^2, P_{-\hat{t}\hat{t}}V\otimes  p^{\ast}F) =0$ for any $\hat{t}\in\mathbb{R}$ and any $F\in\mathrm{Pic}^0(T^2)$.
	  		Therefore $h^1(\mathbb{P}^1\times T^2, P_{-\hat{t}\hat{t}}V\otimes p^{\ast}F)$ is a constant for any $F\in\mathrm{Pic}^0(T^2)$ by the Riemann-Roch-Hirzebruch theorem.
	  		Hence $F^1(P_{-\hat{t}\hat{t}}V)$ is a locally free sheaves on $\hat{T}^2$.
	  		By the same way we can prove that $F^1(P_{<-\hat{t}<\hat{t}}V)$ is also locally free.
	    \end{proof}
	    We will denote by $\mathrm{AN}(P_{\ast\ast}V)_{\hat{t}}$
	    the locally free sheaf $F^1(P_{-\hat{t}\hat{t}}V)$ for $\hat{t}\in\mathbb{R}$.
	    Since we have $P_{-(\hat{t}+1)\ (\hat{t}+1)}V \simeq P_{-\hat{t}\hat{t}}V$,
	    we can regard $\{\mathrm{AN}(P_{\ast\ast}V)_{\hat{t}}\}$ as a family on $\hat{t} \in S^1$.
	    \begin{Def}\label{Def:Algebraic singularity set}
	      We define the algebraic singularity set $\mathrm{Sing}(P_{\ast\ast}V) \subset \hat{T}^3 = S^1\times \hat{T}^2$ as 
	      \begin{align*}
	      	\mathrm{Sing}(P_{\ast\ast}V) := &
	        \ \left(\bigcup_{\hat{t} \in \Par(P_{\ast\ast}V,1)} \{-\pi(\hat{t})\}\times \mathrm{Spec}({}^{1}\mathrm{Gr}_{\hat{t}}(P_{\ast\ast}V)) \right)\\
	        &\cup\left(\bigcup_{\hat{t} \in  \Par(P_{\ast\ast}V,2)} \{\pi(\hat{t})\}\times \mathrm{Spec}({}^{2}\mathrm{Gr}_{\hat{t}}(P_{\ast\ast}V)) \right),
	      \end{align*}
	      where
	      $\mathrm{Spec}(\cdot)$ is the spectrum set of a semistable bundle of degree $0$ on $T^2$ (See Definition \ref{Def:spectrum of ss deg0}).
	    \end{Def}
	    By Corollary \ref{Cor:Isom to model bundle of Gr},
	    the notions of singularity set and algebraic singularity set are compatible.
	    \begin{Prp}
	    	Let $(V,h,A)$ be an irreducible $L^2$-finite instanton on $\mathbb{R} \times T^3$  of rank $r>1$, and $P_{\ast\ast}V$ be the associated stable filtered bundle.
	      Then we have $\mathrm{Sing}(V,h,A) = \mathrm{Sing}(P_{\ast\ast}V)$.\qed
	    \end{Prp}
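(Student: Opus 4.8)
The plan is to prove the equality divisor by divisor, matching the boundary component $D_1=\{0\}\times T^2$ (where $z\to 0$, i.e.\ $t\to-\infty$, governed by $(\Gamma_-,N_-)$) with $\mathrm{Spec}(\Gamma_-)$, and $D_2=\{\infty\}\times T^2$ (where $t\to+\infty$, governed by $(\Gamma_+,N_+)$) with $\mathrm{Spec}(\Gamma_+)$. Since $\mathrm{Sing}(V,h,A)=\mathrm{Spec}(\Gamma_+)\cup\mathrm{Spec}(\Gamma_-)$ and $\mathrm{Sing}(P_{\ast\ast}V)$ is the union of its $D_1$- and $D_2$-contributions (Definition \ref{Def:Algebraic singularity set}), it suffices to show
\[
	\bigcup_{\hat t\in\Par(P_{\ast\ast}V,1)}\{-\pi(\hat t)\}\times\mathrm{Spec}\big({}^{1}\mathrm{Gr}_{\hat t}(P_{\ast\ast}V)\big)=\mathrm{Spec}(\Gamma_-),
\]
together with the symmetric statement at $D_2$ (yielding $\mathrm{Spec}(\Gamma_+)$, with $-\pi(\hat t)$ replaced by $+\pi(\hat t)$); taking the union then gives the claim. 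Note that the hypotheses (irreducible, $\mathrm{rank}(V)>1$) guarantee via Theorem \ref{Thm:inst is stable} that $P_{\ast\ast}V$ is stable, so the left-hand side is defined.

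First I would reduce the gradation data to the explicit model built from $(\Gamma_-,N_-)$. By Proposition \ref{Prp:Rough property of prolong} and Corollary \ref{Cor:Rough property of Gr}, near $D_1$ the filtered bundle $P_{\ast\ast}V$ is controlled by the prolongation $P_{\ast}E$ of the pushed-down bundle of Corollary \ref{Cor:hol push down of asymp inst}, so that $\Par(P_{\ast\ast}V,1)=\Par(P_{\ast}E)$ and ${}^{1}\mathrm{Gr}_{\hat t}(P_{\ast\ast}V)\simeq(\mathrm{Gr}_{\hat t}(E)\times T^2,\deebar_{T^2}+\mathrm{Gr}_{\hat t}(f)\,d\bar w)$ as a semistable bundle of degree $0$ on $T^2$. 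Corollary \ref{Cor:Isom to model bundle of Gr} then identifies these objects with the corresponding ones for the model bundle $(E',\deebar_{E'},f')$ constructed directly from $(\Gamma_-,N_-)$; in particular $\Par(P_{\ast}E)=\Par(P_{\ast}E')$ and the $T^2$-gradations have the same spectrum. This reduces the whole statement to an explicit computation for $E'$.

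Next I would carry out that computation on the eigenspaces of $\Gamma_{-,\bar w}$. For a joint eigenvalue corresponding to $\xi\in\widetilde{\mathrm{Spec}(\Gamma_-)}$, a single-valued $\deebar_{E'}$-holomorphic section of the associated eigenline has norm growth $|z|^{-\hat t}$, where $\hat t$ is congruent modulo $\mathbb{Z}$ to minus the $S^1$-component of $\xi$: here both the $(0,1)$-part $\Gamma_{-,\bar\tau}$ and the monodromy (single-valuedness) constraint contribute, and the constant is fixed by $z=\exp(2\pi\tau)$ and the normalization $(2\pi\sqrt{-1})^{-1}$ in the definition of $\widetilde{\mathrm{Spec}}$. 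Thus $\Par(P_{\ast}E')$ is exactly the set of $-(S^1\text{-components})$ of $\mathrm{Spec}(\Gamma_-)$, and for a fixed weight $\hat t$ the relevant eigenlines are those with $S^1$-component $-\pi(\hat t)$. For the transverse spectrum, the gradation $(\mathrm{Gr}_{\hat t}(E')\times T^2,\deebar_{T^2}+\mathrm{Gr}_{\hat t}(f')\,d\bar w)$ splits along the $\Gamma_{-,\bar w}$-eigenvalues into blocks $(\underline{\mathbb{C}},\deebar+\alpha\,d\bar w)\otimes(\underline{\mathbb{C}^m},\deebar+N\,d\bar w)$; the first factor is precisely the flat line bundle representing the $\hat T^2$-component of $\xi$ under $\hat T^2\simeq\mathrm{Pic}^0(T^2)$, while the nilpotent block $N$ (coming from $N_-$) only fixes the Jordan structure. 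Hence, by Definition \ref{Def:spectrum of ss deg0}, $\mathrm{Spec}({}^{1}\mathrm{Gr}_{\hat t}(P_{\ast\ast}V))$ is exactly the set of $\hat T^2$-components of those $\xi\in\mathrm{Spec}(\Gamma_-)$ whose $S^1$-component equals $-\pi(\hat t)$. Combining the two computations, the left-hand side sweeps out precisely $\mathrm{Spec}(\Gamma_-)$; the same argument at $D_2$ gives $\mathrm{Spec}(\Gamma_+)$, and the union completes the proof.

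The hard part will be the normalization and sign bookkeeping in the weight computation: verifying that minus the parabolic weight at $D_1$ (and plus at $D_2$) reproduces the $(2\pi\sqrt{-1})^{-1}$-normalized $x^1$-eigenvalue of $\Gamma_\pm$ modulo $\mathbb{Z}$, which depends essentially on the single-valuedness constraint and on the coordinate $z=\exp(2\pi\tau)$, and in confirming that the Fourier--Mukai spectrum of the model $T^2$-gradation recovers exactly the $\hat T^2$-images of the $\Gamma_{\pm,\bar w}$-eigenvalues, so that the nilpotent $N_\pm$-blocks contribute nothing to the spectrum.
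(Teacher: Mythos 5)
Your proposal is correct and takes essentially the same route as the paper: the paper also deduces the statement by using Proposition \ref{Prp:Rough property of prolong}, Corollary \ref{Cor:Rough property of Gr} and Corollary \ref{Cor:Isom to model bundle of Gr} to reduce both singularity sets to the explicit model bundle $(E',\deebar_{E'},f')$ built from $(\Gamma_{\pm},N_{\pm})$, and then reads off the weights and the $\hat{T}^2$-spectrum from that model. The normalization and sign bookkeeping you flag as the hard part is precisely the holomorphic-frame computation the paper carries out later, in the proof of Lemma \ref{Lem:explicit formula for gr. of model bundle}.
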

	    \begin{Prp}\label{Prp: algebraic scattering map}
	    	Let $I \subset \mathbb{R}$ be a closed interval with $|I| < 1$ and $\pi:\mathbb{R} \to S^1$ be the quotient map.
	      Let $U \subset \hat{T}^2$ be the complement of the image of $\left(\pi(I)\times\hat{T}^2\right) \cap \mathrm{Sing}(P_{\ast\ast}V)$
	      under the projection $\hat{T}^3 \to \hat{T}^2$.
	      Then, for any $\hat{t}, \hat{t}' \in I$, we have a natural isomorphism
	      $\mathrm{AN}(P_{\ast\ast}V)_{\hat{t}}|_{U} \simeq \mathrm{AN}(P_{\ast\ast}V)_{\hat{t'}}|_{U}$.
	    \end{Prp}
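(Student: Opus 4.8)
The plan is to compare $\mathrm{AN}(P_{\ast\ast}V)_{\hat{t}}=F^1(P_{-\hat{t}\hat{t}}V)$ and $\mathrm{AN}(P_{\ast\ast}V)_{\hat{t}'}=F^1(P_{-\hat{t}'\hat{t}'}V)$ through the common filtered piece $P_{-\hat{t}'\hat{t}}V$, after relabeling so that $\hat{t}\le\hat{t}'$. Since the first index decreases from $-\hat{t}$ to $-\hat{t}'$ while the second index increases from $\hat{t}$ to $\hat{t}'$, the canonical inclusions of filtered pieces give two short exact sequences of coherent $\mathcal{O}_{\mathbb{P}^1\times T^2}$-modules,
\begin{align*}
0 &\to P_{-\hat{t}'\hat{t}}V \to P_{-\hat{t}\hat{t}}V \to Q_1 \to 0,\\
0 &\to P_{-\hat{t}'\hat{t}}V \to P_{-\hat{t}'\hat{t}'}V \to Q_2 \to 0,
\end{align*}
where $Q_1$ is supported on $D_1$ and $Q_2$ on $D_2$. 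By the defining properties of a filtered bundle, $Q_1$ admits a filtration whose subquotients are the gradations ${}^{1}\mathrm{Gr}_{a}(P_{\ast\ast}V)$ with $a\in\Par(P_{\ast\ast}V,1)\cap(-\hat{t}',-\hat{t}]$, i.e.\ with $-a\in[\hat{t},\hat{t}')$, and $Q_2$ has subquotients ${}^{2}\mathrm{Gr}_{b}(P_{\ast\ast}V)$ with $b\in\Par(P_{\ast\ast}V,2)\cap(\hat{t},\hat{t}']\subset[\hat{t},\hat{t}']$.

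Next I would run the Fourier–Mukai computation on each gradation. Each such $G\in\{{}^{1}\mathrm{Gr}_{a},{}^{2}\mathrm{Gr}_{b}\}$ is a semistable bundle of degree $0$ on $D_i\simeq T^2$ — precisely the property that makes $\mathrm{Spec}(G)$ meaningful in Definition \ref{Def:Algebraic singularity set} — so Corollary \ref{Cor:FM-trans of ss deg0} applies after unwinding $F^\bullet$ on $D_i$: there the factor $p_{23}^\ast\mathcal{L}$ restricts to the Poincar\'e bundle on $T^2\times\hat{T}^2$ and $p_3$ restricts to the projection $T^2\times\hat{T}^2\to\hat{T}^2$, so that $F^j(G)=0$ for $j\neq 1$ and $F^1(G)$ is a skyscraper sheaf supported on $\mathrm{Spec}(G)\subset\hat{T}^2$.

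The crux is then the bookkeeping that every such support avoids $U$. For ${}^{1}\mathrm{Gr}_{a}$ occurring in $Q_1$ we have $-a\in[\hat{t},\hat{t}')\subset I$, so the associated singular point $\{-\pi(a)\}\times\mathrm{Spec}({}^{1}\mathrm{Gr}_{a})=\{\pi(-a)\}\times\mathrm{Spec}({}^{1}\mathrm{Gr}_{a})$ lies in $(\pi(I)\times\hat{T}^2)\cap\mathrm{Sing}(P_{\ast\ast}V)$, and likewise $\{\pi(b)\}\times\mathrm{Spec}({}^{2}\mathrm{Gr}_{b})$ for $Q_2$ with $b\in I$. Hence each $\mathrm{Spec}(G)$ is contained in the image of that intersection under $\hat{T}^3\to\hat{T}^2$, which is exactly the complement of $U$; here the hypothesis $|I|<1$ ensures $\pi|_I$ is injective so that, in view of $P_{-(\hat{t}+1)\,(\hat{t}+1)}V\cong P_{-\hat{t}\hat{t}}V$, no gradation is double-counted through the period. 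Since restriction to the open set $U$ is exact and commutes with the higher direct images defining $F^\bullet$, filtering $Q_1,Q_2$ by their gradations and inducting with the long exact sequence yields $F^j(Q_1)|_U=F^j(Q_2)|_U=0$ for all $j$. Applying $F^\bullet$ to the two short exact sequences and restricting to $U$ then shows that the two inclusions induce isomorphisms
\[
F^1(P_{-\hat{t}'\hat{t}}V)|_U\xrightarrow{\ \sim\ }F^1(P_{-\hat{t}\hat{t}}V)|_U,\qquad
F^1(P_{-\hat{t}'\hat{t}}V)|_U\xrightarrow{\ \sim\ }F^1(P_{-\hat{t}'\hat{t}'}V)|_U,
\]
and composing the inverse of the first with the second gives the desired $\mathrm{AN}(P_{\ast\ast}V)_{\hat{t}}|_U\simeq\mathrm{AN}(P_{\ast\ast}V)_{\hat{t}'}|_U$. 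As these maps are assembled only from canonical inclusions of filtered pieces, they are independent of auxiliary choices and satisfy the cocycle condition over triples $\hat{t}\le\hat{t}'\le\hat{t}''$, which is the sense in which the isomorphism is natural and is what will later define the scattering map.

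I expect the main obstacle to be the sign- and index-matching just described: correctly lining up the parabolic weights of the gradations appearing in $Q_1$ and $Q_2$ with the $S^1$-coordinates $\mp\pi(\cdot)$ of the points in Definition \ref{Def:Algebraic singularity set}, so that $U$ provably removes every support $\mathrm{Spec}(G)$ and nothing more. A secondary technical point requiring care is the commutation of $F^\bullet$ with restriction to $U$ and the resulting vanishing $F^j(Q_1)|_U=F^j(Q_2)|_U=0$, which I would settle by the gradation filtration together with the support statement of Corollary \ref{Cor:FM-trans of ss deg0}.
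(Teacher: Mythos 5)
Your proof is correct, and its engine is the same as the paper's: apply $F^{\bullet}$ to inclusions of filtered pieces and use Corollary \ref{Cor:FM-trans of ss deg0} to kill the error terms over $U$ --- $F^{0}$ of a gradation vanishes identically, while $F^{1}$ of a gradation is a skyscraper supported on its spectrum, which the definition of $U$ removes. The difference lies in the decomposition. The paper first reduces to $I=[-\eps,\eps]$ with $I\cap\mathrm{Sing}_{\mathbb{R}}(P_{\ast\ast}V)\subset\{0\}$ (a reduction justified only by the terse phrase ``by extending $I$,'' which implicitly requires chaining isomorphisms over a finite cover of $I$ by such subintervals), and then compares $\mathrm{AN}(P_{\ast\ast}V)_{\hat{t}}$ for $\hat{t}>0$ with the fixed sheaf $F^1(P_{-\eps-\eps}V)$ through a single short exact sequence whose quotient is the single gradation ${}^{2}\mathrm{Gr}_{0}(P_{\ast\ast}V)$ (misprinted as ${}^{2}\mathrm{Gr}_{1}$ in the paper's displayed sequence). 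You instead compare $\hat{t}$ and $\hat{t}'$ directly through the common piece $P_{-\hat{t}'\hat{t}}V$, allowing the quotients $Q_1,Q_2$ to carry several gradations, which you dispose of by d\'evissage and the long exact sequence; your sign bookkeeping matching the weights $a$ with the points $\{-\pi(a)\}\times\mathrm{Spec}({}^{1}\mathrm{Gr}_{a})$ of Definition \ref{Def:Algebraic singularity set} is exactly right. Your route buys uniformity: no reduction step, no implicit chaining, arbitrarily many parabolic jumps inside $I$ handled at once, and the transitivity of the isomorphisms (needed later to assemble $\mathrm{AN}(P_{\ast\ast}V)$ into a mini-holomorphic bundle) is manifest from the canonicity of the inclusions. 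The paper's route makes each individual comparison shorter, since after the reduction its quotient is a single semistable bundle rather than a filtered object.
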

	    \begin{proof}
	    	We only need to prove the claim under the assumption $I=[-\eps,\eps]$ for a positive number $\eps>0$.
	    	By extending $I$,
	    	we may assume that any points in $\mathrm{Sing}_{\mathbb{R}}(P_{\ast\ast}V)\cap I$ are interior points of $I$.
	    	Hence we may assume either
	      	$I \cap \mathrm{Sing}_{\mathbb{R}}(P_{\ast\ast}V) = \emptyset$ or $I \cap \mathrm{Sing}_{\mathbb{R}}(P_{\ast\ast}V) = \{0\}$.
	      	Under this assumption, we will show that there exists the natural isomorphism
	      \[
	      	\mathrm{AN}(P_{\ast\ast}V)_{\hat{t}}|_{U} \simeq F^1(P_{-\eps-\eps}V)|_{U}
	      \]
	      for any $\hat{t} \in I$.
	      This isomorphism is obvious for $\hat{t}=0$,
	      and a proof for $\hat{t}>0$ is also valid for $\hat{t}<0$.
	      Hence we assume $\hat{t}>0$.
	      Then, from the short exact sequence $0 \to P_{-\eps-\eps}V \to P_{-\hat{t}\hat{t}}V \to {}^{2}\mathrm{Gr}_{1}(P_{\ast\ast}V) \to 0$,
	      we obtain the exact sequence
	      $F^0({}^{2}\mathrm{Gr}_{0}(P_{\ast\ast}V)) \to  F^1(P_{-\eps-\eps}V) \to \mathrm{AN}(P_{\ast\ast}V)_{\hat{t}} \to F^1({}^{2}\mathrm{Gr}_{0}(P_{\ast\ast}V))$.
	      Here we have $F^0({}^{2}\mathrm{Gr}_{0}(P_{\ast\ast}V)) = 0$
	      and $F^1({}^{2}\mathrm{Gr}_{0}(P_{\ast\ast}V))|_{U} = 0$
	      from Corollary \ref{Cor:FM-trans of ss deg0}.
	      Hence we obtain $\mathrm{AN}(P_{\ast\ast}V)_{\hat{t}}|_{U} \simeq F^1(P_{-\eps-\eps}V)|_{U}$.
	    \end{proof}
	    \begin{Def}
	    	We call this isomorphism the \textit{algebraic scattering map}.
	    \end{Def}
	    \begin{Cor}
	    	The family $\{\mathrm{AN}(P_{\ast\ast}V)_{\hat{t}}\}_{\hat{t}}$ forms a mini-holomorphic bundle
	      $(\mathrm{AN}(P_{\ast\ast}V),\deebar_{\mathrm{AN}},\partial_{\mathrm{AN},\hat{t}})$ on $\hat{T}^3 \setminus \mathrm{Sing}(P_{\ast\ast}V)$.
	    \end{Cor}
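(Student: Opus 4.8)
The plan is to verify the two conditions in Definition \ref{Def:min-hol. bdle} for the family $\{\mathrm{AN}(P_{\ast\ast}V)_{\hat{t}}\}_{\hat{t}}$ on $\hat{T}^3 = S^1 \times \hat{T}^2$, exploiting the local description recalled after that definition: over a product $q(I)\times U$ with $I\subset\mathbb{R}$ an interval of length $<1$ and $U\subset\hat{T}^2$ open, giving a mini-holomorphic bundle is the same as giving a family of holomorphic bundles $\{V^{\hat{t}}\}_{\hat{t}\in I}$ on $U$ together with holomorphic scattering isomorphisms $\Psi_{\hat{t},\hat{t}'}:V^{\hat{t}}\to V^{\hat{t}'}$. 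Thus it suffices to assemble such local data over $S^1$ and to verify the integrability condition.

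First I would supply the $\hat{T}^2$-directional operator $\deebar_{\mathrm{AN}}$ fiberwise. By Proposition \ref{Prp:Algebraic Nahm trans}, each $\mathrm{AN}(P_{\ast\ast}V)_{\hat{t}}=F^1(P_{-\hat{t}\hat{t}}V)$ is a locally free $\mathcal{O}_{\hat{T}^2}$-module, hence carries a holomorphic structure; restricting to the open set $U$ away from $\mathrm{Sing}(P_{\ast\ast}V)$ gives the bundles $V^{\hat{t}}:=\mathrm{AN}(P_{\ast\ast}V)_{\hat{t}}|_U$ and the operator $\deebar_{\mathrm{AN}}$. Next I would define the $S^1$-directional operator $\partial_{\mathrm{AN},\hat{t}}$ from the algebraic scattering maps of Proposition \ref{Prp: algebraic scattering map}: for a closed interval $I$ with $|I|<1$ and its associated $U$, the natural isomorphisms $\mathrm{AN}(P_{\ast\ast}V)_{\hat{t}}|_U\simeq\mathrm{AN}(P_{\ast\ast}V)_{\hat{t}'}|_U$ are isomorphisms of locally free sheaves, so they are holomorphic bundle isomorphisms; I set $\Psi_{\hat{t},\hat{t}'}$ equal to these and declare a local section to be $\partial_{\mathrm{AN},\hat{t}}$-flat exactly when it is constant under the identifications $\Psi_{\hat{t},\hat{t}'}$.

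The integrability condition, which by the discussion after Definition \ref{Def:min-hol. bdle} amounts to $[\partial_{\mathrm{AN},\hat{t}},\deebar_{\mathrm{AN}}]=0$, is then immediate: since each $\Psi_{\hat{t},\hat{t}'}$ is a holomorphic isomorphism of $V^{\hat{t}}$ with $V^{\hat{t}'}$, the parallel transport along $S^1$ intertwines the $\deebar_{\hat{T}^2}$-structures, which is precisely the asserted commutation, and the Leibniz rule holds by construction. The hard part will be the global gluing over $S^1$: I must check that the scattering maps are transitive, $\Psi_{\hat{t}',\hat{t}''}\comp\Psi_{\hat{t},\hat{t}'}=\Psi_{\hat{t},\hat{t}''}$ whenever all three parameters lie in a common interval of length $<1$, and that they are compatible with the periodicity isomorphism $P_{-(\hat{t}+1)\,(\hat{t}+1)}V\simeq P_{-\hat{t}\hat{t}}V$, so that $\partial_{\mathrm{AN},\hat{t}}$ descends to a well-defined operator on $\hat{T}^3$. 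This reduces to the naturality of the isomorphisms produced in Proposition \ref{Prp: algebraic scattering map}, which are built from the canonical short exact sequences $0\to P_{-\eps-\eps}V\to P_{-\hat{t}\hat{t}}V\to{}^{2}\mathrm{Gr}_{1}(P_{\ast\ast}V)\to 0$ together with the vanishing statements in Corollary \ref{Cor:FM-trans of ss deg0}; tracking these functorially over varying $\hat{t}$ yields the cocycle identity and the required compatibility, completing the construction of the mini-holomorphic bundle $(\mathrm{AN}(P_{\ast\ast}V),\deebar_{\mathrm{AN}},\partial_{\mathrm{AN},\hat{t}})$ on $\hat{T}^3 \setminus \mathrm{Sing}(P_{\ast\ast}V)$.
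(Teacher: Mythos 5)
Your proposal is correct and takes exactly the route the paper intends: the paper states this corollary with no separate proof, as an immediate consequence of Proposition~\ref{Prp:Algebraic Nahm trans} (local freeness of $F^1(P_{-\hat{t}\hat{t}}V)$, supplying $\deebar_{\mathrm{AN}}$) and Proposition~\ref{Prp: algebraic scattering map} (the algebraic scattering maps, supplying $\partial_{\mathrm{AN},\hat{t}}$), with integrability forced by the scattering maps being isomorphisms of locally free sheaves, hence holomorphic. Your extra verification of the cocycle identity and of compatibility with the periodicity isomorphism $P_{-(\hat{t}+1)\,(\hat{t}+1)}V\simeq P_{-\hat{t}\hat{t}}V$, via functoriality of $F^1$ applied to the inclusions $P_{-\eps-\eps}V\subset P_{-\hat{t}\hat{t}}V$, is precisely the detail the paper leaves implicit.
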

	    \begin{Def}
	    	We call this construction the \textit{algebraic Nahm transform}.
	    \end{Def}
		We prove that the Nahm transform and the algebraic Nahm transform are compatible.
		\begin{Thm}\label{Thm:Algebraic Nahm trans}
			Let $(V,h,A)$ be an irreducible $L^2$-finite instanton of rank $r>1$ on $\mathbb{R}\times T^3$
			and $P_{\ast\ast}V$ the associated stable filtered bundle on $(\mathbb{P}^1\times T^2,\{0,\infty\}\times T^2)$.
			Then, the underlying mini-holomorphic bundle $(\hat{V},\deebar_{\hat{A}},\partial_{\hat{V},\hat{t}})$
			of the Nahm transform $(\hat{V},\hat{h},\hat{A},\hat{\Phi})$ of $(V,h,A)$ is isomorphic to
			$(\mathrm{AN}(P_{\ast\ast}V),\deebar_{\mathrm{AN}},\partial_{\mathrm{AN},\hat{t}})$.
		\end{Thm}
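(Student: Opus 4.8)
The plan is to prove the isomorphism fiberwise first, by identifying the $L^2$-kernel of the negative Dirac operator with a sheaf cohomology of the prolonged filtered bundle, and then to promote this to an isomorphism of mini-holomorphic bundles by matching the two halves of the structure. Write $\xi = (\hat{t},\hat{w}) \in S^1\times\hat{T}^2 = \hat{T}^3$. Since $\mathbb{R}\times T^3 \simeq \mathbb{C}^{\ast}\times T^2$ is K\"ahler with trivial canonical bundle, the spinor bundles split as $S^{-}\simeq \Omega^{0,1}$ and $S^{+}\simeq \Omega^{0,0}\oplus\Omega^{0,2}$, and up to a constant $\dirac^{-}_{A_{\xi}}$ becomes $\deebar_{A_{\xi}}+\deebar^{\ast}_{A_{\xi}}$ acting on $(0,1)$-forms. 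Hence $\hat{V}_{\xi} = \mathrm{Ker}(\dirac^{-}_{A_{\xi}})\cap L^2$ is the space of $L^2$ harmonic $(0,1)$-forms of $(V\otimes L_{-\xi},\deebar_{A_{\xi}})$. By Remark \ref{Rem:surj of neg Dirac op} the $L^2$ harmonic $(0,0)$- and $(0,2)$-forms vanish, which matches the vanishing $H^0=H^2=0$ established algebraically in Proposition \ref{Prp:vanish cohom of stable bundle} and used in Proposition \ref{Prp:Algebraic Nahm trans}; so on both sides only the middle cohomology survives.

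First I would identify the fibers. The flat twist $L_{-\xi}$ factors into a twist in the $\mathbb{C}^{\ast}$-direction governed by $\hat{t}$ and a twist by the flat line bundle $F_{\hat{w}} := \mathcal{L}|_{T^2\times\{\hat{w}\}}$ on $T^2$, the latter being exactly $L_{-\hat{w}}$ by the Poincar\'e lemma, hence matching the restriction of $p_{23}^{\ast}\mathcal{L}$ over $\hat{w}$. Under the prolongation $z=\exp(2\pi\tau)$, the $\hat{t}$-twist shifts the order of growth at the two ends, producing precisely the prolongation $P_{-\hat{t}\hat{t}}V$. Using the acceptability of $(V,h,A)$ (Corollary \ref{Cor:Inst is prolongable}), the norm estimates of Proposition \ref{Prp:norm estimate}, and the exponential decay of harmonic spinors (Corollary \ref{Cor:decay of harmonic spinor}), I would invoke an $L^2$-Dolbeault isomorphism to identify the $L^2$ harmonic $(0,1)$-forms with $H^1(\mathbb{P}^1\times T^2, P_{-\hat{t}\hat{t}}V\otimes p^{\ast}F_{\hat{w}})$, which is exactly the fiber of $\mathrm{AN}(P_{\ast\ast}V)_{\hat{t}} = F^1(P_{-\hat{t}\hat{t}}V)$ over $\hat{w}$. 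The key point is that $L^2$-growth with respect to the Poincar\'e-like metric $|z|^{-2}dzd\bar{z}+dwd\bar{w}$ is equivalent to the order conditions defining $P_{-\hat{t}\hat{t}}V$.

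Next I would promote this to an isomorphism of mini-holomorphic bundles by checking the two components of $\deebar_{\hat{V}}$ from Proposition \ref{Prp:monopole induce mini-hol}. In the $\hat{w}$-direction, $\hat{A}=Pd_{\mathcal{V}}$ is the projection of the flat connection, so its $(0,1)$-part is obtained by differentiating the family of harmonic representatives in $\hat{w}$; under the identification this corresponds to the relative Dolbeault operator of $R^1p_{3\ast}(p_{12}^{\ast}P_{-\hat{t}\hat{t}}V\otimes p_{23}^{\ast}\mathcal{L})$, i.e.\ $\deebar_{\mathrm{AN}}$, since the $\hat{w}$-dependence on both sides is governed by the single Poincar\'e bundle $\mathcal{L}$. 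In the $\hat{t}$-direction, the operator $\partial_{\hat{V},\hat{t}} = \nabla_{\hat{A},\hat{t}}-\sqrt{-1}\hat{\Phi}$ encodes the scattering map of the underlying mini-holomorphic bundle; because $\hat{\Phi}(f)=P(2\pi\sqrt{-1}tf)$ and $2\pi t=\log|z|$, this operator is exactly the weight-shifting parallel transport, which I would match with the algebraic scattering map of Proposition \ref{Prp: algebraic scattering map}. Combining the two compatibilities yields the desired isomorphism $(\hat{V},\deebar_{\hat{A}},\partial_{\hat{V},\hat{t}})\simeq(\mathrm{AN}(P_{\ast\ast}V),\deebar_{\mathrm{AN}},\partial_{\mathrm{AN},\hat{t}})$.

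The hard part will be the $L^2$-Dolbeault isomorphism of the second paragraph together with its naturality. One must show that the $L^2$ harmonic $(0,1)$-forms, with the decay rates dictated by the asymptotic model solution $(\Gamma_{\pm},N_{\pm})$, are in natural bijection with the Dolbeault cohomology of $P_{-\hat{t}\hat{t}}V\otimes p^{\ast}F_{\hat{w}}$, with the parabolic weights at $z=0$ and $z=\infty$ matched exactly to the flat parameter $\hat{t}$. This requires careful weight bookkeeping near the two divisors via Proposition \ref{Prp:norm estimate}, and then a verification that the resulting isomorphism depends holomorphically on $(\hat{t},\hat{w})$ and intertwines $\hat{\Phi}$ with the weight filtration, so that it is an isomorphism of the full mini-holomorphic structures rather than merely of the fibers.
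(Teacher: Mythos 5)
Your proposal follows essentially the same route as the paper's proof: a slice-wise identification of $(\hat{V},\deebar_{\hat{A}})$ with $\mathrm{AN}(P_{\ast\ast}V)_{\hat{t}}$ via an $L^2$-Dolbeault comparison between the $L^2$-cohomology of $(V,\deebar_{A_{\xi}})$ and the cohomology of the prolongation (after twisting to reduce to $\hat{t}=0$, where the paper works with $F^1(P_{<0<0}V)\simeq \mathrm{AN}(P_{\ast\ast}V)_{0}$ over $U_{0}$), followed by matching $\partial_{\hat{V},\hat{t}}=\nabla_{\hat{A},\hat{t}}-\sqrt{-1}\hat{\Phi}$ with the algebraic scattering map using $\hat{\Phi}(f)=P(2\pi\sqrt{-1}tf)$ and $2\pi t=\log|z|$, exactly as in Lemma \ref{Lem:compatibility of scattering map}. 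The ``hard part'' you defer is precisely what the paper proves as Proposition \ref{Prp:L^2 Dolbeault}, namely that the inclusion of the Dolbeault complex of $P_{<0<0}V$ into the $L^2$-Dolbeault complex is a quasi-isomorphism, established by Fourier expansion along $T^2$, reduction to the zero-mode bundle $E_0$, and Zucker's surjectivity argument.
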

		\begin{proof}
			For a smooth manifold $M$ and a vector bundle $F$ on $M$,
			let $C^{\infty}(F)$ denote the sheaf of $C^{\infty}$-sections of $F$.
			If $M$ is a complex manifold and $F$ is a holomorphic bundle,
			then we also denote by $\mathcal{O}(F)$ the sheaf of holomorphic sections of $F$.
			
			Let $U_{\hat{t}}\subset \hat{T}^2$ be the complement of the image of
			$\mathrm{Sing}(V,h,A)\cap(\{\hat{t}\}\times\hat{T}^2)$ under the projection map
			$\hat{T}^3\to\hat{T}^2$.
			We first construct an isomorphism $(\hat{V},\deebar_{\hat{A}})|_{\{\hat{t}\} \times U_{\hat{t}}} \simeq \mathrm{AN}(P_{\ast\ast}V)_{\hat{t}}|_{U_{\hat{t}}}$
			for any $\hat{t} \in S^1$.
			By replacing $(V,h,A)$ with $(V, h, A + 2\pi\sqrt{-1}\hat{t}dt)$,
			we may assume $\hat{t} = 0$.
			From the short exact sequence $0\to P_{<0<0}V \to P_{00}V \to {}^1\mathrm{Gr}_{0}(P_{\ast\ast}V)\oplus {}^2\mathrm{Gr}_{0}(P_{\ast\ast}V)\to 0$,
			we obtain the exact sequence $0\to F^1(P_{<0<0}V) \to \mathrm{AN}(P_{\ast\ast}V)_{0} \to F^1({}^1\mathrm{Gr}_{0}(P_{\ast\ast}V)\oplus {}^2\mathrm{Gr}_{0}(P_{\ast\ast}V))$.
			Hence by the definition of $U_0$,
			the isomorphism $F^1(P_{<0<0}V)|_{U_0} \simeq \mathrm{AN}(P_{\ast\ast}V)_{0}|_{U_0}$ holds.
			Hence it suffices to prove $(\hat{V},\deebar_{\hat{A}})|_{\{0\} \times U_{0}} \simeq F^1(P_{<0<0}V)|_{U_{0}}$.
			On one hand,
			since the Dolbeault resolution $(C^{\infty}(\Omega^{0,\ast}(p^{\ast}_{12}P_{<0<0}V \otimes p^{\ast}_{23}\mathcal{L})),\deebar)$
			of the holomorphic vector bundle $p^{\ast}_{12}P_{<0<0}V \otimes p^{\ast}_{23}\mathcal{L}$
			is acyclic for the functor ${p_3}_{\ast}$,
			we have an isomorphism
			$F^1(P_{<0<0}V) \simeq H^1({p_3}_{\ast}C^{\infty}(\Omega^{0,\ast}(p^{\ast}_{12}P_{<0<0}V \otimes p^{\ast}_{23}\mathcal{L})), \deebar)$.
			Let $\mathcal{V}^{i}$ be the flat bundle on $\hat{T}^2$
			which is the quotient of the product bundle $\underline{C^{\infty}(\mathbb{P}^1\times T^2, \Omega^{0,i}(P_{<0<0}V))}$
			on $\mathrm{Hom}(\mathbb{R}^2,\mathbb{R})$ by $\Lambda^{\ast}_2$-action as in section 4.
			Then we have $F^1(P_{<0<0}V) \simeq H^1({p_3}_{\ast}C^{\infty}(\Omega^{0,\ast}(p^{\ast}_{12}P_{<0<0}V \otimes p^{\ast}_{23}\mathcal{L})), \deebar) \simeq H^1(\mathcal{O}(\mathcal{V}^{\ast}),\deebar_{P_{<0<0}V_{\xi}})$ on $\hat{T}^2$.
			On the other hand,
			for the normed vector space $X^i := \{f \in L^2(\mathbb{R}\times T^3, \Omega^{0,i}(V)) \mid \deebar_{A}(f) \in L^2\}$,
			we construct the flat vector bundle $\mathcal{V}^{i}_{L^2}$ on $\hat{T}^2$  from $X^i$ in a similar way.
			Then, we also have an isomorphism 
			$(\hat{V},\deebar_{\hat{A}})|_{\{0\} \times U_{0}} \simeq H^1(\mathcal{O}(\mathcal{V}^{\ast}_{L^2}|_{U_0}), \deebar_{A_{\xi}})$.
			By Proposition \ref{Prp:norm estimate},
			we obtain the natural inclusions $C^{\infty}(\mathbb{P}^1\times T^2, \Omega^{0,i}(P_{<0<0}V)) \hookrightarrow L^2(\mathbb{R}\times T^3,\Omega^{0,i}(V))$
			and the induced chain homomorphism
			$\varphi:(\mathcal{O}(\mathcal{V}^{\ast}|_{U_0}),\deebar_{P_{<0<0}V_{\xi}}) \to (\mathcal{O}(\mathcal{V}^{\ast}_{L^2}|_{U_0}), \deebar_{A_{\xi}})$.
			In order to prove that $\varphi$ is a quasi-isomorphism,
			we only need to prove that the specialization
			$\varphi_{\xi}:(C^{\infty}(\mathbb{P}^1\times T^2, \Omega^{0,\ast}(P_{<0<0}V)),\deebar_{P_{<0<0}V_{\xi}}) \to (X^{\ast},\deebar_{A_{\xi}})$
			is a quasi-isomorphism for any $\xi \in U_0$.
			We will show this below in Proposition \ref{Prp:L^2 Dolbeault}.
			
			It remains to prove that the scattering map of $(\hat{V},\hat{h},\hat{A},\hat{\Phi})$ and
			the algebraic scattering map of $(\mathrm{AN}(P_{\ast\ast}V),\deebar_{\mathrm{AN}},\partial_{\mathrm{AN},t})$
			are compatible under the isomorphism $H^1(\varphi)$.
			It suffices to show the following lemma.
			\begin{Lem}\label{Lem:compatibility of scattering map}
				Let $I=[-\eps,\eps] \subset \mathbb{R}$ be a closed interval and $\xi \in \hat{T}^2$
				with the condition $(\pi(I) \times \{\xi\}) \cap \mathrm{Sing}(V,h,A) = \emptyset$.
				For $\hat{t} \in I$, we take $f_{\hat{t}} \in \mathrm{AN}(P_{\ast\ast}V)_{(\hat{t},\xi)} \simeq H^1(\mathbb{P}^1\times T^2,P_{-\hat{t}\hat{t}}V_{\xi})$
				satisfying the condition that $f_{\hat{t}}$ is a constant under the algebraic scattering map for any $\hat{t}\in I$. 
				Then, $H^1(\varphi)_{(\hat{t},\xi)}(f_{\hat{t}})$ is a constant under the scattering map of $(\hat{V},\hat{h},\hat{A},\hat{\Phi})$ for any $\hat{t}\in I$.
			\end{Lem}
		\end{proof}
		\begin{proof}[{\upshape \textbf{(Proof of Lemma \ref{Lem:compatibility of scattering map})}}]
			We may assume $\xi=0$.
			As in section 4,
			we set $\mathcal{V}$ the flat vector bundle on $\hat{T}^3$ which is the quotient of the product bundle
			$\underline{L^2(\mathbb{R}\times T^3, S^{-}\otimes V)}$ on $\mathrm{Hom}(\mathbb{R}^3,\mathbb{R})$ by $\Lambda^{\ast}_3$-action.
			By using the orthogonal projection $P:\mathcal{V}|_{\hat{T}^3 \setminus \mathrm{Sing}(V,h,A)} \to \hat{V}$,
			the equation of the scattering map can be written as follows:
			\begin{equation}\label{Eqn:scattering eq}
			\nabla_{\hat{A}_{\hat{t}}}(\cdot) - \sqrt{-1}\hat{\Phi}(\cdot) = 
			P\left( \partial_{\hat{t}}(\cdot) + \log|z|(\cdot) \right) = 0.
			\end{equation}
			By the assumption, there exists $f \in H^1(\mathbb{P}^1\times T^2, P_{-\eps-\eps}V)$ such that $f_{\hat{t}}$ is the image of $f$ under the natural isomorphism $H^1(\mathbb{P}^1\times T^2, P_{-\eps-\eps}V) \simeq H^1(\mathbb{P}^1\times T^2, P_{-\hat{t}\hat{t}}V)$.
			Here the image of $f_{\hat{t}}$ in $\hat{V}_{(\hat{t},0)}$ is 
			$P(f\cdot |z|^{-\hat{t}})$.
			Hence the derivative is $\partial_{\hat{t}}P(f\cdot |z|^{-\hat{t}})= P(\partial_{\hat{t}}(f\cdot |z|^{-\hat{t}}))=
			-P(f\cdot|z|^{-\hat{t}}\cdot\log|z|)$.
			Therefore $H^1(\varphi)_{(0,\hat{t})}(f_{\hat{t}})$ satisfies the equation (\ref{Eqn:scattering eq}),
			and this completes the proof.
		\end{proof}
	\subsubsection{$L^2$-Dolbeault lemma}
		Let $(V,h,A)$ be an irreducible $L^2$-finite instanton and $P_{\ast\ast}V$ its prolongation.
		We assume a condition $(0,0) \not\in \mathrm{Sing}(V,h,A)$.
		Let $\mathcal{A}^{0,i}(P_{<0<0}V)$ denote the sheaf of smooth sections of $\Omega^{0,i}(P_{<0<0}V)$.
		Then we have the natural isomorphism
		$H^1(\mathbb{P}^1\times T^2,(\mathcal{A}^{0,\ast}(P_{<0<0}V),\deebar_{P_{<0<0}V}))\simeq \mathrm{AN}(P_{\ast\ast}V)_{(0,0)}$.
		Let ${}_{p}\mathcal{A}^{0, i}_{L^2}(V)$ be the presheaf on $\mathbb{P}^1\times T^2$ that associates an open subset $W \subset \mathbb{P}^1\times T^2$ to a $\mathbb{C}$-vector space
		\[
			\Bigl\{s \in L^2\bigl(W \cap (\mathbb{R}\times T^3), \Omega^{0,i}(V)\bigr)  \;\Bigm|\; \deebar_{A}(s) \in L^2\bigl(W \cap (\mathbb{R}\times T^3), \Omega^{0,i+1}(V)\bigr) \Bigr\},
		\]
		where $L^2(W \cap (\mathbb{R}\times T^3), \Omega^{0,i}(V))$ means the set of $L^2$-sections of $\Omega^{0,i}(V)$ on $W \cap (\mathbb{R}\times T^3)$ with respect to $h$ and $g_{\mathbb{R}\times T^3}$.
		Let $\mathcal{A}^{0, i}_{L^2}(V)$ denote the sheafification of ${}_{p}\mathcal{A}^{0, i}_{L^2}(V)$.
		Then we have the natural isomorphism $H^1(\mathbb{P}^1\times T^2,(\mathcal{A}^{0,\ast}_{L^2},\deebar_{A})) \simeq \hat{V}_{(0,0)}$.
		Let $K:\mathcal{A}^{0, i}(P_{<0<0}V)\to \mathcal{A}^{0,i}_{L^2}(V)$ be the sheaf homomorphism induced by the inclusion map $C^{\infty}(W,\Omega^{0,i}(P_{<0<0}V)) \subset L^2\bigl(W \cap (\mathbb{R}\times T^3), \Omega^{0,i}(V)\bigr)$ for an open subset $W\subset \mathbb{P}^1\times T^2$.
		For simplicity, we use the same symbol $K$ for the chain map $(\mathcal{A}^{0,\ast}(P_{<0<0}V),\deebar_{P_{<0<0}V}) \to (\mathcal{A}^{0,\ast}_{L^2},\deebar_{A})$.
		\begin{Prp}\label{Prp:L^2 Dolbeault}
			The chain map $K$ induces an isomorphism $\hat{V}_{(0,0)} \simeq \mathrm{AN}(P_{\ast\ast}V)_{(0,0)}$.
		\end{Prp}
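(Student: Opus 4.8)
The plan is to prove that $K$ is a quasi-isomorphism of complexes of sheaves on $\mathbb{P}^1\times T^2$ and then to descend this to the first cohomology of global sections. Both $\mathcal{A}^{0,\ast}(P_{<0<0}V)$ and $\mathcal{A}^{0,\ast}_{L^2}(V)$ are modules over the sheaf of smooth functions: for the latter, if $s$ is a local section and $f$ is smooth then $fs\in L^2$ and $\deebar_A(fs)=\deebar(f)\wedge s+f\deebar_A(s)\in L^2$ because $\deebar(f)$ is bounded, so multiplication by smooth functions preserves the sheaf. Hence both complexes consist of fine, and in particular soft and acyclic, sheaves, so the cohomology of the complex of global sections computes the hypercohomology of the sheaf complex, and a chain map inducing isomorphisms on all cohomology sheaves induces isomorphisms on global cohomology. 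Since the smooth Dolbeault complex $(\mathcal{A}^{0,\ast}(P_{<0<0}V),\deebar)$ is the classical fine resolution of the locally free sheaf $\mathcal{O}(P_{<0<0}V)$, the whole statement reduces to showing that $(\mathcal{A}^{0,\ast}_{L^2}(V),\deebar_A)$ is also a resolution of $\mathcal{O}(P_{<0<0}V)$ and that $K$ induces the tautological identification in degree $0$.

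First I would compute the cohomology sheaves away from the divisor $D=\{0,\infty\}\times T^2$: there the $L^2$ condition imposes no local constraint on relatively compact opens, so by elliptic regularity and the ordinary local $\deebar$-Poincar\'e lemma the complex $(\mathcal{A}^{0,\ast}_{L^2}(V),\deebar_A)$ resolves $\mathcal{O}(V)=\mathcal{O}(P_{<0<0}V)$, with $K$ the identity in degree $0$. For the degree-$0$ sheaf along $D$, I would invoke the norm estimate of Proposition \ref{Prp:norm estimate}: under $z=\exp(2\pi(t+\sqrt{-1}x^1))$ and the metric $g_{\mathbb{R}\times T^3}=|z|^{-2}dzd\bar z+dwd\bar w$, a local holomorphic section $s$ of $V$ satisfies $|s|_h\sim|z|^{-\mathrm{ord}(s)}(-\log|z|^2)^{k}$, and a direct integration shows that $s$ is locally square-integrable near $D$ if and only if $\mathrm{ord}(s)<0$ (the computation at $\infty$ being identical after passing to $\eta=z^{-1}$). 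An $L^2$ section annihilated by $\deebar_A$ is holomorphic by interior regularity, so this identifies $\mathcal{H}^0(\mathcal{A}^{0,\ast}_{L^2}(V))=\mathcal{O}(P_{<0<0}V)$ near $D$, with $K$ the natural inclusion of holomorphic sections, hence an isomorphism.

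The main work, and the expected principal obstacle, is the local $L^2$-Dolbeault lemma near $D$, namely the vanishing of the cohomology sheaves $\mathcal{H}^i(\mathcal{A}^{0,\ast}_{L^2}(V))$ for $i>0$ on a cusp neighborhood of a point of $D$. Here I would reduce to the explicit model by the bounded holomorphic isomorphism $\Psi$ of Proposition \ref{Prp:Isom to model bundle}, after which the operator becomes $\pi^{\ast}\deebar_{E'}+f'd\bar w$, and expand in Fourier modes along the elliptic fibre $T^2$ and along the circle factor $S^1$. The assumption $(0,0)\notin\mathrm{Sing}(V,h,A)$ supplies a uniform spectral gap: exactly as the condition $0\notin\mathrm{Spec}(\Gamma_\pm)$ forced $\mathrm{Ker}(\Dirac_{\Gamma_\pm})=0$ in Proposition \ref{Prp:a priori est}, no Fourier mode meets the spectrum determined by the eigenvalues of $\Gamma_{\bar w},\Gamma_{\bar\tau}$, so each mode produces a $\deebar$-equation on the one-dimensional cusp $\mathbb{C}^{\ast}$ with a nonzero weight. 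Such equations are solvable with the required weighted $L^2$ bound by the one-variable estimates of Simpson already used in Propositions \ref{Prp:norm estimate} and \ref{Prp:Isom to model bundle} (\cite[Lemma 7.1, Corollary 4.3]{Ref:Sim2}), and summing the modes with uniform control over the gap yields an $L^2$ primitive, giving the vanishing. Combining the two regions, $K$ is a quasi-isomorphism between two fine resolutions of $\mathcal{O}(P_{<0<0}V)$, and passing to global sections produces the desired isomorphism $\hat{V}_{(0,0)}\simeq\mathrm{AN}(P_{\ast\ast}V)_{(0,0)}$.
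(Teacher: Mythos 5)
Your global framework (both complexes are fine sheaves, so a stalk-wise quasi-isomorphism on $\mathbb{P}^1\times T^2$ would yield the isomorphism on $H^1$ of global sections) is sound, but the local input you feed into it is false, and it fails exactly where the difficulty of the proposition sits. The claim that a holomorphic section $s$ is locally square-integrable near $D$ if and only if $\mathrm{ord}(s)<0$ ignores the logarithmic factors coming from the weight filtration. By Proposition \ref{Prp:norm estimate}, a compatible holomorphic frame element $b_i$ satisfies $|b_i|_h^2\sim|z|^{-2\mathrm{ord}(b_i)}(-\log|z|^2)^{\mathrm{deg}^{W}(b_i)}$, while the cusp volume form is comparable to $|z|^{-2}$ times Lebesgue measure; hence an element with $\mathrm{ord}(b_i)=0$ and $\mathrm{deg}^{W}(b_i)\leq -2$ is $L^2$ near $z=0$ (since $\int_0 r^{-1}(-\log r)^{-2}\,dr<\infty$) although it lies in $P_{0}\setminus P_{<0}$. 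Such elements occur whenever the $\mathfrak{su}(2)$-representation attached to $N$ contains an irreducible summand of rank at least $3$ (weight filtration degrees $-2,0,2$), and the hypothesis $(0,0)\notin\mathrm{Sing}(V,h,A)$ does not exclude this: it forbids only the simultaneous eigenvalue $(0,0,0)$ of $(\Gamma_1,\Gamma_2,\Gamma_3)$, so there may be a component $E_{\alpha}$ with $\Gamma_{\bar w}$-eigenvalue $\alpha\neq 0$ on which $\Gamma_{\bar\tau}$ has eigenvalue $0$ and $N$ acts through a rank $3$ irreducible. Taking the weight $-2$ holomorphic frame element $b_1$ of that component and multiplying by the locally defined, invertible, $\deebar$-holomorphic factor $\exp\bigl(-f\cdot(\bar w-\bar w_0)\bigr)$, you obtain a holomorphic section of $V$ on a small neighborhood of a point $(0,w_0)\in D$ which is $L^2$ but does not lie in $P_{<0<0}V$. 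Consequently $\mathcal{H}^0$ of your $L^2$ complex strictly contains $\mathcal{O}(P_{<0<0}V)$ at points of $D$, so $K$ is \emph{not} a stalk-wise quasi-isomorphism on $\mathbb{P}^1\times T^2$, and your reduction cannot be completed.

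What kills these local sections is periodicity along the $T^2$ fibre: $\exp\bigl(-f\cdot(\bar w-\bar w_0)\bigr)b_1$ does not extend over $U^{\ast}\times T^2$, because for $\alpha\not\equiv 0$ the flat line bundle on $T^2$ determined by $\alpha$ (and its nilpotent extensions) has no nonzero holomorphic section. This is precisely why the paper never computes cohomology sheaves on $\mathbb{P}^1\times T^2$: it first pushes forward by $q:\mathbb{P}^1\times T^2\to\mathbb{P}^1$ and proves that $q_{\ast}K$ is a quasi-isomorphism stalk-wise on $\mathbb{P}^1$ (Lemma \ref{Lem:quasi-isom}), where the Fourier expansion over the whole fibre is legitimate. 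The nonzero Fourier modes are contracted by the explicit inverse $G_{L^2}$ of $\deebar_{A_{\bar w}}$, the components $E_{\alpha}$ with $\alpha\neq 0$ are then removed because $f_{\alpha}$ is invertible, and only on the remaining piece $E_0$ is there a genuine gap in the $\tau$-direction (there all orders are non-integral, so no borderline logarithmic sections exist); that last step still requires Zucker's surjectivity of $\deebar_{E_0}$ \cite[Proposition 11.5]{Ref:Zuc} rather than a bare one-variable estimate. Your ``uniform spectral gap'' conflates these two mechanisms: the gap is directional and component-dependent, coming from the $w$-twisting on $E_{\alpha}$ ($\alpha\neq0$) and from the $\tau$-holonomy only on $E_0$, so the mode-by-mode Simpson estimate cannot be run uniformly. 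Working on full fibres over $\mathbb{P}^1$, i.e.\ applying $q_{\ast}$ before localizing, is an essential structural ingredient of the proof, and it is the step your proposal is missing.
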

	\begin{proof}
    Let $q:\mathbb{P}^1\times T^2 \to \mathbb{P}^1$ be the projection.
    To show that $K$ induces an isomorphism $\hat{V}_{(0,0)} \simeq \mathrm{AN}(P_{\ast\ast}V)_{(0,0)}$,
    we only need to prove that $q_{\ast}K$ is a quasi-isomorphism.
  	Thus we consider the following lemma.
    \begin{Lem}\label{Lem:quasi-isom}
    	For any $z \in \mathbb{P}^1$, 
    	$(q_{\ast}K)_z$ is a quasi-isomorphism,
    	where $(q_{\ast}K)_z$ means the induced chain map between the stalks at $z$.
    \end{Lem}
    Once we admit the lemma,
    then $q_{\ast}K$ is a quasi-isomorphism and the proof is complete.
  \end{proof}
    \begin{proof}[{\upshape \textbf{(Proof of Lemma \ref{Lem:quasi-isom})}}]
    	This lemma is trivial unless $z=0$ or $z=\infty$,
		and the same proof works for both the cases $z=0$ and $z=\infty$.
		Thus it suffices to consider only the case $z=0$.
		
		From Proposition \ref{Prp:hol Asymp of Inst},
		we take a sufficiently small neighborhood $U$ of $0 \in \mathbb{P}^1$,
		a frame $\mb{v}=(v_i)$ of $V$ on $U^{\ast} \times T^2$,
		and a model solution $(\Gamma,N)$ of the Nahm equation
		that they satisfy conditions in Proposition \ref{Prp:hol Asymp of Inst},
		where $U^{\ast} := U \setminus \{0\}$.
		Let $(E, h_{E}, \deebar_{E}, f)=\bigoplus_{\alpha}(E_{\alpha}, h_{E_{\alpha}}, \deebar_{E_{\alpha}}, f_{\alpha})$ be the holomorphic Hermitian vector bundle with the endomorphism on $U^{\ast}$ constructed from $(V,h,A)$ in Corollary \ref{Cor:hol push down of asymp inst}.
		Let $\mb{e}$ be the $C^{\infty}$-frame of $E$ which corresponds to $\mb{v}$.
		We take the sheaves $\mathcal{A}^{0,i}_{L^2}(E)$ and $\mathcal{A}^{0,i}(P_{<0}E)$ on $U$ constructed from $E$ and $P_{<0}E$
		in a similar way to $\mathcal{A}^{0,i}_{L^2}(V)$ and $\mathcal{A}^{0,i}(P_{<0<0}V)$.
		Then we have $\mathcal{A}^{0,i}_{L^2}(E) \simeq\bigoplus_{\alpha}\mathcal{A}^{0,i}_{L^2}(E_{\alpha})$ and $\mathcal{A}^{0,i}(P_{<0}E)\simeq \bigoplus_{\alpha}\mathcal{A}^{0,i}(P_{<0}E_{\alpha})$ because $E_{\alpha}$ and $E_{\beta}$ are orthogonal for any $\alpha\neq\beta$.
		
		We write $\nabla^{0,1}_{A} = \deebar_{A_{\bar{z}}}d\bar{z} +\deebar_{A_{\bar{w}}}d\bar{w}$.
		For $s \in L^2(U^{\ast} \times T^2, V)$,
		we denote the Fourier series expansion of $s$ with respect to $\mb{v}$ by 
		\[
			s=\mb{v}\cdot\sum_{n \in \Lambda_2^{\ast}} s_{n}(z) \exp(2\pi\sqrt{-1} \i< n,w >).
		\]
		By using the Fourier series expansion,
		we set the bounded operator $I:L^2(U^{\ast} \times T^2, V) \to L^2(U^{\ast},E)$ by $I(s):= \mb{e}\cdot s_{0}(z)$,
		and set $L^2(U^{\ast} \times T^2, V)^{\perp} := \mathrm{Ker}(I)$.
		Because of Remark \ref{Remark:some attentions} (i),
		we can construct the inverse operator $G_{L^2}:L^2(U^{\ast} \times T^2,V)^{\perp}\to L^2(U^{\ast} \times T^2,V)^{\perp}$ of $\deebar_{A_{\bar{w}}}$ as
		\[
			G_{L^2}(s) := \mb{v}\cdot\sum_{n=(n_2,n_3) \in \Lambda_2^{\ast} \setminus \{(0,0)\}} \bigl((\sqrt{-1} n_2 - n_3)\pi + A_{\bar{w}} \bigr)^{-1} s_{n}(z) \exp(2\pi\sqrt{-1} \i< n,w >),
		\]
		where $A_{\bar{w}}$ is the component of $\nabla^{0,1}_{A}(\mb{v}) = \mb{v}(A_{\bar{w}}d\bar{w}+A_{\bar{z}}d\bar{z})$.
		Therefore $(q_{\ast}\mathcal{A}^{0, \ast}_{L^2}(V)(U) , \deebar_A)$ is quasi-isomorphic to the following complex:
		\[
			\mathcal{A}^{0, 0}_{L^2}(E)(U)
			\xrightarrow{f \oplus \deebar_{E}}
			\mathcal{A}^{0, 0}_{L^2}(E)(U) \oplus \mathcal{A}^{0, 1}_{L^2}(E)(U)
			\xrightarrow{\deebar_{E} \;-\; f}
			\mathcal{A}^{0, 1}_{L^2}(E)(U).
		\]
		Since $f_{\alpha}:\mathcal{A}^{0, 0}_{L^2}(E_{\alpha})(U) \to \mathcal{A}^{0, 0}_{L^2}(E_{\alpha})(U)$ is an isomorphism for $\alpha\neq0$,
		$(q_{\ast}\mathcal{A}^{0, \ast}_{L^2}(V)(U) , \deebar_A)$ is quasi-isomorphic to
		\[
			\mathcal{A}^{0, 0}_{L^2}(E_0)(U)
			\xrightarrow{f_0 \oplus \deebar_{E_0}}
			\mathcal{A}^{0, 0}_{L^2}(E_0)(U) \oplus \mathcal{A}^{0, 1}_{L^2}(E_0)(U)
			\xrightarrow{\deebar_{E_0} \;-\; f_0}
			\mathcal{A}^{0, 1}_{L^2}(E_0)(U).
		\]
		By the assumption $(0,0) \not\in\mathrm{Sing}(V,h,A)$,
         the same argument in the proof of \cite[Proposition 11.5]{Ref:Zuc} shows 
        that $\deebar_{E_0}$ is surjective.
        Thus $(q_{\ast}\mathcal{A}^{0, \ast}_{L^2}(V)(U) , \deebar_A)$ 
        is quasi-isomorphic to
        \[
        	\left( \mathrm{Ker}(\deebar_{E_0}) \cap \mathcal{A}^{0,0}_{L^2}(E_0)(U) \right)
          	\xrightarrow{f_0}
          \left( \mathrm{Ker}(\deebar_{E_0}) \cap \mathcal{A}^{0, 0}_{L^2}(E_0)(U) \right).
        \]
        By a similar way we can prove that 
        $(q_{\ast}\mathcal{A}^{0, \ast}(P_{<0<0}V)(U), \deebar_{P_{<0}V})$
        is also quasi-isomorphic to
        \[
        	\Gamma(U,P_{<0}(E_0))
          	\xrightarrow{P_{<0}f_0}
          \Gamma(U,P_{<0}(E_0)).
        \]
        By the assumption $(0,0)\not\in\mathrm{Sing}(V,h,A)$,
        we can show any $s \in \mathrm{Ker}(\deebar_{E_0}) \cap \mathcal{A}^{0, 0}_{L^2}(E_0)(U^{\ast})$
        decays exponentially in $t \to -\infty$.
        Hence we have $\mathrm{Ker}(\deebar_{E_0}) \cap \mathcal{A}^{0, 0}_{L^2}(E_0)(U^{\ast}) = \Gamma(U,P_{<0}(E_0))$
        and the proof is complete.
    \end{proof}
	\section{Correspondence between weights}\label{Correspondence between weights}
	As in Section 5,
	we assume that $T^3$ is isomorphic to the product of a circle $S^1 = \mathbb{R}/\mathbb{Z}$ and a 2-dimensional torus $T^2 = \mathbb{R}^2/\Lambda_2$ as a Riemannian manifold.
	Let $P_{\ast\ast}V$ be a stable filtered bundle on $(\mathbb{P}^1\times T^2,\{0,\infty\}\times T^2)$
	of $\mathrm{deg}(P_{\ast\ast}V)=0$ and of rank $r>1$.
	Let $(\mathrm{AN}(P_{\ast\ast}V),\deebar_{\mathrm{AN}},\partial_{\mathrm{AN},t})$ be the algebraic Nahm transform of $P_{\ast\ast}V$.
	Let $\mathrm{Sing}_{0}(P_{\ast\ast}V) \subset \hat{T}^2$ be the image of $\mathrm{Sing}(P_{\ast\ast}V) \cap \{0\}\times \hat{T}^2$
	under the projection $\hat{T}^3 = S^1 \times \hat{T}^2 \to \hat{T}^2$.
	Let $\mathcal{L} \to T^2\times \hat{T}^2$ be the Poincar\'{e} bundle of $T^2$.
	For $I \subset \{1,2,3\}$, let $p_I$ be the projection of $\mathbb{P}^1\times T^2\times \hat{T}^2$
	onto the product of the $i$-th components $(i \in I)$.
	We define a functor $F^i:\mathrm{Coh}(\mathcal{O}_{\mathbb{P}^1\times T^2}) \to \mathrm{Coh}(\mathcal{O}_{\hat{T}^2})$ to be
	$F^i(\mathcal{F}):=R^{i}{p_3}_{\ast}(p^{\ast}_{12}\mathcal{F} \otimes p^{\ast}_{23}\mathcal{L})$ as in subsection \ref{SubSec:Alg. Nahm trans.}.
  	\begin{Prp}\label{Prp:merom comparison}
    	We take a positive number $\eps>0$ small enough to satisfy
    	$\mathrm{Sing}(P_{\ast\ast}V) \cap \bigl(q([-\eps,\eps])\times \hat{T}^2\bigr) = \{0\} \times \mathrm{Sing}_{0}(P_{\ast\ast}V)$,
    	where $q : \mathbb{R} \to S^1$ be the quotient map.
    	Then, we have the following.
    	\begin{itemize}
    		\item
    			We have a sequence of injections
    			\[
    				F^1(P_{-\eps-\eps}V) \hookrightarrow \mathrm{AN}(P_{\ast\ast}V)_{\pm\eps}
    				\hookrightarrow F^1(P_{\eps\eps}V)
    			\]
    			which is compatible with the algebraic scattering map.
    		\item 
    			Under the above sequence of injections,
    			we have the following isomorphisms:
    	\begin{align*}
    		&\mathrm{AN}(P_{\ast\ast}V)_{-\eps} \cap \mathrm{AN}(P_{\ast\ast}V)_{\eps} 
    		\simeq F^1(P_{-\eps-\eps}V).\\
    		&\mathrm{AN}(P_{\ast\ast}V)_{-\eps} + \mathrm{AN}(P_{\ast\ast}V)_{\eps}
    		\simeq F^1(P_{\eps\eps}V). \\
    		&F^1(P_{\eps\eps}V)/\mathrm{AN}(P_{\ast\ast}V)_{\eps}
    		\simeq F^1({}^{1}\mathrm{Gr}_{0}(P_{\ast\ast}V))
    		\simeq  H^1(\mathrm{FM}({}^{1}\mathrm{Gr}_{0}(P_{\ast\ast}V))).\\
    		&F^1(P_{\eps\eps}V)/\mathrm{AN}(P_{\ast\ast}V)_{-\eps} 
    		\simeq F^1({}^{2}\mathrm{Gr}_{0}(P_{\ast\ast}V))
    		\simeq  H^1(\mathrm{FM}({}^{2}\mathrm{Gr}_{0}(P_{\ast\ast}V))).
    	\end{align*}
    	Here ${}^{i}\mathrm{Gr}_{0}(P_{\ast\ast}V))$ is the gradation of $P_{\ast\ast}V$ (See subsubsection 2.4.1), and
    	$\mathrm{FM}(\mathcal{E}) \in D^{b}(\mathrm{Coh}(\mathcal{O}_{\hat{T}^2}))$ is the Fourier-Mukai transform of a coherent sheaf $\mathcal{E}$ on $T^2$.
    	\end{itemize}
  	\end{Prp}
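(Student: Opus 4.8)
The plan is to reduce the whole proposition to one Mayer--Vietoris short exact sequence of filtered sheaves together with the two gradation sequences, and then to apply the functors $F^i$, feeding in the cohomology vanishing of Proposition \ref{Prp:vanish cohom of stable bundle} and the Fourier--Mukai computation of Corollary \ref{Cor:FM-trans of ss deg0}. First I would record the identifications forced by the choice of $\eps$. Since $0$ is the only point of $\mathrm{Sing}_{\mathbb{R}}(P_{\ast\ast}V)$ in $[-\eps,\eps]$, the filtration of $P_{\ast\ast}V$ jumps in each variable inside $(-\eps,\eps]$ only at $0$; hence $P_{\eps\eps}V = P_{00}V$ and $P_{-\eps-\eps}V = P_{<0<0}V$, while $\mathrm{AN}(P_{\ast\ast}V)_{\eps}=F^1(P_{-\eps,\eps}V)$ and $\mathrm{AN}(P_{\ast\ast}V)_{-\eps}=F^1(P_{\eps,-\eps}V)$ are the transforms of the anti-diagonal sheaves $P_{-\hat t\hat t}V$ at $\hat t=\eps$ and $\hat t=-\eps$. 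Write $A:=P_{-\eps-\eps}V$, $B:=P_{-\eps,\eps}V$, $C:=P_{\eps,-\eps}V$, $D:=P_{\eps\eps}V$. Using that near $D_i$ the filtration depends only on the $i$-th weight, one checks $A=B\cap C$ and $D=B+C$ as subsheaves of $D$, so the square on $A,B,C,D$ is bicartesian and produces
\[
0\to A\to B\oplus C\to D\to 0.
\]
The same local computation identifies the gradation quotients: $B/A\cong{}^2\mathrm{Gr}_0(P_{\ast\ast}V)$ and $C/A\cong{}^1\mathrm{Gr}_0(P_{\ast\ast}V)$ (supported on $D_2$, resp.\ $D_1$), and likewise $D/B\cong{}^1\mathrm{Gr}_0(P_{\ast\ast}V)$, $D/C\cong{}^2\mathrm{Gr}_0(P_{\ast\ast}V)$.

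Next I would apply $F^{\bullet}$. Because $B,C$ are anti-diagonal and $A=P_{<0<0}V$, $D=P_{00}V$, Proposition \ref{Prp:vanish cohom of stable bundle} (at $\hat t=\eps,-\eps,0$) gives $F^0=F^2=0$ for all four of $A,B,C,D$. Since ${}^1\mathrm{Gr}_0(P_{\ast\ast}V)$ and ${}^2\mathrm{Gr}_0(P_{\ast\ast}V)$ are semistable of degree $0$ on $D_1,D_2\cong T^2$, and $F^i$ applied to sheaves supported on $D_i$ agrees with the Fourier--Mukai transform $\mathrm{FM}$ on $T^2$ (the restriction of $\mathcal{L}$ to $D_i\times\hat{T}^2$ is the Poincar\'e bundle of $T^2$, so base change along the support gives $F^i(\mathcal F)=H^i(\mathrm{FM}(\mathcal F))$), Corollary \ref{Cor:FM-trans of ss deg0} yields $F^0({}^i\mathrm{Gr}_0)=F^2({}^i\mathrm{Gr}_0)=0$ and $F^1({}^i\mathrm{Gr}_0)\cong H^1(\mathrm{FM}({}^i\mathrm{Gr}_0))$. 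With these inputs the long exact sequence of $0\to A\to B\oplus C\to D\to 0$ collapses to
\[
0\to F^1(A)\to F^1(B)\oplus F^1(C)\to F^1(D)\to 0,
\]
and the long exact sequences of the four gradation sequences provide the injections $F^1(A)\hookrightarrow F^1(B),F^1(C)$ and $F^1(B),F^1(C)\hookrightarrow F^1(D)$ (each from the vanishing of an $F^0$ of a gradation), together with $F^1(D)/F^1(B)\cong F^1({}^1\mathrm{Gr}_0)$ and $F^1(D)/F^1(C)\cong F^1({}^2\mathrm{Gr}_0)$ (from the vanishing of the corresponding $F^2$).

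Finally I would read off the assertions. The chain $F^1(A)\hookrightarrow \mathrm{AN}(P_{\ast\ast}V)_{\pm\eps}\hookrightarrow F^1(D)$ is exactly the two factorizations through $B$ and through $C$; compatibility with the algebraic scattering map is automatic, because all arrows are induced by the natural inclusions of filtered sheaves that define the scattering map in Proposition \ref{Prp: algebraic scattering map}. The displayed short exact sequence gives $\mathrm{AN}(P_{\ast\ast}V)_{-\eps}\cap\mathrm{AN}(P_{\ast\ast}V)_{\eps}\cong F^1(A)=F^1(P_{-\eps-\eps}V)$ (kernel of the sum map, injectivity on the left using $F^0(D)=0$) and $\mathrm{AN}(P_{\ast\ast}V)_{-\eps}+\mathrm{AN}(P_{\ast\ast}V)_{\eps}\cong F^1(D)=F^1(P_{\eps\eps}V)$ (surjectivity of the sum map using $F^2(A)=0$). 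The two quotient isomorphisms are the gradation identifications above, composed with $F^1({}^i\mathrm{Gr}_0)\cong H^1(\mathrm{FM}({}^i\mathrm{Gr}_0))$.

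The step I expect to be most delicate is purely the bookkeeping at $D=D_1\sqcup D_2$: verifying that the four corner sheaves are precisely $P_{00}V$, $P_{<0<0}V$ and the two anti-diagonal sheaves, that the square is genuinely bicartesian, and that each of the four quotients is the correct gradation on the correct component. Once these local-at-$D$ identifications are pinned down the vanishing statements apply verbatim, and everything else is formal homological algebra.
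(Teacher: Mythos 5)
Your proposal is correct and takes essentially the same route as the paper's proof: the same Mayer--Vietoris sequence $0\to P_{-\eps-\eps}V\to P_{\eps-\eps}V\oplus P_{-\eps\eps}V\to P_{\eps\eps}V\to 0$ and the same gradation sequences, with $F^{\bullet}$ applied and the long exact sequences collapsed by the vanishing from Proposition \ref{Prp:vanish cohom of stable bundle} and Corollary \ref{Cor:FM-trans of ss deg0}, and compatibility with the scattering map holding by construction. The only (welcome) difference is presentational: you make explicit the identifications $P_{\eps\eps}V=P_{00}V$ and $P_{-\eps-\eps}V=P_{<0<0}V$ forced by the choice of $\eps$, which justifies applying the vanishing results to the diagonal corners more cleanly than the paper's citation of Proposition \ref{Prp:Algebraic Nahm trans} does.
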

  \begin{proof}
  	From the definition of stable filtered bundle on $(\mathbb{P}^1\times T^2,\{0,\infty\}\times T^2)$,
    ${}^{i}\mathrm{Gr}_{0}(P_{\ast\ast}V)\;(i=1,2)$ are  semistable locally free sheaves on $\{0\}\times T^2$ and
    $\{\infty\}\times T^2$ respectively.
    Therefore by Corollary \ref{Cor:FM-trans of ss deg0} we have
    $F^0({}^{i}\mathrm{Gr}_{0}(P_{\ast\ast}V)) = H^0(\mathrm{FM}({}^{i}\mathrm{Gr}_{0}(P_{\ast\ast}V))) = 0$.
  	Hence by the short exact sequences
    $0 \to P_{-\eps-\eps}V \to P_{\eps-\eps}V \to {}^{1}\mathrm{Gr}_{0}(P_{\ast\ast}V) \to 0$
    and
    $0 \to P_{\eps-\eps}V \to P_{\eps\eps}V \to {}^{2}\mathrm{Gr}_{0}(P_{\ast\ast}V) \to 0$,
    we have an inclusion of sheaves
    $F^1(P_{-\eps-\eps}V) \hookrightarrow \mathrm{AN}(P_{\ast\ast}V)_{-\eps} \hookrightarrow F^1(P_{\eps\eps}V)$.
    Since $F^2(P_{\eps-\eps}V) = 0$ is shown in Proposition \ref{Prp:Algebraic Nahm trans},
    we have 
    \[
    	F^1(P_{\eps\eps}V) / \mathrm{AN}(P_{\ast\ast}V)_{-\eps} 
      	= F^1({}^{2}\mathrm{Gr}_{0}(P_{\ast\ast}V))
        = H^1(\mathrm{FM}({}^{2}\mathrm{Gr}_{0}(P_{\ast\ast}V))).
    \]
    In a similar way,
    we obtain
    $F^1(P_{-\eps-\eps}V) \hookrightarrow \mathrm{AN}(P_{\ast\ast}V)_{\eps} \hookrightarrow F^1(P_{\eps\eps}V)$
    and
    \[
    	F^1(P_{\eps\eps}V)/\mathrm{AN}(P_{\ast\ast}V)_{\eps} 
      	= F^1({}^{1}\mathrm{Gr}_{0}(P_{\ast\ast}V))
        = H^1(\mathrm{FM}({}^{1}\mathrm{Gr}_{0}(P_{\ast\ast}V))).
    \]
    The compatibility with the algebraic scattering map is trivial by the definition of the algebraic scattering map.
    
    We consider the short exact sequence
    $0 \to P_{-\eps-\eps}V \to P_{\eps-\eps}V \oplus P_{-\eps\eps}V \to P_{\eps\eps}V \to 0$.
    Since Proposition \ref{Prp:Algebraic Nahm trans} shows $F^0(P_{\eps\eps}V)=0$
    and $F^2(P_{-\eps-\eps}V)=0$,
    we obtain the exact sequence
    \[
    	0 \to F^1(P_{-\eps-\eps}V) \to 
      \mathrm{AN}(P_{\ast\ast}V)_{-\eps} \oplus \mathrm{AN}(P_{\ast\ast}V)_{\eps} \to
      F^1(P_{\eps\eps}V) \to 0.
    \]
    Therefore we have
    \begin{align*}
    	\mathrm{AN}(P_{\ast\ast}V)_{-\eps} \cap \mathrm{AN}(P_{\ast\ast}V)_{\eps} &\simeq F^1(P_{-\eps-\eps}V), \\
    	\mathrm{AN}(P_{\ast\ast}V)_{-\eps} + \mathrm{AN}(P_{\ast\ast}V)_{\eps} &\simeq F^1(P_{\eps\eps}V).
    \end{align*}
  \end{proof}
  
  	Let $(V,h,A)$ be an irreducible $L^2$-finite instanton of rank $r >1$
    and $(\hat{V},\hat{h},\hat{A},\hat{\Phi})$ the Nahm transform of $(V,h,A)$.
    For $\xi \in \mathrm{Sing}(V,h,A)$,
    let $\rho_{\pm,\xi}$ be the representation of $\mathfrak{su}(2)$ defined in Definition \ref{Def:sing set}.
    For the irreducible decomposition $\rho_{\pm,\xi} = \bigoplus_{i=1}^{m_{\pm,\xi}}\rho_{\pm,\xi,i}$,
    we set $w_{\pm,\xi} := (\mathrm{rank}(\rho_{\pm,\xi,i}))$.
    We decompose the weight $\vec{k}=(k_i) \in \mathbb{Z}^{\mathrm{rank}(\hat{V})}$ of $(\hat{V},\hat{h},\hat{A},\hat{\Phi})$ at $\xi \in \mathrm{Sing}(V,h,A)$
    into the positive part $k_{+}$ and the negative part $k_{-}$.
  \begin{Thm}\label{Thm:Correspodence between weights}
  	$k_{\pm}$ agrees with $\pm w_{\pm,\xi}$ under a suitable permutation.
  \end{Thm}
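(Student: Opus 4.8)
The plan is to read off the weight $\vec k$ at $\xi$ from the meromorphic scattering map of the underlying mini-holomorphic bundle, and then to compute that scattering map on the algebraic side. By Theorem \ref{Thm:Algebraic Nahm trans} the underlying mini-holomorphic bundle of $(\hat V,\hat h,\hat A,\hat\Phi)$ is isomorphic to $(\mathrm{AN}(P_{\ast\ast}V),\deebar_{\mathrm{AN}},\partial_{\mathrm{AN},\hat t})$ compatibly with the scattering maps, so it suffices to work with $\mathrm{AN}(P_{\ast\ast}V)$ (note that $P_{\ast\ast}V$ is stable since $(V,h,A)$ is irreducible). After twisting $(V,h,A)$ by a flat line bundle we may assume $\xi=(0,\xi_0)$ with $\xi_0\in\hat T^2$, so that $\xi$ lies over $\hat t=0$. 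By Proposition \ref{Prp:dirac weight is merom degree}, $\vec k$ is the multiset of exponents occurring in a diagonalization $\Psi_{-\eps,\eps}(\mathbf v^{-})=\mathbf v^{+}\,\mathrm{diag}(w^{k_i})$ of the scattering map, where $w$ is a local holomorphic coordinate on $\hat T^2$ centred at $\xi_0$.

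First I would turn the scattering map into a comparison of two lattices at $\xi_0$. By Proposition \ref{Prp: algebraic scattering map}, away from $\xi_0$ both $\mathrm{AN}(P_{\ast\ast}V)_{-\eps}$ and $\mathrm{AN}(P_{\ast\ast}V)_{\eps}$ are canonically identified with $F^1(P_{-\eps-\eps}V)$; hence $\Psi_{-\eps,\eps}$ is the tautological identification there, and the exponents $k_i$ are exactly the elementary divisors at $\xi_0$ of the pair of locally free sheaves $\mathrm{AN}(P_{\ast\ast}V)_{\pm\eps}$ inside their common rational hull. Proposition \ref{Prp:merom comparison} exhibits both as lattices in $F^1(P_{\eps\eps}V)$ with intersection $F^1(P_{-\eps-\eps}V)$ and sum $F^1(P_{\eps\eps}V)$. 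Writing the local normal form $v_i^{-}=w^{k_i}v_i^{+}$ and filtering by order of vanishing, a short computation shows that the stalk at $\xi_0$ of $\mathrm{AN}(P_{\ast\ast}V)_{\eps}/F^1(P_{-\eps-\eps}V)$ is a direct sum of cyclic $\mathcal O_{\hat T^2,\xi_0}$-modules of lengths $\{k_i : k_i>0\}$, while that of $\mathrm{AN}(P_{\ast\ast}V)_{-\eps}/F^1(P_{-\eps-\eps}V)$ has lengths $\{-k_i : k_i<0\}$. By Proposition \ref{Prp:merom comparison} these two quotients are $H^1(\mathrm{FM}({}^{2}\mathrm{Gr}_{0}(P_{\ast\ast}V)))$ and $H^1(\mathrm{FM}({}^{1}\mathrm{Gr}_{0}(P_{\ast\ast}V)))$, localized at $\xi_0$, respectively.

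Next I would evaluate these localized Fourier-Mukai modules. By Corollary \ref{Cor:Rough property of Gr} the gradation ${}^{i}\mathrm{Gr}_{0}(P_{\ast\ast}V)$ is a semistable degree-$0$ bundle equipped with the endomorphism $\mathrm{Gr}_{0}(f)$, and by Corollary \ref{Cor:Isom to model bundle of Gr} the Jordan type of $\mathrm{Gr}_{0}(f)$ on the summand attached to $\xi_0$ coincides with that of the model endomorphism built from $N_{\pm,\bar w}$, the $d\bar w$-component of $\sum_i N_{\pm,i}\,dx^i$. Corollary \ref{Cor:FM-trans of ss deg0} then identifies the length decomposition of $H^1(\mathrm{FM}(\cdot))$ at $\xi_0$ with the sizes of the Jordan blocks of this nilpotent endomorphism. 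Since $N_{\pm,\bar w}$ is, up to a nonzero scalar, the raising operator $N_{\pm,2}+\sqrt{-1}N_{\pm,3}$ of the $\mathfrak{su}(2)$-representation $\rho_{\pm}$, and every irreducible summand of dimension $d$ contributes a single Jordan block of size $d$, these block sizes are exactly the ranks $\mathrm{rank}(\rho_{\pm,\xi,i})$, i.e.\ the entries of $w_{\pm,\xi}$. Finally, tracking the dictionary $t\to+\infty \leftrightarrow z\to\infty \leftrightarrow \{\infty\}\times T^2 \leftrightarrow {}^{2}\mathrm{Gr}$ and $t\to-\infty \leftrightarrow z\to 0 \leftrightarrow \{0\}\times T^2 \leftrightarrow {}^{1}\mathrm{Gr}$, coming from $z=\exp(2\pi(t+\sqrt{-1}x^1))$, matches the positive weights with $w_{+,\xi}$ and the negative weights with $-w_{-,\xi}$, giving $\vec k_{\pm}=\pm w_{\pm,\xi}$ up to permutation.

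The hardest part will be the middle step: showing that the elementary divisors of the lattice pair split cleanly, with the positive part controlled by $\mathrm{AN}(P_{\ast\ast}V)_{\eps}/F^1(P_{-\eps-\eps}V)$ and the negative part by $\mathrm{AN}(P_{\ast\ast}V)_{-\eps}/F^1(P_{-\eps-\eps}V)$, with the correct signs and without the two modifications interfering. This requires choosing the holomorphic frames $\mathbf v^{\pm}$ of Proposition \ref{Prp:dirac weight is merom degree} compatibly with the filtrations realizing the two Hecke modifications of Proposition \ref{Prp:merom comparison}, and it is precisely here that Corollary \ref{Cor:Isom to model bundle of Gr} is indispensable, since it lets one replace the analytically defined gradation $\mathrm{Gr}_{0}(f)$ by the explicit nilpotent $N_{\pm}$ whose Jordan structure is governed by $\mathfrak{su}(2)$-representation theory.
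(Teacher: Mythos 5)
Your proposal is correct and takes essentially the same route as the paper's own proof: weights read off the meromorphic scattering map via Proposition \ref{Prp:dirac weight is merom degree}, transfer to the algebraic side via Theorem \ref{Thm:Algebraic Nahm trans}, comparison of the lattices $\mathrm{AN}(P_{\ast\ast}V)_{\pm\eps}$ via Proposition \ref{Prp:merom comparison}, and identification of the localized Fourier--Mukai stalks with the Jordan blocks of $N_{\pm,\bar w}$ via Corollaries \ref{Cor:FM-trans of ss deg0} and \ref{Cor:Isom to model bundle of Gr}, matched to $\mathfrak{su}(2)$-weights. The only difference is one of emphasis: you spell out the elementary-divisor computation that the paper leaves implicit, while the explicit model-frame computation of the gradations that you assert is exactly what the paper carries out in Lemma \ref{Lem:explicit formula for gr. of model bundle}.
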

  \begin{proof}
  	By considering $V_{\xi} = (V,h,A_{\xi})$ instead of $(V,h,A)$,
    we assume $\xi=0$.
    By Proposition \ref{Prp:merom comparison} and Proposition \ref{Prp:dirac weight is merom degree},
    $k_{+}$ (resp. $k_{-}$) is determined by the stalk at $0 \in \hat{T}^2$ of $H^1(\mathrm{FM}({}^{2}\mathrm{Gr}_{0}(P_{\ast\ast}V)))$
    (resp. $H^1(\mathrm{FM}({}^{1}\mathrm{Gr}_{0}(P_{\ast\ast}V)))$).
    Applying Proposition \ref{Prp:hol Asymp of Inst} to $(V,h,A)|_{(0,\infty)\times T^3}$ and $(V,h,A)|_{(-\infty,0)\times T^3}$,
    we obtain the model solutions $(\Gamma_{\pm},N_{\pm})$ of the Nahm equation.
    We set $\Gamma_{\pm,\bar{\tau}}d\bar{\tau} + \Gamma_{\pm,\bar{w}}d\bar{w} := (\sum_i\Gamma_{\pm,i}dx^i)^{(0,1)}$
    and $N_{\pm,\bar{\tau}}d\bar{\tau} + N_{\pm,\bar{w}}d\bar{w} := (\sum_iN_{\pm,i}dx^i)^{(0,1)}$.
    We consider the following lemma.
    \begin{Lem}\label{Lem:explicit formula for gr. of model bundle}
    	Let $X_{\pm,0}$ be the eigenspace of $\Gamma_{\pm,\bar{\tau}}$ of eigenvalue $0$.
    	The gradation ${}^{i}\mathrm{Gr}_{0}(P_{\ast\ast}V)\;(i=1,2)$ are isomorphic with
    	$\tilde{E}_{\pm} := \left(X_{\pm,0} \times T^2, \deebar_{T^2} + (\Gamma_{\pm,\bar{w}} + N_{\pm,\bar{w}})|_{X_{\pm,0}}d\bar{w} \right)$ respectively.
    \end{Lem}
    If we admit this lemma,
    then by Corollary \ref{Cor:FM-trans of ss deg0},
    the stalk $H^1(\mathrm{FM}(\tilde{E}_{\pm}))_{0}$ is determined by the size of Jordan blocks of $(\Gamma_{\pm,\bar{w}} + N_{\pm,\bar{w}})|_{X_{\pm,0}}$
    whose eigenvalues are $0$.
    Since the size of Jordan blocks of $(\Gamma_{\pm,\bar{w}} + N_{\pm,\bar{w}})|_{X_{\pm,0}}$ whose eigenvalues are $0$ is the rank of irreducible representations of $\mathfrak{su}(2)$ contained in $\rho_{\pm,0}$,
    hence $k_{\pm}$ agrees with $\pm w_{\pm,\xi}$ with a suitable permutation.
  \end{proof}
  \begin{proof}[{\upshape \textbf{(proof of Lemma \ref{Lem:explicit formula for gr. of model bundle})}}]
    We may assume that any $\Gamma_i$ are diagonal.
  	As in subsection \ref{SubSec:Prop of Gr},
    we take $R>0$ and set the holomorphic Hermitian vector bundle with the endomorphism $({E}'_{\pm},h_{E'_{\pm}},\deebar_{{E}'_{\pm}},f'_{\pm})$
    on $\Delta^{\ast}(R):=\{z\in\mathbb{C}\mid 0<|z|<\exp(-2\pi R)\}$ as follows:
    \begin{eqnarray*}
    	\left\{\begin{array}{ll}
    		\deebar_{E'_{\pm}}(\mb{e'}_{\pm}) &= \mb{e'}\left(\Gamma_{\pm,\bar{\tau}} + ((2\pi)^{-1}\log|z|)^{-1}N_{\pm,\bar{\tau}}\right)d\bar{z}/(2\pi\bar{z}) \\
    		h_{E'_{\pm}}(e'_{\pm,i}, e'_{\pm,j}) &= \delta_{ij} \\
    		f'_{\pm}(\mb{e'}_{\pm}) &= \mb{e'}_{\pm}\left(\Gamma_{\pm,\bar{w}} + ((2\pi)^{-1}\log|z|)^{-1}N_{\pm,\bar{w}}\right),
    	\end{array}\right.
    \end{eqnarray*}
    where $\mb{e'}_{\pm} = (e'_{\pm,i})$ is a $C^{\infty}$-frame of $E'_{\pm}$ on $\Delta(R)^{\ast}$.
    We take a holomorphic frame $\mb{b}'_{\pm}=(b'_{\pm,i})$ of ${E}'_{\pm}$
    by $\mb{b}'_{\pm} := \mb{e'}_{\pm}\exp(-\Gamma_{\pm,\bar{\tau}}\bar{\tau}- 2N_{\pm,\bar{\tau}}\log(t))
    = \mb{e'}_{\pm}\exp( -(2\pi)^{-1}\Gamma_{\pm,\bar{\tau}}\log(\bar{z})- 2N_{\pm,\bar{\tau}}\log((2\pi)^{-1}\log|z|) )$.
    Then, we also set the holomorphic frame $\mb{b''}_{\pm}=(b''_{\pm,i})$ of $P_{0}E'$ by
    $b''_{\pm,i} := z^{- \lceil \mathrm{ord}(b'_{\pm,i}) \rceil}b'_{\pm,i}$,
    where $\lceil \alpha \rceil$ is the least integer satisfying $\lceil \alpha \rceil \geq \alpha$.
    Hence we have
    $
    	P_{0}f'(\mb{b''}_{\pm})
      =	\mb{b''}_{\pm}
      	\exp(\Gamma_{\pm,\bar{\tau}}\bar{\tau}+ 2N_{\pm,\bar{\tau}}\log(t))
        (\Gamma_{\pm,\bar{w}} + (\log|z|)^{-1}N_{\pm,\bar{w}})
        \exp(-\Gamma_{\pm,\bar{\tau}}\bar{\tau}-2N_{\pm,\bar{\tau}}\log(t))
      =	\mb{b''}_{\pm}(\Gamma_{\pm,\bar{w}} + N_{\pm,\bar{w}})
    $.
    By Remark \ref{Remark:some attentions} \ref{Item:fundamental domain in Remark:some attentions},
    ${}^{i}\mathrm{Gr}_{0}(P_{\ast\ast}V)\;(i=1,2)$ is spanned by the subset of $\mb{b''}_{\pm}$
    that correspond to eigenvectors of $\Gamma_{\pm,1}$ of eigenvalue $0$.
  	By Corollary \ref{Cor:Isom to model bundle of Gr},
    ${}^{i}\mathrm{Gr}_{0}(P_{\ast\ast}V)\;(i=1,2)$ is isomorphic to
    $
    	\tilde{E}'_{\pm} 
      := \left(\mathrm{Gr}_{0}(P_{\ast}E'_{\pm}) \times T^2, \deebar_{T^2} + \mathrm{Gr}_{0}(P_{\ast}f'_{\pm})d\bar{w} \right)
      \simeq \left(X_{\pm,0} \times T^2, \deebar_{T^2} + (\Gamma_{\pm,\bar{w}} + N_{\pm,\bar{w}})|_{X_{\pm,0}}d\bar{w} \right)
    $ respectively.
  \end{proof}
	
\end{document}